\providecommand{\lang}{british}
\providecommand{\KOMAopt}{twoside=semi, DIV=current,headinclude}
\definecolor{darkblue}{rgb}{0.0,0.0,0.6}
	\renewcommand{\substack}{\crampedsubstack}
\newcommand{\Frac}{\operatorname{Frac}}
\newcommand{\ideal}[1]{\langle #1\rangle}
\newcommand{\op}{\mathrm{op}}
\DeclareMathOperator{\rank}{rank}
\newcommand{\Span}[1]{\langle#1\rangle}
\newcommand{\donothing}[1]{}
\newcommand{\dsum}{\mathbin{\oplus}}
	\newcommand{\bigdsum}{\bigoplus}
\DeclareMathOperator{\End}{End}
	\DeclareMathOperator{\stabEnd}{\underline{End}}
\DeclareMathOperator{\Ext}{Ext}
\DeclareMathOperator{\Hom}{Hom}
\newcommand{\Kgp}{\mathrm{K}}
\newcommand{\id}{\mathrm{id}}
\newcommand{\iso}{\cong}
\newcommand{\isoto}{\xrightarrow{\,\smash{\raisebox{-0.65ex}{\ensuremath{\scriptstyle\sim}}}\,}}
\newcommand{\map}[1]{\xrightarrow{#1}}
\newcommand{\tensor}{\mathbin{\otimes}}
\newcommand{\Wedge}[1]{{\textstyle{\bigwedge}}^{\! #1}}
\DeclareMathOperator{\Proj}{Proj}
\newcommand{\GL}{\mathrm{GL}}
\DeclareMathOperator{\gldim}{gl.dim}
\newcommand{\bdd}{\mathrm{b}}
\newcommand{\CC}{\mathbb{C}}
\newcommand{\e}{\mathrm{e}}
\newcommand{\KK}{\mathbb{K}}
\newcommand{\ZZ}{\mathbb{Z}}
\newcommand{\defeq}{\coloneqq}
	\newcommand{\eqdef}{\eqqcolon}
\DeclareMathOperator{\dimvec}{\underline{dim}}
\newcommand{\idemp}[1]{e_{#1}}
\newcommand{\src}[1]{s#1}
\newcommand{\tgt}[1]{t#1}
\newcommand{\compl}{\mathrm{c}}
	\newcommand{\disunion}{\sqcup}
\newcommand{\clustalg}[1]{\mathscr{A}_{#1}}
\newcommand{\cpa}[2]{#1\langle\!\langle #2\rangle\!\rangle}
\newcommand{\Endalg}[2]{\End_{#1}(#2)^{\op}}
	\newcommand{\stabEndalg}[2]{\stabEnd_{#1}(#2)^{\op}}
\newcommand{\powser}[2]{#1\llbracket#2\rrbracket}
	\newcommand{\powrat}[2]{#1(\!(#2)\!)}
	\DeclareMathOperator{\add}{add}
\DeclareMathOperator{\CM}{CM}
\newcommand{\dcat}[1][]{\mathcal{D}^{#1}}
	\newcommand{\bdcat}{\mathcal{D}^{\bdd}}
\DeclareMathOperator{\GP}{GP}
	\DeclareMathOperator{\GI}{GI}
	\DeclareMathOperator{\gproj}{gproj}
	\DeclareMathOperator{\stabgproj}{\underline{gproj}}
	\DeclareMathOperator{\ginj}{ginj}
	\DeclareMathOperator{\stabginj}{\underline{ginj}}
\newcommand{\hcat}[1][]{\mathcal{K}^{#1}}
	\DeclareMathOperator{\fgmod}{mod}
\DeclareMathOperator{\per}{per}
\DeclareMathOperator{\projcat}{proj}
\newcommand{\singcat}{\mathcal{D}_{\mathrm{sg}}}
\newcommand{\stab}[1]{\underline{#1}}
\newcommand{\Grass}[2]{\mathrm{Gr}_{#1}^{#2}}
	\newcommand{\Grasscone}[2]{\widehat{\mathrm{Gr}}{}_{#1}^{#2}}
	\newcommand{\qGrass}[2]{\mathrm{Gr}_{#1}(#2)}
\newcommand{\Plueck}[1]{\Delta_{#1}}
\newcommand{\PP}{\mathbb{P}}
\newcommand{\blank}{\mathord{\char123}}
\newcommand{\cat}{\mathcal}
\renewcommand{\epsilon}{\varepsilon}
\renewcommand{\geq}{\geqslant}
\renewcommand{\leq}{\leqslant}
\newcommand{\type}{\mathsf}
\renewcommand{\subset}{\subseteq}
\newcommand{\bdry}{\partial}
\newcommand{\close}[1]{\overline{#1}}
  \theoremstyle{plain}
\newtheorem{thm}{Theorem}[section]
\newtheorem{thm*}{Theorem}
\newtheorem{lem}[thm]{Lemma}
\newtheorem{cor}[thm]{Corollary}
\newtheorem{cor*}{Corollary}
\newtheorem{prop}[thm]{Proposition}
\newtheorem{conj}[thm]{Conjecture}
\theoremstyle{definition}
\newtheorem{defn}[thm]{Definition}
\newtheorem{eg}[thm]{Example}
\theoremstyle{remark}
\newtheorem{rem}[thm]{Remark}
\numberwithin{equation}{section} \usepackage[backend=biber,citestyle=numeric-comp,bibstyle=numeric,maxbibnames=99,giveninits=true,doi=false,isbn=false,url=false,eprint=true]{biblatex}
\title[Quasi-coincidence of cluster structures on positroid varieties]{Quasi-coincidence of cluster structures\\on positroid varieties}
\author{Matthew Pressland}
\address[Matthew Pressland]{School of Mathematics \& Statistics\\University of Glasgow\\University Place\\Glasgow G20 8LR\\United Kingdom}
\curraddr{Laboratoire de Mathématiques Nicolas Oresme\\Université de Caen Normandie\\Boulevard Maréchal Juin\\14032 Caen Cedex 5\\France}
\email{\href{mailto:matthew.pressland@unicaen.fr}{matthew.pressland@unicaen.fr}}
\urladdr{\url{https://mdpressland.github.io}}
\subjclass[2020]{13F60 (Primary) 14M15, 16G20, 18G10, 18G80 (Secondary)}
\keywords{Categorification, cluster algebra, derived category, Grassmannian, positroid variety}
\dedicatory{Dedicated to Bernhard Keller on the occasion of his 60th birthday}
\newcommand{\posit}{\mathcal{P}}
\newcommand{\posvar}{\Pi}
	\newcommand{\posvarcone}{\widehat{\posvar}}
\newcommand{\openposvar}{\Pi^\circ}
	\newcommand{\openposvarcone}{\posvarcone^\circ}
\newcommand{\clust}[1]{\mathscr{#1}}
\newcommand{\froz}{\clust{F}}
\newcommand{\cvars}{\mathcal{V}}
\newcommand{\cmons}{\mathcal{W}}
\newcommand{\yhat}{\hat{y}}
\newcommand{\srcneck}{\mathcal{I}^{\smash{\mathrm{src}}}}
\newcommand{\tgtneck}{\mathcal{I}^{\smash{\mathrm{tgt}}}}
\newcommand{\MSsrc}[1]{\mathfrak{m}_{#1}^{\smash{\mathrm{src}}}}
\newcommand{\MStgt}[1]{\mathfrak{m}_{#1}^{\smash{\mathrm{tgt}}}}
\newcommand{\Segprod}{\mathbin{\overline{\otimes}}}
\newcommand{\ind}{\operatorname{ind}}
\newcommand{\coind}{\operatorname{coind}}
\renewcommand{\src}{\smash{\mathrm{src}}}
\renewcommand{\tgt}{\smash{\mathrm{tgt}}}
\newcommand{\srclab}[1]{I_{#1}^{\smash{\mathrm{src}}}}
\newcommand{\tgtlab}[1]{I_{#1}^{\smash{\mathrm{tgt}}}}
\newcommand{\Tsrc}{T^{\mathrm{src}}}
\newcommand{\Ttgt}{T^{\mathrm{tgt}}}
\newcommand{\GLsrc}{\eta^{\smash{\mathrm{src}}}}
\newcommand{\GLtgt}{\eta^{\smash{\mathrm{tgt}}}}
\newcommand{\CCsrc}{\Phi^{\mathrm{src}}}
\newcommand{\CCtgt}{\Phi^{\mathrm{tgt}}}
\newcommand{\bOmega}{\boldsymbol{\Omega}}
\newcommand{\bSigma}{\boldsymbol{\Sigma}}
\newcommand{\bsigma}{\boldsymbol{\upsigma}}
\newcommand{\Zdual}[1]{\smash{{#1}}^\vee}
\DeclareMathOperator{\degvec}{\underline{deg}}
\newcommand{\black}{+}
\newcommand{\white}{-}
\newcommand{\wt}[1][]{\operatorname{wt}^{#1}}
\newcommand{\pos}{\mathrm{pos}}
\renewcommand{\neg}{\mathrm{neg}}
\newcommand{\varPlueck}[1]{\Delta(#1)}
\newcommand{\ltwist}{\text{\reflectbox{\(\vec{\text{\reflectbox{\(\tau\)}}}\)}}}
\newcommand{\rtwist}{\vec{\tau}}
\newcommand{\strandcolor}{red}
\newcommand{\graphcolor}{black}
\newcommand{\quivcolor}{DarkGreen}
\newcommand{\bdrycolor}{gray}
\newcommand{\matchcolor}{magenta}
\newcommand{\frozcolor}{MediumBlue}
\tikzset{strand/.style={\strandcolor}, boundary/.style={thick, \bdrycolor},  bipedge/.style={\graphcolor, thick},
 quivarrow/.style={\quivcolor, -latex, thick},
 frozarrow/.style={\frozcolor, -latex, thick},
 matcharrow/.style={\matchcolor, -latex, very thick} }
\newcommand{\strarrow}{\arrow{angle 60}}
\pgfmathsetmacro{\bstart}{130} \pgfmathsetmacro{\seventh}{360/7}
\pgfmathsetmacro{\ninth}{360/9}
\newcommand{\dotrad}{1pt}  
\begin{document}

\begin{abstract}
By work of a number of authors, beginning with Scott and culminating with Galashin and Lam, the coordinate rings of positroid varieties in the Grassmannian carry cluster algebra structures.
In fact, they typically carry many such structures, the two best understood being the source-labelled and target-labelled structures, referring to how the initial cluster is computed from a Postnikov diagram or plabic graph.
In this article, we show that these two cluster algebra structures quasi-coincide, meaning in particular that a cluster variable in one structure may be expressed in the other structure as the product of a cluster variable and a Laurent monomial in the frozen variables.
This resolves a conjecture attributed to Muller and Speyer from 2017.
The proof depends critically on categorification: of the relevant cluster algebra structures by the author, of perfect matchings and twists by the author with Çanakçı and King, and of quasi-equivalences of cluster algebras by Fraser--Keller.
By similar techniques, we also show that Muller--Speyer's left twist map is a quasi-cluster equivalence from the target-labelled structure to the source-labelled structure.
\end{abstract}

\maketitle

\section{Introduction}

Positroid varieties have their origin in influential work of Postnikov \cite{Postnikov-PosGrass} on the totally positive Grassmannian, in which they appear as cells in a stratification of the Grassmannian that is particularly well-adapted to the study of positivity.
In the same paper, Postnikov introduced the combinatorics of alternating strand diagrams---now often called Postnikov diagrams---and plabic graphs, which were subsequently used by Scott \cite{Scott-Grass} to equip the coordinate ring of the Grassmannian with the structure of a cluster algebra.

Cluster algebra structures are useful when studying totally positive spaces since they provide a set of linearly independent functions (the cluster monomials) with non-negative structure constants, and this set can often be extended to a basis with the same property, albeit in several different ways.
More specifically, a cluster algebra comes with a preferred set of generators called cluster variables, together with a notion of compatibility between these variables, and a cluster monomial is, by definition, a product of compatible cluster variables.
Some cluster variables are frozen, and compatible with all others.

In Scott's cluster algebra structure on the Grassmannian \(\Grass{k}{n}\) of \(k\)-dimensional subspaces of \(\CC^n\), all Plücker coordinates are cluster variables, and two Plücker coordinates are compatible if and only if their labels (which are \(k\)-element subsets of \(\{1,\dotsc,n\}\)) are non-crossing.
This combinatorial condition appeared in earlier work by Leclerc and Zelevinsky \cite{LecZel}, who showed that two Plücker coordinates quasi-commute (as elements of the quantum matrix algebra) if and only if they have non-crossing labels.
Whenever \(2<k<n-2\), there are more cluster variables than just the Plücker coordinates, and indeed in all but finitely many such cases there are infinitely many cluster variables.

It was long expected (see for example \cite[Conj.~3.4]{MulSpe-LocAcycl}) that Scott's results should generalise from the Grassmannian to arbitrary (open) positroid varieties, a conjecture proved by Serhiyenko, Sherman-Bennett and Williams \cite{SSBW} for Schubert cells and subsequently by Galashin and Lam \cite{GalLam} in full generality.
Both results depend heavily on work of Leclerc on Richardson varieties \cite{Leclerc-Strata}; the cluster structures are obtained from Leclerc's via homogenisation.
In all cases, one chooses a Postnikov diagram corresponding to the positroid in question, and associates to it a quiver with Plücker coordinates as vertices.
This turns out to be the initial seed for the cluster algebra structure.

While the process for constructing the quiver is always the same, there are two commonly used ways of assigning Plücker coordinates to its vertices.
These correspond to the two natural bijections between the strands, which collectively describe a permutation of \(\{1,\dotsc,n\}\), and these \(n\) labels: one can either associate a strand to the label at its source, or to the label at its target.
For the full Grassmannian, the two labellings simply produce two different seeds in the same cluster algebra structure, but for more general positroids this is no longer the case.
This means each positroid is in fact equipped with two cluster algebra structures, which have the same cluster combinatorics (indeed, they are abstractly isomorphic) but are distinct in the sense that different sets of functions, and even different subsets of the Plücker coordinates, are cluster variables.

However, a further expectation, first remarked on by Muller and Speyer \cite[Rem.~4.7]{MulSpe-Twist} and later described as a conjecture by Fraser and Sherman-Bennett \cite[Conj.~1.1]{FSB}, was that these two cluster structures should in fact have the same set of cluster monomials. More precisely, the conjecture is that the cluster structures \emph{quasi-coincide}; this is a special case of a more general notion of quasi-equivalence, for cluster structures on possibly different rings, due to Fraser \cite{Fraser-Quasi} (see Definition~\ref{d:qcm}). In particular, it means that each non-frozen cluster variable \(x\) in one structure is equal to a product \(x'p\), in which \(x'\) is a cluster variable, and \(p\) a Laurent monomial in frozen variables, in the other cluster structure. Moreover, the resulting assignment \(x\mapsto x'\) is a bijection between the two sets of non-frozen cluster variables, respecting compatibility.

In this paper, we prove this conjecture. Perhaps surprisingly, our proof relies primarily on representation theory and homological algebra.
For connected positroids, the two cluster algebra structures have been categorified by the author \cite{Pressland-Postnikov}, and the main algebraic fact underpinning the proof is that the two categorifications are derived equivalent in an extremely natural way.
This gives us access to a homological method for detecting quasi-equivalences of cluster algebras, developed recently by Fraser and Keller \cite[Appendix]{KelWu}.
This categorical approach resolves the quasi-coincidence conjecture for connected positroids, and we show on the geometric side that this implies the general statement.

Fraser and Sherman-Bennett \cite[Cor.~6.8]{FSB} proved the quasi-coincidence conjecture for positroids satisfying a combinatorial condition which they call toggle-connectedness, while also pointing out \cite[Ex.~6.6]{FSB} that not all positroids (or even all connected positroids) have this property. Our argument is also rather different: Fraser and Sherman-Bennett's works by expressing the identity map as a composition of quasi-cluster equivalences (coming from the toggles referred to in the term `toggle-connectedness'), whereas we show directly that the identity is such an equivalence.

To make the paper as self-contained as possible, we provide more detailed background on positroid varieties and their cluster structures in Section~\ref{s:positroids} and on quasi-equivalences of cluster algebras in Section~\ref{s:quasi-equivalence}.
In Section~\ref{s:connected}, we show that the quasi-coincidence conjecture may be reduced to the case of connected positroids, for which the technology of categorification is available, and in Section~\ref{s:categorification} we describe this technology.
The proof of the quasi-coincidence conjecture is then given in Section~\ref{s:proof}.
In Section~\ref{s:twist}, we demonstrate that similar techniques can be used to prove a related fact, namely that Muller--Speyer's left twist map \cite{MulSpe-Twist} is a quasi-cluster equivalence. Shortly after this article first appeared as a preprint, Casals, Le, Sherman-Bennett and Weng \cite{CLSBW} gave an independent proof of this fact, using a connection to symplectic geometry via Legendrian weaves, rather than categorification. In particular, they show that the twist coincides with the Donaldson--Thomas transformation, which is known independently to be a quasi-cluster equivalence.

\section{Positroids, positroid varieties and their cluster structures}
\label{s:positroids}

\subsection{Postnikov diagrams, plabic graphs and perfect matchings}

In this section, we recall the combinatorics underlying open positroid varieties, and the cluster structures on their coordinate rings.
This will also allow us to set our conventions. In some cases, we will replace the original definitions by (non-trivially) equivalent statements, in order to make the exposition more concise.

\begin{defn}[\cite{Postnikov-PosGrass}]
A \emph{Postnikov diagram}, drawn in an oriented disc with \(n\) marked points on its boundary, consists of a set of \(n\) smooth curves, called \emph{strands}, such that one strand starts and one strand ends at each marked point.
These strands must satisfy the following rules.
\begin{enumerate}[label=(P\arabic*)]
\item\label{d:P1} The strands cross finitely many times, and each crossing is transverse and involves only two strands.
\item\label{d:P2} Moving along each strand, the signs of its crossings with other strands alternate.
\item\label{d:P3} No strand crosses itself.
\item\label{d:P4} If two strands cross twice, they must be oriented in opposite directions between the crossings.
Said differently, if we follow a pair of strands forwards (or backwards) from a crossing, they do not meet again.
\end{enumerate}
In all conditions except \ref{d:P3}, the marked points on the boundary are considered to be crossings by extending the strands out of the disc in the natural way.
We call the diagram \emph{connected} if the union of its strands is connected.
\end{defn}

For concreteness, we usually identify the marked points on the disc with the set \(\ZZ_n=\{1,\dotsc,n\}\), as in Figure~\ref{f:plabic}.
The notation reflects the fact that we will mostly treat \(\ZZ_n\) as a cyclically ordered set, in the clockwise order on the boundary of the disc, but occasionally it will be necessary to break this cyclic order to the usual linear order \(1<\dotsb<n\).

\begin{defn}
A Postnikov diagram \(D\) determines a permutation \(\pi_D\) of the marked points on the disc, via the rule that the strand starting at marked point \(i\) ends at marked point \(\pi_D(i)\).
\end{defn}

The data in a Postnikov diagram \(D\) may be encoded in a number of other equivalent ways, and we will use several of them here.
The first equivalent formulation is as a \emph{plabic graph} or \emph{dimer model}.
To describe this, we note that \(D\) cuts the disc into simply-connected regions, each of which has a boundary consisting of strand segments (and possibly some of the boundary of the disc).
We call the region \emph{oriented} if these strand segments are oriented consistently (either clockwise or anticlockwise), and \emph{alternating} otherwise, since in this case the orientations of the strand segments alternate around the boundary of the region because of \ref{d:P2}.

The corresponding plabic graph \(\Gamma_D\) has a black node for each anticlockwise oriented region, a white node for each clockwise region, and an edge for each crossing (connecting the nodes for the two oriented regions incident with this crossing).
An oriented region may meet the boundary of the disc at one or more of the marked points, and we complete the plabic graph by drawing half-edges from the node of this region to these marked points.
In this way, each marked point becomes incident with a unique half-edge, and \(\Gamma_D\) also cuts the disc into simply-connected regions, which are in natural bijection with the alternating regions of the Postnikov diagram.
An example of a Postnikov diagram and its dual plabic graph is shown in Figure~\ref{f:plabic}.

\begin{figure}
\begin{tikzpicture}[scale=3,baseline=(bb.base),yscale=-1]

\path (0,0) node (bb) {};

\foreach \n/\m/\a in {1/4/0, 2/3/0, 3/2/5, 4/1/10, 5/7/0, 6/6/-3, 7/5/0}
{ \coordinate (b\n) at (\bstart-\seventh*\n+\a:1.0);
  \draw (\bstart-\seventh*\n+\a:1.1) node {\(\m\)}; }

\foreach \n/\m in {8/1, 9/2, 10/3, 11/4, 14/5, 15/6, 16/7}
  {\coordinate (b\n) at ($0.65*(b\m)$);}

\coordinate (b13) at ($(b15) - (b16) + (b8)$);
\coordinate (b12) at ($(b14) - (b15) + (b13)$);

\foreach \n/\x/\y in {13/-0.03/-0.03, 12/-0.22/0.0, 14/-0.07/-0.03, 11/0.05/0.02, 16/-0.02/0.02}
  {\coordinate (b\n)  at ($(b\n) + (\x,\y)$); } 

\foreach \h/\t in {1/8, 2/9, 3/10, 4/11, 5/14, 6/15, 7/16, 
 8/9, 9/10, 10/11,11/12, 12/13, 13/8, 14/15, 15/16, 12/14, 13/15, 8/16}
{ \draw [bipedge] (b\h)--(b\t); }

\foreach \n in {8,10,12,15} 
  {\draw [\graphcolor] (b\n) circle(\dotrad) [fill=white];} \foreach \n in {9,11,13, 14,16}  
  {\draw [\graphcolor] (b\n) circle(\dotrad) [fill=\graphcolor];} 

\foreach \e/\f/\t in {2/9/0.5, 4/11/0.5, 5/14/0.5, 7/16/0.5, 
 8/9/0.5, 9/10/0.5, 10/11/0.5,11/12/0.5, 12/13/0.45, 8/13/0.6, 
 14/15/0.5, 15/16/0.6, 12/14/0.45, 13/15/0.4, 8/16/0.6}
{\coordinate (a\e-\f) at ($(b\e) ! \t ! (b\f)$); }

\draw [strand] plot[smooth]
coordinates {(b1) (a8-16) (a15-16) (b6)}
[postaction=decorate, decoration={markings,
 mark= at position 0.2 with \strarrow,
 mark= at position 0.5 with \strarrow, 
 mark= at position 0.8 with \strarrow }];
 
\draw [strand] plot[smooth]
coordinates {(b6) (a14-15) (a12-14)(a11-12) (a10-11) (b3)}
[postaction=decorate, decoration={markings,
 mark= at position 0.15 with \strarrow, mark= at position 0.35 with \strarrow,
 mark= at position 0.53 with \strarrow, mark= at position 0.7 with \strarrow,
 mark= at position 0.87 with \strarrow }];
 
\draw [strand] plot[smooth]
coordinates {(b3) (a9-10) (a8-9) (b1)}
[postaction=decorate, decoration={markings,
 mark= at position 0.2 with \strarrow,
 mark= at position 0.5 with \strarrow, 
 mark= at position 0.8 with \strarrow }];

\draw [strand] plot[smooth]
coordinates {(b2) (a9-10) (a10-11) (b4)}
 [postaction=decorate, decoration={markings,
 mark= at position 0.2 with \strarrow,
 mark= at position 0.5 with \strarrow, 
 mark= at position 0.8 with \strarrow }];

\draw [strand] plot[smooth]
coordinates {(b4) (a11-12) (a12-13) (a13-15) (a15-16) (b7)}
[postaction=decorate, decoration={markings,
 mark= at position 0.15 with \strarrow, mark= at position 0.35 with \strarrow,
 mark= at position 0.55 with \strarrow, mark= at position 0.7 with \strarrow,
 mark= at position 0.87 with \strarrow }];

\draw [strand] plot[smooth]
coordinates {(b7) (a8-16) (a8-13) (a12-13) (a12-14) (b5)}
[postaction=decorate, decoration={markings,
 mark= at position 0.15 with \strarrow, mark= at position 0.315 with \strarrow,
 mark= at position 0.5 with \strarrow, mark= at position 0.7 with \strarrow,
 mark= at position 0.88 with \strarrow }];

\draw [strand] plot[smooth]
coordinates {(b5) (a14-15) (a13-15) (a8-13) (a8-9) (b2)}
[postaction=decorate, decoration={markings,
 mark= at position 0.13 with \strarrow, mark= at position 0.33 with \strarrow,
 mark= at position 0.5 with \strarrow, mark= at position 0.7 with \strarrow,
 mark= at position 0.88 with \strarrow }];
 
 \draw [boundary] (0,0) circle(1.0);
\end{tikzpicture}
\caption{A Postnikov diagram and its corresponding plabic graph.}
\label{f:plabic}
\end{figure}
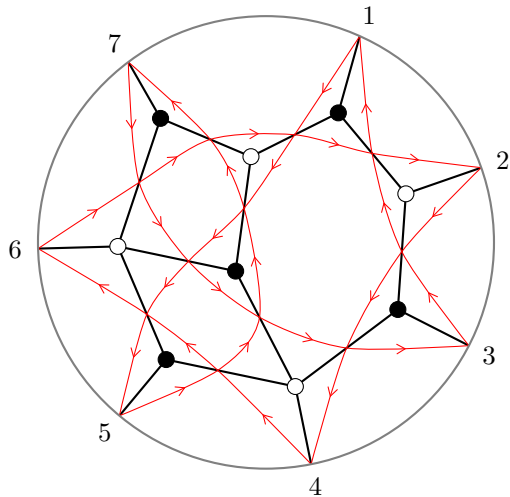

As can be deduced from the figure, it is possible to reverse this procedure to produce a collection of strands, also called \emph{zig-zag paths} in literature on dimer models, from any plabic graph.
These  will automatically satisfy conditions \ref{d:P1} and \ref{d:P2}, but \ref{d:P3} and \ref{d:P4} are additional (consistency) conditions on the plabic graph.
For this reason, we use the strand diagram as our primary definition.
Connectedness of \(D\) is equivalent to that of \(\Gamma_D\).

For the next definition, we use the fact that a Postnikov diagram is drawn in an oriented disc to give each strand a left-hand and right-hand side.
The left-hand side of a strand is bounded by the strand together with the anticlockwise segment of the boundary of the disc between the source and target of the strand, whereas the right-hand side uses the clockwise segment of the boundary instead.

\begin{defn}
Let \(D\) be a Postnikov diagram, and let \(j\) be an alternating region of \(D\).
The \emph{source label} \(\srclab{j}\) of \(j\) is the set of marked points \(i\in\ZZ_n\) such that the strand with source \(i\) has region \(j\) on its left-hand side.
The \emph{target label} \(\tgtlab{j}\) is the set of marked points \(i\in\ZZ_n\) such that the strand with target \(i\) has \(j\) on its left-hand side; that is, \(\tgtlab{j}=\pi_D(\srclab{j})\).
The \emph{source necklace} \(\srcneck_D\) of \(D\) is the set of source labels \(\srclab{j}\) for \(j\) an alternating region incident with the boundary of the disc, while the \emph{target necklace} \(\tgtneck_D\) is the set of target labels \(\tgtlab{j}\) for these regions.
See Figure~\ref{f:labels} for an example.

A \(k\)-element subset of \(\ZZ_n\) will be referred to simply as a \emph{\(k\)-subset}.
By comparing the labels of oriented regions either side of a crossing, one can see that all source and target labels for a fixed diagram \(D\) are \(k\)-subsets for some \(k\).
We say \(D\) has \emph{type} \((k,n)\) if \(k\) is this constant cardinality and \(n\) is the number of strands.
\end{defn}

\begin{figure}
\begin{minipage}{0.95\textwidth}
\begin{tikzpicture}[scale=3,baseline=(bb.base),yscale=-1]
\path (0,0) node (bb) {};

\foreach \n/\m/\a in {1/4/0, 2/3/0, 3/2/5, 4/1/10, 5/7/0, 6/6/-3, 7/5/0}
{ \coordinate (b\n) at (\bstart-\seventh*\n+\a:1.0);
  \draw (\bstart-\seventh*\n+\a:1.1) node {\(\m\)}; }

\foreach \n/\m in {8/1, 9/2, 10/3, 11/4, 14/5, 15/6, 16/7}
  {\coordinate (b\n) at ($0.65*(b\m)$);}

\coordinate (b13) at ($(b15) - (b16) + (b8)$);
\coordinate (b12) at ($(b14) - (b15) + (b13)$);

\foreach \n/\x/\y in {13/-0.03/-0.03, 12/-0.22/0.0, 14/-0.07/-0.03, 11/0.05/0.02, 16/-0.02/0.02}
  {\coordinate (b\n)  at ($(b\n) + (\x,\y)$); } 

\foreach \h/\t in {1/8, 2/9, 3/10, 4/11, 5/14, 6/15, 7/16, 
 8/9, 9/10, 10/11,11/12, 12/13, 13/8, 14/15, 15/16, 12/14, 13/15, 8/16}
{ \draw [bipedge] (b\h)--(b\t); }

\foreach \n in {8,10,12,15} 
  {\draw [\graphcolor] (b\n) circle(\dotrad) [fill=white];} \foreach \n in {9,11,13, 14,16}  
  {\draw [\graphcolor] (b\n) circle(\dotrad) [fill=\graphcolor];} 

\foreach \n/\m/\l/\a in {1/124/134/0, 2/234/123/-1, 3/345/127/1, 4/456/167/10, 5/256/367/5, 6/267/356/0, 7/127/345/0}
{ \draw [\frozcolor] (\bstart+\seventh/2-\seventh*\n+\a:0.9) node (q\m) {\scriptsize \(\mathbf{\l}\)}; }

\foreach \m/\l/\a/\r in {247/135/\bstart/0.42, 245/137/10/0.25, 257/357/210/0.36}
{ \draw [\quivcolor] (\a:\r) node (q\m) {\scriptsize \(\mathbf{\l}\)}; }

\foreach \e/\f/\t in {2/9/0.5, 4/11/0.5, 5/14/0.5, 7/16/0.5, 
 8/9/0.5, 9/10/0.5, 10/11/0.5,11/12/0.5, 12/13/0.45, 8/13/0.6, 
 14/15/0.5, 15/16/0.6, 12/14/0.45, 13/15/0.4, 8/16/0.6}
{\coordinate (a\e-\f) at ($(b\e) ! \t ! (b\f)$); }

\draw [strand] plot[smooth]
coordinates {(b1) (a8-16) (a15-16) (b6)}
[postaction=decorate, decoration={markings,
 mark= at position 0.2 with \strarrow,
 mark= at position 0.5 with \strarrow, 
 mark= at position 0.8 with \strarrow }];
 
\draw [strand] plot[smooth]
coordinates {(b6) (a14-15) (a12-14)(a11-12) (a10-11) (b3)}
[postaction=decorate, decoration={markings,
 mark= at position 0.15 with \strarrow, mark= at position 0.35 with \strarrow,
 mark= at position 0.53 with \strarrow, mark= at position 0.7 with \strarrow,
 mark= at position 0.87 with \strarrow }];
 
\draw [strand] plot[smooth]
coordinates {(b3) (a9-10) (a8-9) (b1)}
[postaction=decorate, decoration={markings,
 mark= at position 0.2 with \strarrow,
 mark= at position 0.5 with \strarrow, 
 mark= at position 0.8 with \strarrow }];

\draw [strand] plot[smooth]
coordinates {(b2) (a9-10) (a10-11) (b4)}
 [postaction=decorate, decoration={markings,
 mark= at position 0.2 with \strarrow,
 mark= at position 0.5 with \strarrow, 
 mark= at position 0.8 with \strarrow }];

\draw [strand] plot[smooth]
coordinates {(b4) (a11-12) (a12-13) (a13-15) (a15-16) (b7)}
[postaction=decorate, decoration={markings,
 mark= at position 0.15 with \strarrow, mark= at position 0.35 with \strarrow,
 mark= at position 0.55 with \strarrow, mark= at position 0.7 with \strarrow,
 mark= at position 0.87 with \strarrow }];

\draw [strand] plot[smooth]
coordinates {(b7) (a8-16) (a8-13) (a12-13) (a12-14) (b5)}
[postaction=decorate, decoration={markings,
 mark= at position 0.15 with \strarrow, mark= at position 0.315 with \strarrow,
 mark= at position 0.5 with \strarrow, mark= at position 0.7 with \strarrow,
 mark= at position 0.88 with \strarrow }];

\draw [strand] plot[smooth]
coordinates {(b5) (a14-15) (a13-15) (a8-13) (a8-9) (b2)}
[postaction=decorate, decoration={markings,
 mark= at position 0.13 with \strarrow, mark= at position 0.33 with \strarrow,
 mark= at position 0.5 with \strarrow, mark= at position 0.7 with \strarrow,
 mark= at position 0.88 with \strarrow }];
\end{tikzpicture}\hfill
\begin{tikzpicture}[scale=3,baseline=(bb.base),yscale=-1]
\path (0,0) node (bb) {};

\foreach \n/\m/\a in {1/4/0, 2/3/0, 3/2/5, 4/1/10, 5/7/0, 6/6/-3, 7/5/0}
{ \coordinate (b\n) at (\bstart-\seventh*\n+\a:1.0);
  \draw (\bstart-\seventh*\n+\a:1.1) node {\(\m\)}; }

\foreach \n/\m in {8/1, 9/2, 10/3, 11/4, 14/5, 15/6, 16/7}
  {\coordinate (b\n) at ($0.65*(b\m)$);}

\coordinate (b13) at ($(b15) - (b16) + (b8)$);
\coordinate (b12) at ($(b14) - (b15) + (b13)$);

\foreach \n/\x/\y in {13/-0.03/-0.03, 12/-0.22/0.0, 14/-0.07/-0.03, 11/0.05/0.02, 16/-0.02/0.02}
  {\coordinate (b\n)  at ($(b\n) + (\x,\y)$); } 

\foreach \h/\t in {1/8, 2/9, 3/10, 4/11, 5/14, 6/15, 7/16, 
 8/9, 9/10, 10/11,11/12, 12/13, 13/8, 14/15, 15/16, 12/14, 13/15, 8/16}
{ \draw [bipedge] (b\h)--(b\t); }

\foreach \n in {8,10,12,15} 
  {\draw [\graphcolor] (b\n) circle(\dotrad) [fill=white];} \foreach \n in {9,11,13, 14,16}  
  {\draw [\graphcolor] (b\n) circle(\dotrad) [fill=\graphcolor];} 

\foreach \n/\m/\l/\a in {1/124/156/0, 2/234/145/-1, 3/345/345/1, 4/456/235/10, 5/256/123/5, 6/267/127/0, 7/127/167/0}
{ \draw [\frozcolor] (\bstart+\seventh/2-\seventh*\n+\a:0.9) node (q\m) {\scriptsize \(\mathbf{\l}\)}; }

\foreach \m/\l/\a/\r in {247/157/\bstart/0.42, 245/135/10/0.25, 257/137/210/0.36}
{ \draw [\quivcolor] (\a:\r) node (q\m) {\scriptsize \(\mathbf{\l}\)}; }

\foreach \e/\f/\t in {2/9/0.5, 4/11/0.5, 5/14/0.5, 7/16/0.5, 
 8/9/0.5, 9/10/0.5, 10/11/0.5,11/12/0.5, 12/13/0.45, 8/13/0.6, 
 14/15/0.5, 15/16/0.6, 12/14/0.45, 13/15/0.4, 8/16/0.6}
{\coordinate (a\e-\f) at ($(b\e) ! \t ! (b\f)$); }

\draw [strand] plot[smooth]
coordinates {(b1) (a8-16) (a15-16) (b6)}
[postaction=decorate, decoration={markings,
 mark= at position 0.2 with \strarrow,
 mark= at position 0.5 with \strarrow, 
 mark= at position 0.8 with \strarrow }];
 
\draw [strand] plot[smooth]
coordinates {(b6) (a14-15) (a12-14)(a11-12) (a10-11) (b3)}
[postaction=decorate, decoration={markings,
 mark= at position 0.15 with \strarrow, mark= at position 0.35 with \strarrow,
 mark= at position 0.53 with \strarrow, mark= at position 0.7 with \strarrow,
 mark= at position 0.87 with \strarrow }];
 
\draw [strand] plot[smooth]
coordinates {(b3) (a9-10) (a8-9) (b1)}
[postaction=decorate, decoration={markings,
 mark= at position 0.2 with \strarrow,
 mark= at position 0.5 with \strarrow, 
 mark= at position 0.8 with \strarrow }];

\draw [strand] plot[smooth]
coordinates {(b2) (a9-10) (a10-11) (b4)}
 [postaction=decorate, decoration={markings,
 mark= at position 0.2 with \strarrow,
 mark= at position 0.5 with \strarrow, 
 mark= at position 0.8 with \strarrow }];

\draw [strand] plot[smooth]
coordinates {(b4) (a11-12) (a12-13) (a13-15) (a15-16) (b7)}
[postaction=decorate, decoration={markings,
 mark= at position 0.15 with \strarrow, mark= at position 0.35 with \strarrow,
 mark= at position 0.55 with \strarrow, mark= at position 0.7 with \strarrow,
 mark= at position 0.87 with \strarrow }];

\draw [strand] plot[smooth]
coordinates {(b7) (a8-16) (a8-13) (a12-13) (a12-14) (b5)}
[postaction=decorate, decoration={markings,
 mark= at position 0.15 with \strarrow, mark= at position 0.315 with \strarrow,
 mark= at position 0.5 with \strarrow, mark= at position 0.7 with \strarrow,
 mark= at position 0.88 with \strarrow }];

\draw [strand] plot[smooth]
coordinates {(b5) (a14-15) (a13-15) (a8-13) (a8-9) (b2)}
[postaction=decorate, decoration={markings,
 mark= at position 0.13 with \strarrow, mark= at position 0.33 with \strarrow,
 mark= at position 0.5 with \strarrow, mark= at position 0.7 with \strarrow,
 mark= at position 0.88 with \strarrow }];
\end{tikzpicture}
\end{minipage}
\caption{A plabic graph of type \((3,7)\). The left-hand figure shows the source labels, while the right-hand figure shows the target labels. The labels of boundary regions, which form the necklaces, are displayed in blue since these will later label the frozen Plücker coordinates in a cluster structure (see Theorem~\ref{t:GalLam}).}
\label{f:labels}
\end{figure}
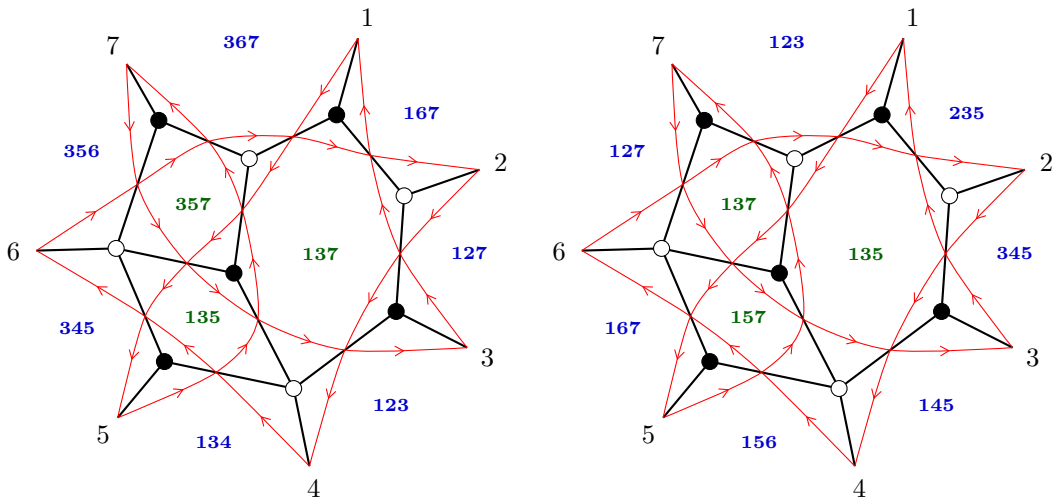

This definition of the type of a Postnikov diagram is equivalent to that given in \cite[Def.~2.5]{CKP} in terms of the combinatorics of the plabic graph, by \cite[Thm.~5.3]{MulSpe-Twist} (see also \cite[Prop.~8.2]{CKP}).
The target necklace is sometimes referred to simply as the necklace, and the source necklace as the reverse necklace (or upper necklace \cite{Oh}).
Strictly speaking, both \(\srcneck_D\) and \(\tgtneck_D\) should be regarded as a cyclically ordered sequence of \(n\) labels, possibly with repetition, by using the natural cyclic ordering of the boundary regions; when \(D\) is disconnected, some boundary regions meet the boundary in several distinct segments, and so their labels appear in \(\srcneck_D\) and \(\tgtneck_D\) multiple times.
However, since we will mainly use the necklaces in the case that \(D\) is connected, in which there are always exactly \(n\) boundary regions with distinct labels, and we will not explicitly use their cyclic ordering, we will ignore this subtlety from now on.

\begin{defn}
\label{d:pm}
Let \(\Gamma\) be a plabic graph.
A \emph{perfect matching} of \(\Gamma\) is a set \(\mu\) of edges (and half-edges) of \(\Gamma\) such that each node is incident with exactly one edge in \(\mu\).
The \emph{boundary value} \(\bdry\mu\subset\ZZ_n\) consists of the marked points \(i\) such that either
\begin{enumerate}
\item the half-edge at \(i\) is incident with a white node and included in \(\mu\), or
\item the half-edge at \(i\) is incident with a black node and not included in \(\mu\).
\end{enumerate}
If \(D\) is a Postnikov diagram, its associated \emph{positroid} \(\posit_D\) is the set of subsets of \(\ZZ_n\) which appear as boundary values of perfect matchings of \(\Gamma_D\) \cite[Thm.~3.1]{MulSpe-Twist}.
All of these subsets have the same cardinality, which may be computed by subtracting the number of black nodes of \(\Gamma_D\) from the number of white nodes, then adding the number of half-edges incident with a black node.
\end{defn}

\begin{rem}
The terminology `positroid' refers to the fact that \(\posit_D\) is a matroid represented by a totally positive matrix.
The diagram \(D\) is connected if and only if \(\posit_D\) is connected as a matroid, i.e.\ is not expressible as a non-trivial direct sum \cite[Prop.~5.8]{OPS-PGs}.
\end{rem}

\begin{defn}
Let \(D\) be a Postnikov diagram, and let \(e\) be an edge of the plabic graph \(\Gamma_D\), corresponding to a crossing of strands in \(D\) (or to a boundary marked point).
Consider the two strand segments obtained by following these strands forwards from the crossing until they terminate; by \ref{d:P4}, these segments do not themselves cross.
This pair of strand segments can be completed to a closed curve using either of the two boundary arcs in the disc between their endpoints.
One of these two closed curves bounds a region containing the angle between the two strands as they exit the crossing, and we call this the \emph{downstream wedge} at \(e\).

The \emph{upstream wedge} at \(e\) is defined similarly, instead following the strands backwards from the crossing and taking the region containing the angle between them as they enter the crossing.
Examples are shown in Figure~\ref{f:wedges}.

\begin{figure}
\begin{minipage}{0.6\textwidth}
\begin{tikzpicture}[baseline=0]
\draw [white,fill=magenta!20] (0,0) to (60:2) arc (60:120:2) (120:2) to (0,0);
\draw [white,fill=blue!20] (0,0) to (-60:2) arc (-60:-120:2) (-120:2) to (0,0);
\draw [bipedge] (-0.6,0)--(0.6,0);
\draw [\graphcolor] (-0.6,0) circle(2.5*\dotrad) [fill=black];
\draw [\graphcolor] (0.6,0) circle(2.5*\dotrad) [fill=white];
\draw [strand] plot[smooth]
coordinates {(-120:2) (0,0) (60:2)}
[postaction=decorate, decoration={markings,
 mark= at position 0.3 with \strarrow,
 mark= at position 0.8 with \strarrow }];
\draw [strand] plot[smooth]
 coordinates {(-60:2) (0,0) (120:2)}
 [postaction=decorate, decoration={markings,
  mark= at position 0.3 with \strarrow,
  mark= at position 0.8 with \strarrow }];
\draw [boundary] (0,0) circle(2.0);
\end{tikzpicture}\hfill
\begin{tikzpicture}[baseline=0]
\draw [fill=magenta, fill opacity=0.2, draw opacity=0] (120:2) to (0,0) plot[smooth]
coordinates {(0,0) (0.6,0.3) (-20:2)} arc (-20:120:2) (120:2);
\draw [fill=blue, fill opacity=0.2, draw opacity=0] (0,0) to (-120:2) arc (-120:20:2) (20:2) plot[smooth]
 coordinates {(20:2) (0.6,-0.3) (0,0)};
\draw [bipedge] (-0.6,0)--(0.6,0);
\draw [\graphcolor] (-0.6,0) circle(2.5*\dotrad) [fill=black];
\draw [\graphcolor] (0.6,0) circle(2.5*\dotrad) [fill=white];
\draw [strand] (-120:2) to (0,0)
[postaction=decorate, decoration={markings,
 mark= at position 0.6 with \strarrow }];
\draw [strand] (0,0) to (120:2)
[postaction=decorate, decoration={markings,
 mark= at position 0.6 with \strarrow }];
\draw [strand] plot[smooth]
coordinates {(0,0) (0.6,0.3) (-20:2)}
[postaction=decorate, decoration={markings,
 mark= at position 0.25 with \strarrow,
 mark= at position 0.8 with \strarrow }];
\draw [strand] plot[smooth]
 coordinates {(20:2) (0.6,-0.3) (0,0)}
 [postaction=decorate, decoration={markings,
  mark= at position 0.2 with \strarrow,
  mark= at position 0.70 with \strarrow }];
\draw [boundary] (0,0) circle(2.0);
\end{tikzpicture}
\end{minipage}
\caption{The downstream (magenta) and upstream (blue) wedge of an edge in a plabic graph \(\Gamma\). While \ref{d:P4} guarantees that these regions are indeed wedge-shaped, the right-hand figure demonstrates that they may not be disjoint.}
\label{f:wedges}
\end{figure}
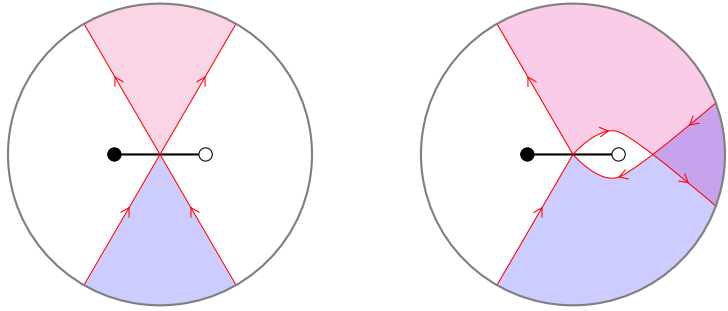
\end{defn}

\begin{rem}
\label{r:bdry-wedges}
If \(e\) is a half-edge, the crossing at \(e\) is itself the termination point of one of the two strands, and the start point of the other.
Thus, the downstream wedge at \(e\) is either the left-hand side of the strand beginning at \(e\), if \(e\) is incident with a white node, or the right-hand side of this strand, if \(e\) is incident with a black node.
The same holds for the upstream wedge, considering instead the strand ending at \(e\).
\end{rem}

\begin{prop}
\label{p:MSmats}
For each alternating region \(j\) of \(D\), let \(\MSsrc{j}\) be the set of edges \(e\in\Gamma_D\) such that \(j\) is contained in the downstream wedge of \(e\), and let \(\MStgt{j}\) be the set of edges \(e\in\Gamma_D\) such that \(j\) is contained in the upstream wedge of \(e\).
Then \(\MSsrc{j}\) and \(\MStgt{j}\) are perfect matchings of \(\Gamma_D\).
\end{prop}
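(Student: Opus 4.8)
The plan is to prove the statement for $\MSsrc{j}$; the case of $\MStgt{j}$ will then follow, because reversing the orientations of all strands of $D$ produces another Postnikov diagram with the same plabic graph $\Gamma_D$ — but with the colours of the nodes swapped, and with downstream and upstream wedges swapped — and being a perfect matching makes no reference to the node colours. So it suffices to show that for each node $v$ of $\Gamma_D$, corresponding to an oriented region $R$, exactly one of the crossings on $\bdry R$ has $j$ in its downstream wedge. I would take $R$ to be oriented anticlockwise, the clockwise case being symmetric.

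First I would record the combinatorics around $R$. Traversing $\bdry R$ in the direction that agrees with the orientations of its strand segments, let $e_1,\dotsc,e_m$ be the crossings met, in cyclic order, with the segment of $\bdry R$ from $e_i$ to $e_{i+1}$ lying on a strand $\rho_i$ (indices read modulo $m$). The two strands leaving the crossing $e_i$ are $\rho_{i-1}$ and $\rho_i$; by~\ref{d:P4} the portions of these two strands running forwards from $e_i$ do not meet again, so their union $A_i$ is a simple arc in the disc joining the two marked points at which $\rho_{i-1}$ and $\rho_i$ end. The crucial local observation, which one reads off from a transverse crossing as in Figure~\ref{f:wedges} (with a boundary crossing giving the degenerate case of Remark~\ref{r:bdry-wedges}), is that the downstream wedge of $e_i$ is exactly the connected component of the complement of $A_i$ in the disc not containing $R$. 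Hence $j$ lies in the downstream wedge of $e_i$ precisely when $j$ and $R$ lie on opposite sides of $A_i$.

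To see that at least one $e_i$ works, I would use a parity argument. Pick a point $q\in R$, a point $p\in j$, and a path $\pi$ from $q$ to $p$ that crosses the strands transversally and avoids their crossings, so that $j$ lies in the downstream wedge of $e_i$ exactly when $\#(\pi\cap A_i)$ is odd. Since the portion of $\rho_i$ running forwards from $e_i$ is the segment of $\bdry R$ from $e_i$ to $e_{i+1}$ followed by the portion of $\rho_i$ running forwards from $e_{i+1}$, all terms other than the segments of $\bdry R$ cancel modulo $2$ in $\sum_{i=1}^m\#(\pi\cap A_i)$, leaving $\sum_{i=1}^m\#(\pi\cap A_i)\equiv\#(\pi\cap\bdry R)\pmod 2$. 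The right-hand side is odd, because $\pi$ runs from inside $R$ to outside it, so the number of indices $i$ with $j$ in the downstream wedge of $e_i$ is odd; in particular it is at least one.

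The hard part will be showing that at most one $e_i$ works. I would prove that the downstream wedges of $e_1,\dotsc,e_m$ are pairwise disjoint. By the local observation, this reduces to the statement that the arcs $A_1,\dotsc,A_m$ are pairwise non-crossing — meeting at most along common terminal sub-arcs arising from a strand that bounds $R$ along more than one segment — since then any $A_{i'}$, which touches $\bdry R$ and therefore lies in the closed $R$-side of $A_i$, cannot cross into the other component, which is the downstream wedge of $e_i$. Verifying this non-crossing property is exactly where conditions~\ref{d:P3} and~\ref{d:P4} enter, through an analysis of how the forward strand-segments issuing from the corners of a single oriented region can meet one another, and I expect it to be the main obstacle; when $D$ is disconnected there is the additional nuisance that one strand may contribute several segments to $\bdry R$. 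Granted this, the parity count then forces each alternating region $j\neq R$ to lie in exactly one of the downstream wedges at the crossings on $\bdry R$, and symmetrically for the upstream wedges, so $\MSsrc{j}$ and $\MStgt{j}$ are perfect matchings.
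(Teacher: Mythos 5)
The paper gives no self-contained argument for this proposition, deferring to Muller--Speyer and to the direct geometric proof in \cite[\S7]{CKP}, so there is no in-paper reasoning to compare against. As a plan, yours is sensible and several pieces are correct: the reduction of $\MStgt{j}$ to $\MSsrc{j}$ by reversing all strands (which is essentially Proposition~\ref{p:op}\ref{p:op-MS}); the local observation that the downstream wedge at a crossing $e_i$ on $\bdry R$ is the component of the disc cut off by the forward arc $A_i$ that does not contain $R$; and the parity argument, via the telescoping identity $\sum_i\#(\pi\cap A_i)\equiv\#(\pi\cap\bdry R)\pmod 2$, showing that $j$ lies in an odd (in particular, positive) number of the downstream wedges at crossings on $\bdry R$.

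The gap is exactly the point you flag as ``the main obstacle'' and then assume: you do not actually prove that the open downstream wedges at the distinct crossings of $\bdry R$ are pairwise disjoint, or equivalently (with some care about shared terminal sub-arcs) that $A_1,\dotsc,A_m$ are pairwise non-crossing. This is the heart of the proposition and the place where the global consistency conditions \ref{d:P3} and \ref{d:P4} genuinely do work; without it, the parity count yields only that $\MSsrc{j}$ meets each oriented region in an odd number of edges, i.e.\ ``at least one'' rather than ``exactly one,'' so the matching property is not established. Until that disjointness is proved, the argument is incomplete. There is also a smaller imprecision: consecutive arcs $A_i$ and $A_{i+1}$ always share a terminal sub-arc, namely the forward portion of $\rho_i$ beyond $e_{i+1}$, not only when a single strand bounds $R$ along several segments; so the disjointness you need must be phrased for the open wedges rather than literally for the arcs $A_i$.
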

\begin{proof}
This statement was originally proved by Muller and Speyer \cite[Thm.~5.3]{MulSpe-Twist}. A direct geometric argument is given in \cite[\S7]{CKP}.
\end{proof}

By the reasoning in Remark~\ref{r:bdry-wedges}, we see that
\begin{equation}
\label{eq:MSbdry}
\bdry\MSsrc{j}=\srclab{j},\quad
\bdry\MStgt{j}=\tgtlab{j},
\end{equation}
which has several immediate corollaries.

\begin{cor}
\label{c:bdry-card}
If \(D\) is a Postnikov diagram of type \((k,n)\), then the boundary value of any perfect matching of \(\Gamma_D\) is a \(k\)-subset.
Moreover, for any alternating region \(j\) of a Postnikov diagram \(d\), the source and target labels \(\srclab{j}\) and \(\tgtlab{j}\) are contained in the positroid \(\posit_D\).
In particular, there are inclusions \(\srcneck_D\subset\posit_D\) and \(\tgtneck_D\subset\posit_D\).
\end{cor}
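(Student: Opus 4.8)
The plan is to obtain the corollary directly from Proposition~\ref{p:MSmats} together with the boundary computation~\eqref{eq:MSbdry}, using also the observation recorded in Definition~\ref{d:pm} that all boundary values of perfect matchings of $\Gamma_D$ share a common cardinality.

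First I would dispose of the middle assertion, from which the others follow. Let $j$ be an alternating region of $D$. By Proposition~\ref{p:MSmats}, both $\MSsrc{j}$ and $\MStgt{j}$ are perfect matchings of $\Gamma_D$, and by~\eqref{eq:MSbdry} their boundary values are $\srclab{j}$ and $\tgtlab{j}$ respectively. Since $\posit_D$ is by definition the set of boundary values of perfect matchings of $\Gamma_D$, this gives $\srclab{j}\in\posit_D$ and $\tgtlab{j}\in\posit_D$ at once. For the first assertion, recall that $D$ being of type $(k,n)$ means precisely that every source and target label is a $k$-subset; choosing any alternating region $j$ incident with the boundary of the disc (there is at least one, as the boundary half-edges of $\Gamma_D$ divide the boundary of the disc into arcs, each lying on the boundary of such a region), we conclude that $\posit_D$ contains the $k$-subset $\srclab{j}$. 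As every element of $\posit_D$ has the same cardinality, every element of $\posit_D$ is therefore a $k$-subset, and in particular so is the boundary value of any perfect matching of $\Gamma_D$.

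Finally, the necklace inclusions are immediate: $\srcneck_D$ consists of the labels $\srclab{j}$ for $j$ an alternating region incident with the boundary, each of which lies in $\posit_D$ by the middle assertion, so $\srcneck_D\subset\posit_D$; the argument for $\tgtneck_D\subset\posit_D$ is identical. I do not anticipate any genuine difficulty here: all the substantive content has already been packaged into Proposition~\ref{p:MSmats} and~\eqref{eq:MSbdry}, and the only point requiring a moment's care is the existence of a boundary region to serve as a witness in the first assertion, which is clear since $\Gamma_D$ is a nonempty graph drawn in the disc.
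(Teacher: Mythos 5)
Your proof is correct and follows the same route the paper intends: the paper presents Corollary~\ref{c:bdry-card} as an immediate consequence of Proposition~\ref{p:MSmats} and equation~\eqref{eq:MSbdry}, and you have simply spelled out the steps. In particular, using the fact recorded in Definition~\ref{d:pm} that all boundary values of perfect matchings share a common cardinality, together with some $\srclab{j}$ as a $k$-subset witness, is the natural way to make the first assertion explicit.
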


\subsection{Positroid varieties and their cluster structures}

We consider the Grassmannian \(\Grass{k}{n}\) of \(k\)-dimensional subspaces of \(\CC^n\) (i.e.\ an \(n\)-dimensional complex vector space with a chosen ordered basis), in its Plücker embedding \(\Plueck{}\colon\Grass{k}{n}\to\PP(\Wedge{k}\CC^n)\) with Plücker coordinates \(\Plueck{I}\) indexed by \(k\)-subsets \(I\subset\ZZ_n\).
At this point, we are treating \(\ZZ_n\) as a linearly ordered set, to determine the sign for each Plücker coordinate \(\Plueck{I}\).
We will sometimes write \(\varPlueck{I}=\Plueck{I}\) when this improves readability (for example if \(I=\srclab{j}\) or \(I=\tgtlab{j}\)).

\begin{defn}
Let \(\posit=\posit_D\) be the positroid associated to a Postnikov diagram \(D\) of type \((k,n)\).
The \emph{positroid variety} \(\posvar_\posit\) is the subset of \(\Grass{k}{n}\) defined by the vanishing of the Plücker coordinates \(\Plueck{I}\) for \(I\notin\posit\) \cite[Thm.~5.15]{KLS}.
The \emph{open positroid variety} \(\openposvar_\posit\) is the subset of \(\posvar_\posit\) on which the Plücker coordinates \(\Plueck{I}\) with \(I\in\tgtneck_D\subset\posit\) do not vanish \cite[\S2.2]{FSB}.

We write \(\Grasscone{k}{n}\) for the affine cone on the Grassmannian (on which Plücker coordinates are actual functions, not just projective coordinates), which naturally contains the cones \(\posvarcone_\posit\) and \(\openposvarcone_\posit\) on \(\posvar_\posit\) and \(\openposvar_\posit\) respectively.
These cones are defined inside \(\Grasscone{k}{n}\) by the same vanishing and non-vanishing conditions on Plücker coordinates as for the original positroid varieties inside \(\Grass{k}{n}\).
\end{defn}

We remark that, despite what the notation may suggest, the necklaces \(\srcneck_D\) and \(\tgtneck_D\) may be determined entirely from the positroid \(\posit\), without reference to a Postnikov diagram \(D\) \cite[\S16]{Postnikov-PosGrass}; as a result, if \(D\) and \(D'\) are Postnikov diagrams such that \(\posit_D=\posit_{D'}\) (for example, if \(D\) and \(D'\) are related by a geometric exchange, also known as a square move \cite[\S12]{Postnikov-PosGrass} or a mutation), then \(\srcneck_D=\srcneck_{D'}\) and \(\tgtneck_D=\tgtneck_{D'}\) also.
In particular, the open positroid variety \(\openposvar_\posit\) depends only on \(\posit\), and not on \(D\).

\begin{prop}
\label{p:sourceneck}
If \(x\in\posvar_\posit\), then \(x\in\openposvar_{\posit}\) if and only if \(\Plueck{I}(x)\ne0\) for all \(I\in\srcneck_D\subseteq\posit\).
That is, \(\openposvar_{\posit}\) is also characterised inside \(\posvar_\posit\) by non-vanishing of the Plücker coordinates labelled by elements of the source necklace.
\end{prop}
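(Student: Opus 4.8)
The plan is to reduce the statement to the corresponding fact about the target necklace, which is built into the definition of $\openposvar_\posit$, by exhibiting a symmetry of the situation that interchanges source and target labels. The key observation is that the source and target necklaces are exchanged by the operation that reverses all strands of the Postnikov diagram $D$: reversing strands turns the strand with source $i$ into a strand with target $i$, and swaps left-hand and right-hand sides, so $\srclab{j}$ for the reversed diagram is $\tgtlab{j}$ for $D$ (up to a uniform relabelling of regions). Concretely, if $D^\ast$ denotes the diagram obtained by reversing all strands, then $D^\ast$ is again a Postnikov diagram, of type $(n-k,n)$, with $\srcneck_{D^\ast}=\tgtneck_D$ and $\tgtneck_{D^\ast}=\srcneck_D$, and $\posit_{D^\ast}$ is the matroid dual of $\posit_D$.

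First I would record that positroid duality (equivalently, the isomorphism $\Grass{k}{n}\isoto\Grass{n-k}{n}$ sending a subspace to its orthogonal complement with respect to a suitable bilinear form) identifies $\posvar_{\posit_D}$ with $\posvar_{\posit_{D^\ast}}$ and, crucially, sends the Plücker coordinate $\Plueck{I}$ (up to a nonzero scalar) to $\Plueck{I^\compl}$, where $I^\compl=\ZZ_n\setminus I$; under matroid duality $\posit_{D^\ast}=\{I^\compl : I\in\posit_D\}$. Second, I would use the fact (already noted in the excerpt, following \cite[\S16]{Postnikov-PosGrass}) that the necklaces depend only on the positroid, so that this duality carries $\tgtneck_D$ to the target necklace of $\posit_{D^\ast}$, which is $\srcneck_{D}$ read through complementation; more precisely $I\mapsto I^\compl$ gives a bijection $\srcneck_D\to\tgtneck_{D^\ast}$. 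Then, by definition of the open positroid variety applied to $\posit_{D^\ast}$, a point of $\posvar_{\posit_{D^\ast}}$ lies in $\openposvar_{\posit_{D^\ast}}$ if and only if $\Plueck{J}$ is nonzero for all $J\in\tgtneck_{D^\ast}$; transporting back along the duality isomorphism, a point $x\in\posvar_\posit$ lies in $\openposvar_\posit$ if and only if $\Plueck{I}(x)\ne 0$ for all $I\in\srcneck_D$, which is exactly the claim.

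The main obstacle is to verify carefully that the duality isomorphism $\Grass{k}{n}\to\Grass{n-k}{n}$ really does send the open positroid variety of $\posit$ to that of the dual positroid (not merely the positroid variety to the positroid variety), and that it does so compatibly with the identification of necklaces; this requires knowing that $\openposvar_\posit$ depends only on $\posit$ and matching up the two definitions of "open" (via $\tgtneck$) under complementation. An alternative, more self-contained route that avoids invoking external duality statements is to argue directly with perfect matchings: by Proposition~\ref{p:MSmats} and \eqref{eq:MSbdry}, each $\srclab{j}\in\srcneck_D$ is the boundary value of the matching $\MSsrc{j}$, so $\srcneck_D\subset\posit_D$ (Corollary~\ref{c:bdry-card}); one then shows that the complement in $\posvar_\posit$ of the locus where all source-necklace Plückers are nonzero coincides with the complement of the locus where all target-necklace Plückers are nonzero, by checking that on each boundary stratum of $\posvar_\posit$ the vanishing of some $\Plueck{I}$ with $I\in\srcneck_D$ is equivalent to the vanishing of some $\Plueck{I'}$ with $I'\in\tgtneck_D$. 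I expect the strand-reversal/duality argument to be cleaner, with the bookkeeping of signs and the region relabelling being the only genuinely fiddly point.
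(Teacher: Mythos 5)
Your main argument is circular, and the paper itself flags exactly this pitfall. Unwind what the duality isomorphism $\op\colon\Grass{k}{n}\isoto\Grass{n-k}{n}$, $\op^*\Plueck{I}=\Plueck{I^\compl}$, actually gives. By Proposition~\ref{p:op}\ref{p:op-posit} one has $\srcneck_{D^\op}=\{I^\compl:I\in\tgtneck_D\}$ and $\tgtneck_{D^\op}=\{I^\compl:I\in\srcneck_D\}$. So for $x\in\posvar_\posit$ the condition ``$\Plueck{I}(x)\ne0$ for all $I\in\srcneck_D$'' translates, under $\op$, to ``$\Plueck{J}(\op(x))\ne0$ for all $J\in\tgtneck_{D^\op}$'', which is indeed by definition the condition that $\op(x)\in\openposvar_{\posit^\op}$. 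But the defining condition $x\in\openposvar_\posit$, namely ``$\Plueck{I}(x)\ne0$ for all $I\in\tgtneck_D$'', translates under $\op$ to ``$\Plueck{J}(\op(x))\ne0$ for all $J\in\srcneck_{D^\op}$''. To conclude you must know that $\op(x)\in\openposvar_{\posit^\op}$ is equivalent to non-vanishing of the \emph{source}-necklace Plücker coordinates for $D^\op$, i.e.\ you must already have Proposition~\ref{p:sourceneck} for $D^\op$ — the very statement you are trying to prove. You half-notice this when you call the claim ``$\op$ sends open positroid to open positroid'' the ``main obstacle'', but that obstacle \emph{is} the proposition (up to replacing $D$ by $D^\op$), not a routine compatibility check. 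The paper is explicit about this: immediately after Proposition~\ref{p:op} it remarks that the statement that $\op$ restricts to an isomorphism of \emph{open} positroid varieties ``also depends on Proposition~\ref{p:sourceneck}.''

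The paper's actual proof is a genuine piece of algebra, not a symmetry-pushing argument: it invokes Muller and Speyer's result \cite[Prop.~7.13]{MulSpe-Twist} expressing each boundary source-label Plücker coordinate $\varPlueck{\srclab{j}}|_{\openposvar_\posit}$ (for $j\in F_0$), after applying the square of the right twist, as a Laurent monomial in the target-necklace Plücker coordinates, together with \cite[Thm.~12.2]{CKP} (Theorem~\ref{t:MS=CC}) and \cite[Thm.~6.7]{MulSpe-Twist} to see that the twist-square fixes those coordinates. Non-vanishing then follows since the target-necklace coordinates are nonzero on $\openposvar_\posit$, and the converse is the symmetric statement. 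Your secondary ``self-contained'' sketch via matchings and boundary strata is too vague to assess as written; if you want a route independent of the duality map, the Laurent-monomial relation between $\srcneck$- and $\tgtneck$-labelled Plücker coordinates on $\openposvar_\posit$ is the fact you would need to establish, and it is not obviously easier than citing Muller--Speyer.
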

\begin{proof}
Recall that \(\tgtlab{j}=\pi_D(\srclab{j})\) for every region \(j\) of \(D\), in particular for the boundary regions, whose labels give the elements of the necklaces.
Thus, it follows from a result of Muller--Speyer \cite[Prop.~7.13]{MulSpe-Twist} that when \(j\) is a boundary region, the Plücker coordinate \(\varPlueck{\srclab{j}}|_{\smash{\openposvarcone_\posit}}\) is a Laurent monomial in the Plücker coordinates \(\Plueck{I}\) for \(I\in\tgtneck_D\), and hence does not vanish on the open positroid variety.
While Muller--Speyer's more general result is stated with an automorphism (the square of the right twist) applied to \(\varPlueck{\srclab{j}}|_{\smash{\openposvarcone_\posit}}\), the argument opening the proof of \cite[Prop.~6.6]{MulSpe-Twist} shows that the right twist inverts \(\varPlueck{\srclab{j}}|_{\smash{\openposvarcone_\posit}}\) for each boundary region \(j\), and hence its square fixes this Plücker coordinate.

As Muller--Speyer point out, a completely analogous result holds with the roles of \(\srcneck_D\) and \(\tgtneck_D\) switched, and from this we deduce the converse implication.
\end{proof}

\begin{eg}
\label{eg:grass}
If \(\pi_D(i)=i+k\) for all \(i\in\ZZ_n\), then we call \(D\) \emph{uniform}. In this case, every \(k\)-subset appears in \(\posit=\posit_D\), and \(\srcneck_D=\tgtneck_D\) consists of the cyclic intervals \(\{i+1,\dotsc,i+k\}\) for \(i\in\{1,\dotsc,n\}\); in particular, \(D\) has type \((k,n)\).
(While the two necklaces have the same elements, their indexing by the boundary regions of \(D\) is different.)
Thus, \(\posvar_\posit=\Grass{k}{n}\) is the whole Grassmannian, while \(\openposvar_\posit\), defined by the non-vanishing of the Plücker coordinates with cyclic interval labels, is sometimes called the \emph{big cell}, being the unique top-dimensional piece of the positroid stratification.
\end{eg}

\begin{defn}
Let \(D\) be a Postnikov diagram.
The associated quiver \(Q_D\) has as vertices the alternating regions of \(D\), and as arrows the crossings of \(D\), with arrows oriented in the forward direction of the crossing strands (see Figure~\ref{f:quiv-arrow}).
As usual, boundary marked points are treated as crossings in this definition.

The \emph{frozen} subquiver \(F_D\subset Q_D\) has as vertices the alternating regions incident with the boundary of the disc, and as arrows the crossings at the boundary marked points.
\end{defn}

\begin{figure}
\begin{tikzpicture}
\draw [bipedge] (-1,0)--(1,0);
\draw [\graphcolor] (-1,0) circle(2.5*\dotrad) [fill=black];
\draw [\graphcolor] (1,0) circle(2.5*\dotrad) [fill=white];
\draw [strand] plot[smooth]
coordinates {(-1,-1) (-0.4,-0.6) (0,0) (0.4,0.6) (1,1)}
[postaction=decorate, decoration={markings,
 mark= at position 0.3 with \strarrow,
 mark= at position 0.8 with \strarrow }];
\draw [strand] plot[smooth]
 coordinates {(1,-1) (0.4,-0.6) (0,0) (-0.4,0.6) (-1,1)}
 [postaction=decorate, decoration={markings,
  mark= at position 0.3 with \strarrow,
  mark= at position 0.8 with \strarrow }];
\draw [\quivcolor] (0,-1) node (s) {\(\bullet\)};
\draw [\quivcolor] (0,1) node (t) {\(\bullet\)};
\draw [quivarrow] (s) -- (t);
\end{tikzpicture}
\caption{The arrow of \(Q_D\) at a crossing of \(D\), or equivalently at an edge of \(\Gamma_D\). The rules at half-edges are obtained by cutting the figure in half along the arrow.}
\label{f:quiv-arrow}
\end{figure}
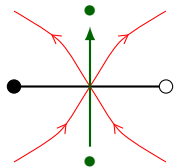

In terms of the plabic graph \(\Gamma_D\), the quiver \(Q_D\) has one vertex for each region cut out by \(\Gamma_D\), with frozen vertices corresponding to regions on the boundary, and one arrow for each (half-)edge of \(\Gamma_D\).
This arrow joins the two regions incident with the edge and is oriented so that the black node of the edge is on the left, and/or the white node of the edge is on the right.
\begin{defn}
For a Postnikov diagram \(D\), we write \(\clustalg{D}\) for the cluster algebra with initial ice quiver \(Q_D\), with invertible frozen variables and coefficients in \(\CC\).
The initial cluster variables are written \(x_j\), for \(j\) a vertex of \(Q_D\) (i.e.\ an alternating region of \(D\)).
For us, the term `cluster variable' includes frozen variables unless explicitly stated.
\end{defn}

The frozen arrows (or any other arrows between frozen vertices) play no role in the construction of \(\clustalg{D}\), but will be important later on in its categorification.
Since every element of \(\clustalg{D}\) may be written as a Laurent polynomial in the initial variables \(x_j\) by the Laurent phenomenon, any homomorphism \(\varphi\colon\clustalg{D}\to R\) of commutative \(\CC\)-algebras is entirely determined by its values on these variables.
This abstract cluster algebra is related to positroid varieties by the following theorem, due to Galashin and Lam.

\begin{thm}[{\cite[Thm.~3.5]{GalLam}}]
\label{t:GalLam}
Let \(D\) be a Postnikov diagram of type \((k,n)\) with associated positroid \(\posit=\posit_D\).
Then there are \(\CC\)-algebra isomorphisms
\[\GLsrc\colon\clustalg{D}\isoto\CC[\openposvarcone_\posit],\quad\GLtgt\colon\clustalg{D}\isoto\CC[\openposvarcone_\posit]\]
such that \(\GLsrc(x_j)=\varPlueck{\srclab{j}}|_{\openposvarcone_\posit}\) and \(\GLtgt(x_j)=\varPlueck{\tgtlab{j}}|_{\openposvarcone_\posit}\).
\end{thm}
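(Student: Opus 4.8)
The plan is to construct the target-labelled isomorphism $\GLtgt$ and then deduce $\GLsrc$; the two are produced in the same way, with target labels replaced by source labels throughout (cf.\ Proposition~\ref{p:sourceneck}), and alternatively $\GLsrc$ can be recovered from $\GLtgt$ using the Muller--Speyer twist \cite{MulSpe-Twist}. The source of the isomorphism is ultimately Leclerc's construction of cluster structures on strata of flag varieties \cite{Leclerc-Strata}: one uses the identification of open positroid varieties with open projected Richardson varieties in $\Grass{k}{n}$ (and hence with open Richardson varieties in the full flag variety), and then homogenises Leclerc's cluster structure to pass from the Richardson variety to the affine cone $\openposvarcone_\posit$.

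The first thing I would do is exhibit the initial cluster of $\clustalg{D}$ as a rational chart on $\openposvarcone_\posit$: namely, check that the Plücker coordinates $\{\varPlueck{\tgtlab{j}} : j \in Q_0\}$ are algebraically independent and that their common non-vanishing locus is a dense torus in $\openposvarcone_\posit$ over which they restrict to a coordinate system. This gives a field isomorphism $\Frac\clustalg{D}\isoto\Frac\CC[\openposvarcone_\posit]$ with $x_j\mapsto\varPlueck{\tgtlab{j}}$. For the full Grassmannian this is Scott's theorem \cite{Scott-Grass}; in general it comes from Leclerc's analysis \cite{Leclerc-Strata} together with the combinatorial translation between the plabic graph quiver $Q_D$ and Leclerc's seed, and the properties of the Muller--Speyer twist \cite{MulSpe-Twist}.

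The substantive point is then to show that this field isomorphism carries $\clustalg{D}$ onto $\CC[\openposvarcone_\posit]$, which I would do by sandwiching both between the cluster algebra and the upper cluster algebra $\overline{\clustalg{D}}$ of the seed $Q_D$. To identify $\CC[\openposvarcone_\posit]$ with $\overline{\clustalg{D}}$ --- the intersection, by Berenstein--Fomin--Zelevinsky, of the Laurent rings of the initial seed and of its one-step mutations --- I would use (i) normality of $\posvarcone_\posit$, hence of $\openposvarcone_\posit$, following from Knutson--Lam--Speyer \cite{KLS}, and (ii) the fact that the chart above, together with the charts obtained by mutating it at each mutable vertex of $Q_D$, embeds in $\openposvarcone_\posit$ as a family of tori whose union omits only a subset of codimension at least two. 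Granting (i) and (ii), a regular function on $\openposvarcone_\posit$ restricts to a Laurent polynomial on each such torus and so lies in $\overline{\clustalg{D}}$, while conversely any element of $\overline{\clustalg{D}}$ is regular on the union of the tori and extends over the codimension-two locus by Hartogs. Finally, to replace $\overline{\clustalg{D}}$ by $\clustalg{D}$ I would invoke the local acyclicity of the cluster algebras attached to plabic graphs \cite{MulSpe-LocAcycl}, together with Muller's theorem that a locally acyclic cluster algebra coincides with its upper cluster algebra; chaining these gives $\clustalg{D}=\overline{\clustalg{D}}=\CC[\openposvarcone_\posit]$, with the prescribed images of the initial variables.

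I expect the main obstacle to be point (ii): controlling the position of the mutated cluster tori inside the open positroid variety and bounding the codimension of the locus they miss. This forces essential use of the positroid stratification and its compatibility with square moves, and is the technical heart of \cite{GalLam} --- it is also, via the identifications above, the place where one must establish that Leclerc's cluster subalgebra of the relevant Richardson variety is in fact the whole coordinate ring. By comparison, the algebraic-independence and density statements of the first step, and the matching of $Q_D$ with the Plücker relations, are routine but demand care with conventions (orientation of the disc, linear versus cyclic order on $\ZZ_n$, and the choice of necklace).
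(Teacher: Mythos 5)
The paper does not itself prove Theorem~\ref{t:GalLam}: it is quoted from Galashin and Lam, with the precursors by Scott (uniform case) and Serhiyenko--Sherman-Bennett--Williams (Schubert case) noted in the surrounding text. The only piece the paper supplies is the passage between the two labelling conventions: Galashin--Lam state only the $\GLsrc$ form, and the paper recovers the $\GLtgt$ form in Proposition~\ref{p:op-GL} by applying $\GLsrc$ to the opposite diagram $D^\op$ and transporting along the isomorphism $\op^*\colon\CC[\openposvarcone_{\posit^\op}]\isoto\CC[\openposvarcone_\posit]$, which complements labels. Your proposal reverses this, constructing $\GLtgt$ first and deducing $\GLsrc$ (either symmetrically or via the twist); the two are formally interchangeable, but if your aim is to match the paper, the opposite-diagram reduction of Proposition~\ref{p:op-GL} is the mechanism it actually uses, not the twist.

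As a reconstruction of the external Galashin--Lam argument your outline is plausible in shape: transfer Leclerc's cluster structure on Richardson varieties to open positroid varieties and homogenise; identify $\CC[\openposvarcone_\posit]$ with the upper cluster algebra $\overline{\clustalg{D}}$ using normality (from Knutson--Lam--Speyer) and a codimension-two statement about the starfish of cluster tori; then close the gap to $\clustalg{D}$ via local acyclicity and Muller's $\mathcal{A}=\mathcal{U}$ theorem --- this last step is precisely Proposition~\ref{p:uca} in the paper. Two caveats, though. First, open projected Richardson varieties are isomorphic to, but not literally equal to, open Richardson varieties in the full flag variety (the projection restricts to an isomorphism on the relevant stratum); this must not be elided when transporting Leclerc's seed and comparing it with the plabic-graph seed $Q_D$, which is where the genuinely delicate combinatorics lives. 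Second, you correctly flag your point~(ii) --- controlling the starfish of tori and bounding the codimension of their complement, equivalently reconciling Leclerc's categorical seeds with Postnikov's combinatorial ones --- as the technical heart, but the proposal leaves it entirely as a black box. Since this is exactly what is new in Galashin--Lam over the earlier literature, the sketch is an honest road map but does not constitute a proof.
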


This result has several precursors: it was proved by Serhiyenko, Sherman-Bennett and Williams \cite{SSBW} in the case that \(\openposvar_\posit\) is a Schubert variety, and by Scott \cite{Scott-Grass} for uniform diagrams, i.e.\ in the setting of Example~\ref{eg:grass}.
In this special case, Scott shows that the maps \(\GLsrc\) and \(\GLtgt\) extend to isomorphisms of the cluster algebra with non-invertible frozen variables associated to \(Q_D\) with the homogeneous coordinate ring \(\CC[\Grasscone{k}{n}]\) of the whole Grassmannian, but this stronger result does not have an analogue for more general positroid varieties.
From now on, we will usually drop the notation indicating that Plücker coordinates have been restricted to \(\openposvarcone_\posit\) when this is clear from the context.

Strictly speaking, Galashin and Lam only state the result of Theorem~\ref{t:GalLam} referring to \(\GLsrc\), but the statement concerning \(\GLtgt\) may be deduced from this in several ways: see \cite[Rem.~2.16, Thm.~5.17]{FSB}, or Proposition~\ref{p:op-GL} below.

In discussing cluster characters in general in Section~\ref{s:categorification}, we will need to refer to the upper cluster algebra generated by a seed, as defined by Berenstein, Fomin and Zelevinsky \cite[Def.~1.1]{BFZ-CA3}.
While in general this may be strictly larger than the ordinary cluster algebra, Muller and Speyer show that there is no difference in the case of positroid varieties.

\begin{prop}
\label{p:uca}
Let \(\clust{A}_D^+\) be the upper cluster algebra (with invertible frozen variables) defined from the same initial data as \(\clust{A}_D\). Then \(\clust{A}_D^+=\clust{A}_D\).
\end{prop}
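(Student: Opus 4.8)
The inclusion $\clust{A}_D\subseteq\clust{A}_D^{+}$ holds by definition, so the content is the reverse inclusion, and the plan is to obtain it from the theory of locally acyclic cluster algebras. A theorem of Muller asserts that any locally acyclic cluster algebra coincides with its upper cluster algebra, and this persists after inverting the frozen variables, as we do here; so it suffices to prove that $\clust{A}_D$ is locally acyclic. By Theorem~\ref{t:GalLam} we may identify $\clust{A}_D$ with $\CC[\openposvarcone_\posit]$, so the task becomes: the cluster structure on the coordinate ring of the open positroid cone is locally acyclic.

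To establish this, I would first treat the uniform case of Example~\ref{eg:grass}, in which $\posvar_\posit=\Grass{k}{n}$ and $\clust{A}_D$ is Scott's cluster structure on the cone over the Grassmannian; here local acyclicity is precisely the theorem of Muller and Speyer \cite{MulSpe-LocAcycl}. For a general Postnikov diagram $D$, I would appeal to the analysis of the cluster structure attached to an arbitrary plabic graph carried out by Muller and Speyer \cite{MulSpe-Twist}, where the same conclusion is reached. The combinatorial heart of that argument is the construction, via suitable reductions of $\Gamma_D$, of a cover of $\clust{A}_D$ by finitely many cluster subalgebras that become acyclic after freezing, together with Muller's results allowing local acyclicity to be verified on such a cover; with this in hand, Muller's equality of a locally acyclic cluster algebra with its upper cluster algebra finishes the proof.

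The step I expect to be the main obstacle is exactly this local acyclicity input, which is why in the final account it is best quoted rather than reproved. If one preferred a more self-contained route, an alternative would be to use Theorem~\ref{t:GalLam} to replace $\clust{A}_D$ by the normal domain $\CC[\openposvarcone_\posit]$, observe that $\clust{A}_D^{+}$ is then trapped between this ring and the intersection of its localisations at the cluster variables of its various seeds, and conclude by checking that every height-one prime of $\CC[\openposvarcone_\posit]$ fails to contain all the cluster variables of some single seed. Verifying this codimension-one condition is, however, essentially equivalent to the local acyclicity statement above, so either way the substantive ingredient is the same and is due to Muller and Speyer.
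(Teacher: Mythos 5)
Your approach is essentially the same as the paper's: invoke local acyclicity of $\clust{A}_D$, due to Muller--Speyer, and then apply Muller's theorem that a locally acyclic cluster algebra equals its upper cluster algebra. Two small remarks on attribution and economy. First, the paper cites \cite[Prop.~2.6, Thm.~3.3]{MulSpe-LocAcycl} for local acyclicity of $\clust{A}_D$ for a \emph{general} Postnikov diagram $D$, not just the uniform (Grassmannian) case; you do not need to switch to \cite{MulSpe-Twist} for the general case as you suggest. Second, the detour through Theorem~\ref{t:GalLam} to identify $\clust{A}_D$ with $\CC[\openposvarcone_\posit]$ is unnecessary: local acyclicity is an intrinsic property of the cluster algebra determined by the ice quiver $Q_D$, and Muller--Speyer prove it at that level, so the result can be stated and proved without ever referencing the positroid variety. (Your alternative route via normality and height-one primes is reasonable in spirit but, as you yourself note, the codimension-one verification is where the Muller--Speyer input re-enters, so it is not a genuine shortcut.)
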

\begin{proof}
This combines \cite[Prop.~2.6, Thm.~3.3]{MulSpe-LocAcycl}, showing that \(\clust{A}_D\) is locally acyclic, with \cite[Thm.~2]{Muller-AEqualsU}, showing that this property implies equality with the upper cluster algebra.
\end{proof}

\begin{defn}
Let \(R\) be a commutative \(\KK\)-algebra for some field \(\KK\).
A \emph{cluster structure} on \(R\) is a choice of \(\KK\)-algebra isomorphism \(\eta\colon\clust{A}\isoto R\) where \(\clust{A}\) is a cluster algebra over \(\KK\).
Given this data, we define the cluster variables, cluster monomials, clusters, etc.\ of \(R\) to be the images under \(\eta\) of the corresponding elements of \(\clust{A}\).
An \emph{upper cluster structure} is defined analogously to be a \(\KK\)-algebra isomorphism \(\eta\colon\clust{A}^+\isoto R\) for some upper cluster algebra \(\clust{A}^+\).
\end{defn}

By Theorem~\ref{t:GalLam}, the coordinate ring \(\CC[\openposvarcone_\posit]\) carries two cluster algebra structures; we call that arising from the isomorphism \(\GLsrc\) the \emph{source-labelled} structure and that from the isomorphism \(\GLtgt\) the \emph{target-labelled} structure.
While the two structures have the same underlying cluster combinatorics, since \(\GLsrc\) and \(\GLtgt\) have the same domain, they may not have the same cluster variables.
Indeed, while the two sets of cluster variables do coincide for uniform diagrams (for example, for a uniform diagram of type \((2,n)\) the cluster variables in either structure are precisely the Plücker coordinates), they are typically different.
By Proposition~\ref{p:uca}, both \(\GLsrc\) and \(\GLtgt\) are also upper cluster algebra structures on \(\CC[\openposvarcone_\posit]\).

\begin{eg}
\label{eg:running1}
Consider the plabic graph shown in Figure~\ref{f:eg}, together with its quiver \(Q_D\) and its source and target labellings.
Since there are \(7\) boundary marked points and each label has cardinality \(3\), the corresponding open positroid variety \(\openposvar_\posit\) is a subvariety of \(\Grass{3}{7}\).
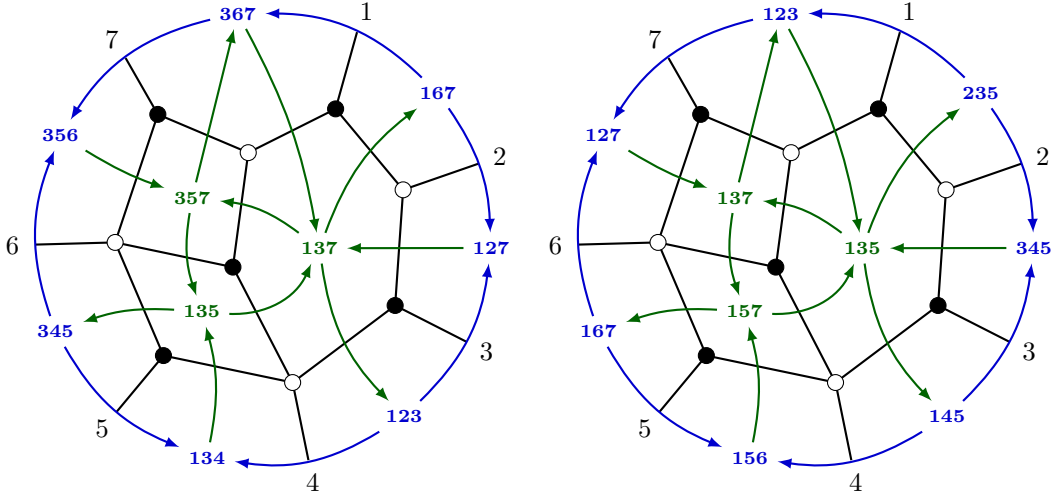
\begin{figure}
\begin{minipage}{0.95\textwidth}
\begin{tikzpicture}[scale=3,baseline=(bb.base),yscale=-1]
\path (0,0) node (bb) {};

\foreach \n/\m/\a in {1/4/0, 2/3/0, 3/2/5, 4/1/10, 5/7/0, 6/6/-3, 7/5/0}
{ \coordinate (b\n) at (\bstart-\seventh*\n+\a:1.0);
  \draw (\bstart-\seventh*\n+\a:1.1) node {\(\m\)}; }

\foreach \n/\m in {8/1, 9/2, 10/3, 11/4, 14/5, 15/6, 16/7}
  {\coordinate (b\n) at ($0.65*(b\m)$);}

\coordinate (b13) at ($(b15) - (b16) + (b8)$);
\coordinate (b12) at ($(b14) - (b15) + (b13)$);

\foreach \n/\x/\y in {13/-0.03/-0.03, 12/-0.22/0.0, 14/-0.07/-0.03, 11/0.05/0.02, 16/-0.02/0.02}
  {\coordinate (b\n)  at ($(b\n) + (\x,\y)$); } 

\foreach \h/\t in {1/8, 2/9, 3/10, 4/11, 5/14, 6/15, 7/16, 
 8/9, 9/10, 10/11,11/12, 12/13, 13/8, 14/15, 15/16, 12/14, 13/15, 8/16}
{ \draw [bipedge] (b\h)--(b\t); }

\foreach \n in {8,10,12,15} 
  {\draw [\graphcolor] (b\n) circle(\dotrad) [fill=white];} \foreach \n in {9,11,13, 14,16}  
  {\draw [\graphcolor] (b\n) circle(\dotrad) [fill=\graphcolor];} 

\foreach \e/\f/\t in {2/9/0.5, 4/11/0.5, 5/14/0.5, 7/16/0.5, 
 8/9/0.5, 9/10/0.5, 10/11/0.5,11/12/0.5, 12/13/0.45, 8/13/0.6, 
 14/15/0.5, 15/16/0.6, 12/14/0.45, 13/15/0.4, 8/16/0.6}
{\coordinate (a\e-\f) at ($(b\e) ! \t ! (b\f)$); }

\foreach \n/\m/\l/\a in {1/124/134/0, 2/234/123/-1, 3/345/127/1, 4/456/167/10, 5/256/367/5, 6/267/356/0, 7/127/345/0}
{ \draw [\frozcolor] (\bstart+\seventh/2-\seventh*\n+\a:1) node (q\m) {\scriptsize \(\mathbf{\l}\)}; }

\foreach \m/\l/\a/\r in {247/135/\bstart/0.42, 245/137/10/0.25, 257/357/210/0.36}
{ \draw [\quivcolor] (\a:\r) node (q\m) {\scriptsize \(\mathbf{\l}\)}; }

\foreach \t/\h/\a in {124/247/13, 247/127/11, 245/234/18, 247/245/34, 257/247/14, 245/257/15,
 267/257/6, 257/256/0, 256/245/-9, 345/245/0, 245/456/-16}
{ \draw [quivarrow]  (q\t) edge [bend left=\a] (q\h); }

\foreach \t/\h/\a in {234/124/-20, 234/345/19, 456/345/-16, 456/256/22, 256/267/22, 127/267/-20,
 127/124/19}
{ \draw [frozarrow]  (q\t) edge [bend left=\a] (q\h); }
\end{tikzpicture}\hfill
\begin{tikzpicture}[scale=3,baseline=(bb.base),yscale=-1]
\path (0,0) node (bb) {};

\foreach \n/\m/\a in {1/4/0, 2/3/0, 3/2/5, 4/1/10, 5/7/0, 6/6/-3, 7/5/0}
{ \coordinate (b\n) at (\bstart-\seventh*\n+\a:1.0);
  \draw (\bstart-\seventh*\n+\a:1.1) node {\(\m\)}; }

\foreach \n/\m in {8/1, 9/2, 10/3, 11/4, 14/5, 15/6, 16/7}
  {\coordinate (b\n) at ($0.65*(b\m)$);}

\coordinate (b13) at ($(b15) - (b16) + (b8)$);
\coordinate (b12) at ($(b14) - (b15) + (b13)$);

\foreach \n/\x/\y in {13/-0.03/-0.03, 12/-0.22/0.0, 14/-0.07/-0.03, 11/0.05/0.02, 16/-0.02/0.02}
  {\coordinate (b\n)  at ($(b\n) + (\x,\y)$); } 

\foreach \h/\t in {1/8, 2/9, 3/10, 4/11, 5/14, 6/15, 7/16, 
 8/9, 9/10, 10/11,11/12, 12/13, 13/8, 14/15, 15/16, 12/14, 13/15, 8/16}
{ \draw [bipedge] (b\h)--(b\t); }

\foreach \n in {8,10,12,15} 
  {\draw [\graphcolor] (b\n) circle(\dotrad) [fill=white];} \foreach \n in {9,11,13, 14,16}  
  {\draw [\graphcolor] (b\n) circle(\dotrad) [fill=\graphcolor];} 

\foreach \e/\f/\t in {2/9/0.5, 4/11/0.5, 5/14/0.5, 7/16/0.5, 
 8/9/0.5, 9/10/0.5, 10/11/0.5,11/12/0.5, 12/13/0.45, 8/13/0.6, 
 14/15/0.5, 15/16/0.6, 12/14/0.45, 13/15/0.4, 8/16/0.6}
{\coordinate (a\e-\f) at ($(b\e) ! \t ! (b\f)$); }

\foreach \n/\m/\l/\a in {1/124/156/0, 2/234/145/-1, 3/345/345/1, 4/456/235/10, 5/256/123/5, 6/267/127/0, 7/127/167/0}
{ \draw [\frozcolor] (\bstart+\seventh/2-\seventh*\n+\a:1) node (q\m) {\scriptsize \(\mathbf{\l}\)}; }

\foreach \m/\l/\a/\r in {247/157/\bstart/0.42, 245/135/10/0.25, 257/137/210/0.36}
{ \draw [\quivcolor] (\a:\r) node (q\m) {\scriptsize \(\mathbf{\l}\)}; }

\foreach \t/\h/\a in {124/247/13, 247/127/11, 245/234/18, 247/245/34, 257/247/14, 245/257/15,
 267/257/6, 257/256/0, 256/245/-9, 345/245/0, 245/456/-16}
{ \draw [quivarrow]  (q\t) edge [bend left=\a] (q\h); }

\foreach \t/\h/\a in {234/124/-20, 234/345/19, 456/345/-16, 456/256/22, 256/267/22, 127/267/-20,
 127/124/19}
{ \draw [frozarrow]  (q\t) edge [bend left=\a] (q\h); }
\end{tikzpicture}
\end{minipage}
\caption{A plabic graph \(D\) for a positroid variety in \(\Grass{3}{7}\) with its dual quiver. The quiver vertices are given their source labels on the left, and their target labels on the right (cf.~Figure~\ref{f:labels}).}
\label{f:eg}
\end{figure}
By computing perfect matchings of \(\Gamma_D\), we see that it is determined by the equations
\[\Plueck{234}=\Plueck{456}=\Plueck{457}=\Plueck{467}=\Plueck{567}=0,\]
together with the non-vanishing of the \(7\) frozen variables in either of the two cluster structures.
As a sample calculation, note that the Plücker relation
\[\Plueck{167}\Plueck{345}-\Plueck{367}\Plueck{145}+\Plueck{467}\Plueck{135}-\Plueck{567}\Plueck{134}=0\]
on \(\Grasscone{3}{7}\) implies that \(\Plueck{167}\Plueck{345}=\Plueck{367}\Plueck{145}\) on \(\openposvarcone_\posit\), where \(\Plueck{467}=\Plueck{567}=0\).
Thus, if the frozen variables \(\Plueck{367}\), \(\Plueck{167}\) and \(\Plueck{345}\) in the source-labelled structure are all non-zero, so is the frozen variable
\begin{equation}
\label{eq:froz-ident}
\Plueck{145}=\frac{\Plueck{167}\Plueck{345}}{\Plueck{367}}
\end{equation}
in the target-labelled structure.

Since the full subquiver of \(Q_D\) on the mutable vertices is an oriented \(3\)-cycle, the cluster algebra \(\clustalg{D}\) is of finite type \(\type{A}_3\), so we may compute all of the cluster variables.
For the source-labelled structure, these are shown in Table~\ref{tab:src-eg}, whereas for the target-labelled structure they are shown in Table~\ref{tab:tgt-eg}.
Degrees are computed with respect to the grading in which all Plücker coordinates have degree \(1\).
By direct comparison, we see that these two sets of cluster variables are different; this also follows from \eqref{eq:froz-ident}, which would contradict the linear independence of cluster monomials \cite[Cor.~3.4]{CIKLFP} if \(\Plueck{145}\) were a cluster variable in the source-labelled structure.

We also note that, in contrast to the cluster structure on the Grassmannian itself, on more general positroid varieties it is possible for a non-trivial product of Plücker coordinates to be a cluster variable, e.g.\ \(\Plueck{125}\Plueck{367}\) in the source-labelled structure in this example.
While this structure has \(\Plueck{367}\) as a frozen variable, \(\Plueck{125}\) is not a cluster variable at all, allowing \(\Plueck{125}\Plueck{367}\) to be a cluster variable without violating the linear independence of cluster monomials.
\begin{table}
\caption{The cluster variables for the source-labelled structure on \(\CC[\openposvarcone_\posit]\) for \(\posit\) as in Example~\ref{eg:running1}.}
\label{tab:src-eg}
\begin{tabular}{ll}\toprule
Frozen&\(\Plueck{167},\Plueck{127},\Plueck{123},\Plueck{134},\Plueck{345},\Plueck{356},\Plueck{367}\)\\\midrule
Mutable, degree \(1\)&\(\Plueck{357},\Plueck{347},\Plueck{137},\Plueck{346},\Plueck{136},\Plueck{126},\Plueck{135}\)\\\midrule
Mutable, degree \(2\)&\(\Plueck{125}\Plueck{367},\Plueck{124}\Plueck{367}\)\\\bottomrule
\end{tabular}
\end{table}
\begin{table}
\caption{The cluster variables for the target-labelled structure on \(\CC[\openposvarcone_\posit]\) for \(\posit\) as in Example~\ref{eg:running1}.}
\label{tab:tgt-eg}
\begin{tabular}{ll}\toprule
Frozen&\(\Plueck{123},\Plueck{235},\Plueck{345},\Plueck{145},\Plueck{156},\Plueck{167},\Plueck{127}\)\\\midrule
Mutable, degree \(1\)&\(\Plueck{137},\Plueck{136},\Plueck{135},\Plueck{126},\Plueck{125},\Plueck{245},\Plueck{157}\)\\\midrule
Mutable, degree \(2\)&\(\Plueck{147}\Plueck{235},\Plueck{145}\Plueck{236}\)\\\bottomrule
\end{tabular}
\end{table}
\end{eg}

\subsection{Opposite diagrams}

One technique we will apply repeatedly in this paper is to deduce statements about one Postnikov diagram \(D\) by applying a theorem to its opposite diagram \(D^\op\), obtained by reversing the directions of the strands.
In this subsection, we collect some useful information concerning the relationship between \(D\) and \(D^\op\).
While the sets of alternating regions in these two diagrams are the same, their labels are typically different, and so we write \(\srclab{j}(D)\) and \(\srclab{j}(D^\op)\) to distinguish them.
We use similar notation for target labels, the Muller--Speyer matchings from Proposition~\ref{p:MSmats}, and so on.

For \(I\subset\ZZ_n\), we write \(I^\compl=\ZZ_n\setminus I\) for the complement of \(I\).
The opposite \(\Gamma^\op\) of a plabic graph \(\Gamma\) is obtained by reversing the colours of the nodes; in particular, this means that \(\Gamma\) and \(\Gamma^\op\) have the same edge set.
The next proposition collects statements which follow directly from the definitions.

\begin{prop}
\label{p:op}
Let \(D\) be a Postnikov diagram and \(\Gamma\) a plabic graph.
\begin{enumerate}
\item\label{p:op-plabic} \(\Gamma_{D^\op}=\Gamma_D^\op\).
\item\label{p:op-labs} If \(j\) is an alternating region of \(D\) and \(D^\op\), then \(\srclab{j}(D^\op)=\tgtlab{j}(D)^\compl\) and \(\tgtlab{j}(D^\op)=\srclab{j}(D)^\compl\).
\item If \(D\) has type \((k,n)\), then \(D^\op\) has type \((n-k,n)\).
\item A subset \(\mu\) of the common edge set of \(\Gamma\) and \(\Gamma^\op\) is a perfect matching of \(\Gamma\) if and only if it is a perfect matching of \(\Gamma^\op\).
\item For a perfect matching \(\mu\) of \(\Gamma\) and \(\Gamma^\op\), we have \(\bdry\mu(\Gamma^\op)=(\bdry\mu(\Gamma))^\compl\).
\item\label{p:op-posit} \(\posit_{D^\op}=\{I^\compl:I\in\posit_D\}\), \(\srcneck_{D^\op}=\{I^\compl:I\in\tgtneck_{D}\}\) and \(\tgtneck_{D^\op}=\{I^\compl:I\in\srcneck_D\}\).
\item\label{p:op-MS} If $j$ is an alternating region of \(D\) and \(D^\op\), then \(\MSsrc{j}(D^\op)=\MStgt{j}(D)\) and \(\MStgt{j}(D^\op)=\MSsrc{j}(D)\).
\item\label{p:op-quiv}\(Q_{D^\op}=Q_D^\op\).
\end{enumerate}
\end{prop}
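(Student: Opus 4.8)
The statements are all unwound from the definitions, as the paper indicates, so the plan is first to record the elementary effects of reversing all strand orientations and then to apply them item by item. Reversal leaves the underlying curves and their crossings unchanged, so ``crossing of $D$'' and ``crossing of $D^\op$'' refer to the same set, as do ``oriented region'' and ``alternating region'' (each of these two classes is preserved, since reversing every strand segment bounding a region preserves whether they are oriented consistently); it interchanges the source and target of every strand; it interchanges the left-hand and right-hand sides of every strand, using that by \ref{d:P3} each strand is a simple arc, so every region lies on exactly one of its two sides; and it interchanges ``following a strand forwards'' with ``following it backwards'', hence interchanges the upstream and downstream wedges at every edge, these still being well-defined for $D^\op$ because \ref{d:P4} is unaffected by reversing orientations.

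Item \ref{p:op-plabic} is then immediate: reversal turns anticlockwise-oriented regions into clockwise ones and vice versa, hence black nodes into white nodes and vice versa, while fixing the set of (half-)edges; this is the definition of $\Gamma_D^\op$. For \ref{p:op-labs}, $i\in\srclab{j}(D^\op)$ says the $D^\op$-strand from $i$ has $j$ on its left; that strand is the reversal of the $D$-strand to $i$, so the condition becomes ``$j$ on the right of the $D$-strand to $i$'', i.e.\ $i\notin\tgtlab{j}(D)$, whence $\srclab{j}(D^\op)=\tgtlab{j}(D)^\compl$, and symmetrically $\tgtlab{j}(D^\op)=\srclab{j}(D)^\compl$. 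Item (c) follows since $n$ is unchanged and each source label of $D^\op$, being the complement of a $k$-subset, has $n-k$ elements. For (d), the matching condition of Definition~\ref{d:pm} refers only to incidences of (half-)edges with nodes, which are untouched by recolouring, so $\Gamma$ and $\Gamma^\op$ have literally the same perfect matchings. For (e), for each marked point $i$ whether $i\in\bdry\mu(\Gamma)$ is determined by the colour of the node at the half-edge of $i$ and by whether that half-edge lies in $\mu$, and passing to $\Gamma^\op$ flips that colour and hence flips the answer; so $\bdry\mu(\Gamma^\op)=(\bdry\mu(\Gamma))^\compl$.

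For \ref{p:op-posit}, $\posit_{D^\op}$ consists of the boundary values $\bdry\mu(\Gamma_{D^\op})$ over perfect matchings $\mu$; by \ref{p:op-plabic} these may be computed in $\Gamma_D^\op$, where by (d) the matchings coincide with those of $\Gamma_D$ and by (e) the boundary value of $\mu$ is $(\bdry\mu(\Gamma_D))^\compl$, so $\posit_{D^\op}=\{I^\compl:I\in\posit_D\}$; the two necklace identities follow from \ref{p:op-labs} together with the fact that the alternating regions of $D^\op$ incident with the boundary are exactly those of $D$. For \ref{p:op-MS}, the fourth observation above says the downstream wedge at $e$ in $D^\op$ is the upstream wedge at $e$ in $D$, so $j$ lies in one exactly when it lies in the other, giving $\MSsrc{j}(D^\op)=\MStgt{j}(D)$ and, symmetrically, $\MStgt{j}(D^\op)=\MSsrc{j}(D)$; note that \eqref{eq:MSbdry} and \ref{p:op-labs} only match up the \emph{boundary values} of these matchings, and since distinct perfect matchings can share a boundary value, it is the wedge argument that proves the matchings themselves are equal. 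Finally, for \ref{p:op-quiv}, the arrow of $Q_D$ at a crossing runs from the alternating region between the two incoming strand segments to the one between the two outgoing segments (Figure~\ref{f:quiv-arrow}); reversal swaps incoming with outgoing and hence reverses every arrow, which is the definition of $Q_D^\op$. Equivalently, describing $Q_D$ from $\Gamma_D$ by the black-on-left/white-on-right rule and invoking \ref{p:op-plabic} reverses every arrow. None of this is deep --- the proposition is flagged as a direct consequence of the definitions --- so the work is purely bookkeeping; the points deserving care are the claim (entering \ref{p:op-labs} and (c)) that every region lies on a single side of each strand, which is exactly where \ref{d:P3} is used, and keeping the orientation-and-colour conventions straight so the complements land in the right places, particularly in (e), while \ref{p:op-MS} is the one item where it is worth being explicit rather than terse.
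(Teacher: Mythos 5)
Your argument is correct and complete. The paper itself offers no proof — it introduces the proposition as ``statements which follow directly from the definitions'' and adds only the remark that the second claims in \ref{p:op-labs} and \ref{p:op-MS} follow from the first via $(D^\op)^\op=D$ — so what you have written is the natural unwinding of those definitions, with the right points flagged: the role of \ref{d:P3} in making the left/right dichotomy well defined, and the fact that matching boundary values alone would not establish \ref{p:op-MS}.
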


In \ref{p:op-labs} and \ref{p:op-MS}, the second statement follows from the first by swapping the roles of \(D\) and \(D^\op\), since \((D^\op)^\op=D\).

Recall that there is an isomorphism \(\op\colon \Grass{k}{n}\isoto\Grass{n-k}{n}\) given on coordinates by \(\op^*\Plueck{I}=\Plueck{I^\compl}\).
It follows from Proposition~\ref{p:op}\ref{p:op-posit} that this isomorphism restricts to isomorphisms \(\op\colon\posvar_{\posit}\isoto\posvar_{\posit^\op}\) and \(\op\colon\openposvar_{\posit}\isoto\openposvar_{\posit^\op}\), where \(\posit=\posit_D\) and \(\posit^\op=\posit_{D^\op}\).
Here the statement concerning open positroid varieties also depends on Proposition~\ref{p:sourceneck}.

By Proposition~\ref{p:op}\ref{p:op-quiv}, the cluster algebras \(\clustalg{D}\) and \(\clustalg{D^\op}\) differ only by reversing the quivers of their seeds.
In particular, they coincide as subrings of the Laurent polynomial ring in the initial variables \(x_j\) for \(j\in Q_0\) (this vertex set being common to both quivers), and they have the same cluster variables and clusters.

\begin{prop}
\label{p:op-GL}
Let \(D\) be a Postnikov diagram with positroid \(\posit=\posit_D\), and write \(\posit^\op=\posit_{D^\op}\).
Then the target-labelled cluster structure \(\GLtgt\colon\clustalg{D}\isoto\CC[\openposvar_\posit]\) (see Theorem~\ref{t:GalLam}) may be expressed as the composition
\[\begin{tikzcd}
\clustalg{D}\arrow{r}{\mathrm{id}}&\clustalg{D^\op}\arrow{r}{\GLsrc}&\CC[\openposvarcone_{\posit^\op}]\arrow{r}{\op^*}&\CC[\openposvarcone_{\posit}]
\end{tikzcd}\]
of the source-labelled cluster structure on \(\CC[\openposvarcone_{\posit^\op}]\) with the isomorphism \(\op^*\).
\end{prop}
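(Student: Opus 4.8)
The plan is to observe that both maps under comparison are $\CC$-algebra homomorphisms with source $\clustalg{D}$, and then to check that they agree on the initial cluster variables $x_j$; since every element of $\clustalg{D}$ is a Laurent polynomial in these variables by the Laurent phenomenon, a homomorphism out of $\clustalg{D}$ is determined by its values on them. First I would verify that the displayed composition is meaningful: the map $\id\colon\clustalg{D}\to\clustalg{D^\op}$ is well-defined---indeed an equality of subalgebras of a common Laurent polynomial ring---because $Q_{D^\op}=Q_D^\op$ has the same vertex set as $Q_D$ by Proposition~\ref{p:op}\ref{p:op-quiv}, so that $x_j$ has the same meaning on both sides; the map $\GLsrc\colon\clustalg{D^\op}\isoto\CC[\openposvarcone_{\posit^\op}]$ is the source-labelled cluster structure attached to the Postnikov diagram $D^\op$ (which has type $(n-k,n)$) by Theorem~\ref{t:GalLam}, noting that reversing the orientations of all strands manifestly preserves conditions \ref{d:P1}--\ref{d:P4}, so $D^\op$ is again a Postnikov diagram; and $\op^*\colon\CC[\openposvarcone_{\posit^\op}]\isoto\CC[\openposvarcone_{\posit}]$ is the pullback along the isomorphism $\op\colon\openposvarcone_{\posit}\isoto\openposvarcone_{\posit^\op}$ already established before the statement (via Proposition~\ref{p:op}\ref{p:op-posit} together with Proposition~\ref{p:sourceneck}). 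Hence the composition is a $\CC$-algebra isomorphism $\clustalg{D}\isoto\CC[\openposvarcone_{\posit}]$.

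It remains to compute the image of a generator $x_j$, for $j$ a vertex of $Q_D$, under the composition. The map $\id$ sends $x_j\in\clustalg{D}$ to $x_j\in\clustalg{D^\op}$; then $\GLsrc$, applied to $D^\op$, sends this to $\varPlueck{\srclab{j}(D^\op)}|_{\openposvarcone_{\posit^\op}}$ by Theorem~\ref{t:GalLam}; and $\op^*$ sends $\Plueck{I}$ to $\Plueck{I^\compl}$ by definition of $\op$, hence carries $\varPlueck{\srclab{j}(D^\op)}|_{\openposvarcone_{\posit^\op}}$ to $\varPlueck{\srclab{j}(D^\op)^\compl}|_{\openposvarcone_{\posit}}$. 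By Proposition~\ref{p:op}\ref{p:op-labs} we have $\srclab{j}(D^\op)=\tgtlab{j}(D)^\compl$, so $\srclab{j}(D^\op)^\compl=\tgtlab{j}(D)$, and thus the composition sends $x_j$ to $\varPlueck{\tgtlab{j}(D)}|_{\openposvarcone_{\posit}}=\GLtgt(x_j)$. Since the composition and $\GLtgt$ are both $\CC$-algebra homomorphisms out of $\clustalg{D}$ agreeing on all the $x_j$, they coincide; equivalently, the displayed formula may simply be taken as the definition of $\GLtgt$, which then automatically has the property required in Theorem~\ref{t:GalLam}.

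I do not expect a serious obstacle here: the argument is essentially bookkeeping once the groundwork of the subsection (especially Proposition~\ref{p:op} and the restricted isomorphism $\op$) is in place. The only points that need care are keeping track of which Postnikov diagram each arrow is attached to---in particular that the middle arrow is the \emph{source}-labelled structure for $D^\op$, not for $D$---and observing that we invoke Theorem~\ref{t:GalLam} only for $\GLsrc$, so that this proposition genuinely yields the $\GLtgt$ half of that theorem from the $\GLsrc$ half, as advertised in the discussion following it, with no circularity.
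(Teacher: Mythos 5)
Your proof is correct and takes essentially the same route as the paper: reduce to comparing values on the initial variables $x_j$ (justified by the Laurent phenomenon), compute $\op^*\GLsrc(x_j)=\varPlueck{\srclab{j}(D^\op)^\compl}$, and apply Proposition~\ref{p:op}\ref{p:op-labs} to identify this with $\varPlueck{\tgtlab{j}(D)}=\GLtgt(x_j)$. The paper applies the label identity before $\op^*$ and you apply it after, but this is merely a reordering of the same two-step manipulation.
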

\begin{proof}
To check this, it is enough to compare the values of the isomorphisms on the initial cluster variables of \(\clustalg{D}\).
By definition, we have \(\GLtgt(x_j)=\varPlueck{\tgtlab{j}(D)}\) for each \(j\in Q_0\). On the other hand, viewing \(x_j\) as an initial variable of \(\clustalg{D^\op}\) and applying the isomorphism \(\GLsrc\) (for \(D^\op\)) yields
\[\GLsrc(x_j)=\varPlueck{\srclab{j}(D^\op)}=\varPlueck{\tgtlab{j}(D)^\compl}\]
by Proposition~\ref{p:op}\ref{p:op-labs}.
But then \(\op^*\varPlueck{\tgtlab{j}(D)^\compl}=\varPlueck{\tgtlab{j}(D)}\), and so the two isomorphisms coincide.
\end{proof}

\section{Quasi-equivalences and quasi-coincidence}
\label{s:quasi-equivalence}

The relationship between the source and target-labelled cluster structures on the coordinate ring \(\CC[\openposvarcone_\posit]\) is phrased in terms of quasi-equivalences of cluster algebras in the sense of Fraser \cite{Fraser-Quasi}.
In this section, we will recall the relevant definitions in the context of cluster algebras of geometric type defined by quivers.

We first introduce shorthand notation for cluster monomials.
If \(Q=Q(s)\) is the quiver of a seed in a cluster algebra \(\clust{A}\), we may write the cluster (Laurent) monomials of \(s\) as
\[x^v=\prod_{j\in Q_0}x_j^{v_j}\]
for \(v\in\ZZ^{Q_0}\), where \(x_j\) is the cluster variable associated to vertex \(j\).
If \(x^v\) is a Laurent monomial in frozen variables, then the support of \(v\) is contained in the set \(F_0=F_0(\clust{A})\) of frozen vertices, common to all seeds, and we will typically view \(v\) as an element of the smaller lattice \(\ZZ^{F_0}\).
If \(\eta\colon\clust{A}\isoto R\) is a cluster structure on a commutative \(\KK\)-algebra \(R\), we abbreviate \(\eta_j=\eta(x_j)\) and \(\eta^v=\eta(x^v)\) for the cluster variables and monomials in \(R\) of some fixed seed \(s\).

\begin{defn}
Let \(\clust{A}\) be a cluster algebra with invertible frozen variables.
We denote by \[\froz(\clust{A})=\{x^v:v\in\ZZ^{F_0}\}\]
the group of Laurent monomials in the frozen variables. If \(\eta\colon\clust{A}\isoto R\) is a cluster structure on \(R\), we write \(\froz(R)=\eta(\froz(\clust{A}))\).
If \(\eta\colon\clust{A}\isoto R\) and \(\nu\colon\clust{B}\isoto S\) are two cluster structures and \(f\colon R\to S\) is an algebra homomorphism, we say that \(f\) is a \emph{cluster isomorphism} if \(\nu^{-1}\circ f\circ\eta\colon\clust{A}\to\clust{B}\) is a strong isomorphism of cluster algebras in the sense of \cite{FomZel-CA2}.
\end{defn}

\begin{defn}
Let \(\clust{A}\) be the cluster algebra generated by an initial seed \(s\) with ice quiver \((Q(s),F(s))\).
Then \(\stab{\clust{A}}\) is the cluster algebra (without frozen variables) generated by the initial seed \(\stab{s}\) with quiver \(\stab{Q}(s)\), the full subquiver of \(Q(s)\) on the mutable vertices.
\end{defn}

We can also obtain \(\stab{\clust{A}}\) from \(\clust{A}\) by setting all frozen variables equal to \(1\); in particular, it does not depend on the choice of initial seed.
Indeed, any seed \(s\) of \(\clust{A}\), with quiver \(Q(s)\), determines a seed \(\underline{s}\) of \(\stab{\clust{A}}\) by setting the frozen variables to \(1\) and taking \(Q(\stab{s})=\stab{Q}(s)\).
From this point of view, we obtain a quotient map \(\pi_{\clust{A}}\colon\clust{A}\to\stab{\clust{A}}\), the kernel of which is the two-sided ideal generated by \(\{x_j-1:j\in F_0\}\) (cf.~\cite[Lem.~A.1]{KelWu}).
Similarly, for a cluster structure \(\eta\colon\clust{A}\isoto R\), there is a quotient map \(\pi_R\colon R\to\stab{R}\) with kernel generated by \(\{\eta_j-1:j\in F_0\}\).
The following statements are then immediate.

\begin{prop}
\label{p:stab-map}
Let \(\eta\colon\clust{A}\isoto R\) and \(\nu\colon\clust{B}\isoto S\) be cluster structures, and let \(f\colon R\to S\) be a (unital) algebra homomorphism such that \(f(\froz(R))\subset\froz(S)\).
Then there is a unique algebra homomorphism \(\stab{f}\colon\stab{R}\to\stab{S}\) such that the diagram
\[\begin{tikzcd}
R\arrow{r}{f}\arrow{d}[swap]{\pi_{R}}&S\arrow{d}{\pi_{S}}\\
\stab{R}\arrow{r}{\stab{f}}&\stab{S}
\end{tikzcd}\]
commutes.
In particular, there is an induced cluster structure \(\stab{\eta}\colon\stab{\clust{A}}\isoto\stab{R}\) such that
\[\begin{tikzcd}
\clust{A}\arrow{r}{\eta}\arrow{d}[swap]{\pi_{\clust{A}}}&R\arrow{d}{\pi_{R}}\\
\stab{\clust{A}}\arrow{r}{\stab{\eta}}&\stab{R}
\end{tikzcd}\]
commutes.
\end{prop}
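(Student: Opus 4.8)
### Proof proposal

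The plan is to verify the two commuting squares by chasing generators, using the universal property of the quotient maps $\pi_R$ and $\pi_{\clust A}$, whose kernels are explicitly described just before the statement. The ring $\stab{R}$ is by construction the quotient of $R$ by the ideal $\mathfrak{a}_R$ generated by $\{\eta_j-1:j\in F_0\}$, and similarly $\stab S=S/\mathfrak{a}_S$ with $\mathfrak{a}_S$ generated by $\{\nu_j-1:j\in F_0\}$. To produce $\stab f$ it suffices to check that $\pi_S\circ f$ kills $\mathfrak{a}_R$, i.e.\ that $f(\eta_j)-1\in\ker\pi_S=\mathfrak{a}_S$ for each $j\in F_0$; the required $\stab f$ is then the induced map on quotients, and its uniqueness is immediate since $\pi_R$ is surjective.

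The one point that needs argument is precisely this containment, and it is where the hypothesis $f(\froz(R))\subset\froz(S)$ enters. For a frozen vertex $j$, the cluster variable $\eta_j$ lies in $\froz(R)$, so by hypothesis $f(\eta_j)\in\froz(S)$, say $f(\eta_j)=\nu^w$ for some $w\in\ZZ^{F_0}$. Applying $\pi_S$ and using that $\pi_S(\nu_i)=1$ for every frozen $i$ (because $\stab\nu_i$, the image of a frozen variable, is $1$ in $\stab{\clust B}$, hence $\pi_S(\nu_i)=\nu'$-image equals $1$), we get $\pi_S(f(\eta_j))=\prod_i\pi_S(\nu_i)^{w_i}=1$. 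Hence $f(\eta_j)-1\in\ker\pi_S$, as needed. This gives the first square. For the ``in particular'' statement, apply the first part with $S=R$, $\nu=\eta$, and $f=\id_R$, which trivially satisfies $f(\froz(R))\subset\froz(R)$; the resulting $\stab f\colon\stab R\to\stab R$ is the identity, but more to the point one checks directly that the composite $\pi_R\circ\eta\colon\clust A\to\stab R$ kills $\ker\pi_{\clust A}$ (generated by $\{x_j-1:j\in F_0\}$, which $\eta$ sends to $\{\eta_j-1\}\subset\ker\pi_R$), hence factors through $\stab{\clust A}$ to give a map $\stab\eta\colon\stab{\clust A}\to\stab R$ making the second square commute. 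That $\stab\eta$ is an isomorphism follows because $\eta$ is an isomorphism carrying the generating set $\{x_j-1:j\in F_0\}$ of $\ker\pi_{\clust A}$ onto the generating set $\{\eta_j-1:j\in F_0\}$ of $\ker\pi_R$, so $\eta$ descends to an isomorphism of the quotients, which one identifies with $\stab{\clust A}$ and $\stab R$ respectively.

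There is no real obstacle here — the statement is essentially formal once the kernels of $\pi_R$ and $\pi_{\clust A}$ are known — but the step requiring (minor) care is confirming that $\pi_S$ sends every frozen cluster variable of $S$ to $1$, so that a Laurent monomial in frozen variables is sent to $1$; this is exactly the content of ``setting all frozen variables equal to $1$'' made precise, and it is what makes $f(\froz(R))\subset\froz(S)$ the right hypothesis. One should also remark that $\stab f$ and $\stab\eta$ are unital since $f$, $\eta$, and the quotient maps all are.
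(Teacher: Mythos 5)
Your proof is correct and takes essentially the same approach as the paper, which simply asserts "The following statements are then immediate" after describing the kernels of $\pi_{\clust{A}}$ and $\pi_R$; you have spelled out the routine check (that $\pi_S\circ f$ annihilates the generators $\eta_j-1$ of $\ker\pi_R$ because $f$ sends frozen Laurent monomials to frozen Laurent monomials, which $\pi_S$ collapses to $1$) that the paper leaves implicit.
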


Let \(\eta\colon \clust{A}\isoto R\) be a cluster structure and let \(s\) be a seed of \(\clust{A}\).
We define the exchange matrix \(B=B(s)\) of the quiver \(Q=Q(s)\) to have \((i,j)\)-th entry
\[b_{ij}=|\{(j\to i)\in Q_1\}|-|\{(i\to j)\in Q_1\}|\]
for \(i\in Q_0\) and \(j\in \stab{Q}_0\), and then for each \(j\in \stab{Q}_0\) define
\[\yhat_j=\yhat_j(s)=\prod_{i\in Q_0}\eta_i^{b_{ij}}\in\Frac(R),\]
where \(\eta_i=\eta(x_i(s))\) is the cluster variable of \(s\) attached to vertex \(i\) (cf.~\cite[Eq.~3.7]{FomZel-CA4}).
We take the fraction field \(\Frac(R)\) as the codomain here since the cluster variables \(\eta_i\) may not be invertible in \(R\).

In the situation of Proposition~\ref{p:stab-map}, assume that \(\stab{f}\) is a cluster isomorphism.
In this case, there is an induced bijection \(s\mapsto f(s)\) between seeds of \(R\) and seeds of \(S\), and a further induced bijection \(f\colon \stab{Q}_0(s)\isoto\stab{Q}_0(f(s))\) between the sets of mutable vertices of the quivers.
Moreover, this bijection of quiver vertices may be extended (non-uniquely, if \(\stab{Q}(s)\) has parallel arrows) to a quiver isomorphism \(\stab{Q}\isoto\stab{Q}(s)\).
Even if \(f\) is the identity map, as in our main application, the induced bijections on seeds and vertex sets are typically not; indeed, these sets are usually not equal.

In general, the map \(f\) also extends uniquely to a function \(f\colon\Frac(R)\to\Frac(S)\).
All of these abuses of notation appear in part \ref{d:qcm-yhat} of the following definition.

\begin{defn}[{\cite[Def.~3.1]{Fraser-Quasi}, see also \cite[\S2.6]{FSB}}]
\label{d:qcm}
Let \(\eta\colon\clust{A}\isoto R\) and \(\nu\colon\clust{B}\isoto S\) be cluster structures, in which frozen variables are invertible.
Then an algebra homomorphism \(f\colon R\to S\) is a \emph{quasi-cluster morphism} from \(\eta\) to \(\nu\) if
\begin{enumerate}
\item\label{d:qcm-froz} \(f(\froz(R))\subset\froz(S)\) and for each non-frozen cluster variable \(x\in R\), there is a non-frozen cluster variable \(x'\in S\) and \(p\in\froz(S)\) such that \(f(x)=x'p\),
\item\label{d:qcm-stable} the map \(\stab{f}\) as in Proposition~\ref{p:stab-map} is a cluster isomorphism with respect to the cluster structures \(\stab{\eta}\) and \(\stab{\nu}\), and
\item\label{d:qcm-yhat} for any seed \(s\) of \(\clust{A}\) and any \(j\in\stab{Q}_0(s)\), we have \(f(\yhat_j(s))=\yhat_{f(j)}(f(s))\).
\end{enumerate}
\end{defn}

\begin{rem}
\label{r:seed-by-seed}
As in \cite[Prop.~3.2]{Fraser-Quasi}, the condition in Definition~\ref{d:qcm}\ref{d:qcm-yhat} is stable under mutations, and so it suffices to check it on a single seed \(s\) of \(\clust{A}\).
Similarly, if \(\underline{f}\) is an algebra isomorphism and there exists a seed \(s\) of \(R\) such that \(f(s)\) is a seed of \(S\) and \(f\colon\stab{Q}_0(s)\isoto\stab{Q}_0(f(s))\) may be extended to a quiver isomorphism, then this property in fact holds for all seeds of \(R\), and so \(f\) is a cluster isomorphism.

It follows directly from the definition that if \(f\) is a quasi-cluster morphism from \(\eta\) to \(\nu\) and also an isomorphism of algebras, then \(f^{-1}\) is a quasi-cluster morphism from \(\nu\) to \(\eta\).
\end{rem}

\begin{defn}
\label{d:quasi-coincide}
Let \(\eta\colon\clust{A}\isoto R\) and \(\nu\colon\clust{B}\isoto R\) be cluster algebra structures on a single algebra \(R\).
We say that these cluster structures \emph{quasi-coincide} if the identity map on \(R\) is a quasi-cluster morphism from \(\eta\) to \(\nu\) (and hence also from \(\nu\) to \(\eta\)).
\end{defn}

Having established this language, the quasi-coincidence conjecture which we will ultimately prove (see Theorem~\ref{t:main-thm}) may be stated as follows.

\begin{conj}[{\cite[Rem.~4.7]{MulSpe-Twist}, \cite[Conj.~1.1]{FSB}}]
\label{conj:qc-conj}
For any Postnikov diagram \(D\) with positroid \(\posit=\posit_D\), the cluster structures \(\GLsrc\colon\clustalg{D}\isoto\CC[\openposvarcone_\posit]\) and \(\GLtgt\colon\clustalg{D}\isoto\CC[\openposvarcone_\posit]\) quasi-coincide.
\end{conj}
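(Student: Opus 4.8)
The plan is to prove Conjecture~\ref{conj:qc-conj} in two stages: first reduce to the case of connected Postnikov diagrams by a geometric argument, and then settle the connected case using categorification and the Fraser--Keller homological criterion for quasi-equivalence. For the reduction (carried out in Section~\ref{s:connected}), I would use the fact that a disconnected positroid variety decomposes, up to the relevant geometry, as a product indexed by the connected components of $\posit$, and that taking products of cluster structures is compatible with both the source- and target-labelled constructions; since a Segre-type product of quasi-cluster morphisms is again a quasi-cluster morphism, quasi-coincidence for each connected factor yields it for the whole. The subtlety here is checking that the open positroid variety and its coordinate ring really do factor in the way needed, and that the necklaces (hence the frozen data) behave correctly under the decomposition --- this is where Proposition~\ref{p:sourceneck} and Proposition~\ref{p:op} get used.

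For the connected case, the key input is that the source- and target-labelled cluster structures on $\CC[\openposvarcone_\posit]$ are both categorified by the author's work \cite{Pressland-Postnikov}: there is a Frobenius (or exact) category, attached to $D$, whose stable category carries cluster-tilting data, and the two cluster characters $\CCsrc$ and $\CCtgt$ recover $\GLsrc$ and $\GLtgt$ respectively. The plan is to exhibit a derived equivalence --- in fact an explicit, very natural one, essentially coming from the passage $D\rightsquigarrow D^{\op}$ together with a dualising functor --- relating the two categorifications, intertwining the two cluster characters up to a twist by frozen variables. Then I would invoke the Fraser--Keller theorem \cite[Appendix]{KelWu}: if two cluster structures on a ring are categorified by derived-equivalent (or suitably related) $2$-Calabi--Yau or Frobenius categories in a way that matches the cluster-tilting objects and respects the relevant combinatorial data, the identity map is a quasi-cluster morphism. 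Concretely this means verifying the three conditions of Definition~\ref{d:qcm}: condition~\ref{d:qcm-froz} follows because the cluster character applied to the same object in the two structures differs by a Laurent monomial in frozen variables (an index/coindex computation); condition~\ref{d:qcm-stable} follows because the two categorifications have the same stable category, so the identity induces the identity on $\stab{\clust{A}}$ and is trivially a cluster isomorphism there; and condition~\ref{d:qcm-yhat} --- matching the $\yhat_j$ --- reduces, via Remark~\ref{r:seed-by-seed}, to a single seed, where it becomes a statement about the exchange matrix and the frozen twist that can be read off from the categorical picture.

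The main obstacle I expect is making precise the sense in which the two categorifications are ``derived equivalent in an extremely natural way'' and checking that this equivalence is compatible with \emph{all} the structure the Fraser--Keller criterion requires --- not just an abstract triangle equivalence of derived categories, but one that carries the distinguished cluster-tilting object of one categorification to that of the other (or to a mutation-equivalent one), and that intertwines the two cluster character formulas including the bookkeeping of gradings and frozen coefficients. This is essentially a careful accounting exercise in the representation theory of the relevant dimer/frozen Jacobian algebras, where one must track indices and coindices through the equivalence; the combinatorial heart of it is the identity $\tgtlab{j}(D) = \pi_D(\srclab{j}(D))$ together with the matching identities $\bdry\MSsrc{j}=\srclab{j}$, $\bdry\MStgt{j}=\tgtlab{j}$ from \eqref{eq:MSbdry}, now promoted to statements about modules rather than just their boundary values. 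A secondary obstacle is ensuring the reduction to the connected case is not circular --- one wants quasi-coincidence, a statement about an infinite set of cluster variables, to genuinely descend through the product decomposition, which requires knowing that the bijection on non-frozen cluster variables respects the product structure, and this in turn leans on the explicit description of cluster variables as cluster-character values on indecomposable rigid objects.
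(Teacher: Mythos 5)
Your overall architecture is correct and matches the paper's: reduce to connected positroids via a product/Segre decomposition, then apply the Fraser--Keller criterion to the two categorifications. The reduction part of your sketch is essentially right, and your identification of the main obstacles is sensible. But for the connected case there are two concrete misidentifications that would derail the argument if carried out as written.

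First, the derived equivalence is not the one you describe. You propose an equivalence ``coming from the passage $D\rightsquigarrow D^{\op}$ together with a dualising functor''. The functor $\Zdual{(\blank)}$ does give equivalences $\CM(B_D^\op)^\op\isoto\CM(B_D)$, and $D^\op$ does play a role in the paper for transporting statements between the source and target sides --- but this is a \emph{contravariant} equivalence between categories attached to \emph{different} algebras ($B_D$ and $B_D^\op=B_{D^\op}$). It is not the functor one plugs into Fraser--Keller. The equivalence the paper actually uses (Proposition~\ref{p:der-eq}, Corollary~\ref{c:stab-equiv}) is the \emph{tautological} one: both $\gproj\CM(B)$ and $\ginj\CM(B)$ are extension-closed in $\CM(B)\subset\fpmod{B}$, and because $B$ is Iwanaga--Gorenstein each inclusion induces an equivalence on bounded derived categories, so $\bdcat(\ginj\CM(B))=\bdcat(\fpmod B)=\bdcat(\gproj\CM(B))$ and $\varphi$ is literally the identity. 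The duality functor would in any case produce a contravariant functor, which is the wrong variance for the commuting squares in \eqref{eq:qcm-diagram}.

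Second, and more seriously, your plan does not engage with the hypothesis in Fraser--Keller's theorem that $\stab{\varphi}(\Ttgt)$ be mutation-equivalent to $\Tsrc$. Since $\varphi$ is the identity, this becomes the assertion that $\Tsrc$ and $\Ttgt$ lie in a common mutation class inside $\singcat(B)$, and this is the genuinely hard content. The paper settles it by (i)~computing that $\Sigma^2\Tsrc\iso\Ttgt$ (Theorem~\ref{t:sigma2}), which rests on the Grothendieck-group identity $[N(\MStgt{j})]=[A\idemp{j}]$ in $\Kgp_0(\projcat{\stab{A}})$ (Lemma~\ref{l:upmat-class}) and the index/coindex formalism, and (ii)~showing separately that the mutation class of a cluster-tilting object in $\singcat(B)$ is closed under the shift (Proposition~\ref{p:mut-shift}), which uses Ford--Serhiyenko's existence of green-to-red sequences for the quiver $\stab{Q}_D$. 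Neither of these steps is routine, and your sketch omits both. Relatedly, your check of Definition~\ref{d:qcm}\ref{d:qcm-froz} via ``an index/coindex computation'' is too quick: the paper's verification that the cluster characters agree on $\Ttgt$ (Proposition~\ref{p:CCsrc}, Corollary~\ref{c:cc-diag}) passes through the categorified left twist (Theorems~\ref{t:MS=CC} and~\ref{t:twist}) and uses injectivity of $\ltwist$, not a direct index comparison.
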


\begin{rem}
Quasi-coincidence of \(\eta\colon\clust{A}\isoto R\) and \(\nu\colon\clust{B}\isoto R\) does not imply that \(\clust{A}\) and \(\clust{B}\) are strongly isomorphic cluster algebras; while \(\stab{\clust{A}}\) and \(\stab{\clust{B}}\) must be, the systems of frozen variables (and even their number \cite[\S7]{Fraser-GrassBraid}) may differ.
Indeed, Fraser and Sherman-Bennett \cite[Thm.~4.21]{FSB} have exhibited yet more cluster structures on \(\CC[\openposvarcone_\posit]\), some of which come from cluster algebras not isomorphic to \(\clustalg{D}\), and they prove in some cases \cite[Cor.~6.8]{FSB} (and conjecture in general \cite[Conj.~1.3]{FSB}) that these also quasi-coincide with the source-labelled and target-labelled structures.
At present, our methods for proving Conjecture~\ref{conj:qc-conj} depend implicitly on the fact that both \(\GLsrc\) and \(\GLtgt\) have domain \(\clustalg{D}\) (see Proposition~\ref{p:homot-lift}, for example), and so do not apply to Fraser--Sherman-Bennett's stronger conjecture.
\end{rem}

\section{Reduction to connected positroids}
\label{s:connected}

In this section, we show how to reduce Conjecture~\ref{conj:qc-conj} to the case of connected positroids, for which we have access to the categorical tools to be discussed in the next section.
The intuition is relatively straightforward---given a disconnected positroid \(\posit=\posit_1\times\posit_2\), there is a natural isomorphism \(\openposvar_\posit\iso\openposvar_{\posit_1}\times\openposvar_{\posit_2}\), and we may view a Plücker coordinate on \(\openposvar_\posit\), up to sign, as a product of a Plücker coordinate on \(\openposvar_{\posit_1}\) with one on \(\openposvar_{\posit_2}\).
This extends to a relationship between the cluster structures, allowing us to deduce quasi-coincidence of the source-labelled and target-labelled cluster structures on \(\CC[\openposvarcone_\posit]\) from their quasi-coincidence on \(\CC[\openposvarcone_{\posit_1}]\) and \(\CC[\openposvarcone_{\posit_2}]\).
Unfortunately, making this precise requires a somewhat technical detour with some heavy notation, not least because of the need to keep track of the signs.

Let \(D\) be a Postnikov diagram, and let \(\gamma\) be an arc in the disc with endpoints on the boundary and which is disjoint from \(D\).
Cutting along \(\gamma\), we obtain a Postnikov diagram \(D_1\) in a disc with marked points \(S_1\subseteq\ZZ_n\) on the boundary, and a second Postnikov diagram \(D_2\) in a disc with marked points \(S_2\subseteq\ZZ_n\) on the boundary.
Moreover, \(S_1\) and \(S_2\) are cyclic intervals partitioning \(\ZZ_n\), and we fix their labelling so that \(1\in S_1\).
The results in this section are only of interest in the case that \(S_1\) and \(S_2\) are both non-empty, which we will assume throughout.

For the remainder of the section, we minimise double subscripts by writing \(\Gamma_{D_i}=\Gamma_i\), \(\posit_{D_i}=\posit_i\), and so on. For a similar reason, we also write \(\epsilon(n)=(-1)^n\) for \(n\in\ZZ\).

\begin{prop}
\label{p:posit-decomp}
If \(D_i\) has type \((k_i,|S_i|)\) and \(D\) has type \((k,n)\), then \(k=k_1+k_2\) and the map \(\disunion\colon\binom{S_1}{k_1}\times\binom{S_2}{k_2}\to\binom{n}{k}\) with \((I_1,I_2)\mapsto I_1\disunion I_2\), restricts to a bijection \(\disunion\colon\posit_1\times\posit_2\isoto\posit_D\).
\end{prop}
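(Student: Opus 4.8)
The plan is to relate perfect matchings of $\Gamma_D$ to pairs of perfect matchings of $\Gamma_1$ and $\Gamma_2$, and then read off the statement from the description of positroids as boundary values of perfect matchings (Definition~\ref{d:pm}). Since $\gamma$ is disjoint from $D$, it passes through a single region of $D$; cutting along $\gamma$ subdivides this region into one region of $D_1$ and one region of $D_2$, and introduces one new boundary marked point on each of the two discs (the two copies of the endpoints of $\gamma$), while all other regions, crossings, nodes and (half-)edges of $\Gamma_D$ are inherited unchanged by exactly one of $\Gamma_1$ or $\Gamma_2$. The key combinatorial observation is therefore that a perfect matching $\mu$ of $\Gamma_D$ restricts to a perfect matching $\mu_1$ of $\Gamma_1$ and $\mu_2$ of $\Gamma_2$, where on the newly created half-edges one makes the forced choice dictated by whether the node of the region containing the cut is already matched by an inherited edge; conversely, a pair $(\mu_1,\mu_2)$ glues back to a perfect matching $\mu$ of $\Gamma_D$ precisely because the gluing identifies the two new boundary nodes' constraints consistently. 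This sets up a bijection between perfect matchings of $\Gamma_D$ and pairs of perfect matchings of $\Gamma_1,\Gamma_2$.

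Next I would track boundary values under this bijection. The marked points of $\ZZ_n$ are partitioned as $S_1\disunion S_2$ (the two auxiliary marked points created by the cut are \emph{not} in $\ZZ_n$ and must be excluded from boundary values), and for $i\in\ZZ_n$ the half-edge at $i$ together with its node colour and its membership in $\mu$ are all inherited by exactly one of $\Gamma_1,\Gamma_2$. Hence $\bdry\mu = \bdry\mu_1 \disunion \bdry\mu_2$ as subsets of $\ZZ_n = S_1 \disunion S_2$. Running this over all perfect matchings and using Definition~\ref{d:pm} gives $\posit_D = \{I_1 \disunion I_2 : I_1 \in \posit_1,\ I_2 \in \posit_2\}$, which is exactly the claim that $\disunion$ restricts to a surjection $\posit_1 \times \posit_2 \to \posit_D$; injectivity of the restriction is automatic since $\disunion\colon\binom{S_1}{k_1}\times\binom{S_2}{k_2}\to\binom{\ZZ_n}{k_1+k_2}$ is injective (one recovers $I_j = I \cap S_j$). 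The cardinality statement $k = k_1 + k_2$ then follows: any element of $\posit_D$ has the form $I_1 \disunion I_2$ with $|I_j| = k_j$, so $k = |I_1 \disunion I_2| = k_1 + k_2$. Alternatively one can verify $k = k_1 + k_2$ directly from the node/half-edge count in Definition~\ref{d:pm}, since the cut adds one white and one black node? --- more carefully, it adds one half-edge of each colour and splits one region into two, and the net effect on the alternating sum (white nodes) $-$ (black nodes) $+$ (half-edges at black nodes) is additive; I would present whichever bookkeeping is cleaner.

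The main obstacle is the careful verification that the cut behaves as described at the level of plabic graphs --- in particular that the strands of $D$ restrict to valid strand diagrams $D_1, D_2$ (so that conditions \ref{d:P1}--\ref{d:P4} are inherited, which is needed even to speak of $\Gamma_1,\Gamma_2$ and their positroids), and that the colour of the new boundary node on each side, and hence the gluing rule for matchings, is forced consistently. A subtle point is orientation and the placement of the new half-edges: the arc $\gamma$ meets the boundary circle in two points, splitting it into the two boundary arcs that become the boundaries of the two sub-discs, and one must check that the half-edge created at each new marked point attaches to the node of the region that $\gamma$ passed through, with colour determined by the local picture, so that a matching of $\Gamma_D$ leaves that node's constraint to be satisfied by exactly one of the two new half-edges. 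Once this local analysis is pinned down, the bijection of matchings and the additivity of boundary values are routine, and the proposition follows.
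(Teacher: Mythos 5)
Your overall strategy---biject perfect matchings of $\Gamma_D$ with pairs of perfect matchings of $\Gamma_1,\Gamma_2$, track boundary values, and read off the result from Definition~\ref{d:pm} and Corollary~\ref{c:bdry-card}---is exactly the paper's, and the formal skeleton (injectivity via $I_i = I\cap S_i$, surjectivity from the matching bijection, $k=k_1+k_2$ from additivity of boundary-value cardinalities) is fine. But your description of what the cut does to the plabic graph is wrong, and this misreading is what creates the ``main obstacle'' you worry about in your final paragraph. The arc $\gamma$ is disjoint from $D$, hence lies in a single region of $D$ which is necessarily a boundary \emph{alternating} region: the boundary arc between any two consecutive marked points already lies in an alternating region, so the endpoints of $\gamma$ do too, and $\gamma$ stays there. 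Alternating regions are the \emph{faces} of $\Gamma_D$, not its nodes or edges, so $\gamma$ is disjoint from $\Gamma_D$. Consequently no new boundary marked points are created (the paper's setup explicitly gives $D_i$ marked points $S_i\subseteq\ZZ_n$; the endpoints of $\gamma$ are not strand endpoints and so are not marked points), no new half-edges or nodes appear, and there is no ``node of the region containing the cut'' whose matching constraint needs to be split between the two pieces---that region is alternating and carries no node.

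What is actually true is that $\Gamma_D$ is already disconnected, with connected components $\Gamma_1$ and $\Gamma_2$. A perfect matching of $\Gamma_D$ is therefore \emph{tautologically} the disjoint union of a perfect matching of $\Gamma_1$ and one of $\Gamma_2$; there are no forced choices, no gluing consistency to verify, and no auxiliary marked points to exclude from boundary values. Once the geometry is corrected, the bijection of matchings and the identity $\bdry\mu=\bdry\mu_1\disunion\bdry\mu_2$ are immediate (this is precisely the paper's three-sentence first paragraph), and the remainder of your argument goes through as you wrote it. So the conclusion is right and the plan is right, but the local analysis you identify as the crux is addressing a problem that does not exist, and if taken literally (``the node of the region containing the cut'') it refers to an object that is not there.
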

\begin{proof}
Let \(\mu\) be a perfect matching of \(\Gamma_D\), so that by Corollary~\ref{c:bdry-card} we have \(|\bdry\mu|=k\).
Let \(\mu_i\) be the intersection of \(\mu\) with the edges of the connected component \(\Gamma_i\) of \(\Gamma_D\).
Directly from the definition, we see that \(\mu_i\) is a perfect matching of \(\Gamma_{D_i}\), hence \(|\bdry\mu_i|=k_i\) by Corollary~\ref{c:bdry-card} again, and that \(\bdry\mu=\bdry\mu_1\disunion\bdry\mu_2\).
It follows that \(k=k_1+k_2\).

Similarly, if \(\mu_i\) is a perfect matching of \(\Gamma_i\), then \(\mu=\mu_1\disunion\mu_2\) is a perfect matching of \(\Gamma_D\), and \(\bdry\mu=\bdry\mu_1\disunion\bdry\mu_2\) as above.
This shows that \(\disunion\) restricts to a map \(\posit_1\times\posit_2\to\posit_D\) as claimed.

The map \(\disunion\) is injective since \(I_1=(I_1\disunion I_2)\cap S_1\), and similarly for \(I_2\). The argument from the first paragraph demonstrates that if \(I\in\posit_D\), then \(I\cap S_i\in\posit_i\). Thus, \(I=(I\cap S_1)\disunion(I\cap S_2)\) is in the image of \(\disunion\colon\posit_1\times\posit_2\to\posit_D\), and so this restricted map is surjective.
\end{proof}

The partition \(\ZZ_n=S_1\disunion S_2\) determines a direct sum decomposition \(\CC^n=\CC^{S_1}\dsum\CC^{S_2}\), and hence a decomposition
\begin{equation}
\label{eq:bigwedge}
\Wedge{k}\CC^n=\bigdsum_{k_1+k_2=k}\Wedge{k_1}\CC^{S_1}\tensor\Wedge{k_2}\CC^{S_2}.
\end{equation}
Given \(S\subseteq\ZZ_n\) and \(0<k<|S|\), we write \(\Grass{k}{S}\) for the Grassmannian of \(k\)-dimensional subspaces of \(\CC^S\).
The following geometric fact appears to be well known (see e.g.\ \cite{HavZan}).

\begin{prop}
\label{p:Grass-product}
Given a partition \(\ZZ_n=S_1\disunion S_2\) and natural numbers \(k=k_1+k_2\) with \(0<k_i<|S_i|\), consider the map \(\dsum\colon\Grass{k_1}{S_1}\times\Grass{k_2}{S_2}\to\Grass{k}{n}\) given by \((U_1,U_2)\mapsto U_1\oplus U_2\). Then there is a commutative diagram
\[\begin{tikzcd}
\Grass{k}{n}\arrow{r}{\Plueck{}}&\PP(\Wedge{k}\CC^n)\\
\Grass{k_1}{S_1}\times\Grass{k_2}{S_2}\arrow{u}{\dsum}\arrow{r}{\Plueck{}\times\Plueck{}}&\PP(\Wedge{k_1}\CC^{S_1})\times\PP(\Wedge{k_2}\CC^{S_2}),\arrow{u}{\bsigma}
\end{tikzcd}\]
where \(\bsigma\) is obtained by composing the Segre embedding \[\sigma\colon\PP(\Wedge{k_1}\CC^{S_1})\times\PP(\Wedge{k_2}\CC^{S_2})\to\PP(\Wedge{k_1}\CC^{S_1}\otimes\Wedge{k_2}\CC^{S_2})\]
with the projectivisation of the inclusion \(\Wedge{k_1}\CC^{S_1}\otimes\Wedge{k_2}\CC^{S_2}\to\Wedge{k}\CC^n\) from the decomposition \eqref{eq:bigwedge}.
\end{prop}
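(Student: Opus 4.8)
The plan is to verify the square on $\CC$-points, where it collapses to an elementary identity for wedge products of decomposable tensors, and then to upgrade the conclusion to a commutative diagram of varieties using that the Plücker embedding is a closed immersion. To set up, note first that $\dsum$ is at any rate a well-defined map of sets: since $\CC^{S_1}\cap\CC^{S_2}=0$ inside $\CC^n$, the subspace $U_1\dsum U_2$ is a genuine direct sum, of dimension $k_1+k_2=k$, and the hypotheses $0<k_i<|S_i|$ serve only to make the three Grassmannians non-empty. Write $\iota\colon\Wedge{k_1}\CC^{S_1}\tensor\Wedge{k_2}\CC^{S_2}\to\Wedge{k}\CC^n$ for the inclusion of summands appearing in \eqref{eq:bigwedge}, so that $\bsigma=\PP(\iota)\circ\sigma$, and put $g=\bsigma\circ(\Plueck{}\times\Plueck{})$.

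For the point-level computation, fix $(U_1,U_2)$ and choose bases $u_1,\dotsc,u_{k_1}$ of $U_1$ and $v_1,\dotsc,v_{k_2}$ of $U_2$; then $u_1,\dotsc,u_{k_1},v_1,\dotsc,v_{k_2}$ is a basis of $U_1\dsum U_2$. By definition of the Plücker embeddings, $\Plueck{}(U_1)$ and $\Plueck{}(U_2)$ are the classes of $\omega_1=u_1\wedge\dotsb\wedge u_{k_1}\in\Wedge{k_1}\CC^{S_1}$ and $\omega_2=v_1\wedge\dotsb\wedge v_{k_2}\in\Wedge{k_2}\CC^{S_2}$, while $\Plueck{}(U_1\dsum U_2)$ is the class of $\omega=u_1\wedge\dotsb\wedge u_{k_1}\wedge v_1\wedge\dotsb\wedge v_{k_2}\in\Wedge{k}\CC^n$. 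Since the Segre embedding $\sigma$ sends a pair of classes $([\omega_1],[\omega_2])$ to the class of $\omega_1\tensor\omega_2$, we obtain $g(U_1,U_2)=[\iota(\omega_1\tensor\omega_2)]$. It remains only to observe that $\iota(\omega_1\tensor\omega_2)=\omega$, which is a matter of unwinding the definition of $\iota$: the summand $\Wedge{k_1}\CC^{S_1}\tensor\Wedge{k_2}\CC^{S_2}$ of \eqref{eq:bigwedge} is the image of the map carrying a decomposable tensor $(a_1\wedge\dotsb\wedge a_{k_1})\tensor(b_1\wedge\dotsb\wedge b_{k_2})$ to $a_1\wedge\dotsb\wedge a_{k_1}\wedge b_1\wedge\dotsb\wedge b_{k_2}$, with $a_i,b_j$ regarded as vectors in $\CC^n$ via the inclusions $\CC^{S_i}\hookrightarrow\CC^n$. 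Hence $g(U_1,U_2)=[\omega]=\Plueck{}(U_1\dsum U_2)$ for every $(U_1,U_2)$.

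To conclude, I would note that $g$ is a morphism of varieties, being the composite of the product of Plücker embeddings, the Segre embedding, and the projectivised inclusion $\PP(\iota)$; its image on points lies in the closed subvariety $\Plueck{}(\Grass{k}{n})\subseteq\PP(\Wedge{k}\CC^n)$, and since $\Plueck{}$ is a closed immersion, $g$ factors uniquely as $\Plueck{}\circ h$ for a morphism $h\colon\Grass{k_1}{S_1}\times\Grass{k_2}{S_2}\to\Grass{k}{n}$, which induces precisely the set map $\dsum$ on $\CC$-points. Thus $\dsum$ is a morphism and the diagram commutes. I do not expect any genuine obstacle here: the statement is a formal manipulation of decomposable tensors, and the only point requiring a little care is to write out the inclusion $\iota$ in \eqref{eq:bigwedge} explicitly enough that $\iota(\omega_1\tensor\omega_2)=\omega$ becomes a tautology rather than a sign computation. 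The more delicate bookkeeping---tracking the actual Plücker \emph{coordinates} $\Plueck{I}$ across the partition $\ZZ_n=S_1\disunion S_2$, where the choice $1\in S_1$ pins down the conventions---is exactly what the remainder of this section is set up to handle, and is not needed for the present proposition.
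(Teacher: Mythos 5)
Your proof is correct, but note that the paper does not actually prove this proposition: it is stated as a well-known geometric fact with a citation to the literature, and the coordinate-level statement Corollary~\ref{c:Plueck-prod} (where the sign $\epsilon\big(k_2\cdot|I\cap S_1^+|\big)$ first appears) is then deduced from it by unwinding the Segre embedding. The paper also sketches, without carrying it out, an alternative route in the opposite direction: prove Corollary~\ref{c:Plueck-prod} directly by writing points of $\Grasscone{k}{n}$ as $k\times n$ matrices and computing Plücker coordinates as minors of a block-diagonal matrix, then read off Proposition~\ref{p:Grass-product} as a consequence. Your argument instead supplies the omitted proof of the proposition itself, by the natural coordinate-free computation with decomposable tensors; you correctly identify that the only content is the tautology $\iota(\omega_1\tensor\omega_2)=\omega$ once the inclusion $\iota$ from \eqref{eq:bigwedge} is made explicit, and this is exactly why no signs appear at this level even though they must in the coordinate version. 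The upgrade from $\CC$-points to a morphism of varieties via the closed immersion $\Plueck{}$ is also fine. One small inaccuracy in a side remark: the hypothesis $0<k_i<|S_i|$ is not required for the Grassmannians to be non-empty ($\Grass{0}{S}$ and $\Grass{|S|}{S}$ are single points, with well-defined Plücker embeddings); those strict inequalities are imposed for positroid-theoretic reasons elsewhere in the paper and play no role in this proposition.
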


Recall that to define Plücker embeddings as in Proposition~\ref{p:Grass-product}, we break the cyclic order on \(\ZZ_n=\{1,\dotsc,n\}\) to its usual linear order.
Defining
\begin{align*}
S_1^-&=\{i\in\ZZ_n:\text{\(i<j\) for all \(j\in S_2\)}\},\\
S_1^+&=\{i\in\ZZ_n:\text{\(i>j\) for all \(j\in S_2\)}\},
\end{align*}
the fact that \(S_1\) and \(S_2\) are cyclic intervals means that \(S_1^\pm\) and \(S_2\) are linear intervals, and \(\ZZ_n\) is the concatenation of \(S_1^-\), followed by \(S_2\), followed by \(S_1^+\), recalling that we assume \(1\in S_1\).
(In particular, \(S_1^-\) is non-empty, although \(S_1^+\) need not be.)
Then for \(I\in\binom{S_1}{k_1}\) and \(J\in\binom{S_2}{k_2}\), we have
\[\bigwedge_{k\in I\disunion J}v_k=\epsilon(k_2\cdot|I\cap S_1^+|)\bigl({\bigwedge}_{i\in I}v_i\bigr)\wedge\bigl({\bigwedge}_{j\in J}v_j\bigr),\]
where \(v_k\) denotes the \(k\)-th standard basis vector of \(\CC^n\) and the wedge products over \(I\), \(J\) and \(I\disunion J\) are taken in the linear order on these subsets of \(\ZZ_n\).
Recall that we write \(\epsilon(n)=(-1)^n\).

\begin{cor}
\label{c:Plueck-prod}
For \((U_1,U_2)\in\Grass{k_1}{S_1}\times\Grass{k_2}{S_2}\) and \(I\in\binom{n}{k}\), we have
\[\Plueck{I}(U_1\oplus U_2)=
\begin{cases}
\epsilon(k_2\cdot|I\cap S_1^+|)\Plueck{I\cap S_1}(U_1)\Plueck{I\cap S_2}(U_2),&|I\cap S_i|=k_i,\\
0,&\text{otherwise.}
\end{cases}\]
In particular, the map \(\oplus\) embeds \(\Grass{k_1}{S_1}\times\Grass{k_2}{S_2}\) into \(\Grass{k}{n}\) as the closed positroid variety \(\posvar_\posit\) for \(\posit=\binom{S_1}{k_1}\times\binom{S_2}{k_2}\).
\end{cor}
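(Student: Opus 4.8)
The plan is to read the displayed formula off Proposition~\ref{p:Grass-product} by unwinding it with the explicit sign computation recalled just before the statement, and then to obtain the final clause by a short linear-algebra argument.

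For the formula, I would unwind the commutative square of Proposition~\ref{p:Grass-product}: it presents the homogeneous coordinates of $\Plueck{}(U_1\oplus U_2)$ as the image of $(\Plueck{}(U_1),\Plueck{}(U_2))$ under the Segre embedding $\sigma$ followed by the projectivised inclusion $\iota\colon\Wedge{k_1}\CC^{S_1}\tensor\Wedge{k_2}\CC^{S_2}\hookrightarrow\Wedge{k}\CC^n$ coming from \eqref{eq:bigwedge}. Now $\sigma$ sends the pair to the point whose $(I,J)$-component, in the basis $\{(\bigwedge_{i\in I}v_i)\tensor(\bigwedge_{j\in J}v_j)\}$, is $\Plueck{I}(U_1)\Plueck{J}(U_2)$ for $I\in\binom{S_1}{k_1}$ and $J\in\binom{S_2}{k_2}$; and $\iota$ carries $(\bigwedge_{i\in I}v_i)\tensor(\bigwedge_{j\in J}v_j)$ to $(\bigwedge_{i\in I}v_i)\wedge(\bigwedge_{j\in J}v_j)=\epsilon(k_2\cdot|I\cap S_1^+|)\bigwedge_{k\in I\disunion J}v_k$ by the sign formula preceding the statement, using that $\epsilon(n)\in\{\pm1\}$. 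Comparing the coefficients of $\bigwedge_{k\in L}v_k$ for a $k$-subset $L\subseteq\ZZ_n$ then gives: $\Plueck{L}(U_1\oplus U_2)=0$ unless $|L\cap S_1|=k_1$ (equivalently $|L\cap S_i|=k_i$ for both $i$), in which case $L=(L\cap S_1)\disunion(L\cap S_2)$ is the unique $\disunion$-preimage and the coefficient is $\epsilon(k_2\cdot|L\cap S_1^+|)\Plueck{L\cap S_1}(U_1)\Plueck{L\cap S_2}(U_2)$. Renaming $L$ as $I$ is exactly the claimed formula.

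For the final clause, I would first note that $\posit=\binom{S_1}{k_1}\times\binom{S_2}{k_2}$ is a positroid, so that $\posvar_\posit$ is defined: reversing the cutting construction---placing uniform Postnikov diagrams of types $(k_1,|S_1|)$ and $(k_2,|S_2|)$ side by side in a disc---produces a disconnected diagram $D$ with $\posit_D=\posit$ by Proposition~\ref{p:posit-decomp} and Example~\ref{eg:grass}. Next, $\oplus$ is injective, since $U_i=(U_1\oplus U_2)\cap\CC^{S_i}$, and it is a morphism of projective varieties, hence proper, so its image is a closed subvariety of $\Grass{k}{n}$; by the formula just proved this image lies in the vanishing locus $\posvar_\posit$ of the $\Plueck{I}$ with $I\notin\posit$. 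For the reverse inclusion I would take $x\in\posvar_\posit$, represented by a full-rank $k\times n$ matrix $M=[\,M_1\mid M_2\,]$ with columns partitioned by $\ZZ_n=S_1\disunion S_2$; since $x$ has a nonzero Plücker coordinate, necessarily indexed by a $k$-subset $J$ with $|J\cap S_i|=k_i$, row operations bring the columns of $M$ indexed by $J$ to the standard basis of $\CC^k$, with those in $J\cap S_1$ the first $k_1$ basis vectors. If some column of $M_1$ had nonzero $(k_1+\ell)$-th entry for an $\ell\in\{1,\dotsc,k_2\}$, then swapping it into $J$ in place of the column realising the $(k_1+\ell)$-th basis vector would produce a nonzero $k\times k$ minor on a column set meeting $S_1$ in $k_1+1$ points, contradicting $x\in\posvar_\posit$; symmetrically for $M_2$. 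Hence $M$ is block diagonal, so $x=U_1\oplus U_2$ with $U_i\in\Grass{k_i}{S_i}$ the row span of the nonzero block of $M_i$, and the image of $\oplus$ is exactly $\posvar_\posit$; finally $\oplus$ is a closed embedding onto it by the well-known fact cited in Proposition~\ref{p:Grass-product}.

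Granted Proposition~\ref{p:Grass-product}, the formula is routine bookkeeping, the one point requiring care being the sign contributed by $\iota$, which is handled by the formula recalled before the statement. The only genuinely nontrivial step is, I expect, the surjectivity of $\oplus$ onto $\posvar_\posit$ in the last clause; the block-diagonalisation argument above is the cleanest route I see, although one could alternatively combine irreducibility of positroid varieties \cite{KLS} with the dimension comparison $\dim\bigl(\Grass{k_1}{S_1}\times\Grass{k_2}{S_2}\bigr)=k_1(|S_1|-k_1)+k_2(|S_2|-k_2)=\dim\posvar_\posit$.
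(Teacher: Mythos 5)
Your argument for the displayed formula is correct and follows exactly the paper's route: the paper's proof is the one-line observation that $\Plueck{I}(U_1\oplus U_2)$ is the $I$-th coordinate of $\bsigma(\Plueck{}(U_1),\Plueck{}(U_2))$, and your unwinding of $\sigma$ and of the inclusion coming from \eqref{eq:bigwedge}, together with the sign formula recalled just before the statement, is precisely what that one line compresses.

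For the final clause the paper itself offers no argument (the proof text only addresses the formula, and the author instead leans on the citation \cite{HavZan} attached to Proposition~\ref{p:Grass-product} and a remark that one can alternatively argue directly with $k\times n$ matrices), so here you are supplying something genuinely extra rather than reproducing the paper. Your block-diagonalisation argument for surjectivity is correct: after normalising a nonzero Plücker minor to the identity, a nonzero entry of $M_1$ in a row indexed by $J\cap S_2$ would give a nonzero minor $\Plueck{J'}$ with $|J'\cap S_1|=k_1+1$, which is forbidden on $\posvar_\posit$, and likewise for $M_2$; so the matrix is block diagonal and the point decomposes as a direct sum. You also correctly flag the prerequisite that $\posit=\binom{S_1}{k_1}\times\binom{S_2}{k_2}$ is a positroid (needed for $\posvar_\posit$ to be defined at all), justified by gluing two uniform diagrams; and the alternative argument via irreducibility plus the dimension count is also sound. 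Your proof is thus a valid, and more self-contained, justification of both assertions in the corollary.
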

\begin{proof}
Using Proposition~\ref{p:Grass-product}, the value of \(\Delta_I(U_1\oplus U_2)\) is given by the \(I\)-th coordinate of \(\bsigma(\Delta(U_1),\Delta(U_2))\), which is as claimed by the definition of the Segre embedding \(\sigma\).
\end{proof}

Alternatively, by representing points in \(\Grasscone{k}{n}\) as \(k\times n\) matrices, with Plücker coordinates computed as minors, one may prove Corollary~\ref{c:Plueck-prod} directly and then deduce Proposition~\ref{p:Grass-product} as a consequence.

\begin{prop}
\label{p:posvar-decomp}
In the setting of Proposition~\ref{p:posit-decomp}, the map \(\dsum\) from Proposition~\ref{p:Grass-product} restricts to isomorphisms
\begin{align*}
\dsum\colon\posvar(\posit_1)\times\posvar(\posit_2)&\isoto\posvar(\posit_D),\\
\dsum\colon\openposvar(\posit_1)\times\openposvar(\posit_2)&\isoto\openposvar(\posit_D).
\end{align*}
\end{prop}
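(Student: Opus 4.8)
The plan is to bootstrap everything from Corollary~\ref{c:Plueck-prod}, which already realises $\dsum$ as a closed embedding $\Grass{k_1}{S_1}\times\Grass{k_2}{S_2}\hookrightarrow\Grass{k}{n}$ with image the closed positroid variety $\posvar_\posit$ for $\posit=\binom{S_1}{k_1}\times\binom{S_2}{k_2}$. Since the restriction of a closed embedding to a locally closed subvariety of its source is an isomorphism onto its image, it is enough to prove that, for $(U_1,U_2)\in\Grass{k_1}{S_1}\times\Grass{k_2}{S_2}$ (following Proposition~\ref{p:Grass-product}, we tacitly assume $0<k_i<|S_i|$, the degenerate cases being elementary), one has $U_1\oplus U_2\in\posvar(\posit_D)$ if and only if $U_i\in\posvar(\posit_i)$ for both $i$, and likewise $U_1\oplus U_2\in\openposvar(\posit_D)$ if and only if $U_i\in\openposvar(\posit_i)$ for both $i$; the second isomorphism in the statement is then the restriction of the first.

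For the closed-variety statement, the implication ``if $U_i\in\posvar(\posit_i)$ for both $i$, then $U_1\oplus U_2\in\posvar(\posit_D)$'' is read off directly from Corollary~\ref{c:Plueck-prod}: for $I\notin\posit_D$, the coordinate $\Plueck{I}(U_1\oplus U_2)$ is zero unless $|I\cap S_i|=k_i$ for both $i$, and in that case $\posit_D$ being the image of $\posit_1\times\posit_2$ under $\disunion$ (Proposition~\ref{p:posit-decomp}) forces $I\cap S_1\notin\posit_1$ or $I\cap S_2\notin\posit_2$, so one of the factors $\Plueck{I\cap S_1}(U_1)$, $\Plueck{I\cap S_2}(U_2)$ vanishes. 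Conversely, suppose $U_1\oplus U_2\in\posvar(\posit_D)$ and fix $I_1\in\binom{S_1}{k_1}\setminus\posit_1$; then $I_1\disunion I_2\notin\posit_D$ for every $I_2\in\binom{S_2}{k_2}$, so Corollary~\ref{c:Plueck-prod} gives $\Plueck{I_1}(U_1)\Plueck{I_2}(U_2)=0$ for all such $I_2$. As the Plücker coordinates of a point of $\Grass{k_2}{S_2}$ cannot all vanish, $\Plueck{I_1}(U_1)=0$; hence $U_1\in\posvar(\posit_1)$, and by symmetry $U_2\in\posvar(\posit_2)$.

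For the open-variety statement I intend to use the standard description of the open positroid variety as a matroid stratum, $\openposvar_\posit=\{U\in\Grass{k}{n}:\posit_U=\posit\}$ with $\posit_U:=\{I:\Plueck{I}(U)\ne0\}$ (see \cite{KLS} or \cite[\S2.2]{FSB}; the substantive implication, that non-vanishing of the necklace Plücker coordinates on $\posvar_\posit$ already forces non-vanishing of every Plücker coordinate indexed by $\posit$, is of the same character as Proposition~\ref{p:sourceneck}). Granting this, Corollary~\ref{c:Plueck-prod} shows that $\posit_{U_1\oplus U_2}$ is exactly the image of $\posit_{U_1}\times\posit_{U_2}$ under $\disunion$, while $\posit_D$ is the image of $\posit_1\times\posit_2$ under $\disunion$ by Proposition~\ref{p:posit-decomp}. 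Since $\disunion$ is injective, $\posit_{U_1\oplus U_2}=\posit_D$ is equivalent to $\posit_{U_1}\times\posit_{U_2}=\posit_1\times\posit_2$ inside $\binom{S_1}{k_1}\times\binom{S_2}{k_2}$, and since all of $\posit_{U_1},\posit_{U_2},\posit_1,\posit_2$ are non-empty, applying the two coordinate projections shows this is in turn equivalent to $\posit_{U_1}=\posit_1$ and $\posit_{U_2}=\posit_2$. Translating back through the matroid-stratum description gives $U_1\oplus U_2\in\openposvar(\posit_D)$ if and only if $U_1\in\openposvar(\posit_1)$ and $U_2\in\openposvar(\posit_2)$, as required.

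Throughout, the sign $\epsilon(k_2\cdot|I\cap S_1^+|)$ of Corollary~\ref{c:Plueck-prod} never matters, since only the vanishing or non-vanishing of Plücker coordinates is used; in particular no analysis of how $\srcneck_D$ and $\tgtneck_D$ break up under cutting along $\gamma$ is required. The single ingredient that is not purely formal is the identification of $\openposvar_\posit$ with the matroid stratum, and I expect that to be the only place where a complete write-up needs care --- either by pinning down a precise reference or by reproving the relevant implication in the spirit of Proposition~\ref{p:sourceneck}.
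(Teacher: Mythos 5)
Your argument for the closed positroid varieties is essentially the same as the paper's, and it is correct: use Corollary~\ref{c:Plueck-prod} in one direction, and in the other pick $J\in\binom{S_2}{k_2}$ with $\Plueck{J}(U_2)\ne0$ to deduce that $\Plueck{I}(U_1)$ vanishes whenever $I\notin\posit_1$.

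The open-variety half of your argument, however, has a genuine gap. The characterisation $\openposvar_\posit=\{U:\posit_U=\posit\}$ (with $\posit_U=\{I:\Plueck{I}(U)\ne0\}$) is \emph{false}, and it is not a step that can be saved by a more careful write-up. Already in $\Grass{2}{4}$ with $\posit=\binom{\ZZ_4}{2}$ the open positroid variety is the big cell, cut out by non-vanishing of $\Plueck{12},\Plueck{23},\Plueck{34},\Plueck{14}$; the row span of $\left(\begin{smallmatrix}1&0&1&0\\0&1&0&1\end{smallmatrix}\right)$ lies in this cell but has $\Plueck{13}=0$, so $\posit_U\subsetneq\posit$. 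In general $\openposvar_\posit$ is a union of many matroid strata, with the stratum of $\posit$ merely dense in it, so non-vanishing of the necklace Plücker coordinates does not force non-vanishing of all $\posit$-indexed ones. Because of this, the sentence ``no analysis of how $\srcneck_D$ and $\tgtneck_D$ break up under cutting along $\gamma$ is required'' is exactly backwards: that analysis is what the paper's proof actually does for the open case, showing that each $I\in\tgtneck_D$ restricts to $I\cap S_i\in\tgtneck_i$ and, conversely, that each $J\in\tgtneck_i$ extends (uniquely) to an element of $\tgtneck_D$. To repair your argument you would need to replace the matroid-stratum shortcut by this necklace decomposition (or an equivalent statement), after which Corollary~\ref{c:Plueck-prod} does the rest as you describe.
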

\begin{proof}
By Proposition~\ref{p:posit-decomp}, for \(I\in\binom{n}{k}\) we have \(I\in\posit_D\) if and only if \(I\cap S_1\in\posit_1\) and \(I\cap S_2\in\posit_2\).
In particular, this means that \(|I\cap S_1|=k_1\) and \(|I\cap S_2|=k_2\), so \(\posvar(\posit_D)\) is contained in \(\posvar_{\posit}\) for \(\posit=\binom{S_1}{k_1}\times\binom{S_2}{k_2}\).
Thus, if \(U\in\posvar(\posit_D)\), then \(U=U_1\dsum U_2\) for some \(U_1\in\Grass{k_1}{S_1}\) and \(U_2\in\Grass{k_2}{S_2}\).

Now if \(I\notin\posit_D\), then either \(I\cap S_1\notin\posit_1\) or \(I\cap S_2\notin\posit_2\), so by Corollary~\ref{c:Plueck-prod} we have \(\Plueck{I}(U_1\dsum U_2)=0\) for any \(U_1\in\posvar(\posit_1)\) and \(U_2\in\posvar(\posit_2)\).
Thus, the inclusion \(\dsum\) takes \(\posvar(\posit_1)\times\posvar(\posit_2)\) into \(\posvar(\posit_D)\) as claimed.

On the other hand, assume \(U=U_1\dsum U_2\in\posvar(\posit_D)\) and choose \(I\in\binom{S_1}{k_1}\) with \(I\notin\posit_1\). Then \(I\disunion J\notin\posit_D\) for any \(J\in\binom{S_2}{k_2}\). Since \(k_2\ne0\), we may choose \(J\in\binom{S_2}{k_2}\) such that \(\Delta_J(U_2)\ne0\). It then follows from Corollary~\ref{c:Plueck-prod} that
\[\Plueck{I}(U_1)\Plueck{J}(U_2)=\pm\Plueck{I\disunion J}(U)=0,\]
and so \(\Plueck{I}(U_1)=0\). Thus, \(U_1\in\posvar(\posit_1)\). The symmetric argument shows that \(U_2\in\posvar(\posit_2)\), and it follows that \(\dsum\) takes \(\posvar(\posit_1)\times\posvar(\posit_2)\) onto \(\posvar(\posit_D)\).

By the definition of the labelling rules for a Postnikov diagram, if \(I\in\tgtneck_D\), then \(I\cap S_1\in\tgtneck_1\) and \(I\cap S_2\in\tgtneck_2\).
Moreover, if \(I\in\tgtneck_1\), then there is \(J\in\tgtneck_2\) such that \(I\disunion J\in\tgtneck_D\); indeed, there is a unique such \(J\), given by the target label of the boundary face of \(D_2\) coming from the face of \(D\) that we cut through.
The analogous statement holds swapping the roles of \(\tgtneck_1\) and \(\tgtneck_2\).
Using Corollary~\ref{c:Plueck-prod} to see that \(\Plueck{I}(U_1\oplus U_2)\ne0\) if and only if \(\Plueck{I\cap S_1}(U_1)\ne0\) and \(\Plueck{I\cap S_2}(U_2)\ne0\), the claim on open positroids follows.
\end{proof}

Given \(\ZZ\)-graded commutative \(\CC\)-algebras \(R\) and \(S\), we denote by
\[R\Segprod S=\bigdsum_{d\in\ZZ}R_d\otimes_\CC S_d\]
their Segre product, with the property that \(\Proj(R\Segprod S)=\Proj(R)\times\Proj(S)\).
We consider \(R\Segprod S\) as a \(\ZZ\)-graded algebra in the way suggested by this direct sum decomposition: a homogeneous element of degree \(d\) is a sum of pure tensors \(x\otimes y\) for which \(x\in R_d\) and \(y\in S_d\) each have degree \(d\) in their respective algebras.
In the next result, we grade the homogeneous coordinate ring of an open positroid variety in the usual way, by putting the Plücker coordinates in degree \(1\).

\begin{cor}
\label{c:Segre-product}
The map \(\partial\colon\CC[\openposvarcone(\posit_D)]\to\CC[\openposvarcone(\posit_1)]\Segprod\CC[\openposvarcone(\posit_2)]\), defined on Plücker coordinates by \(\partial\colon\Plueck{I}\mapsto\epsilon(k_2\cdot|I\cap S_1^+|)\Plueck{I\cap S_1}\otimes\Plueck{I\cap S_2}\), is an isomorphism of graded \(\CC\)-algebras.
\end{cor}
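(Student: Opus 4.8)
The plan is to verify that $\partial$ is a well-defined homomorphism of graded algebras, that it lands in the Segre product, and that it is bijective. First I would check that $\partial$ respects the grading: a Plücker coordinate $\Plueck{I}$ has degree $1$ on $\CC[\openposvarcone(\posit_D)]$, and its image $\epsilon(k_2\cdot|I\cap S_1^+|)\Plueck{I\cap S_1}\otimes\Plueck{I\cap S_2}$ is a pure tensor in which each factor has degree $1$ in its respective ring (using that $|I\cap S_i|=k_i$, which holds whenever $I\in\posit_D$ by Proposition~\ref{p:posit-decomp}), hence lies in $\CC[\openposvarcone(\posit_1)]_1\otimes_\CC\CC[\openposvarcone(\posit_2)]_1$, a summand of $(\CC[\openposvarcone(\posit_1)]\Segprod\CC[\openposvarcone(\posit_2)])_1$. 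So the formula does define a degree-preserving linear map on the span of Plücker coordinates.

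Next I would argue that $\partial$ extends to a ring homomorphism from $\CC[\openposvarcone(\posit_D)]$, which amounts to checking that it is compatible with the relations defining this ring. The cleanest way is geometric: by Proposition~\ref{p:posvar-decomp}, the map $\dsum$ is an isomorphism $\openposvar(\posit_1)\times\openposvar(\posit_2)\isoto\openposvar(\posit_D)$, and this lifts to the affine cones (a point of the cone $\openposvarcone(\posit_D)$ is a point of $\openposvar(\posit_D)$ together with a choice of nonzero vector in $\Wedge{k}\CC^n$ over it; the Segre-embedding description in Proposition~\ref{p:Grass-product} shows that such choices correspond bijectively to pairs of choices of nonzero vectors in $\Wedge{k_i}\CC^{S_i}$, i.e.\ to points of the two cones with the same projective image, which is exactly the locus parametrised by the Segre product). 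Pulling back functions along this isomorphism of cones gives a graded $\CC$-algebra isomorphism $\CC[\openposvarcone(\posit_D)]\isoto\CC[\openposvarcone(\posit_1)]\Segprod\CC[\openposvarcone(\posit_2)]$, and by Corollary~\ref{c:Plueck-prod} its effect on the generator $\Plueck{I}$ is precisely $\epsilon(k_2\cdot|I\cap S_1^+|)\Plueck{I\cap S_1}\otimes\Plueck{I\cap S_2}$ when $|I\cap S_i|=k_i$ and $0$ otherwise. Since by Theorem~\ref{t:GalLam} (or directly) the Plücker coordinates generate $\CC[\openposvarcone(\posit_D)]$, a homomorphism out of it is determined by its values on them, so this pullback map coincides with $\partial$; in particular $\partial$ is a well-defined isomorphism of graded $\CC$-algebras.

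The main obstacle is the bookkeeping in the second step: one must be careful that the coordinate ring $\CC[\openposvarcone(\posit_D)]$ really is generated by Plücker coordinates (so that the formula on generators determines $\partial$, and matches the geometric pullback) and that the affine cone over the product decomposes as the Segre product of the two cones rather than their ordinary product — this is where the grading, and the fact that the Segre product has $\Proj(R\Segprod S)=\Proj(R)\times\Proj(S)$, is used. Once the identification of $\partial$ with the pullback along the cone isomorphism is in place, bijectivity is immediate. I would close by remarking, as the paper does for Corollary~\ref{c:Plueck-prod}, that one could instead verify directly via the minor formula that $\partial$ carries Plücker relations to Plücker relations and is invertible, but the geometric route via Proposition~\ref{p:posvar-decomp} is shorter.
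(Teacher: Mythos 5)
Your proposal is correct and follows the same route as the paper: the paper's proof simply states that $\partial$ is the second isomorphism of Proposition~\ref{p:posvar-decomp}, reinterpreted on homogeneous coordinate rings, and you have spelled out exactly what that reinterpretation means (the cone-level lift via Proposition~\ref{p:Grass-product}, the Segre-product identification $\Proj(R\Segprod S)=\Proj(R)\times\Proj(S)$, and the match with Corollary~\ref{c:Plueck-prod} on generators).
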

\begin{proof}
This is the second isomorphism from Proposition~\ref{p:posvar-decomp}, reinterpreted as a map of homogeneous coordinate rings.
\end{proof}

Now we relate cluster algebra structures on \(\CC[\openposvarcone(\posit_1)]\) and \(\CC[\openposvarcone(\posit_2)]\) with those on \(\CC[\openposvarcone(\posit)]\).
The next proposition is illustrated by Figure~\ref{f:Postcut}.

\begin{prop}
\label{p:quivsplit}
Let \(Q_0^i\) be the set of vertices of \(Q=Q_D\) corresponding to faces of \(D_i\), for \(i=1,2\). These sets satisfy
\begin{enumerate}
\item\label{p:quivsplit-union} \(Q_0^1\cup Q_0^2=Q_0\),
\item\label{p:quivsplit-intersection} \(Q_0^1\cap Q_0^2=\{*\}\) is a single frozen vertex,
\item\label{p:quivsplit-subquiv} the full subquiver of \(Q\) on \(Q_0^i\) is the quiver associated to \(D_i\), and
\item\label{p:quivsplit-noarrows} if \(\alpha\colon j\to j'\) is an arrow, then either \(j,j'\in Q_0^1\) or \(j,j'\in Q_0^2\).
\end{enumerate}
In particular, \(Q_0^1\) and \(Q_0^2\) have no mutable vertices in common, and there are no arrows between a mutable vertex of \(Q_0^1\) and a mutable vertex of \(Q_0^2\).
\end{prop}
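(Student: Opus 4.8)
The plan is to obtain all four statements, and the closing remark, from the geometry of the cut, so the first thing I would do is pin down which face of $D$ the arc $\gamma$ meets. Since $\gamma$ is disjoint from the union of the strands of $D$, and that union cuts the disc into simply-connected faces, the connected arc $\gamma$ lies in the closure of a single face $f$ of $D$. Its two endpoints lie on the boundary of the disc, in distinct boundary gaps because $S_1$ and $S_2$ are both non-empty and so a marked point of each separates them; hence $f$ is incident with the boundary. I would also check, from the local strand picture at a marked point next to an endpoint of $\gamma$, that $f$ is an \emph{alternating} region, so that it contributes a genuine vertex of $Q_D$, which we call $*$ and which is frozen. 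Because $f$ is simply connected and $\gamma$ is a simple arc with both endpoints on the boundary of $f$, cutting along $\gamma$ splits $f$ into exactly two faces --- a face $f_1$ of $D_1$ and a face $f_2$ of $D_2$ --- while every other face of $D$, being disjoint from $\gamma$, lies wholly on one side of $\gamma$ and survives unchanged as a face of the corresponding $D_i$. This gives, for $i = 1, 2$, an injection $\phi_i$ from the faces of $D_i$ into $Q_0$ sending $f_i$ to $*$ and fixing every other face, with image $Q_0^i$.

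With this picture in hand, \ref{p:quivsplit-union} and \ref{p:quivsplit-intersection} become pure bookkeeping: every face of $D$ is either $f$, lying in both $Q_0^1$ and $Q_0^2$, or lies on exactly one side of $\gamma$, hence in exactly one $Q_0^i$, so $Q_0^1 \union Q_0^2 = Q_0$; and a vertex in $Q_0^1 \intersect Q_0^2$ would be $\phi_1$ of a face of $D_1$ and $\phi_2$ of a face of $D_2$, which is possible only for $f = *$, a frozen vertex.

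Next I would treat the arrows. No crossing of $D$, and no boundary marked point (the endpoints of $\gamma$ are not marked points), lies on $\gamma$, so each crossing lies strictly on one side of $\gamma$; its two incident alternating regions then both lie on that side or equal $f$, so they lie in the same $Q_0^i$. That is \ref{p:quivsplit-noarrows}, and the final sentence of the statement follows from it together with \ref{p:quivsplit-intersection}, since the only vertex common to $Q_0^1$ and $Q_0^2$ is the frozen $*$. For \ref{p:quivsplit-subquiv}, I would observe that cutting along $\gamma$ creates no new strands, no new crossings, and no new boundary marked points, so the crossings of $D_i$ --- including its boundary crossings, which are exactly the points of $S_i$ --- are precisely the crossings of $D$ on the $D_i$-side; transporting through $\phi_i$, and using that the orientation rule for an arrow and the alternating-versus-oriented type of a face are both local at a crossing, hence unaffected by the cut, I would identify the full subquiver of $Q_D$ on $Q_0^i$ with $Q_{D_i}$.

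The point I expect to demand the most care is precisely \ref{p:quivsplit-subquiv} at the shared vertex $*$: I must rule out a crossing on the $D_2$-side having the face $f$ at two of its (alternating) corners, since such a crossing would be a loop at $*$ that lies in the full subquiver of $Q_D$ on $Q_0^1$ --- both its endpoints being $* \in Q_0^1$ --- but is not an arrow of $Q_{D_1}$. Equivalently, I need that the cut face $f$ is not self-adjacent across any crossing. The plan is to use the special position of $f$ --- simply connected and incident with the boundary of the disc in the two gaps joined by $\gamma$ --- by forming the simple closed curve consisting of $\gamma$ together with an arc of $\overline{f}$ through a hypothetical self-crossing $c$: this curve bounds a sub-disc, and following the two strands out of $c$ across its boundary, together with condition \ref{d:P4}, should force a configuration incompatible with both of $f$'s boundary gaps lying outside (respectively inside) that sub-disc. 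Closing this argument cleanly, along with the subsidiary check that $f$ is alternating rather than oriented, is where the genuine work lies; everything else is the combinatorial accounting above.
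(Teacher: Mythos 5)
Your proposal follows the same geometric route as the paper's proof, which declares parts \ref{p:quivsplit-union}--\ref{p:quivsplit-subquiv} to be immediate from the cutting construction and proves \ref{p:quivsplit-noarrows} by observing that $\gamma$ separates the disc so that removing $*$ disconnects $Q$; your handling of \ref{p:quivsplit-union}, \ref{p:quivsplit-intersection} and \ref{p:quivsplit-noarrows} is essentially identical.

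Where you go beyond the paper is the loop issue you raise for \ref{p:quivsplit-subquiv}, which is a genuine point that the paper's ``immediate from the construction'' does not spell out: a crossing on the $D_2$-side with both alternating corners equal to $f$ would produce a loop at $*$ lying in the full subquiver on $Q_0^1$ but not in $Q_{D_1}$. However you leave this unresolved, and the route you sketch (a Jordan curve built from $\gamma$ together with an arc of $\close{f}$ through $c$, with an appeal to \ref{d:P4}) is harder than necessary. A cleaner argument is entirely local to $D$ and does not involve $\gamma$ at all. Suppose $f$ were self-adjacent at a crossing $c$ of strands $s_1,s_2$; take a short arc through $c$ joining the two alternating sectors and close it up by a simple path inside $f$, producing a Jordan curve $\delta$ in the interior of the disc. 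Away from $c$ the curve $\delta$ runs inside $f$, which is disjoint from the strands, while at $c$ the diameter lies in the two alternating sectors and so separates the two oriented sectors; hence $s_1$ (say) meets $\delta$ exactly once, transversally, at $c$. But the two endpoints of $s_1$ are marked points on the boundary circle, which lies entirely in one component of the complement of $\delta$, so $s_1$ must cross $\delta$ an even number of times --- a contradiction. This shows that no alternating region of any Postnikov diagram is self-adjacent across a crossing, connected or not, without invoking \ref{d:P4} or the position of $f$ relative to $\gamma$. Your ``subsidiary check'' that $f$ is alternating is also a real (if easy) step; the paper simply takes this for granted by referring to $f$ as a boundary region with a corresponding frozen vertex $*$.
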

\begin{proof}
Recall that \(D_1\) and \(D_2\) are obtained from \(D\) by cutting through a boundary region along a curve \(\gamma\), and we take \(*\) to be the frozen vertex of \(Q\) corresponding to this region.
The remaining vertices of \(Q\) are on one side of \(\gamma\) or the other, and \(Q_0^1\) consists of \(*\) together with those vertices on the side of \(\gamma\) containing the boundary marked point \(1\), while \(Q_0^2\) consists of \(*\) together with the vertices on the other side of \(\gamma\).
Properties \ref{p:quivsplit-union}--\ref{p:quivsplit-subquiv} are then immediate from the construction.

Moreover, removing the vertex \(*\) and all adjacent arrows disconnects the quiver, corresponding to the fact that \(\gamma\) cuts the disc into two components.
Thus, every (directed or undirected) path from \(Q_0^1\) to \(Q_0^2\) must pass through \(*\), and \ref{p:quivsplit-noarrows} follows.
\end{proof}

\begin{eg}
\label{eg:disconnected}
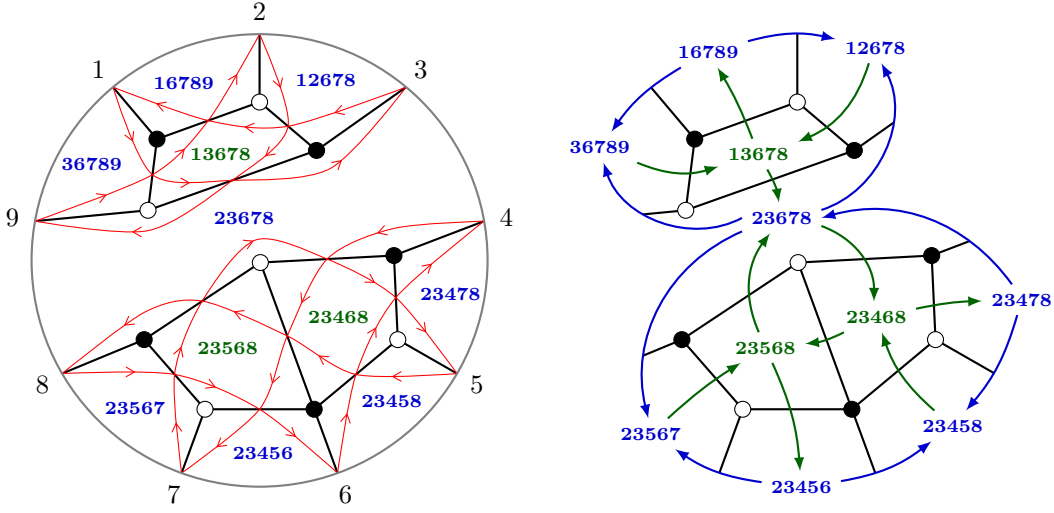
\begin{figure}
\begin{minipage}{0.95\textwidth}
\begin{tikzpicture}[scale=3,baseline=(bb.base)]
\path (0,0) node (bb) {};

\foreach \n/\m/\a in {9/1/0, 8/2/0, 7/3/0, 6/4/0, 5/5/0, 4/6/0, 3/7/0, 2/8/0, 1/9/0}
{ \coordinate (b\m) at (\bstart+\ninth*\n+\a:1.0);
  \draw (\bstart+\ninth*\n+\a:1.1) node {\(\m\)}; }

\foreach \n/\m/\x/\y in {10/1/0/0, 11/2/0/0, 12/3/-0.2/-0.05, 13/4/-0.1/-0.1, 14/5/0/0, 15/6/0/0, 16/7/0/0, 17/8/0.1/0, 18/9/0.2/0.1}
  {\coordinate (b\n) at ($0.7*(b\m)+(\x,\y)$);}

\coordinate (b19) at ($0.01*(b6)$);

\foreach \h/\t in {1/10, 2/11, 3/12, 4/13, 5/14, 6/15, 7/16, 8/17, 9/18, 18/10, 10/11, 11/12, 12/18, 13/14, 14/15, 15/16, 16/17, 17/19, 19/15, 19/13}
{ \draw [bipedge] (b\h)--(b\t); }

\foreach \n in {11,14,16,18,19} 
  {\draw [\graphcolor] (b\n) circle(\dotrad) [fill=white];} \foreach \n in {10,12,13,15,17}  
  {\draw [\graphcolor] (b\n) circle(\dotrad) [fill=\graphcolor];} 

\foreach \e/\f/\t in {10/18/0.5, 10/11/0.5, 11/12/0.5, 12/18/0.5, 13/14/0.5, 14/15/0.5, 15/16/0.5, 16/17/0.5, 17/19/0.5, 15/19/0.5, 13/19/0.5}
{\coordinate (a\e-\f) at ($(b\e) ! \t ! (b\f)$); }

\draw [strand] plot[smooth]
coordinates {(b1) (a10-18) (a12-18) ($(b12)+(0.06,-0.08)$) (b3)}
[postaction=decorate, decoration={markings,
 mark= at position 0.15 with \strarrow,
 mark= at position 0.35 with \strarrow, 
 mark= at position 0.75 with \strarrow }];
 
 \draw [strand] plot[smooth]
 coordinates {(b2) (a11-12) (a12-18) ($(b18)+(0,-0.09)$) (b9)}
 [postaction=decorate, decoration={markings,
  mark= at position 0.15 with \strarrow,
  mark= at position 0.35 with \strarrow, 
  mark= at position 0.75 with \strarrow }];
  
 \draw [strand] plot[smooth]
 coordinates {(b3) (a11-12) (a10-11) (b1)}
 [postaction=decorate, decoration={markings,
  mark= at position 0.25 with \strarrow,
  mark= at position 0.55 with \strarrow, 
  mark= at position 0.85 with \strarrow }];
  
 \draw [strand] plot[smooth]
 coordinates {(b4) ($(b13)+(0,0.12)$) (a13-19) (a15-19) (a15-16) (b7)}
 [postaction=decorate, decoration={markings,
  mark= at position 0.25 with \strarrow,
  mark= at position 0.5 with \strarrow, 
  mark= at position 0.7 with \strarrow,
  mark= at position 0.9 with \strarrow }];
  
 \draw [strand] plot[smooth]
 coordinates {(b5) (a14-15) (a15-19) (a17-19) ($(b17)+(0,0.12)$) (b8)}
 [postaction=decorate, decoration={markings,
  mark= at position 0.15 with \strarrow,
  mark= at position 0.33 with \strarrow, 
  mark= at position 0.55 with \strarrow,
  mark= at position 0.82 with \strarrow }];
  
 \draw [strand] plot[smooth]
 coordinates {(b6) (a14-15) (a13-14) (b4)}
 [postaction=decorate, decoration={markings,
  mark= at position 0.2 with \strarrow,
  mark= at position 0.5 with \strarrow,
  mark= at position 0.78 with \strarrow }];
  
 \draw [strand] plot[smooth]
 coordinates {(b7) (a16-17) (a17-19) ($(b19)+(-0.02,0.1)$) (a13-19) (a13-14) (b5)}
 [postaction=decorate, decoration={markings,
  mark= at position 0.1 with \strarrow,
  mark= at position 0.25 with \strarrow, 
  mark= at position 0.505 with \strarrow,
  mark= at position 0.76 with \strarrow,
  mark= at position 0.9 with \strarrow }];
  
 \draw [strand] plot[smooth]
 coordinates {(b8) (a16-17) (a15-16) (b6)}
 [postaction=decorate, decoration={markings,
  mark= at position 0.24 with \strarrow,
  mark= at position 0.51 with \strarrow,
  mark= at position 0.82 with \strarrow }];
  
 \draw [strand] plot[smooth]
 coordinates {(b9) (a10-18) (a10-11) (b2)}
 [postaction=decorate, decoration={markings,
  mark= at position 0.25 with \strarrow,
  mark= at position 0.55 with \strarrow, 
  mark= at position 0.85 with \strarrow }];
 
 \foreach \n/\l/\a in {1/16789/3, 2/12678/0, 4/23478/0, 5/23458/3, 6/23456/1, 7/23567/0, 9/36789/0}
 { \draw [\frozcolor] (\bstart+\ninth/2-\ninth*\n+\a:0.85) node (q\l) {\scriptsize \(\mathbf{\l}\)}; }
 \draw [\frozcolor] ($-0.2*(b6)$) node (q23678) {\scriptsize \(\mathbf{23678}\)};
 \draw [\quivcolor] ((\bstart-\ninth/2:0.5) node (q13678) {\scriptsize \(\mathbf{13678}\)};
 \draw [\quivcolor] ($0.4*(b5)-(0,0.05)$) node (q23468) {\scriptsize \(\mathbf{23468}\)};
 \draw [\quivcolor] ($0.41*(b7)$) node (q23568) {\scriptsize \(\mathbf{23568}\)};
 
 \draw [boundary] (0,0) circle(1.0);
\end{tikzpicture}
\hfill
\begin{tikzpicture}[scale=3,baseline=(bb.base),tips=proper]
\path (0,0) node (bb) {};

\foreach \n/\m/\a in {9/1/0, 8/2/0, 7/3/0, 6/4/0, 5/5/0, 4/6/0, 3/7/0, 2/8/0, 1/9/0}
{ \coordinate (b\m) at (\bstart+\ninth*\n+\a:1.0);}

\foreach \n/\m/\x/\y in {10/1/0/0, 11/2/0/0, 12/3/-0.2/-0.05, 13/4/-0.1/-0.1, 14/5/0/0, 15/6/0/0, 16/7/0/0, 17/8/0.1/0, 18/9/0.2/0.1}
  {\coordinate (b\n) at ($0.7*(b\m)+(\x,\y)$);}

\coordinate (b19) at ($0.01*(b6)$);

\foreach \e/\f/\t in {10/18/0.5, 10/11/0.5, 11/12/0.5, 12/18/0.5, 13/14/0.5, 14/15/0.5, 15/16/0.5, 16/17/0.5, 17/19/0.5, 15/19/0.5, 13/19/0.5}
{\coordinate (a\e-\f) at ($(b\e) ! \t ! (b\f)$); }

\foreach \e/\f/\t in {3/12/0.56, 4/13/0.58, 8/17/0.53, 9/18/0.62}
{\coordinate (b'\e) at ($(b\e) ! \t ! (b\f)$);}

 \foreach \n/\l/\a in {1/16789/3, 2/12678/0, 4/23478/0, 5/23458/3, 6/23456/1, 7/23567/0, 9/36789/0}
 { \draw [\frozcolor] (\bstart+\ninth/2-\ninth*\n+\a:1) node (q\l) {\scriptsize \(\mathbf{\l}\)}; }
 \draw [\frozcolor] ($-0.2*(b6)$) node (q23678) {\scriptsize \(\mathbf{23678}\)};
 \draw [\quivcolor] ((\bstart-\ninth/2:0.5) node (q13678) {\scriptsize \(\mathbf{13678}\)};
 \draw [\quivcolor] ($0.4*(b5)-(0,0.05)$) node (q23468) {\scriptsize \(\mathbf{23468}\)};
 \draw [\quivcolor] ($0.41*(b7)$) node (q23568) {\scriptsize \(\mathbf{23568}\)};

\foreach \h/\t in {1/10, 2/11, 5/14, 6/15, 7/16, 18/10, 10/11, 11/12, 12/18, 13/14, 14/15, 15/16, 16/17, 17/19, 19/15, 19/13}
{ \draw [bipedge] (b\h)--(b\t); }
\foreach \h/\t in {3/12, 4/13, 8/17, 9/18}
{ \draw [bipedge] (b'\h)--(b\t); }

\foreach \n in {11,14,16,18,19} 
  {\draw [\graphcolor] (b\n) circle(\dotrad) [fill=white];} \foreach \n in {10,12,13,15,17}  
  {\draw [\graphcolor] (b\n) circle(\dotrad) [fill=\graphcolor];} 

 \foreach \t/\h/\a in {36789/13678/-20, 13678/16789/-5, 12678/13678/25, 13678/23678/10, 23468/23568/10, 23568/23678/30,
 23568/23456/10, 23567/23568/5, 23468/23478/5, 23458/23468/15, 23678/23468/35}
 { \draw [quivarrow]  (q\t) edge [bend left=\a] (q\h); }
 
 \foreach \t/\h/\a in {16789/36789/-12, 16789/12678/14, 23478/23458/12, 23456/23458/-14, 23456/23567/13, 23678.west/36789/50, 23678/12678/-50, 23678/23567/-40, 23478/23678.east/-35}
 { \draw [frozarrow]  (q\t) edge [bend left=\a] (q\h); }
\end{tikzpicture}
\end{minipage}
\caption{A disconnected Postnikov diagram and dual plabic graph (left), with the associated quiver (right) and source-labelling.}
\label{f:Postcut}
\end{figure}
Figure~\ref{f:Postcut} shows a disconnected Postnikov diagram \(D\), of type \((5,9)\), and its associated quiver \(Q_D\).
In this case, we have \(S_1^-=\{1,2,3\}\), \(S_2=\{4,5,6,7,8\}\) and \(S_1^+=\{9\}\).
The vertices in the upper component, with boundary marked points \(S_1\), form \(Q_0^1\), while those in the lower component form \(Q_0^2\), with the vertex \(*\) labelled by \(23678\) in both subsets.
The quiver \(Q_D\) is obtained by gluing together at a frozen vertex the quivers of the two component Postnikov diagrams \(D_1\) and \(D_2\); cf.\ Proposition~\ref{p:quivsplit}.
In this case, the two component Postnikov diagrams are both uniform, of types \((2,4)\) and \((3,5)\) respectively.

Observe that \(\{6,7,8\}=\srclab{*}\cap S_2\) appears in the label of every vertex from \(Q_0^1\), while \(\{2,3\}=\srclab{*}\cap S_1\) appears in the label of every vertex from \(Q_0^2\).
The same phenomenon occurs for the target labelling.
\end{eg}

\begin{cor}
\label{c:mut-comm}
For \(i\in Q_0^1\) and \(j\in Q_0^2\) mutable vertices, mutation at \(i\) commutes with mutation at \(j\).
\end{cor}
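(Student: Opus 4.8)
The plan is to deduce this at once from Proposition~\ref{p:quivsplit} together with the standard fact that mutations at two non-adjacent vertices of a quiver commute. First I would record that fact in the form I need it: if $s$ is a seed whose quiver has distinct vertices $i$ and $j$ joined by no arrow in either direction, then $\mu_i\mu_j(s)=\mu_j\mu_i(s)$, and moreover $i$ and $j$ remain non-adjacent in each intermediate quiver. This is well known and immediate from the mutation rules. Indeed, since $b_{ij}=b_{ji}=0$, mutation at $j$ leaves the $i$th row and column of the exchange matrix unchanged---the ``path'' term in the matrix-mutation formula never affects entries involving $i$---and symmetrically, so a term-by-term comparison shows the two orders of mutation yield the same exchange matrix, and in particular no $i$--$j$ arrow is ever created. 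The exchange relation at $i$ involves only the cluster variables attached to neighbours of $i$, none of which is $j$, so $\mu_j$ does not alter it, and conversely; hence the two composite seed mutations agree.

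With this in hand the corollary is immediate: by Proposition~\ref{p:quivsplit} (specifically part~\ref{p:quivsplit-noarrows} and the concluding remark), a mutable vertex $i\in Q_0^1$ and a mutable vertex $j\in Q_0^2$ are distinct and are joined by no arrow of $Q_D$, so applying the fact above to the initial seed of $\clustalg{D}$ gives $\mu_i\mu_j=\mu_j\mu_i$.

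I do not expect a genuine obstacle, as all the content is already in Proposition~\ref{p:quivsplit}. The single point needing care is bookkeeping: one must use that $i$ and $j$ are \emph{mutable} precisely to rule out the shared frozen vertex $*$ of $Q_0^1\cap Q_0^2$ (and arrows among frozen vertices, which are irrelevant to $\clustalg{D}$ anyway), so that the hypotheses of the commutation fact hold verbatim.
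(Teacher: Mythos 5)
Your proposal is correct and matches the paper's argument exactly: the paper also cites Proposition~\ref{p:quivsplit}\ref{p:quivsplit-noarrows} to conclude there are no arrows between $i$ and $j$, then invokes the standard commutation of mutations at non-adjacent vertices. Your added detail (why non-adjacent mutations commute, and why mutability rules out $*$) is accurate but not a different route.
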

\begin{proof}
This is a direct consequence of Proposition~\ref{p:quivsplit}\ref{p:quivsplit-noarrows}, which implies that there are no arrows between \(i\) and \(j\).
\end{proof}

A consequence of Corollary~\ref{c:mut-comm} is that the local bijections on vertices induced by applying a sequence of mutations to \(Q\) respect the division of these vertices into the sets \(Q_0^1\) and \(Q_0^2\).
Thus, if \(j\) is a vertex of a quiver \(Q'\) mutation equivalent to \(Q\), we can make sense of the statement \(j\in Q_0^1\) or \(j\in Q_0^2\) despite the lack of a preferred bijection between \(Q_0\) and \(Q'_0\).
We use this repeatedly below; for example, it is needed for part \ref{mut-partition} of the next proposition to make sense.

\begin{prop}
\label{p:cvar-partition}
The set \(\cvars\) of cluster variables of \(\clustalg{D}\) has the form \(\cvars=\cvars_1\cup\cvars_2\), where
\begin{enumerate}
\item the initial variable \(x_j\) is in \(\cvars_i\) if and only if \(j\in Q_0^i\), and
\item\label{mut-partition} a non-initial variable lies in \(\cvars_i\) if and only if it can be obtained from the initial seed by a sequence of mutations using only vertices from \(Q_0^i\).
\end{enumerate}
In particular, the set \(\cvars_i\) of cluster variables of \(\clustalg{D}\) is in natural bijection, respecting mutations, with the set of cluster variables of \(\clustalg{D_i}\), and \(\cvars_1\cap\cvars_2=\{x_*\}\) consists of a single frozen variable.
\end{prop}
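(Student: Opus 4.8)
The plan is to show that a sequence of mutations applied to the initial seed of $\clustalg{D}$ amounts to running two independent mutation sequences, one on each side of the distinguished frozen vertex $*$, which evolve exactly like the corresponding seeds of $\clustalg{D_1}$ and $\clustalg{D_2}$. The crucial input is Proposition~\ref{p:quivsplit}\ref{p:quivsplit-noarrows}: a mutable vertex of $Q_0^i$ has all its neighbours in $Q_0^i$, while $*$ is frozen and so is never mutated. Concretely, writing $s$ and $s_i$ for the initial seeds of $\clustalg{D}$ and $\clustalg{D_i}$, and $\stab{Q}_0^i$ for the set of mutable vertices lying in $Q_0^i$, I would first prove by induction on length: for any mutation sequence $\mathbf{m}$ applied to $s$, with $\mathbf{m}_i$ the subsequence of $\mathbf{m}$ consisting of its mutations at vertices of $\stab{Q}_0^i$ (a well-defined notion by the discussion preceding the proposition together with Corollary~\ref{c:mut-comm}), the quiver of $\mathbf{m}(s)$ has no arrows between $Q_0^1\setminus\{*\}$ and $Q_0^2\setminus\{*\}$, its full subquiver on $Q_0^i$ is the quiver of $\mathbf{m}_i(s_i)$, the cluster variable of $\mathbf{m}(s)$ at a frozen vertex $j$ is still $x_j$, and the cluster variable of $\mathbf{m}(s)$ at $j\in\stab{Q}_0^i$ equals that of $\mathbf{m}_i(s_i)$ at $j$, as one and the same Laurent polynomial in $\{x_k:k\in Q_0^i\}$. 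The base case is Proposition~\ref{p:quivsplit}; in the inductive step, if the last mutation is at some $a\in\stab{Q}_0^1$ then all arrows at $a$ and all arrows created by mutating at $a$ lie inside $Q_0^1$, so this mutation modifies only the $Q_0^1$-side — quiver and cluster variable alike — exactly as the corresponding mutation of $\mathbf{m}'_1(s_1)$ does, and leaves the $Q_0^2$-side and all frozen variables untouched (the case $a\in\stab{Q}_0^2$ being symmetric).

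Granting this lemma, I would deduce the proposition as follows. A mutation sequence $\mathbf{m}$ of $\clustalg{D}$ that uses only vertices of $\stab{Q}_0^i$ is simultaneously a mutation sequence of $\clustalg{D_i}$, and by the lemma the cluster variables it produces from $s$ are: the frozen $x_j$; the $x_j$ for $j\in\stab{Q}_0^{i'}$ with $i'\ne i$ (untouched); and, for $j\in\stab{Q}_0^i$, the cluster variable of $\mathbf{m}(s_i)$ at $j$, a Laurent polynomial purely in $\{x_k:k\in Q_0^i\}$. One checks that the resulting set of elements is precisely $\cvars_i$ as defined by clauses (a) and (b), so $\cvars_i\subseteq\CC[x_k^{\pm 1}:k\in Q_0^i]$, and that the rule sending the cluster variable of $\mathbf{m}(s)$ at $j\in Q_0^i$ (for such $\mathbf{m}$) to that of $\mathbf{m}(s_i)$ at $j$ is a well-defined, mutation-respecting bijection between $\cvars_i$ and the set of cluster variables of $\clustalg{D_i}$: surjectivity because every mutation sequence of $\clustalg{D_i}$ is one of $\clustalg{D}$ using only $\stab{Q}_0^i$-vertices, and injectivity because the inclusion $\CC[x_k^{\pm 1}:k\in Q_0^i]\hookrightarrow\CC[x_k^{\pm 1}:k\in Q_0]$ is injective. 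Finally, for an arbitrary mutation sequence $\mathbf{m}$ and $j\in\stab{Q}_0^i$, applying the lemma to both $\mathbf{m}$ and to its subsequence $\mathbf{m}_i$ shows the cluster variable of $\mathbf{m}(s)$ at $j$ equals that of $\mathbf{m}_i(s)$ at $j$, hence lies in $\cvars_i$; at a frozen vertex it is $x_j\in\cvars_1\cup\cvars_2$. Thus $\cvars=\cvars_1\cup\cvars_2$.

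It remains to identify $\cvars_1\cap\cvars_2$. The inclusion $\{x_*\}\subseteq\cvars_1\cap\cvars_2$ is immediate, as $*$ is a frozen vertex of both $Q_0^1$ and $Q_0^2$. Conversely, from $\cvars_i\subseteq\CC[x_k^{\pm 1}:k\in Q_0^i]$ and $Q_0^1\cap Q_0^2=\{*\}$ we get $\cvars_1\cap\cvars_2\subseteq\CC[x_*^{\pm 1}]$; if $v$ lies in this intersection then, regarding $v$ as a cluster variable of $\clustalg{D_1}$ via the bijection above, $v$ appears in some seed of $\clustalg{D_1}$ that also contains the frozen variable $x_*$, and since the cluster of any seed is an algebraically independent set, $v\in\CC(x_*)$ forces $v=x_*$. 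The conceptual heart — and the only place real care is needed — is the inductive lemma of the first paragraph, specifically the verification that mutation at a mutable vertex of $Q_0^i$ commutes with restriction to $Q_0^i$; this is exactly what Proposition~\ref{p:quivsplit}\ref{p:quivsplit-noarrows} and the frozenness of $*$ deliver, and everything after it is bookkeeping, using only the standard fact that every cluster is an algebraically independent transcendence basis containing all frozen variables.
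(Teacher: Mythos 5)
Your proof is correct and takes essentially the same approach as the paper: both deduce the partition from the commutativity of mutations across the two sides (Corollary~\ref{c:mut-comm}, ultimately Proposition~\ref{p:quivsplit}\ref{p:quivsplit-noarrows}), with your inductive lemma simply spelling out in detail what the paper states compactly. The only minor difference is in the intersection step, where you pass through the containment $\cvars_i\subseteq\CC[x_k^{\pm1}:k\in Q_0^i]$ before invoking algebraic independence, whereas the paper observes directly that a shared variable other than $x_*$ would appear twice in a single seed; both are valid and rely on the same underlying fact.
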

\begin{proof}
This is a direct consequence of Corollary~\ref{c:mut-comm}; given a sequence of mutations leading to a cluster variable \(x\) at some vertex \(j\in Q_0^i\) of its quiver, one obtains the same cluster variable by performing only the mutations in this sequence from \(Q_0^i\).
In particular this means that no variable other than \(x_*\) lies in \(\cvars_1\cap\cvars_2\), since then it would appear twice in the same seed, violating the fact that the cluster variables in a seed are algebraically independent.
\end{proof}

Given \(x\in\cvars_i\), we write \(x|_i\) for the cluster variable of \(\clustalg{D_i}\) corresponding to \(x\) under the bijection from Proposition~\ref{p:cvar-partition}.
If \(x_j\) is the initial cluster variable of \(\clustalg{D}\) at the vertex \(j\in Q_0^i\), then \(x_j|_i\) is the initial variable of \(\clustalg{D_i}\) attached to the same vertex.
In particular, this means that \(x_j|_i\) is frozen if and only if \(x_j\) is frozen.
With this notation, we may state the following immediate corollary of Proposition~\ref{p:cvar-partition}.

\begin{cor}
\label{c:Segre-comp}
In the notation of Proposition~\ref{p:cvar-partition}, any cluster variable from \(\cvars_1\) is compatible with any cluster variable from \(\cvars_2\).
Moreover, cluster variables \(x,y\in\cvars_i\) are compatible if and only if \(x|_i\) and \(y|_i\) are compatible in \(\clustalg{D_i}\).
\end{cor}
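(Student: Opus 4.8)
The plan is to unpack the definition of compatibility---two cluster variables being compatible precisely when they occur together in a common cluster---and to combine it with the mutation dynamics recorded in Proposition~\ref{p:cvar-partition} and the commutativity of Corollary~\ref{c:mut-comm}. Write $s_0$ for the initial seed of $\clustalg{D}$. For a cluster variable $x\in\cvars_i$, fix by Proposition~\ref{p:cvar-partition}\ref{mut-partition} a sequence $\mathbf{m}_i(x)$ of mutations at mutable vertices of $Q_0^i$ producing $x$ from $s_0$ (the empty sequence if $x$ is initial, e.g.\ $x=x_*$). The elementary observation underpinning everything---already implicit in the proof of Proposition~\ref{p:quivsplit}---is that a mutation at a mutable vertex of $Q_0^1$ changes neither the full subquiver of the current quiver on $Q_0^2$ nor the cluster variables attached to the vertices of $Q_0^2$, and symmetrically with the roles of $1$ and $2$ exchanged; thus the $Q_0^1$- and $Q_0^2$-parts of a seed evolve independently, matching the mutation dynamics of $\clustalg{D_1}$ and $\clustalg{D_2}$. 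Granting this, applying $\mathbf{m}_1(x)$ and then $\mathbf{m}_2(y)$ to $s_0$ produces a seed whose $Q_0^1$-part agrees with that of $\mathbf{m}_1(x)s_0$ (hence contains $x$) and whose $Q_0^2$-part agrees with that of $\mathbf{m}_2(y)s_0$ (hence contains $y$); this common seed witnesses the compatibility of $x\in\cvars_1$ and $y\in\cvars_2$, proving the first assertion.

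For the second assertion I would treat $x,y\in\cvars_1$, the case $i=2$ being symmetric. If $x$ and $y$ are compatible in $\clustalg{D}$ they lie in a common seed $s$, obtained from $s_0$ by some mutation sequence $\mathbf{m}$; using Corollary~\ref{c:mut-comm} to move every mutation at a $Q_0^1$-vertex to the front of $\mathbf{m}$ (keeping their relative order), I may assume $\mathbf{m}=\mathbf{m}_2\mathbf{m}_1$ with each $\mathbf{m}_i$ supported on $Q_0^i$. Since $\mathbf{m}_2$ does not touch the variables at $Q_0^1$-vertices, the intermediate seed $\mathbf{m}_1 s_0$ already contains $x$ and $y$; moreover, as $\cvars_1\cap\cvars_2=\{x_*\}$ and the variables at $Q_0^2$-vertices of any seed lie in $\cvars_2$, the variables $x,y$ (if different from the frozen $x_*$, which is in any case handled trivially) must occupy $Q_0^1$-vertices of $\mathbf{m}_1 s_0$. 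Transporting the $Q_0^1$-part of $\mathbf{m}_1 s_0$ along the identifications of Proposition~\ref{p:quivsplit}\ref{p:quivsplit-subquiv} and Proposition~\ref{p:cvar-partition}---which, by the observation above, intertwine $Q_0^1$-mutations with mutations of $\clustalg{D_1}$---yields a seed of $\clustalg{D_1}$ containing $x|_1$ and $y|_1$, so these are compatible. Conversely, a common seed of $\clustalg{D_1}$ containing $x|_1$ and $y|_1$ is reached by a mutation sequence that lifts to a $Q_0^1$-supported mutation sequence of $\clustalg{D}$ whose resulting seed contains both $x$ and $y$.

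The only point needing genuine care, though it is mild, is the bookkeeping in the elementary observation: a mutation at a mutable vertex $v\in Q_0^1$ modifies only arrows between neighbours of $v$, and by Proposition~\ref{p:quivsplit}\ref{p:quivsplit-noarrows} every mutable neighbour of $v$ lies in $Q_0^1$, the unique shared vertex $*$ being frozen. One checks from this that such a mutation creates no arrow between mutable vertices of $Q_0^1$ and $Q_0^2$ and leaves the subquiver on $Q_0^2$ intact, and that mutations at $Q_0^2$-vertices depend only on the $Q_0^2$-part of the seed. With this in hand all the seed correspondences used above are immediate, and the corollary follows directly from Proposition~\ref{p:cvar-partition}.
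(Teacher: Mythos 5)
Your proof is correct and fills in exactly the argument the paper has in mind: the paper introduces Corollary~\ref{c:Segre-comp} as an ``immediate corollary'' of Proposition~\ref{p:cvar-partition} and gives no explicit proof, with the intended justification being precisely the independence of $Q_0^1$- and $Q_0^2$-mutations that you spell out via Corollary~\ref{c:mut-comm} and Proposition~\ref{p:quivsplit}\ref{p:quivsplit-noarrows}. Your careful verification that a $Q_0^1$-mutation preserves both the $Q_0^2$-subquiver and the arrow partition between the two sides is the right check, and your sorting of the mutation sequence for the second assertion is exactly how one witnesses compatibility seed-by-seed.
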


As a result, if we write \(\cmons_i\) for the cluster monomials of \(\clustalg{D}\) obtained as products of compatible cluster variables from \(\cvars_i\), the map \(x\mapsto x|_i\) extends multiplicatively to a bijection between \(\cmons_i\) and the cluster monomials of \(\clustalg{D_i}\).
We extend the notation \(p\mapsto p|_i\) to describe this bijection.

For a fixed (source or target) labelling convention, we write \(I_{j}|_i=I_j\cap S_i\in\binom{S_i}{k_i}\) for the label of \(j\in Q_0^i\) in the diagram \(D_i\).
As in the proof of Proposition~\ref{p:posvar-decomp} (cf.~Example~\ref{eg:disconnected}), it follows from the definition of the labelling rules that the label \(I_j\) of \(j\in Q_0\) in the full Postnikov diagram \(D\) is given by
\begin{equation}
\label{eq:label-decomp}
I_j=\begin{cases}
I_{j}|_1\disunion I_{*}|_2,&j\in Q_0^1,\\
I_{*}|_1\disunion I_{j}|_2,&j\in Q_0^2.
\end{cases}
\end{equation}

Recall from \cite{Grabowski-GCAs} that a grading on a cluster algebra \(\clustalg{}\) is a grading of the underlying \(\CC\)-algebra with the property that every cluster variable is homogeneous.
We now introduce several useful gradings on the cluster algebra \(\clustalg{D}\) associated to a Postnikov diagram, all derived from the standard \(\ZZ^n\)-grading on \(\CC[\Grasscone{k}{n}]\), called the \(\GL_n(\CC)\)-weight grading in \cite{JKS}.
This is given on Plücker coordinates by
\[\degvec{\Plueck{I}}=\sum_{i\in I}\alpha_i\]
for \(\alpha_i\) the \(i\)-th standard basis vector in \(\ZZ^n\).
This descends to a grading on \(\CC[\openposvarcone_\posit]\) for any positroid \(\posit\), since this subvariety is defined by vanishing and non-vanishing conditions on Plücker coordinates, all of which are homogeneous.

\begin{prop}
\label{p:grading}
For \(x\in\clustalg{D}\), define
\[
\degvec^{\src}(x)=\degvec\GLsrc(x),\quad\degvec^{\tgt}(x)=\degvec\GLtgt(x).
\]
That is, \(\degvec^{\src}\) and \(\degvec^{\tgt}\) are induced by pulling back \(\degvec\) along \(\GLsrc\) and \(\GLtgt\) respectively.
Then both \(\degvec^{\src}\) and \(\degvec^{\tgt}\) are gradings of \(\clustalg{D}\) as a cluster algebra.
\end{prop}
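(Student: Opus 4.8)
The plan is to reduce the statement to a linear condition on the initial seed and then to verify that condition combinatorially from the Postnikov diagram. Since $\GLsrc$ and $\GLtgt$ are isomorphisms of $\CC$-algebras (Theorem~\ref{t:GalLam}), $\degvec^{\src}$ and $\degvec^{\tgt}$ are automatically $\ZZ^n$-gradings of the underlying algebra $\clustalg{D}$; what remains is to show that every cluster variable of $\clustalg{D}$ is homogeneous for each of them. The initial cluster variables are homogeneous: from $\GLsrc(x_j)=\varPlueck{\srclab{j}}$ we get $\degvec^{\src}(x_j)=\sum_{\ell\in\srclab{j}}\alpha_\ell$, and likewise $\degvec^{\tgt}(x_j)=\sum_{\ell\in\tgtlab{j}}\alpha_\ell$. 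By the criterion of Grabowski \cite{Grabowski-GCAs}, a grading of the underlying algebra under which the initial cluster variables are homogeneous is a grading of the cluster algebra if and only if, at every mutable vertex $k$ of the initial quiver, the two monomials in the exchange relation have equal degree; equivalently $\degvec^{\src}\yhat_k=0$ (respectively $\degvec^{\tgt}\yhat_k=0$), that is
\[\sum_{i\in Q_0}b_{ik}\,\degvec^{\src}(x_i)=0\qquad\text{for all }k\in\stab{Q}_0\]
and likewise with $\tgt$. Thus the proposition is equivalent to these identities, and it is these that I would establish.

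I would treat the source-labelled identity in detail; the target-labelled one is symmetric (interchanging the roles of sources and targets of strands throughout), or can alternatively be deduced from the source-labelled case for the opposite diagram $D^\op$ via Proposition~\ref{p:op-GL}, using that post-composing a graded cluster algebra structure with a group homomorphism of the grading group yields another one. Substituting $\degvec^{\src}(x_i)=\sum_{\ell\in\srclab{i}}\alpha_\ell$ and comparing coefficients of $\alpha_\ell$ for each $\ell\in\ZZ_n$ separately, the source-labelled identity becomes $\sum_{i\,:\,\ell\in\srclab{i}}b_{ik}=0$. Here $\ell\in\srclab{i}$ exactly when the region $i$ lies on the left-hand side of the strand $\sigma_\ell$ starting at $\ell$; write $L_\ell$ for this set of regions. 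The arrows of $Q_D$ incident with $k$ correspond to the crossings of $D$ at the corners of the region $k$: such a corner $c$ contributes a single arrow between $k$ and the alternating region $i_c$ lying diagonally opposite at $c$, with an orientation sign $\epsilon_c=\pm1$ read off from Figure~\ref{f:quiv-arrow}, so that the left-hand side equals $\sum_c\epsilon_c\,[i_c\in L_\ell]$, summed over the corners $c$ of $k$. Since $\srclab{i_c}$ is obtained from $\srclab{k}$ by deleting one element and inserting another---the starting points of the two strands crossing at $c$---the difference $[i_c\in L_\ell]-[k\in L_\ell]$ vanishes unless $\sigma_\ell$ is one of those two strands, i.e.\ unless $c$ is a corner through which $\sigma_\ell$ passes; and since $k$ is internal, the in- and out-degrees of $k$ in $Q_D$ coincide (a standard feature of Postnikov-diagram quivers, as in Figure~\ref{f:eg}), so $\sum_c\epsilon_c=0$ and we may replace $[i_c\in L_\ell]$ by $[i_c\in L_\ell]-[k\in L_\ell]$. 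The surviving corners are the two endpoints of each arc of $\sigma_\ell$ that forms an edge of the polygon $k$, and the claim is that at those two endpoints the contributions $\epsilon_c\bigl([i_c\in L_\ell]-[k\in L_\ell]\bigr)$ are opposite, so that the sum telescopes to $0$.

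The crux---and the step I expect to be the main obstacle---is this last cancellation: checking, at a corner $c$ of $k$ lying on $\sigma_\ell$, how the quiver-orientation sign $\epsilon_c$ interacts with the direction in which membership of $\ell$ toggles between $\srclab{k}$ and $\srclab{i_c}$, and verifying that these combine so that the contributions at the two ends of each $\sigma_\ell$-arc of $\partial k$ cancel (and that a strand which leaves and re-enters $k$ causes no trouble). This is a purely local computation using Figure~\ref{f:quiv-arrow}, but it requires careful bookkeeping of orientations. One could instead sidestep the explicit signs by importing the identity from the analogous statements known for the weight gradings of dimer models and of Grassmannian cluster algebras, using the explicit description of $Q_D$ from \cite{Pressland-Postnikov}.
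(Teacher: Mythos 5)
Your proposal follows essentially the same route as the paper: reduce via Grabowski's criterion to the balance condition $\sum_i b_{ik}\,\degvec(x_i)=0$ at each mutable vertex, then verify it coordinate-by-coordinate from the local strand configuration around the face $k$. The paper defers the combinatorial step to "well-known to experts" and a pointer to Scott's Figure~17, while you set up the telescoping cancellation along $\partial k$ more explicitly (and correctly flag the sign bookkeeping as the point requiring care); both reach the target-labelled case by the same symmetry.
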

\begin{proof}
As in \cite[Prop.~3.2]{Grabowski-GCAs}, to see that \(\degvec^{\src}\) is a cluster algebra grading of \(\clustalg{D}\), it is sufficient to check that each initial variable \(x_j\) is homogeneous, which is immediate since \(\GLsrc\) sends these variables to Plücker coordinates, and that
\[\sum_{u\to j}\degvec^{\src}(x_u)=\sum_{j\to v}\degvec^{\src}(x_v)\]
for each mutable vertex \(j\) of \(Q\).
Translating to labels, this means that we need
\[\sum_{u\to j}|\srclab{u}\cap\{i\}|=\sum_{j\to v}|\srclab{v}\cap\{i\}|\]
for each \(i\in\ZZ_n\).
This fact, well-known to experts (cf.~\cite[Proof of Thm.~5.17]{FSB}), follows by considering the local strand configuration at the face \(j\), and how the labels change when passing through one of the crossings around this face.
For example, when \(j\) is \(4\)-valent, the statement can be read off from \cite[Fig.~17]{Scott-Grass}, and the general case is proved similarly.
\end{proof}

The standard \(\ZZ\)-grading on \(\CC[\openposvarcone_\posit]\), which we have already used above, is that with \(\deg(\Plueck{I})=1\) for any (non-zero) Plücker coordinate \(\Plueck{I}\), and which satisfies
\[\deg(f)=\tfrac{1}{k}(\mathbf{1}\cdot\degvec{f})\]
for any \(\degvec\)-homogeneous element \(f\). Here \(\mathbf{1}\) denotes the all \(1\)s vector.
Pulling back to \(\clustalg{D}\) (with either \(\GLsrc\) or \(\GLtgt\)) gives the grading \(\deg\) of this cluster algebra with \(\deg(x_j)=1\) for all \(j\in Q_0\); the fact that this is a grading either follows from Proposition~\ref{p:grading} and the previous formula, or directly from the fact that every mutable vertex of \(Q\) has the same number of incoming and outgoing arrows.

\begin{defn}
For \(S\subseteq\ZZ_n\), write \(\mathbf{1}_S\) for the indicator function of \(S\).
Define a \(\ZZ\)-grading \(\deg_S\) by
\[\deg_S(f)=\mathbf{1}_S\cdot\degvec(f)=\sum_{i\in S}\degvec(f)_i\]
for  \(f\in\CC[\openposvarcone_\posit]\) a \(\degvec\)-homogeneous element, and write \(\deg_S^{\src}\) and \(\deg_S^{\tgt}\) for the pullbacks of \(\deg_S\) to \(\clustalg{D}\) along \(\GLsrc\) and \(\GLtgt\) respectively.
We abbreviate \(\deg_{+}=\deg_{S_1^+}\), and similarly for \(\deg_+^{\src}\) and \(\deg_+^{\tgt}\).
\end{defn}

It follows from Proposition~\ref{p:grading} (or can be proved directly via an analogous argument) that \(\deg_S^{\src}\) and \(\deg_S^{\tgt}\) are cluster algebra gradings of \(\clustalg{D}\) for any \(S\subseteq\ZZ_n\).
We observe that if \(p\in\cmons_1\), then we have equalities
\[\deg_+^{\src}(p)=\deg_+^{\src}(p|_1),\quad\deg_+^{\tgt}(p)=\deg_+^{\tgt}(p|_1)\]
since these hold for the initial variables, noting that \(\deg_+\Plueck{I}=|I\cap S_1^+|=\deg_+\Plueck{I\cap S_1}\).
We similarly have \(\deg(p)=\deg(p|_i)\) for \(p\in\cmons_i\), \(i=1,2\), and we will use these identities in the proof of Theorem~\ref{t:qco-conn} below.

These gradings provide a useful language for dealing with the signs appearing in Corollary~\ref{c:Plueck-prod}.
Indeed, we may rewrite the map \(\partial\) from Corollary~\ref{c:Segre-product} as
\[\partial(\Plueck{I})=\epsilon(k_2\deg_+(\Plueck{I}))\Plueck{I\cap S_1}\otimes\Plueck{I\cap S_2}.\]

\begin{prop}
\label{p:clust-Segre}
Define a map \(\delta^{\src}\colon\clustalg{D}\to\clustalg{D_1}\Segprod\clustalg{D_2}\) on initial variables by
\[\delta^{\src}(x_j)=
\begin{cases}
\epsilon(k_2\deg_+^{\src}(x_j))x_j|_1\otimes x_*|_2,&j\in Q_0^1,\\
\epsilon(k_2\deg_+^{\src}(x_*))x_*|_1\otimes x_j|_2,&j\in Q_0^2.
\end{cases}\]
Then there is a commutative diagram
\[\begin{tikzcd}[column sep=3.5em]
\clustalg{D}\arrow{r}{\GLsrc}\arrow{d}{\delta^{\src}}&\CC[\openposvarcone(\posit)]\arrow{d}{\partial}\\
\clustalg{D_1}\Segprod\clustalg{D_2}\arrow{r}{\GLsrc_1\Segprod\GLsrc_2}&\CC[\openposvarcone(\posit_1)]\Segprod\CC[\openposvarcone(\posit_2)],
\end{tikzcd}\]
In particular, \(\delta^{\src}\) is an isomorphism (and hence a cluster structure).
The analogous statement holds if we replace source-labelling by target-labelling throughout.
\end{prop}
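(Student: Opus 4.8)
The plan is to prove everything by a single computation on the initial cluster variables $x_j$ of $\clustalg{D}$. This suffices because, as noted after the definition of $\clustalg{D}$, every element of $\clustalg{D}$ is a Laurent polynomial in the $x_j$, so any $\CC$-algebra homomorphism out of $\clustalg{D}$ is determined by its values on them. Accordingly, I would simply \emph{define} $\delta^{\src}$ to be the composite
\[\delta^{\src}=(\GLsrc_1\Segprod\GLsrc_2)^{-1}\circ\partial\circ\GLsrc.\]
This is a $\CC$-algebra isomorphism: $\GLsrc$ is one by Theorem~\ref{t:GalLam}, $\partial$ is one by Corollary~\ref{c:Segre-product}, and $\GLsrc_i$ is a \emph{graded} isomorphism with respect to the standard $\ZZ$-gradings (the pullback of $\deg$ along $\GLsrc_i$ being precisely the grading with $\deg(x_j)=1$ on $\clustalg{D_i}$, as in Proposition~\ref{p:grading}), so that $\GLsrc_1\Segprod\GLsrc_2$ is a well-defined isomorphism and may be inverted. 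With this definition the asserted square commutes tautologically and $\delta^{\src}$ is an isomorphism; the only thing left is to check that $\delta^{\src}$ agrees with the stated piecewise formula on each $x_j$ (which in particular shows that that formula does define an algebra map).

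Then I would carry out that comparison. Fix $j\in Q_0^i$ and write $I_j=\srclab{j}(D)$. Using the reformulation $\partial(\Plueck{I})=\epsilon(k_2\deg_+(\Plueck{I}))\,\Plueck{I\cap S_1}\otimes\Plueck{I\cap S_2}$ recorded just before the statement, we get $\partial(\GLsrc(x_j))=\epsilon(k_2\deg_+(\Plueck{I_j}))\,\Plueck{I_j\cap S_1}\otimes\Plueck{I_j\cap S_2}$. The label decomposition \eqref{eq:label-decomp} identifies the factors: for $j\in Q_0^1$ we have $I_j\cap S_1=I_j|_1=\srclab{j}(D_1)$ and $I_j\cap S_2=I_*|_2=\srclab{*}(D_2)$, while for $j\in Q_0^2$ we have $I_j\cap S_1=I_*|_1=\srclab{*}(D_1)$ and $I_j\cap S_2=I_j|_2=\srclab{j}(D_2)$. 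Applying $(\GLsrc_1\Segprod\GLsrc_2)^{-1}$ thus returns $x_j|_1\otimes x_*|_2$ (resp.\ $x_*|_1\otimes x_j|_2$) times $\epsilon(k_2\deg_+(\Plueck{I_j}))$, which is exactly the shape of the stated formula. To reconcile the sign exponents: since $\deg_+^{\src}=\deg_+\circ\GLsrc$ we have $\deg_+^{\src}(x_j)=\deg_+(\Plueck{I_j})=|I_j\cap S_1^+|$, settling the case $j\in Q_0^1$; for $j\in Q_0^2$ the stated exponent is $\deg_+^{\src}(x_*)=|I_*\cap S_1^+|$, and one uses $I_j\cap S_1=I_*\cap S_1$ (from \eqref{eq:label-decomp}) together with $S_1^+\subseteq S_1$ to conclude $I_j\cap S_1^+=I_*\cap S_1^+$, so the exponents agree. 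The target-labelled statement follows by the verbatim argument with $\srclab{}$ replaced by $\tgtlab{}$, $\GLsrc$ by $\GLtgt$, $\GLsrc_i$ by $\GLtgt_i$, and $\deg_+^{\src}$ by $\deg_+^{\tgt}$ throughout; nothing above used the source convention beyond \eqref{eq:label-decomp}, which holds equally for target labels.

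There is no genuine obstacle here: all the substantive ingredients—the bijection $\disunion\colon\posit_1\times\posit_2\isoto\posit_D$ of Proposition~\ref{p:posit-decomp}, the geometric product decomposition of positroid varieties and the resulting Koszul-type sign $\epsilon(k_2|I\cap S_1^+|)$ of Corollary~\ref{c:Plueck-prod}, and the label decomposition \eqref{eq:label-decomp}—are already in place, and what remains is bookkeeping. The one point that genuinely needs care is the case $j\in Q_0^2$: there the correction sign in $\delta^{\src}$ must be read off from the label $I_*$ of the face that was cut rather than from $I_j$ itself, and it is precisely the identity $I_j\cap S_1^+=I_*\cap S_1^+$ (valid because both subsets meet $S_1$ in $I_*|_1$, and $S_1^+\subseteq S_1$) that makes the piecewise definition of $\delta^{\src}$ internally consistent and compatible with $\partial$.
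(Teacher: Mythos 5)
Your proof is correct and takes essentially the same approach as the paper: both reduce to checking commutativity on the initial cluster variables $x_j$ via $\partial(\Plueck{I_j})=\epsilon\big(k_2\deg_+(\Plueck{I_j})\big)\Plueck{I_j\cap S_1}\otimes\Plueck{I_j\cap S_2}$ and the label decomposition \eqref{eq:label-decomp}. The one small improvement in your write-up is that by \emph{defining} $\delta^{\src}$ as the composite $(\GLsrc_1\Segprod\GLsrc_2)^{-1}\circ\partial\circ\GLsrc$ and then matching it to the stated formula, you sidestep the question (left implicit in the paper) of why the piecewise formula on initial variables extends to a well-defined algebra homomorphism in the first place.
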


\begin{proof}
It suffices to show that the diagram commutes beginning with an initial variable \(x_j\).
For \(j\in Q_0^1\), we may compute directly that
\begin{align*}
(\GLsrc_1\Segprod\GLsrc_2)(\delta^{\src}(x_j))
&=\epsilon(k_2\deg_+^{\src}(x_j))\GLsrc_1(x_j|_1)\otimes\GLtgt_2(x_*|_2)\\
&=\epsilon(k_2\deg_+^{\src}(x_j))\varPlueck{\srclab{j}|_1}\otimes\varPlueck{\srclab{*}|_2}.
\end{align*}
On the other hand, since \(\srclab{j}=\srclab{j}|_1\disunion\srclab{*}|_2\) with \(\srclab{j}|_1\subseteq S_1\) and \(\srclab{*}|_2\subseteq S_2\) by construction, we also have
\[\partial\GLsrc(x_j)=\partial(\varPlueck{\srclab{j}})=\epsilon(k_2\deg_+\varPlueck{\srclab{j}})\varPlueck{\srclab{j}|_1}\otimes\varPlueck{\srclab{*}|_2}.\]
We have \(\deg_+^{\src}(x_j)=\deg_+(\varPlueck{\srclab{j}})\) by definition, and so the diagram commutes starting with \(x_j\).
The proof for \(j\in Q_0^2\) is essentially the same, with \(\deg_+^{\src}(x_*)=\deg_+(\varPlueck{\srclab{*}})\) appearing in the expressions for the signs.

Since \(\GLsrc\) and \(\GLsrc_1\Segprod\GLsrc_2\) are isomorphisms by Theorem~\ref{t:GalLam} and \(\partial\) is an isomorphism by Corollary~\ref{c:Segre-product}, it follows that \(\delta^{\src}\) is also an isomorphism.
The proof of the analogous statement for the target-labelling is identical.
\end{proof}

\begin{rem}
Based partly on this example, a more general study of Segre products of graded cluster algebras has been conducted by Grabowski and Hindmarch \cite{GraHin}.
\end{rem}

\begin{eg}
Proposition~\ref{p:clust-Segre} gives two equivalent ways of using the data from Figure~\ref{f:Postcut} to describe a cluster algebra structure on \(\CC\bigl[\openposvarcone\binom{2}{4}\bigr]\Segprod\CC\bigl[\openposvarcone\binom{3}{5}\bigr]\), where each of the positroid varieties involved is the big cell in the relevant Grassmannian, which is identified with the source-labelled cluster structure for \(\openposvar\bigl(\binom{2}{4}\times\binom{3}{5}\bigr)\subseteq\Grass{5}{9}\) using the isomorphism \(\partial\) from Corollary~\ref{c:Segre-product}.
An initial seed for this cluster structure is shown in Figure~\ref{f:prodstruct}; each initial variable is a product of Plücker coordinates up to sign, but some of the signs need to be negative to obtain the correct identification.
\begin{figure}

\begin{tikzpicture}[xscale=4,yscale=3.5,baseline=(bb.base),tips=proper]
\path (0,0) node (bb) {};

\foreach \n/\m/\a in {9/1/0, 8/2/0, 7/3/0, 6/4/0, 5/5/0, 4/6/0, 3/7/0, 2/8/0, 1/9/0}
{ \coordinate (b\m) at (\bstart+\ninth*\n+\a:1.0);}

\foreach \n/\m/\x/\y in {10/1/0/0, 11/2/0/0, 12/3/-0.2/-0.05, 13/4/-0.1/-0.1, 14/5/0/0, 15/6/0/0, 16/7/0/0, 17/8/0.1/0, 18/9/0.2/0.1}
  {\coordinate (b\n) at ($0.7*(b\m)+(\x,\y)$);}

\coordinate (b19) at ($0.01*(b6)$);

\foreach \e/\f/\t in {10/18/0.5, 10/11/0.5, 11/12/0.5, 12/18/0.5, 13/14/0.5, 14/15/0.5, 15/16/0.5, 16/17/0.5, 17/19/0.5, 15/19/0.5, 13/19/0.5}
{\coordinate (a\e-\f) at ($(b\e) ! \t ! (b\f)$); }

\foreach \e/\f/\t in {3/12/0.56, 4/13/0.58, 8/17/0.53, 9/18/0.62}
{\coordinate (b'\e) at ($(b\e) ! \t ! (b\f)$);}

 \foreach \n/\l/\m/\s/\a in {1/19/678/-/3, 2/12/678//0, 4/23/478//0, 5/23/458//3, 6/23/456//1, 7/23/567//0, 9/39/678/-/0}
 { \draw [\frozcolor] (\bstart+\ninth/2-\ninth*\n+\a:1) node (q\l\m) { \(\s\Plueck{\l}\Plueck{\m}\)}; }
 \draw [\frozcolor] ($-0.2*(b6)$) node (q23678) { \(\Plueck{23}\Plueck{678}\)};
 \draw [\quivcolor] ((\bstart-\ninth/2:0.5) node (q13678) { \(\Plueck{13}\Plueck{678}\)};
 \draw [\quivcolor] ($0.4*(b5)-(0,0.05)$) node (q23468) { \(\Plueck{23}\Plueck{468}\)};
 \draw [\quivcolor] ($0.41*(b7)$) node (q23568) { \(\Plueck{23}\Plueck{568}\)};

 \foreach \t/\h/\a in {39678/13678/-20, 13678/19678/-5, 12678/13678/25, 13678/23678/10, 23468/23568/10, 23568/23678/30,
 23568/23456/10, 23567/23568/5, 23468/23478/5, 23458/23468/15, 23678/23468/35}
 { \draw [quivarrow]  (q\t) edge [bend left=\a] (q\h); }
 
 \foreach \t/\h/\a in {19678/39678/-12, 19678/12678/14, 23478/23458/12, 23456/23458/-14, 23456/23567/13, 23678.west/39678/50, 23678/12678/-50, 23678/23567/-40, 23478/23678.east/-35}
 { \draw [frozarrow]  (q\t) edge [bend left=\a] (q\h); }
\end{tikzpicture}
\caption{An initial seed for a cluster structure on \(\CC\bigl[\openposvarcone{\binom{2}{4}}\bigr]\Segprod\CC\bigl[\openposvarcone{\binom{3}{5}}\bigr]\), which identifies with the source-labelled structure on \(\CC\bigl[\openposvar\bigl(\binom{2}{4}\times\binom{3}{5}\bigr)\bigr]\) (cf.~Figure~\ref{f:Postcut}) under the isomorphism \(\partial\).}
\label{f:prodstruct}
\end{figure}
\end{eg}

\begin{prop}
\label{p:delta-formula}
If \(p\in\cmons_1\), then
\begin{equation}
\label{eq:delta-src-1}
\delta^{\src}(p)=\epsilon(k_2\deg_+^{\src}(p))p|_1\otimes (x_*|_2)^{\deg(p)},
\end{equation}
while if \(p\in\cmons_2\), then
\[\delta^{\src}(p)=\epsilon(k_2\deg(p)\deg_+^{\src}(x_*))(x_{*}|_1)^{\deg(p)}\otimes p|_2.\]
The analogous formulae hold for \(\delta^{\tgt}\).
\end{prop}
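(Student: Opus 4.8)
The plan is to prove both identities, and \emph{mutatis mutandis} their target-labelled analogues, by first reducing to the case of a single cluster variable and then inducting on mutation distance, using the structural results of Section~\ref{s:connected} to control how mutations and exchange relations interact with the partition $Q_0=Q_0^1\cup Q_0^2$.

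\emph{Reduction to cluster variables.} First I would check that the claimed right-hand sides behave multiplicatively, so that it is enough to establish the formulas for $x\in\cvars_i$. If $p\in\cmons_1$ is written as a product $p=\prod_\alpha x_\alpha$ of pairwise compatible variables $x_\alpha\in\cvars_1$, each satisfying \eqref{eq:delta-src-1}, then each factor $\epsilon(k_2\deg_+^{\src}(x_\alpha))\,x_\alpha|_1\otimes(x_*|_2)^{\deg(x_\alpha)}$ is homogeneous of degree $\deg(x_\alpha)$ in the Segre product (using $\deg(x_\alpha|_1)=\deg(x_\alpha)$ and $\deg(x_*)=1$), so the product in $\clustalg{D_1}\Segprod\clustalg{D_2}$ is the pure tensor obtained by multiplying the left- and right-hand tensor slots separately. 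Since $\deg$ and $\deg_+^{\src}$ are gradings of $\clustalg{D}$ and $q\mapsto q|_1$ is multiplicative on $\cmons_1$, this collapses to $\epsilon(k_2\deg_+^{\src}(p))\,p|_1\otimes(x_*|_2)^{\deg(p)}$; the $\cmons_2$ identity is the same computation, now using $\sum_\alpha\deg(x_\alpha)\deg_+^{\src}(x_*)=\deg(p)\deg_+^{\src}(x_*)$. For an initial variable $x_j$ with $j\in Q_0^i$ we have $\deg(x_j)=1$, and each formula then reads off exactly the defining value of $\delta^{\src}(x_j)$ from Proposition~\ref{p:clust-Segre}; this provides the base case for the induction.

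\emph{Mutation induction.} For a non-initial $x\in\cvars_1$, Proposition~\ref{p:cvar-partition}\ref{mut-partition} supplies a seed $s$ of $\clustalg{D}$, reached from the initial seed by mutations only at vertices of $Q_0^1$, with $x$ among its variables; I would induct on the length of such a mutation sequence. Assuming the formula holds for every variable of $s$, mutate at a mutable $j\in Q_0^1$ to obtain $x_j'$. By Proposition~\ref{p:quivsplit}\ref{p:quivsplit-noarrows}, Corollary~\ref{c:mut-comm} and the remark following it (which carries the partition $Q_0=Q_0^1\cup Q_0^2$ through arbitrary mutation sequences), every arrow of $Q(s)$ at $j$ lies inside $Q_0^1$; consequently the arrow multiplicities at $j$ coincide with those at $j$ in the corresponding seed $s|_1$ of $\clustalg{D_1}$, the two exchange monomials $M_-=\prod_{u\to j}x_u$ and $M_+=\prod_{j\to v}x_v$ lie in $\cmons_1$, their images $M_\pm|_1$ are the corresponding exchange monomials for $\clustalg{D_1}$, and $x_j'$ corresponds under Proposition~\ref{p:cvar-partition} to the variable of $\clustalg{D_1}$ obtained by the matching mutation of $x_j|_1$. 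Because $\deg$ and $\deg_+^{\src}$ are gradings, homogeneity of $x_jx_j'=M_-+M_+$ forces $\deg(M_-)=\deg(M_+)=\deg(x_j)+\deg(x_j')=:m$ and $\deg_+^{\src}(M_-)=\deg_+^{\src}(M_+)=\deg_+^{\src}(x_j)+\deg_+^{\src}(x_j')=:d$, so by the reduction step applied to $M_\pm$ one gets $\delta^{\src}(M_\pm)=\epsilon(k_2 d)\,(M_\pm|_1)\otimes(x_*|_2)^m$. Applying $\delta^{\src}$ to $x_jx_j'=M_-+M_+$, summing inside the degree-$m$ component of $\clustalg{D_1}\Segprod\clustalg{D_2}$, and invoking the exchange relation in $\clustalg{D_1}$ yields $\delta^{\src}(x_j)\,\delta^{\src}(x_j')=\epsilon(k_2 d)\,\big[(x_j|_1)(x_j'|_1)\big]\otimes(x_*|_2)^m$. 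On the other hand, multiplying the candidate value $\epsilon(k_2\deg_+^{\src}(x_j'))\,(x_j'|_1)\otimes(x_*|_2)^{\deg(x_j')}$ by $\delta^{\src}(x_j)=\epsilon(k_2\deg_+^{\src}(x_j))\,(x_j|_1)\otimes(x_*|_2)^{\deg(x_j)}$ (known by induction) produces the identical element, since $\epsilon(k_2\deg_+^{\src}(x_j))\epsilon(k_2\deg_+^{\src}(x_j'))=\epsilon(k_2 d)$ and $\deg(x_j)+\deg(x_j')=m$. As $\clustalg{D_1}\Segprod\clustalg{D_2}$ is a domain, being isomorphic via $\delta^{\src}$ to $\clustalg{D}$, which by the Laurent phenomenon embeds in a Laurent polynomial ring, and $\delta^{\src}(x_j)\neq0$, cancellation gives \eqref{eq:delta-src-1} for $x_j'$. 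The $\cvars_2$ case runs identically, the common sign of $M_-$ and $M_+$ now being $\epsilon(k_2 m\,\deg_+^{\src}(x_*))$ — equal because $\deg(M_-)=\deg(M_+)$ — and the target-labelled statements follow by replacing $\GLsrc$, $\deg_+^{\src}$ and source labels by $\GLtgt$, $\deg_+^{\tgt}$ and target labels throughout.

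The content here is structural rather than computational, and I expect the one genuinely delicate point to be the sign bookkeeping: one must verify that the two exchange monomials $M_\pm$ carry the same sign and the same total $\deg$, so that their images lie in a single graded piece of $\clustalg{D_1}\Segprod\clustalg{D_2}$ and the induction closes. That, together with the matching of the local quiver data at $j$ between $\clustalg{D}$ and $\clustalg{D_i}$, follows from Propositions~\ref{p:quivsplit}, \ref{p:cvar-partition} and~\ref{p:grading} and Corollary~\ref{c:mut-comm}, but the combinatorics of the signs has to be tracked with some care.
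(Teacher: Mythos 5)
Your proof is correct and follows essentially the same strategy as the paper's: observe multiplicativity of the right-hand side to reduce to cluster variables, take initial variables as the base case (where the formula is just the definition of $\delta^{\src}$ from Proposition~\ref{p:clust-Segre}), and then induct along mutations at vertices of $Q_0^1$, using the gradings $\deg$ and $\deg_+^{\src}$ and the structural fact from Proposition~\ref{p:quivsplit}\ref{p:quivsplit-noarrows} that both exchange monomials lie in $\cmons_1$. The only cosmetic differences are that you apply $\delta^{\src}$ to the product form $x_jx_j'=M_-+M_+$ and then cancel $\delta^{\src}(x_j)$ using the domain property, whereas the paper writes the exchange relation directly as $\delta^{\src}(x')=(\delta^{\src}(a)+\delta^{\src}(b))/\delta^{\src}(x)$ and simplifies; you also spell out the identification of exchange monomials with their counterparts in $\clustalg{D_1}$ slightly more explicitly, which the paper leaves implicit.
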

\begin{proof}
We give the proof for \(p\in\cmons_1\) and the source-labelling convention, the other cases being completely analogous.
First observe that the right-hand side of \eqref{eq:delta-src-1} is multiplicative in \(p\), so this formula holds for the cluster monomials of a given cluster if and only if it holds for the cluster variables of that cluster.

For the initial variables, the claim is just the definition of \(\delta^{\src}\) from Proposition~\ref{p:clust-Segre}.
Assuming that \eqref{eq:delta-src-1} holds for all cluster monomials from a given cluster, consider an exchange relation
\[x'=\frac{a+b}{x}\]
for this cluster; here \(x\in\cvars_1\) is a cluster variable associated to a vertex from \(Q_0^1\), while \(a\) and \(b\) are cluster monomials.
The fact that \(\deg\) and \(\deg_+^{\src}\) are gradings of \(\clustalg{D}\) means that \(\deg(a)=\deg(b)\) and \(\deg_+^{\src}(a)=\deg_+^{\src}(b)\), and we abbreviate these values to \(d\) and \(d_+\).
This means in particular that \(\deg(x)+\deg(x')=d\) and \(\deg_+(x)+\deg_+(x')=d_+\).
Moreover, by Proposition~\ref{p:quivsplit}\ref{p:quivsplit-noarrows}, we have \(a,b\in\cmons_1\).
Thus, we may compute that
\begin{align*}
\delta^{\src}(x')&=\frac{\delta^{\src}(a)+\delta^{\src}(b)}{\delta^{\src}(x)}\\
&=\frac{\epsilon(k_2d_+)a|_1\otimes (x_*|_2)^d+\epsilon(k_2d_+)b|_1\otimes (x_*|_2)^d}{\epsilon(k_2\deg_+(x))x|_1\otimes (x_*|_2)^{\deg(x)}}\\
&=\epsilon(k_2(d_+-\deg_+x))\frac{a|_1+b|_1}{x|_1}\otimes (x_*|_2)^{d-\deg(x)}\\
&=\epsilon(k_2\deg_+(x'|_1))x'|_1\otimes (x_*|_2)^{\deg(x')},
\end{align*}
as required.
The argument for a mutation at \(x\in\cvars_2\) is analogous, and completes the proof.
\end{proof}

We are now ready to prove the main result of this section.

\begin{thm}
\label{t:qco-conn}
If the cluster structures \(\GLsrc_i,\GLtgt_i\colon\clustalg{D_i}\isoto\CC[\openposvarcone(\posit_i)]\) quasi-coincide for \(i=1,2\), then the cluster structures \(\GLsrc,\GLtgt\colon\clustalg{D}\isoto\CC[\openposvarcone(\posit)]\) also quasi-coincide.
\end{thm}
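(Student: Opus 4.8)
The plan is to verify directly that the identity map on $R=\CC[\openposvarcone(\posit_D)]$ is a quasi-cluster morphism from $\GLsrc$ to $\GLtgt$, which by Definition~\ref{d:quasi-coincide} is exactly the assertion. The whole argument is organised around the two commutative squares of Proposition~\ref{p:clust-Segre}, which reduce statements about the structures $\GLsrc,\GLtgt$ on $\clustalg D$ to statements about the component structures $\GLsrc_i,\GLtgt_i$ on $\clustalg{D_i}$ via the isomorphisms $\partial$ (Corollary~\ref{c:Segre-product}), $\delta^{\src}$ and $\delta^{\tgt}$; the computational inputs are the explicit formula of Proposition~\ref{p:delta-formula} together with the description of cluster variables and their compatibilities in Proposition~\ref{p:cvar-partition} and Corollary~\ref{c:Segre-comp}. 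From the hypothesis I fix, for $i=1,2$, the compatibility-respecting bijection $y\mapsto y^\dagger$ on non-frozen cluster variables of $\clustalg{D_i}$ with $\GLsrc_i(y)=\GLtgt_i(y^\dagger)q$ for some $q\in\GLtgt_i(\froz(\clustalg{D_i}))$, extended multiplicatively to cluster monomials.

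For the first part of Definition~\ref{d:qcm}\ref{d:qcm-froz}, that $\GLsrc(\froz(\clustalg D))\subseteq\GLtgt(\froz(\clustalg D))$ (in fact with equality), one applies to $D$ directly the Muller--Speyer result already used in the proof of Proposition~\ref{p:sourceneck}. For the second part, let $\xi$ be a non-frozen cluster variable of $\clustalg D$; by Proposition~\ref{p:cvar-partition} it lies in exactly one of $\cvars_1\setminus\{x_*\}$ and $\cvars_2\setminus\{x_*\}$, say $\cvars_1$, and then $\xi|_1$ is a non-frozen cluster variable of $\clustalg{D_1}$. Set $\xi^\dagger\in\cvars_1$ to be the cluster variable of $\clustalg D$ corresponding under Proposition~\ref{p:cvar-partition} to $(\xi|_1)^\dagger$; it is again non-frozen. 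Now expand $\GLsrc(\xi)$ through $\partial$ and Proposition~\ref{p:delta-formula}, rewrite $\GLsrc_1(\xi|_1)$ as $\GLtgt_1((\xi|_1)^\dagger)q_1$ and $\GLsrc_2(x_*|_2)$ as a target-labelled frozen monomial (using quasi-coincidence for $D_2$, or just Proposition~\ref{p:sourceneck} for $D_2$), and recombine via the target-labelled form of Proposition~\ref{p:delta-formula}; this yields $\GLsrc(\xi)=\GLtgt(\xi^\dagger)p$ with $p\in\GLtgt(\froz(\clustalg D))$. Defined symmetrically on $\cvars_2$, the assignment $\xi\mapsto\xi^\dagger$ is a bijection on non-frozen cluster variables of $\clustalg D$, being assembled from the component bijections and those of Proposition~\ref{p:cvar-partition}, and it respects compatibility by Corollary~\ref{c:Segre-comp}.

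For Definition~\ref{d:qcm}\ref{d:qcm-stable} I would invoke Remark~\ref{r:seed-by-seed} at the initial seed. The containment just proved makes $\stab{\id_R}=\id_{\stab R}$ well defined, and pushing $\xi\mapsto\xi^\dagger$ through $\pi_R$ (the frozen factor $p$ becomes $1$) gives $\stab{\GLsrc}(\stab{x_j})=\stab{\GLtgt}(\stab{x_j^\dagger})$ for every mutable $j$, so $\stab{\GLtgt}^{-1}\circ\stab{\GLsrc}$ sends the initial cluster of $\stab{\clustalg D}$ to $\{\stab{x_j^\dagger}\}$. By Proposition~\ref{p:cvar-partition} and Corollary~\ref{c:mut-comm} the cluster combinatorics of $\clustalg D$ is a disjoint union of those of $\clustalg{D_1}$ and $\clustalg{D_2}$, with $\stab Q_D=\stab Q_{D_1}\disunion\stab Q_{D_2}$; under this decomposition $\{\stab{x_j^\dagger}\}$ restricts on the $\clustalg{D_i}$ side to $\{\stab{(x_j|_i)^\dagger}\}$, which is the image of the initial cluster under the \emph{strong} cluster-algebra isomorphism $\stab{\GLtgt_i}^{-1}\circ\stab{\GLsrc_i}$ provided by quasi-coincidence for $D_i$, and hence has quiver $\stab Q_{D_i}$. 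So $\{\stab{x_j^\dagger}\}$ has quiver $\stab Q_D$, the vertex bijection $j\mapsto j$ extends to a quiver isomorphism, and the criterion of Remark~\ref{r:seed-by-seed} applies. With condition~\ref{d:qcm-stable} in hand, the condition in Definition~\ref{d:qcm}\ref{d:qcm-yhat} may be checked on the initial seed alone (Remark~\ref{r:seed-by-seed}): for a mutable vertex $j\in Q_0^i$, Proposition~\ref{p:quivsplit}\ref{p:quivsplit-noarrows} shows that the $j$-th column of the exchange matrix is supported on $Q_0^i$, while $\yhat_j$ has degree $0$ in every cluster algebra grading, in particular in $\deg$ and $\deg_+^{\src}$; hence in Proposition~\ref{p:delta-formula} the signs and the powers of $x_*$ cancel and $\delta^{\src}(\yhat_j)$ reduces to the $\yhat_j$ of $\clustalg{D_i}$, so the identity $\yhat_j(s)\mapsto\yhat_{f(j)}(f(s))$ for $\clustalg D$ follows from its component counterpart.

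The step I expect to be the main obstacle is the bookkeeping inside Definition~\ref{d:qcm}\ref{d:qcm-froz}: tracking the powers of $x_*$ and the signs $\epsilon\big(k_2\deg_+^{\src}(\,\cdot\,)\big)$ (and their target analogues) of Proposition~\ref{p:delta-formula} through the chain of rewritings and checking that they are consistent. This is precisely what the refined gradings $\deg_+^{\src},\deg_+^{\tgt}$ were introduced to control, but one must also fix a convention resolving the ambiguity in splitting a cluster monomial of $\clustalg D$ across $\cvars_1$ and $\cvars_2$ (which amounts to moving powers of $x_*$ between the two factors). A secondary point worth stressing is that Definition~\ref{d:qcm}\ref{d:qcm-stable} genuinely uses the full strength of the \emph{strong} cluster-algebra isomorphisms coming from the component hypotheses, in order to identify the quiver of $\{\stab{(x_j|_i)^\dagger}\}$ with $\stab Q_{D_i}$, rather than merely the fact that the identity is some quasi-cluster morphism on each $\CC[\openposvarcone(\posit_i)]$.
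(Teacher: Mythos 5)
Your proposal is correct and takes essentially the same route as the paper: both arguments reduce each of the three conditions in Definition~\ref{d:qcm} to its counterpart for $\clustalg{D_1}$ and $\clustalg{D_2}$ by conjugating with $\partial$ and the Segre-product maps $\delta^{\src},\delta^{\tgt}$ from Proposition~\ref{p:clust-Segre}, using Proposition~\ref{p:delta-formula} to keep track of the factor $x_*^{\deg(\cdot)}$ and the signs $\epsilon(k_2\deg_+^{\src}(\cdot))$. The one genuine simplification you introduce is the observation that $\yhat_j$ has degree $0$ in both $\deg$ and $\deg_+^{\src}$ (and $\deg_+^{\tgt}$), which makes the $x_*$-powers and signs in condition~\ref{d:qcm-yhat} vanish immediately; the paper instead carries these terms through the computation and checks they match at the end, which is equivalent but longer. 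One small wrinkle worth flagging: Proposition~\ref{p:delta-formula} as stated applies to cluster \emph{monomials} in $\cmons_i$, so for the $\yhat_j$ (which are Laurent monomials in a cluster) you are implicitly using that the formula is multiplicative and extends to the localisation, and likewise you should observe that $\delta^{\src}$, $\delta^{\tgt}$ and $\partial$ extend to the relevant fraction fields before evaluating on $\yhat_j$. Finally, you prove the identity is a quasi-cluster morphism from $\GLsrc$ to $\GLtgt$, where the paper goes from $\GLtgt$ to $\GLsrc$; as you note via Remark~\ref{r:seed-by-seed} these are equivalent, so this is purely a matter of convention.
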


\begin{proof}
We already know that any frozen variable for the cluster structure \(\GLtgt\) is a Laurent monomial in the frozen variables for \(\GLsrc\) by \cite[Prop.~7.13]{MulSpe-Twist} (see Proposition~\ref{p:sourceneck}), so let \(x\in\cvars_1\) be a non-frozen cluster variable.
By Propositions~\ref{p:clust-Segre} and \ref{p:delta-formula}, we calculate
\begin{equation}
\label{eq:GLtgt-val2}
\partial\GLtgt(x)=(\GLtgt_1\Segprod\GLtgt_1)(\delta^{\tgt}(x))
=\epsilon(k_2\deg_+^{\tgt}(x))\GLtgt_1(x|_1)\otimes\GLtgt_2(x_*|_2)^{\deg(x)}.
\end{equation}
Since \(\GLtgt_1\) quasi-coincides with \(\GLsrc_1\), there is a non-frozen cluster variable \(x'\in\cvars_1\) and Laurent monomials \(p\in\cmons_1\) and \(q\in\cmons_2\) in frozen variables such that \(\GLtgt_1(x|_1)=\GLsrc_1(x'p|_1)\) and \(\GLtgt_2(x_*|_2)=\GLsrc_2(q|_2)\).
It follows from these identities that \(\deg(x'p)=\deg(x)\), \(\deg_+^{\src}(x'p)=\deg_+^{\tgt}(x)\) and \(\deg(q)=1\), recalling that \(\deg(x)=\deg(x|_1)\) and so on.
In particular, \(\deg(\frac{q}{x_*})=0\).

Now we calculate
\begin{align*}
\delta^{\src}\Bigl(x'p\Big(\frac{q}{x_*}\Big)^{\deg(x)}\Bigr)
&=\epsilon( k_2\deg_+^{\src}(x'p))\bigl(x'p|_1\otimes(x_*|_2)^{\deg(x)}\bigr)\Bigl(1\otimes\frac{q}{x_*}\Big|_2\Bigr)^{\deg(x)} \\
&=\epsilon(k_2\deg_+^{\src}(x'p))x'p|_1\otimes (q|_2)^{\deg(x)}.
\end{align*}
Thus, by Proposition~\ref{p:clust-Segre} and \eqref{eq:GLtgt-val2} we have
\begin{align*}
\partial\GLsrc\Bigl(x'p\Bigl(\frac{q}{x_*}\Bigr)^{\deg(x)}\Bigr)
&=(\GLsrc_1\Segprod\GLsrc_2)\delta^{\src}\Bigl(x'p\Bigl(\frac{q}{x_*}\Bigr)^{\deg(x)}\Bigr)\\
&=\epsilon( k_2\deg_+^{\src}(x'p))\GLsrc_1(x'p|_1)\otimes \GLsrc_2(q|_2)^{\deg(x)}\\
&=\epsilon( k_2\deg_+^{\tgt}(x))\GLtgt_1(x|_1)\otimes \GLtgt_2(x_*|_2)^{\deg(x)}\\
&=\partial\GLtgt(x).
\end{align*}
Therefore \(\GLsrc(x'pq^{\deg(x)}/x_*^{\deg(x)})=\GLtgt(x)\) which, since \(x'\) is a cluster variable of \(\clustalg{D}\) and \(pq^{\deg(x)}/x_*^{\deg(x)}\) is a Laurent monomial in frozen variables, is what we needed to show.
Once again, the argument in the case that \(x\in\cvars_2\) is very similar.

Since removing frozen vertices disconnects \(Q\), setting frozen variables to \(1\) in \(\clustalg{D}\) leads to a disconnected cluster algebra \(\stab{\clust{A}}_D=\stab{\clust{A}}_{D_1}\times\stab{\clust{A}}_{D_2}\).
The requirement of Definition~\ref{d:qcm}\ref{d:qcm-stable} for the quasi-coincidence of \(\GLsrc\) and \(\GLtgt\) thus follows immediately from the corresponding property for \(\GLsrc_i\) and \(\GLtgt_i\).

It remains to check Definition~\ref{d:qcm}\ref{d:qcm-yhat}, and as pointed out in Remark~\ref{r:seed-by-seed} it is enough to do this on the initial seed.
Pick a mutable vertex \(j\in Q_0\) and consider
\[\yhat_j^{\tgt}=\prod_{i\in Q_0}\GLtgt(x_i)^{b_{ij}}=
\begin{cases}
\prod_{i\in Q_0^1}\GLtgt(x_i)^{b_{ij}},&j\in Q_0^1,\\
\prod_{i\in Q_0^2}\GLtgt(x_i)^{b_{ij}},&j\in Q_0^2
\end{cases}\]
since any arrow of \(Q\) has either both endpoints in \(Q_0^1\) or both in \(Q_0^2\).
In particular, \(\deg(\yhat_j^{\tgt})=\sum_{i\in Q_0^1}b_{ij}\) if \(j\in Q_0^1\), with the analogous formula for \(j\in Q_0^2\).

If \(j\in Q_0^1\), then we compute that
\begin{align*}
\partial(\yhat_j^{\tgt})
&=\prod_{i\in Q_0^1}\epsilon( k_2b_{ij}\deg_+(x_i))(\GLtgt_1(x_i|_1)\otimes \GLtgt(x_*|_2))^{b_{ij}}\\
&=\epsilon(k_2\deg_+(\yhat_j^{\tgt}))\yhat^{\tgt}_j|_1\otimes \varPlueck{\tgtlab{*}|_2}^{\deg(\yhat^{\tgt}_{j})},
\end{align*}
where
\[\yhat_{j}^{\tgt}|_1=\prod_{i\in Q_0^1}\GLtgt_1(x_i|_1)^{b_{ij}}\]
is the \(\yhat\)-variable attached to vertex \(j\) in the cluster structure \(\GLtgt_1\colon\clustalg{D_1}\isoto\CC[\openposvarcone_1]\).
We use Proposition~\ref{p:quivsplit}\ref{p:quivsplit-subquiv} here to see that the exponents \(b_{ij}\) match those in the calculation of \(\yhat_j^{\tgt}\).
In the same way, for \(j\in Q_0^2\) we have
\[\partial(\yhat_j^{\tgt})=\epsilon(k_2\deg(\yhat_j^{\tgt})\deg_+^{\tgt}(x_*))\varPlueck{\tgtlab{*}|_1}^{\deg(\yhat_j^{\tgt})}\otimes\yhat_j^{\tgt}|_2.\]

Now since \(\GLtgt_1\) quasi-coincides with \(\GLsrc_1\), there is a seed \(s_1\) for \(\clustalg{D_1}\) such that \(\yhat_{j}^{\tgt}|_1=\yhat_{j'}^{\src}(s_1)\) for all mutable \(j\in Q_0^1\).
Here \(j\mapsto j'\) is a bijection between the mutable vertices of \(Q_0^1\) and the mutable vertices of the quiver \(Q(s_1)\) of \(s_1\).
Similarly, there is a seed \(s_2\) for \(\clustalg{D_2}\) such that \(\yhat_{j}^{\tgt}|_2=\yhat_{j'}^{\src}(s_2)\) for all \(j\in Q_0^2\).
Choose a sequence of mutations from the initial seed of \(\clustalg{D_1}\) to \(s_1\), and a sequence of mutations from the initial seed of \(\clustalg{D_2}\) to \(s_2\).
Applying these sequences of mutations to \(Q\)---which we may do in any order by Corollary~\ref{c:mut-comm}---we obtain a seed \(s\) of \(\clustalg{D}\).
By Proposition~\ref{p:quivsplit}\ref{p:quivsplit-noarrows}, the full subquiver of \(Q(s)\) on \(Q_0^i\) is \(Q(s_i)\) for \(i=1,2\).

Combining the bijections \(j\mapsto j'\) for \(j\in Q_0^1\) and \(j\in Q_0^2\), we get a bijection \(j\mapsto j'\) from the mutable vertices of \(Q_D\) to those of \(Q(s)\). For \(j\in Q_0^1\), we calculate
\begin{align}
\label{eq:yhat-calc}
\begin{split}
\partial\yhat_{j'}^{\src}(s)
&=\prod_{i\in Q_0^1}\partial\GLsrc(x_{i'})^{b_{i'j'}}\\
&=\prod_{i\in Q_0^1}\epsilon(k_2b_{i'j'}\deg_+^{\src}(x_i'))\GLsrc_1(x_{i'}|_1)^{b_{i'j'}}\otimes \varPlueck{\tgtlab{*}|_2}^{b_{i'j'}\deg(x_{i'})}\\
&=\epsilon\bigl(k_2\deg_+(\yhat_{j'}^{\src}(s_1))\bigr)\yhat_{j'}^{\src}(s_1)\otimes \varPlueck{\tgtlab{*}|_2}^{\deg(\yhat_{j'}^{\src}(s_1))}\\
&=\epsilon(k_2\deg_+(\yhat_j^{\tgt}))\yhat_j^{\tgt}|_1\otimes\varPlueck{\tgtlab{*}|_2}^{\deg(\yhat_j^{\tgt})}\\
&=\partial\yhat_j^{\tgt},
\end{split}
\end{align}
and so \(\yhat_{j'}^{\src}(s)=\yhat_j^{\tgt}\).
As usual, a completely analogous calculation shows that \(\yhat_{j'}^{\src}(s)=\yhat_j^{\tgt}\) when \(j\in Q_0^2\), completing the proof.
\end{proof}

\begin{cor}
\label{c:conn}
If the source-labelled and target-labelled cluster structures quasi-coincide for connected positroids, then they quasi-coincide for all positroids.
\end{cor}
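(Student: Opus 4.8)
The plan is to prove, for \emph{every} Postnikov diagram $D$, that the source-labelled and target-labelled cluster structures $\GLsrc,\GLtgt\colon\clustalg{D}\isoto\CC[\openposvarcone_{\posit_D}]$ quasi-coincide, by induction on the number $m$ of connected components of $D$. By the remark following Definition~\ref{d:pm}, $D$ has a single component precisely when $\posit_D$ is a connected positroid, so the base case $m=1$ is exactly the hypothesis of the corollary. The inductive step will use Theorem~\ref{t:qco-conn} to reassemble a disconnected diagram from smaller pieces.

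For the inductive step I would argue as follows. Suppose $D$ has $m\geq 2$ components. Since $D$ is disconnected, some boundary region of $D$ meets the boundary of the disc in two distinct segments $\sigma_1,\sigma_2$; routing an arc $\gamma$ through this region from a point of $\sigma_1$ to a point of $\sigma_2$ produces an arc disjoint from $D$. On either side of $\gamma$ the boundary of the disc contains at least one marked point between $\sigma_1$ and $\sigma_2$ (otherwise $\sigma_1$ and $\sigma_2$ would form a single segment of the same region), and the strands incident with such a point remain on that side of $\gamma$ because $\gamma$ avoids $D$. Hence cutting along $\gamma$ as in the setup preceding Proposition~\ref{p:posit-decomp} yields Postnikov diagrams $D_1$ and $D_2$ on nonempty cyclic intervals $S_1,S_2$ partitioning $\ZZ_n$, with $\posit_D\cong\posit_1\times\posit_2$ by Proposition~\ref{p:posit-decomp}. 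Each connected component of $D$ lies wholly on one side of $\gamma$, so the components of $D_1$ and of $D_2$ split those of $D$ into two nonempty families; in particular $D_1$ and $D_2$ each have fewer than $m$ components. By the inductive hypothesis $\GLsrc_i,\GLtgt_i$ quasi-coincide on $\CC[\openposvarcone(\posit_i)]$ for $i=1,2$, and Theorem~\ref{t:qco-conn} then gives that $\GLsrc,\GLtgt$ quasi-coincide on $\CC[\openposvarcone(\posit_D)]$, completing the induction.

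The one point requiring real care is the existence of the separating arc $\gamma$, that is, the claim that a disconnected Postnikov diagram genuinely admits a cut with both pieces nonempty. This is a planarity statement: distinct components of $D$ use non-crossing sets of boundary marked points (a connected path in one component joining neighbourhoods of two cyclically interleaved marked points would be forced to cross a corresponding path in another component, contradicting disjointness of the components), and this forces the existence of a boundary region seen from two sides. Everything beyond this is already packaged into Theorem~\ref{t:qco-conn} and Proposition~\ref{p:posit-decomp}, together with the routine observations that quasi-coincidence is a property attached to a Postnikov diagram and that the class of Postnikov diagrams is closed under this cutting operation.
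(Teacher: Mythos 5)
Your proof is correct and follows the same strategy as the paper, namely induction on the number of connected components using Theorem~\ref{t:qco-conn} for the inductive step. The paper's proof is the single line ``this follows by induction on Theorem~\ref{t:qco-conn}'', implicitly taking the existence of a separating arc $\gamma$ for granted, whereas you usefully spell out the planar (non-crossing) argument showing that any disconnected Postnikov diagram genuinely admits such a cut with both pieces nonempty.
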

\begin{proof}
This follows by induction on Theorem~\ref{t:qco-conn}.
\end{proof}
 
\begin{eg}
Consider again the example from Figure~\ref{f:Postcut}.
One of the initial target-labelled variables from a vertex in \(Q_0^2\) is \(\Plueck{14679}\), which satisfies
\[\partial(\Plueck{14679})=-\Plueck{19}\otimes\Plueck{467}.\]
Since in this case both connected components give rise to uniform positroid varieties, for which the source-labelled and target-labelled cluster structures coincide exactly, both \(\Plueck{19}\) and \(\Plueck{467}\) are source-labelled cluster variables as well as target-labelled variables, and \(\Plueck{19}\) is moreover frozen.
Indeed, one can obtain \(\Plueck{467}\) from the initial source-labelled seed by mutating at the vertex of \(Q_{D_2}\) labelled by \(568\) (i.e.\ that labelled by \(23568\) in \(Q_D\)).
Thus, the recipe of Theorem~\ref{t:qco-conn} claims that in \(\CC[\openposvarcone_\posit]\), for \(\posit=\binom{2}{4}\times\binom{3}{5}\), we have
\[\Plueck{14679}=\Plueck{23467}\frac{\Plueck{16789}}{\Plueck{23678}},\]
with \(\Plueck{23678}\) being the frozen variable at the vertex \(*\).
Indeed, applying the isomorphism \(\partial\), we find that
\[\partial\Bigl(\Plueck{23467}\frac{\Plueck{16789}}{\Plueck{23678}}\Bigr)=(\Plueck{23}\otimes\Plueck{467})\frac{-\Plueck{19}\otimes\Plueck{678}}{\Plueck{23}\otimes\Plueck{678}}=-\Plueck{19}\otimes\Plueck{467}=\partial(\Plueck{14679}),\]
and so this claim is correct. Thus, the target-labelled variable \(\Plueck{14679}\) agrees with the product of the source-labelled variable \(\Plueck{23467}\) and the Laurent monomial \(\Plueck{16789}/\Plueck{23678}\) in frozen source-labelled variables, as required for quasi-coincidence.
\end{eg}

We close this section by giving similar results for Muller--Speyer's left twist automorphism of \(\openposvarcone_\posit\) \cite[\S6.1,Thm.~6.7]{MulSpe-Twist}, denoted by \(\ltwist\); we omit the definition, since we will mostly not need it.
We will also write \(\ltwist\colon\CC[\openposvarcone_\posit]\to\CC[\openposvarcone_\posit]\) for the induced map on functions.

\begin{prop}
\label{p:twist-comm}
Consider a disconnected positroid \(\posit=\posit_1\times\posit_2\), and write \(\ltwist\), \(\ltwist_1\) and \(\ltwist_2\) for the left twist automorphisms of \(\posit\), \(\posit_1\) and \(\posit_2\) respectively.
Then the diagram
\[\begin{tikzcd}[column sep=3em]
\openposvar(\posit)&\openposvar(\posit)\arrow{l}[swap]{\ltwist}\\
\openposvar(\posit_1)\times\openposvar(\posit_2)\arrow{u}{\dsum}&\openposvar(\posit_1)\times\openposvar(\posit_2)\arrow{l}[swap]{\ltwist_1\times\ltwist_2}\arrow{u}{\dsum}
\end{tikzcd}\]
commutes.
\end{prop}
\begin{proof}
By Proposition~\ref{p:posvar-decomp}, given \(A\in\openposvar(\posit)\) we have \(A=A^1\oplus A^2\) for \(A^i\in\openposvar(\posit_i)\), and we need to show that \(\ltwist_1A^1\oplus\ltwist_2A^2=\ltwist A\).
Via the decomposition \(\CC^n=\CC^{S_1}\oplus\CC^{S_2}\), we may view \(A=A^1\oplus A^2\) as a \(k\times n\) matrix in which the \(k_1\times n_1\) submatrix in rows \(\{1,\dotsc,k_1\}\) and columns \(S_1\) is \(A^1\), the \(k_2\times n_2\) submatrix in rows \(\{k_1+1,\dotsc,k\}\) and columns \(S_2\) is \(A^2\), and all other entries are zero.
The matrix \(\ltwist_1A^1\oplus\ltwist_2A^2\) has the same form.

Denote the ordinary Euclidean inner product on \(\CC^n\) by \(\langle\blank,\blank\rangle\), and the \(a\)-th column of a matrix \(M\) by \(M_a\).
Then for indices \(a\in S_i\) and \(b\in S_j\), we have
\[\langle(\ltwist_1A^1\dsum\ltwist_2A^2)_a,A_b\rangle=
\begin{cases}
\langle\ltwist_iA^i_a,A^i_b\rangle,&i=j,\\
0,&i\ne j,
\end{cases}\]
as a consequences of the description of these matrices above.
Now a direct comparison to the definition of \(\ltwist A\) given in \cite[\S6.1]{MulSpe-Twist} shows that \(\ltwist_1A^1\oplus\ltwist_2A^2=\ltwist A\) as required.
\end{proof}

\begin{cor}
\label{c:twist-comm}
In the setting of Proposition~\ref{p:twist-comm}, the diagram
\[\begin{tikzcd}[column sep=3em]
\CC[\openposvarcone(\posit)]\arrow{r}{\ltwist}\arrow{d}{\partial}&\CC[\openposvarcone(\posit)]\arrow{d}{\partial}\\
\CC[\openposvarcone(\posit_1)]\Segprod\CC[\openposvarcone(\posit_2)]\arrow{r}{\ltwist_1\Segprod\ltwist_2}&\CC[\openposvarcone(\posit_1)]\Segprod\CC[\openposvarcone(\posit_2)]
\end{tikzcd}\]
commutes, where \(\partial\) is the isomorphism from Corollary~\ref{c:Segre-product}.
\end{cor}

\begin{thm}
\label{t:twist-conn}
If each \(\ltwist_i\colon\CC[\openposvarcone(\posit_i)]\to\CC[\openposvarcone(\posit_i)]\) is a quasi-cluster morphism from \(\GLtgt_i\) to \(\GLsrc_i\), then \(\ltwist\colon\CC[\openposvarcone(\posit)]\to\CC[\openposvarcone(\posit)]\) is a quasi-cluster morphism from \(\GLtgt\) to \(\GLsrc\).
\end{thm}
\begin{proof}
The proof is along much the same lines as that of Theorem~\ref{t:qco-conn}, and so we concentrate mainly on the slight differences.

The argument opening the proof of \cite[Prop.~6.6]{MulSpe-Twist}, together with \cite[Thm.~6.7]{MulSpe-Twist}, shows that \(\ltwist\) inverts frozen variables for \(\GLtgt\), and so takes these frozen variables to Laurent monomials in the frozen variables for \(\GLsrc\) as in Proposition~\ref{p:sourceneck}.

Assume \(x\in\cvars_1\) is non-frozen, the proof for \(x\in\cvars_2\) being analogous. Applying \(\ltwist_1\Segprod\ltwist_2\) to \eqref{eq:GLtgt-val2} and using Corollary~\ref{c:twist-comm}, we see that
\begin{equation}
\label{eq:twist-calc-1}
\partial\ltwist\GLtgt(x)
=\epsilon(k_2\deg_+^{\tgt}(x))\ltwist_1\GLtgt_1(x|_1)\otimes\ltwist_2\GLtgt_2(x_*|_2)^{\deg(x)}.
\end{equation}
By assumption, we may write \(\ltwist_1\GLtgt_1(x|_1)=\GLsrc_1(x'p|_1)\) and \(\ltwist_2\GLtgt(x_*)=\GLsrc_2(q|_2)\), where \(x'\in\cvars_1\) is non-frozen, and \(p\in\cmons_1\) and \(q\in\cmons_2\) are Laurent monomials in frozen variables.
By \cite[Prop.~6.1, Rem.~6.2]{MulSpe-Twist}, we have \(\degvec(\ltwist(f))=-\degvec(\ltwist)\) for any \(\degvec\)-homogeneous element \(f\in\CC[\openposvarcone_\posit]\).
It follows that \(\deg(x'p)=-\deg(x)\), \(\deg_+^{\src}(x'p)=-\deg_+^{\tgt}(x)\) and \(\deg(q)=-\deg(x_*)=-1\), so that \(\deg(qx_*)=0\).
As in the proof of Theorem~\ref{t:qco-conn}, we may thus calculate
\[\partial\GLsrc(x'p(qx_*)^{\deg(x)})=\epsilon(k_2\deg_+^{\src}(x'p))\ltwist_1\GLtgt_1(x|_1)\otimes\ltwist_2\GLtgt_2(x_*|_2)^{\deg(x)}.\]
Since \(\deg_+^{\src}(x'p)=-\deg_+^{\tgt}(x)\), these two degrees have the same parity, and thus we see by comparing to \eqref{eq:twist-calc-1} that \(\GLsrc(x'p(qx_*)^{\deg(x)})=\ltwist\GLtgt(x)\), as required.

The condition in Definition~\ref{d:qcm}\ref{d:qcm-stable} follows from the disconnectedness of  \(\clustalg{D}=\stab{\mathscr{A}}_D\) exactly as in the proof of Theorem~\ref{t:qco-conn}.
To establish Definition~\ref{d:qcm}\ref{d:qcm-yhat}, our assumption on \(\ltwist_1\) and \(\ltwist_2\) means that there are seeds \(s_i\) for \(\clustalg{D_i}\), \(i=1,2\), such that \(\ltwist(\yhat_j^{\tgt}|_i)=\yhat_{j'}^{\src}(s_i)\), for a bijection \(j\mapsto j'\) between the mutable vertices of \(Q_0^i\) and the mutable vertices of \(Q(s_i)\).
For \(s\) the seed of \(\clustalg{D}\) obtained from \(s_1\) and \(s_2\) as in the proof of Theorem~\ref{t:qco-conn}, a calculation parallel to \eqref{eq:yhat-calc} shows that \(\yhat^{\src}_{j'}(s)=\ltwist\yhat_j^{\tgt}\) for each mutable \(j\in Q_0^1\), with an analogous calculation giving the same result for \(j\in Q_0^2\).
This completes the proof.
\end{proof}

\begin{rem}
\label{r:choices}
While we made particular choices for concreteness, the only property of the source-labelled and target-labelled cluster structures used in proving Theorem~\ref{t:twist-conn} was the commutativity from Proposition~\ref{p:clust-Segre}.
Thus, we can show by an identical argument that \(\ltwist\) is a quasi-cluster morphism from \(\GLsrc\) to \(\GLsrc\) provided \(\ltwist_i\) is a quasi-cluster morphism from \(\GLsrc_i\) to \(\GLsrc_i\), and so on for other combinations of labelling rules.
All of these statements turn out to be equivalent to each other; see Proposition~\ref{p:qcms} below.
\end{rem}

\section{Categorification}
\label{s:categorification}

Given Corollary~\ref{c:conn}, it remains to show that the source-labelled and target-labelled cluster structures on an open positroid variety \(\CC[\openposvarcone_{\posit}]\) quasi-coincide when \(\posit\) is connected.
To do this, we will use categorifications of the combinatorial ingredients introduced in Sections~\ref{s:positroids} and \ref{s:quasi-equivalence}, and in this section we recall the necessary details of these constructions.
For a connected Postnikov diagram \(D\), the cluster algebra \(\clustalg{D}\) was categorified by the author \cite{Pressland-Postnikov}, with the link to the cluster structures on \(\CC[\openposvarcone_\posit]\) clarified in work with Çanakçı and King \cite{CKP}.
Perfect matchings and Muller--Speyer's left twist automorphism of \(\openposvarcone_\posit\) are also categorified in \cite{CKP}.
The categorification of quasi-cluster morphisms is due to Fraser and Keller \cite[Appendix]{KelWu}.

\subsection{Categorification of positroids and positroid cluster algebras}
\label{s:categorification-posit}

Fix a connected Postnikov diagram \(D\), with plabic graph \(\Gamma=\Gamma_D\) and quiver \(Q=Q_D\).
Each node \(v\in\Gamma\) determines a cycle \(c_v\) in \(Q\) (up to cyclic equivalence) by following the arrows of \(Q\) crossing the edges of \(\Gamma\) incident with \(v\).
If \(v\) is black, then \(c_v\) is anticlockwise, whereas if \(v\) is white, then \(c_v\) is clockwise.

If \(a\in Q_1\) crosses a full edge of \(\Gamma\), incident with both a black node \(v\) and white node \(w\), then it is contained in both of the cycles \(c_v\) and \(c_w\).
Up to rotation, we may write these cycles as
\[c_v=a p^\black_a,\quad c_w=ap^\white_a\]
for paths \(p^\black_a,p^\white_a\colon t(a)\to s(a)\) from the target of \(a\) to its source.

\begin{defn}
\label{d:dimer-alg}
The \emph{dimer algebra} of \(D\) is
\[A_D=\cpa{\CC}{Q_D}/\close{\ideal{p^\black_a-p^\white_a:a\in Q_1\setminus F_1}},\]
where \(\cpa{\CC}{Q_D}\) denotes the complete path algebra of \(Q\), a topological algebra in which it makes sense to take closures of ideals.
Equivalently, \(A_D\) is the frozen Jacobian algebra \cite{Pressland-FJAs} of \((Q_D,F_D)\) with potential \(W_D=\sum_{v\in\Gamma^\black_0}c_v-\sum_{w\in\Gamma^\white_0}c_w\), for \(\Gamma^\black_0\) the set of black nodes of \(\Gamma_D\), and \(\Gamma^\white_0\) the set of white nodes.

The boundary algebra of \(D\) is
\[B_D=eA_De,\]
where \(e=\sum_{i\in F_0}\idemp{i}\) is the sum of vertex idempotents at the frozen vertices of \(Q_D\).
\end{defn}

Concretely, the algebra \(A_D\) is spanned (topologically) by the set of paths in the quiver \(Q_D\), subject to the relations indicated, while \(B_D\) is the subalgebra spanned by those paths which start and end on the frozen vertices (at the boundary of the disc).

\begin{rem}
Bearing in mind parts \ref{p:op-plabic} and \ref{p:op-quiv} of Proposition~\ref{p:op}, we see that \(A_{D^\op}=A_D^\op\) and \(B_{D^\op}=B_D^\op\).
\end{rem}

Write \(Z=\powser{\CC}{t}\).
Then both \(A_D\) and \(B_D\) can be given the structure of \(Z\)-algebras, and are free and finitely generated as \(Z\)-modules \cite[Prop.~2.15]{CKP}.
At a vertex \(j\) of \(Q_D\), the generator \(t\in Z\) acts on \(A_D\) by multiplication by a cycle \(c_j\) cyclically equivalent to one of the cycles \(c_v\) passing through \(j\); the choice of node \(v\) is irrelevant because of the defining relations of \(A_D\) and the fact that \(D\) is connected.
We typically abuse notation by identifying \(t\in Z\) both with
\[t=\sum_{i\in Q_0}c_i\in A_D\]
and with
\[ete=\sum_{i\in F_0}c_i\in B_D.\]

\begin{rem}
\label{r:disconnected}
If \(D\) is not connected, then \(A_D\) as defined above is not finitely generated as a \(Z\)-module.
This can be fixed by imposing additional relations to recover the property that all distinguished cycles \(c_v\) at a given vertex are equal in \(A_D\).
While these extra relations are rather natural, the resulting algebra is not a frozen Jacobian algebra, and so we lose access to several technical ingredients in the proof of Theorem~\ref{t:categorify} below (most notably the results from \cite[\S5]{Pressland-iCY} and \cite{Pressland-FJAs}).
On the other hand, we expect that the conclusion of this theorem does in fact remain true in the disconnected case, as long as \(A_D\) is modified in this way.

Results of Keller and Wu \cite{KelWu} (building on \cite{Wu-Higgs} by removing a Jacobi-finiteness assumption) may be applied to give an extriangulated categorification of the cluster algebra \(\clust{A}_D\) in all cases, using the ice quiver with potential \((Q_D,F_D,W_D)\) (and hence implicitly only the ordinary Jacobian relations from Definition~\ref{d:dimer-alg}).
However, this categorification will not satisfy the conclusion of Proposition~\ref{p:embed} below, and so is not suitable from the point of view of our methods.
Once again, we expect that Proposition~\ref{p:embed} remains true for \(D\) disconnected, when \(A_D\) is modified as above.

\end{rem}

\begin{defn}[{\cite{JKS}}]
Fix \(0<k<n\). 
We write \(Q_n\) for the quiver whose vertex set is \(\ZZ_n+\frac{1}{2}=\{\frac{1}{2},\frac{3}{2},\dotsc,n-\frac{1}{2}\}\), again viewed as a cyclically ordered set of \(n\) elements, and arrows \(x_i\colon (i-\frac{1}{2})\to (i+\frac{1}{2})\) and \(y_i\colon (i+\frac{1}{2})\to (i-\frac{1}{2})\) for each \(i\in\ZZ_n\).
Writing \(x=\sum_{i\in\ZZ_n}x_i\) and \(y=\sum_{i\in\ZZ_n}y_i\), we define the \emph{circle algebra} by
\[C_{k,n}=\cpa{\CC}{Q_n}/\close{\ideal{xy-yx,y^k-x^{n-k}}}.\]
\end{defn}

The algebra \(C_{k,n}\) is once again a free and finitely-generated \(Z\)-algebra with \(t\) acting as multiplication by \(xy=yx\) \cite[\S3]{JKS}, and we continue to abuse notation by identifying \(t\) with this element.

\begin{defn}
\label{d:CM}
Let \(\Lambda\) be a free and finitely generated \(Z\)-algebra. A \(\Lambda\)-module is \emph{Cohen--Macaulay} if it is free and finitely generated as a \(Z\)-module.
The full subcategory of \(\fgmod{\Lambda}\) on the Cohen--Macaulay modules is denoted by \(\CM(\Lambda)\).
The rank of \(M\in\CM(\Lambda)\) is defined by
\[\rank(M)=\dim_{\powrat{\CC}{t}}\powrat{\CC}{t}\otimes_ZM,\]
noting that \(\powrat{\CC}{t}=\Frac(Z)\).
For \(\Lambda=A_D\), this means that \(\rank(M)=\frac{1}{\#Q_0}\rank_Z(M)\).
\end{defn}

For \(\Lambda\) as in Definition~\ref{d:CM}, the category \(\CM(\Lambda)\) has enough projective objects, given by \(\add(\Lambda)\), and enough injective objects, given by \(\add(\Zdual{\Lambda})\), where \(\Zdual{(\blank)}=\Hom_Z(\blank,Z)\).
For \(C=C_{k,n}\), the category \(\CM(C)\) provides a categorification of Scott's cluster algebra structure on the homogeneous coordinate ring \(\CC[\Grasscone{k}{n}]\) of the Grassmannian, as demonstrated by Jensen, King and Su \cite{JKS}.
This category is related to the boundary algebras of Postnikov diagrams as follows.

\begin{prop}[{\cite[Prop.~3.6]{CKP}}]
\label{p:embed}
For \(D\) a connected Postnikov diagram of type \((k,n)\), there is a canonical map \(C_{k,n}\to B_D\) such that the induced restriction functor \(\CM(B_D)\to\CM(C_{k,n})\) is fully faithful.
\end{prop}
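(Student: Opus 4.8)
The plan is to exhibit the canonical map $C_{k,n}\to B_D$ explicitly and then show that the restriction functor it induces on Cohen--Macaulay modules is fully faithful.

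\textbf{Construction of the map.} First I would recall that $C_{k,n}$ is generated over $Z$ by the elements $x=\sum x_i$ and $y=\sum y_i$ indexed by $\ZZ_n$, and that these elements correspond geometrically to moving one step clockwise (resp.\ anticlockwise) around the boundary of the disc. On the $B_D$ side, the frozen vertices of $Q_D$ are exactly the boundary regions of $D$, arranged cyclically around the disc, and the frozen arrows are the crossings at the boundary marked points. Between two cyclically adjacent boundary regions there is exactly one boundary crossing, hence exactly one frozen arrow, and its orientation (into the region with the black node on its left, or equivalently the white node on its right) gives either a ``clockwise-type'' or ``anticlockwise-type'' generator. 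The idea is to send $x_i\mapsto$ (the product of frozen arrows realising the step past marked point $i$ in the clockwise direction) and $y_i\mapsto$ (the corresponding product in the anticlockwise direction), choosing the grouping so that each boundary arrow of $Q_D$ is hit exactly once by an $x_i$ and once by a $y_i$. One then checks that the relations $xy=yx$ and $y^k = x^{n-k}$ of $C_{k,n}$ hold in $B_D$: commutativity is clear because both $xy$ and $yx$ at each frozen vertex equal the distinguished cycle $c_j$ (i.e.\ the element I called $t$), using that $D$ is connected so this cycle is well defined; and $y^k=x^{n-k}=t^{?}$ should follow from the fact that going all the way around the boundary in either direction realises the appropriate power of $t$, with the discrepancy between the two directions controlled by the type $(k,n)$. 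Thus we get a $Z$-algebra homomorphism $C_{k,n}\to B_D$, and it is ``canonical'' in that no choices beyond $D$ itself are involved.

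\textbf{Full faithfulness of restriction.} Given the map $C=C_{k,n}\to B_D$, restriction of scalars sends a $B_D$-module to a $C$-module, and this preserves Cohen--Macaulayness since it does not change the underlying $Z$-module, giving a functor $\CM(B_D)\to\CM(C_{k,n})$. To see it is fully faithful, let $M,N\in\CM(B_D)$; I need $\Hom_{B_D}(M,N)=\Hom_{C}(M,N)$, the inclusion ``$\subseteq$'' being automatic. A $C$-linear map $\varphi\colon M\to N$ is $B_D$-linear as soon as it commutes with the action of a generating set of $B_D$ as a $C$-algebra (or even as a $Z$-algebra). So the crux is: $B_D$ is generated as an algebra by the image of $C_{k,n}$. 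Equivalently, every path in $Q_D$ between frozen vertices is, modulo the dimer relations, expressible in terms of the distinguished boundary ``hops'' $x_i,y_i$ and the central element $t$. This is a statement purely about the boundary algebra $B_D=eA_De$ and should be extracted from the structure theory of dimer algebras on a disc --- intuitively, any path from one boundary region to another can be pushed to the boundary of the disc using the relations $p^\black_a=p^\white_a$, where each such relation lets one slide a path across an edge of $\Gamma_D$; iterating, every element of $B_D$ is rewritten as a combination of products of boundary arrows and powers of $t$. This surjectivity of $C_{k,n}\to B_D$ onto a generating set is exactly what makes the restriction functor full; faithfulness is immediate since restriction does not alter underlying sets or maps.

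\textbf{Main obstacle.} The genuinely nontrivial point is the boundary rewriting: proving that $B_D$ is generated over $Z$ (indeed over the image of $C_{k,n}$) by the boundary arrows, i.e.\ that every frozen-to-frozen path reduces to a standard form. This requires a careful combinatorial/topological argument with the dimer relations on the disc --- showing that one can always homotope a path onto the boundary --- and is where I would expect to invoke, or reprove, results on the structure of dimer algebras associated to Postnikov diagrams (for instance the description of $B_D$ as a ``pinched'' version of $C_{k,n}$, or the explicit basis of $B_D$ in terms of boundary walks). Everything else --- well-definedness of the map, verification of the two relations, and the formal deduction of full faithfulness from algebra-generation --- is comparatively routine. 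Since the statement is quoted from \cite[Prop.~3.6]{CKP}, in the paper itself one would simply cite that reference, but the proof sketch above is how one would establish it from scratch.
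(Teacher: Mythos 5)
The construction of the map $C_{k,n}\to B_D$ is roughly in the right spirit, though the details need more care: at a given marked point $i$ there is only one boundary arrow, so at most one of $x_i$, $y_i$ can map to a boundary arrow, and the other generator must map to a longer path through the interior. But the genuine gap is in the full faithfulness argument. You reduce everything to the claim that $B_D$ is generated as an algebra by the image of $C_{k,n}$, i.e.\ that the canonical map $\iota\colon C_{k,n}\to B_D$ is \emph{surjective}. This is false for general connected $D$. Both $C_{k,n}$ and $B_D$ are free $Z$-modules of rank $n^2$ (for a connected diagram each $e_iB_De_j$ is free of rank one over $Z$), so any surjective $Z$-linear ring map between them would have torsion-free kernel of $Z$-rank zero, hence be an isomorphism; but the paper remarks immediately after the statement that $\iota$ is an isomorphism only in the uniform case, by Baur--King--Marsh. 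So in general $\iota$ is injective but not surjective, and your reduction does not go through.

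The mechanism that does work is localisation at $t$: the map $\iota$ becomes an isomorphism after inverting $t$, i.e.\ $C_{k,n}\otimes_Z\Frac(Z)\isoto B_D\otimes_Z\Frac(Z)$. This is what the ``homotope paths to the boundary'' intuition actually yields --- pushing an interior path to the boundary costs powers of $t$, so one only obtains $t^m p\in\iota(C_{k,n})$ for some $m\geq 0$, not $p\in\iota(C_{k,n})$. Granting this, full faithfulness on Cohen--Macaulay modules follows because such modules are $Z$-torsion-free: given $M,N\in\CM(B_D)$ and a $C_{k,n}$-linear $\varphi\colon M\to N$, the element $\varphi(bm)-b\varphi(m)\in N$ is killed by a power of $t$ for every $b\in B_D$, $m\in M$ (since after inverting $t$ the two module structures coincide), hence vanishes. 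This also explains why the restriction to $\CM$ matters: $\fpmod B_D\to\fpmod C_{k,n}$ need not be fully faithful on all finitely generated modules, since $t$-torsion modules do not see the generic-fibre equivalence.
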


When \(D\) is uniform, Baur--King--Marsh show \cite[Thm.~11.2]{BKM} (cf.~\cite[Rem.~8.5]{CKP}) that the canonical map \(C_{k,n}\to B_D\) is an isomorphism, and so the restriction functor is an equivalence, but this is not the case in general.
In this paper, unlike in \cite{CKP}, we will mostly think of \(\CM(B_D)\) as a full subcategory of \(\CM(C_{k,n})\), treating the restriction functor as an inclusion rather than introducing extra notation for it.

\begin{defn}
\label{d:rank1}
Let \(I\subset\ZZ_n\) be a \(k\)-subset.
We write \(M_I\) for the representation of \(Q_n\) with \(Z\) at each vertex, \(x_i\) acting as multiplication by \(t\) if \(i\in I\) and as \(\id_Z\) otherwise, and \(y_i\) acting as \(\id_Z\) if \(i\in I\) and as multiplication by \(t\) otherwise. 
\end{defn}

One can show \cite[\S5]{JKS} that \(M_I\) is a Cohen--Macaulay \(C_{k,n}\)-module with \(\rank(M_I)=1\).
Moreover, any \(M\in\CM(C_{k,n})\) with \(\rank(M)=1\) is isomorphic to \(M_I\) for a unique \(k\)-subset \(I\) \cite[Prop.~5.2]{JKS}.
We view these modules as the categorical counterpart of the Plücker coordinates on \(\Grass{k}{n}\).
Indeed, by \cite[Eq.~9.4]{JKS}, the category \(\CM(C_{k,n})\) admits a cluster character \(\Psi\) such that \(\Psi(M_I)=\Plueck{I}\).
As with Plücker coordinates, we will sometimes write \(M(I)=M_I\) if this aids legibility.

\begin{prop}[{\cite[Prop.~8.6]{CKP}}]
The module \(M_I\in\CM(C_{k,n})\) is in the subcategory \(\CM(B_D)\) if and only if \(I\in\posit_D\).
\end{prop}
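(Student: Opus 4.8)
The plan is to realise each $I\in\posit_D$ by an explicit rank-one Cohen--Macaulay module over the dimer algebra $A_D$, restrict it along the idempotent $e$ to a $B_D$-module, and identify the result with $M_I$ as a $C_{k,n}$-module; the converse then follows by running the construction backwards, using a classification of the rank-one objects of $\CM(A_D)$. Given a perfect matching $\mu$ of $\Gamma_D$, let $N_\mu$ be the representation of $Q_D$ with a copy of $Z=\powser{\CC}{t}$ at every vertex, in which an arrow $a$ acts as multiplication by $t$ if its dual edge of $\Gamma_D$ lies in $\mu$ and as the identity otherwise. The first point to check is that $N_\mu$ satisfies the dimer relations $p^\black_a=p^\white_a$ of Definition~\ref{d:dimer-alg}: the path $p^\black_a$ consists of the arrows dual to the edges at the black endpoint of $a$'s edge other than that edge, so it acts on $N_\mu$ as $t$ raised to the number of $\mu$-edges at that node distinct from $a$'s edge; since $\mu$ is a perfect matching this exponent is $1$ if $a$'s edge is not in $\mu$ and $0$ otherwise, and the identical count at the white endpoint shows $p^\white_a$ acts the same way. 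As a $Z$-module $N_\mu$ is free of rank $\#(Q_D)_0$, so $N_\mu\in\CM(A_D)$ with $\rank(N_\mu)=1$; multiplying by $e$, the module $eN_\mu$ is a $Z$-module direct summand of $N_\mu$, hence free and finitely generated over $Z$, so $eN_\mu\in\CM(B_D)$, and since a connected $D$ has exactly $n$ frozen vertices this module again has rank one, matching $\rank(M_I)=1$.

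The technical heart of this direction is the identification of $eN_\mu$ with $M_{\bdry\mu}$ as $C_{k,n}$-modules, i.e.\ after restriction along the canonical map $C_{k,n}\to B_D$ of Proposition~\ref{p:embed}. I would unwind the definition of that map: the generator $x_i$ of $C_{k,n}$ corresponds to the path in $Q_D$ joining the two boundary regions adjacent to the marked point $i$ that turns past the half-edge at $i$ on one side, and $y_i$ to the analogous path on the other side, so that $x_iy_i$ and $y_ix_i$ are the distinguished cycles at the node of that half-edge and therefore act on $eN_\mu$ as $t$. Hence $x_i$ acts as $t^{a_i}$ and $y_i$ as $t^{1-a_i}$ for some $a_i\in\{0,1\}$, and comparing the black/white and in-$\mu$/not-in-$\mu$ alternatives in the definition of $\bdry\mu$ (Definition~\ref{d:pm}) with the orientation rule for the arrow at a half-edge shows that $a_i=1$ exactly when $i\in\bdry\mu$. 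This is precisely the recipe defining $M_{\bdry\mu}$ (Definition~\ref{d:rank1}, which makes sense because $|\bdry\mu|=k$ by Corollary~\ref{c:bdry-card}), so after normalising the $Z$-bases at the vertices we obtain $eN_\mu\iso M_{\bdry\mu}$; I expect this convention-matching at the half-edges to be the most delicate part of the argument. Since $\posit_D$ is by definition the set of boundary values $\bdry\mu$, every $M_I$ with $I\in\posit_D$ is isomorphic to some $eN_\mu\in\CM(B_D)$, and as $\CM(B_D)$ is a full subcategory of $\CM(C_{k,n})$ this gives $M_I\in\CM(B_D)$.

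For the converse, suppose $M_I\in\CM(B_D)$, so that $M_I$ is the restriction of some rank-one $N\in\CM(B_D)$. I would first classify the rank-one objects of $\CM(A_D)$. Such a module $N'$ has each vertex component $e_iN'$ free over the discrete valuation ring $Z$: it is nonzero because $i$ lies on some distinguished cycle $c_v$, which acts invertibly (as $t$) along that cycle, and the $Z$-ranks of the components sum to $\#(Q_D)_0$, forcing each to be $1$. A change of basis at the vertices then puts $N'$ into a normal form in which every arrow $a$ acts as multiplication by some $t^{m_a}$ with $m_a\geq0$ (this normalisation uses the dimer relations), and the $Z$-algebra structure $t=\sum_v c_v$ yields $\sum_{a\in c_v}m_a=1$ at every node $v$; thus exactly one edge at each node is selected, the selected edges form a perfect matching $\mu$, and $N'\iso N_\mu$.

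It then remains to show that the given $B_D$-module $N$ is itself of the form $eN'$ with $N'\in\CM(A_D)$ of rank one. The natural candidate is $N'=A_De\otimes_{B_D}N$, which satisfies $eN'\cong B_D\otimes_{B_D}N=N$ formally; the substantive point is that $N'$ is again Cohen--Macaulay, and this is where one must invoke the finer structure of $A_D$ --- its Gorenstein and bimodule-resolution properties as a (frozen) Jacobian algebra, from \cite{Pressland-Postnikov,Pressland-FJAs}, equivalently the good homological behaviour of $A_De\otimes_{B_D}-$ on Cohen--Macaulay modules. Granting this, $N\iso eN_\mu$ for some $\mu$, so by the identification of the second paragraph $M_I\iso M_{\bdry\mu}$, and since $M_J\iso M_{J'}$ forces $J=J'$ we conclude $I=\bdry\mu\in\posit_D$. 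The main obstacle is exactly this transfer step from $B_D$ back to $A_D$; one could instead try to classify the rank-one objects of $\CM(B_D)$ directly by their $t$-exponent profiles, but the relations of $B_D=eA_De$ are themselves most transparent through the dimer relations of $A_D$, so this requires the same ingredient. (Alternatively, the ``only if'' direction would follow a posteriori from the categorification theorem~\ref{t:categorify}, since no nonzero object of $\CM(B_D)$ can have a cluster-character value vanishing identically on $\openposvarcone_\posit$, whereas $\Plueck{I}$ does so precisely when $I\notin\posit_D$; but the argument above is self-contained.)
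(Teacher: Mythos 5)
Your strategy matches the one used in \cite{CKP}: build the perfect-matching modules $N_\mu\in\CM(A_D)$, identify $eN_\mu$ with $M_{\bdry\mu}$ (this is Proposition~\ref{p:bdry-value}), and classify rank-one objects of $\CM(A_D)$ as exactly the $N_\mu$ (\cite[Cor.~4.6]{CKP}). Your first two paragraphs, and the normal-form classification of rank-one objects of $\CM(A_D)$, are essentially correct.

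The gap, which you yourself flag, is the lifting step in the converse, and the issue is that you have chosen the wrong adjoint. The left adjoint $N'=A_De\otimes_{B_D}N$ has no a priori reason to be $Z$-torsion-free, and appealing in general terms to the Gorenstein or bimodule-resolution properties of $A_D$ does not obviously repair this. The elementary fix is to take instead the right adjoint $N'=FN:=\Hom_{B_D}(eA_D,N)$, i.e.\ the functor $F$ used throughout the paper. This is automatically Cohen--Macaulay for a simple reason: it is a $Z$-submodule of $\Hom_Z(eA_D,N)$, which is finitely generated and free over the Noetherian principal ideal domain $Z$, so $FN$ is again finitely generated and free over $Z$. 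By the adjunction $e\dashv\Hom_{B_D}(eA_D,-)$ one has $eFN\cong N$, and $\rank_{A_D}(FN)=1$ because $\Frac(Z)\otimes_Z A_D$ is a matrix algebra over $\Frac(Z)$, forcing the $\Frac(Z)$-dimension of $\Frac(Z)\otimes_Z FN$ to equal $\#(Q_D)_0$. Your classification then gives $FN\iso N_\mu$, and $M_I\cong e N_\mu\cong M_{\bdry\mu}$ yields $I=\bdry\mu\in\posit_D$. (The paper's intermediate extension $F'$, the image of $A_De\otimes_{B_D}N\to FN$, is Cohen--Macaulay for the same reason and would serve equally well.) The parenthetical alternative via cluster characters is not a shortcut: it requires $\Psi(X)\ne0$ on $\openposvarcone_\posit$ for all nonzero $X\in\CM(B_D)$, but the non-vanishing of the Fu--Keller character $\CCsrc$ is only transferred to $\Psi$ for reachable rigid objects (Theorem~\ref{t:CC-values}), which is exactly what one does not yet know here.
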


In contrast to the case of the full Grassmannian (or its big cell), the category of Cohen--Macaulay modules \(\CM(B_D)\) is not suitable for categorifying the cluster algebra \(\clustalg{D}\); for example, it is typically not a Frobenius exact category, since its projectives and injectives do not coincide.
Instead, we need to restrict ourselves to a Frobenius exact subcategory of it, and there are two natural choices.

\begin{defn}
Let \(\Lambda\) be a free and finitely generated \(Z\)-algebra.
We say that \(\Lambda\) is Iwanaga--Gorenstein if it has finite injective dimension as a module over itself on both the left and on the right; these injective dimensions then necessarily coincide.
We say that \(M\in\CM(\Lambda)\) is \emph{Gorenstein projective} if
\(\Ext^i_\Lambda(M,\Lambda)=0\)
for all \(i>0\), and that \(M\in\CM(\Lambda)\) is \emph{Gorenstein injective} if
\(\Ext^i_\Lambda(\Zdual{\Lambda},M)=0\)
for all \(i>0\).
The full subcategories of \(\CM(\Lambda)\) on the Gorenstein projective and Gorenstein injective modules are denoted by \(\gproj\CM(\Lambda)\) and \(\ginj\CM(\Lambda)\) respectively.
\end{defn}

By construction, \(\gproj\CM(\Lambda)\) is a Frobenius exact category whose projective-injective objects are those in \(\add(\Lambda)\), and \(\ginj\CM(\Lambda)\) is a Frobenius exact category whose projective-injective objects are those in \(\add(\Zdual{\Lambda})\).
By \cite[Lem.~3.6]{JKS}, for \(C=C_{k,n}\) we have \(\Zdual{C}\iso C\) and hence \(\gproj\CM(C)=\ginj\CM(C)=\CM(C)\), but in general all three categories will be different.

\begin{rem}
There is a slight asymmetry to these definitions arising from the fact that the Cohen--Macaulay property is defined with reference to free (i.e.\ projective) \(Z\)-modules.
The Iwanaga--Gorenstein property of \(\Lambda\) makes no reference to its \(Z\)-module structure (except that we omit the requirement that \(\Lambda\) is Noetherian from the usual definition, this being automatic for finitely generated \(Z\)-algebras), and given this property one would usually define Gorenstein projective\footnote{Confusingly, the modules we call Gorenstein projective are often, e.g.\ in \cite{Buchweitz-MCM}, called (maximal) Cohen--Macaulay.
The definition of Cohen--Macaulay we use here is more classical, and more closely related to commutative algebra, but is weaker than Gorenstein projectivity when the injective dimension of \(\Lambda\) is greater than the Krull dimension of the ground ring \(Z\), as will usually be the case for us.}
\(\Lambda\)-modules as the objects of the full subcategory
\[\GP(\Lambda)=\{M\in\fgmod{\Lambda}:\text{\(\Ext^i_\Lambda(M,\Lambda)=0\) for all \(i>0\)}\}.\]
It is well known that this coincides with the category of \(d\)-th syzygy modules, where \(d\) is the injective dimension of \(\Lambda\).
In our setting, the fact that \(Z\) is \(1\)-dimensional means that \(d\geq1\), so any \(M\in\GP(\Lambda)\) is a syzygy module, i.e.\ a submodule of an object in \(\add(\Lambda)\).
Since \(Z\) is a principal ideal domain, and \(\Lambda\) is free and finitely generated over \(Z\), the same is therefore true of \(M\), and hence
\(\GP(\Lambda)\subset\CM(\Lambda)\)
coincides with \(\gproj\CM(\Lambda)\); that is, a \(\Lambda\)-module is Gorenstein projective if and only if it both lies in \(\CM(\Lambda)\) and is Gorenstein projective in this subcategory.

However, this is not true for Gorenstein injectives.
The Gorenstein injective objects in \(\fgmod{\Lambda}\) are those in the full subcategory
\[\GI(\Lambda)=\{M\in\fgmod{\Lambda}:\text{\(\Ext^i_\Lambda(Q,M)=0\) for all \(i>0\)}\},\]
where \(Q\) is an injective generator of \(\fgmod{\Lambda}\), and these are not typically Cohen--Macaulay; indeed, \(Q\in\GI(\Lambda)\) is itself not free over \(Z\) in general. The Gorenstein injective objects in \(\CM(\Lambda)\) are defined instead with reference to the injective generator \(\Zdual{\Lambda}=\Hom_Z(\Lambda,Z)\) of this subcategory, which is not injective in \(\fgmod{\Lambda}\).
\end{rem}

By the following well-known result (see e.g.\ \cite[Prop.~7.2]{Auslander-Phil}), we may also describe Gorenstein injective Cohen--Macaulay modules in terms of Gorenstein projective Cohen--Macaulay modules for the opposite algebra.

\begin{prop}
\label{p:Zdual}
Let \(\Lambda\) be a free and finitely generated \(Z\)-algebra. Then \(\Lambda^\op\) is another such algebra, and the functor
\[\Zdual{(\blank)}\colon\CM(\Lambda^\op)\isoto(\CM(\Lambda))^\op\]
is an equivalence.
If \(\Lambda\) is Iwanaga--Gorenstein, then this restricts to an equivalence
\[\Zdual{(\blank)}\colon\gproj\CM(\Lambda^\op)\isoto(\ginj\CM(\Lambda))^\op.\]
\end{prop}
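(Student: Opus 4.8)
The plan is to verify by hand that $\Zdual{(\blank)}=\Hom_Z(\blank,Z)$ is an exact duality, and then to transport the $\Ext$-vanishing conditions defining Gorenstein projectivity and Gorenstein injectivity across it. For the first statement, the routine points are as follows. The opposite ring $\Lambda^\op$ is again a $Z$-algebra with the same underlying $Z$-module as $\Lambda$, hence free and finitely generated over $Z$, and a left $\Lambda^\op$-module is the same thing as a right $\Lambda$-module. For $M\in\CM(\Lambda^\op)$ the $Z$-module $M$ is free and finitely generated, so the same is true of $\Zdual M$, and the transpose action $(\lambda\cdot f)(m)=f(m\lambda)$ makes $\Zdual M$ an object of $\CM(\Lambda)$, functorially and contravariantly in $M$. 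Any conflation in $\CM(\Lambda^\op)$ is an exact sequence of $\Lambda^\op$-modules whose terms are free over the principal ideal domain $Z$, so it splits over $Z$ and therefore stays exact after applying $\Hom_Z(\blank,Z)$; thus $\Zdual{(\blank)}$ is exact. Finally, the evaluation map $M\to\Zdual{\Zdual M}$ is an isomorphism for finitely generated free $Z$-modules and is $\Lambda$-linear, so $\Zdual{\Zdual{(\blank)}}\iso\id$ naturally; hence $\Zdual{(\blank)}\colon\CM(\Lambda^\op)\to\CM(\Lambda)^\op$ is an equivalence, with quasi-inverse again $\Zdual{(\blank)}$.

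For the second statement I would first note that the Iwanaga--Gorenstein condition is left--right symmetric, so $\Lambda^\op$ is Iwanaga--Gorenstein whenever $\Lambda$ is. The crucial identification is that $\Zdual{(\Lambda^\op)}\iso\Zdual\Lambda$ in $\CM(\Lambda)$: the free module $\Lambda^\op$, viewed as a left module over itself, is $\Lambda$ viewed as a right $\Lambda$-module, and $\Hom_Z(\blank,Z)$ with the transpose action sends it to the injective cogenerator $\Zdual\Lambda=\Hom_Z(\Lambda,Z)$ of $\CM(\Lambda)$. Since $\Zdual{(\blank)}$ is an exact duality it preserves the Yoneda $\Ext$-groups of these exact categories, so for $M\in\CM(\Lambda^\op)$ and $i>0$ one obtains $\Ext^i_{\Lambda^\op}(M,\Lambda^\op)\iso\Ext^i_\Lambda(\Zdual{(\Lambda^\op)},\Zdual M)\iso\Ext^i_\Lambda(\Zdual\Lambda,\Zdual M)$. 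Consequently $M$ is Gorenstein projective in $\CM(\Lambda^\op)$ if and only if $\Zdual M$ is Gorenstein injective in $\CM(\Lambda)$; applying the symmetric statement to $\Zdual{(\blank)}\colon\CM(\Lambda)\to\CM(\Lambda^\op)^\op$ then shows that the duality restricts to an equivalence $\gproj\CM(\Lambda^\op)\isoto(\ginj\CM(\Lambda))^\op$.

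The step I expect to require the most care is the $\Ext$-comparison just invoked: the conditions in the definitions of Gorenstein projective and Gorenstein injective modules are phrased using $\Ext^i$ in $\fgmod\Lambda$, whereas the duality manifestly preserves $\Ext$ computed in the exact category $\CM(\Lambda)$. To reconcile them I would use that $\CM(\Lambda)$ has enough projectives, all lying in $\add\Lambda$ and hence projective in $\fgmod\Lambda$ as well; a resolution of a Cohen--Macaulay module by such objects is simultaneously a projective resolution in $\fgmod\Lambda$, so the two flavours of $\Ext$-group agree between Cohen--Macaulay arguments, in particular for the pairs $(M,\Lambda^\op)$ and $(\Zdual\Lambda,\Zdual M)$ above. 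Crucially no injective coresolutions are needed, which is what would fail since $\Zdual\Lambda$ is not injective in $\fgmod\Lambda$.
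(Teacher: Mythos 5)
Your proof is correct. The paper itself gives no argument for this proposition, instead attributing it as well-known and pointing to Auslander's survey; your reconstruction is the standard one, and you have correctly identified and handled the one step that genuinely requires care, namely that the paper's definitions of Gorenstein projective and Gorenstein injective are phrased via $\Ext^i_\Lambda$ computed in $\fgmod\Lambda$, whereas the duality $\Zdual{(\blank)}$ most naturally compares Yoneda $\Ext$-groups in the exact categories $\CM(\Lambda^\op)$ and $\CM(\Lambda)$. Your reconciliation — that syzygies of Cohen–Macaulay modules stay Cohen–Macaulay because $Z$ is a principal ideal domain, so a projective resolution in $\CM(\Lambda)$ by objects of $\add\Lambda$ is simultaneously a projective resolution in $\fgmod\Lambda$, making the two flavours of $\Ext$ agree for the pairs $(M,\Lambda^\op)$ and $(\Zdual\Lambda,\Zdual M)$ — is exactly right, and you are correct that no appeal to injective coresolutions in $\fgmod\Lambda$ is needed (which would fail, since $\Zdual\Lambda$ is injective only in $\CM(\Lambda)$, not in $\fgmod\Lambda$).
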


It turns out that the category \(\gproj\CM(B_D)\) is most suitable for categorifying the source-labelled cluster structure on \(\CC[\openposvarcone_\posit]\), whereas \(\ginj\CM(B_D)\) categorifies the target-labelled structure, as we will now explain.
For the source-labelled cluster structure, we have the following theorem.

\begin{thm}
\label{t:src-cat}
The category \(\gproj\CM(B_D)\) is a stably \(2\)-Calabi--Yau Frobenius exact category admitting a cluster-tilting object \(\Tsrc=eA_D\) such that the natural maps \(A_D\to\Endalg{B_D}{\Tsrc}\) and \(A_D/A_DeA_D\to\stabEndalg{B_D}{\Tsrc}\) are isomorphisms.
For each \(j\in Q_0\), the indecomposable summand \(eA_De_j\) of \(\Tsrc\) is isomorphic to \(M(\srclab{j})\) as an object of \(\CM(C_{k,n})\), so in particular we have
\[\Tsrc\iso\bigdsum_{j\in Q_0}M(\srclab{j}).\]
\end{thm}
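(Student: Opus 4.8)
The plan is to assemble this statement from the internally Calabi--Yau framework of \cite{Pressland-iCY,Pressland-FJAs}, as applied to Postnikov diagrams in \cite{Pressland-Postnikov}, together with the rank-one module theory of \cite{JKS}. First I would recall that $A_D$, being the frozen Jacobian algebra of $(Q_D,F_D,W_D)$ (Definition~\ref{d:dimer-alg}) and, since $D$ is connected, free and module-finite over $Z=\powser{\CC}{t}$ by \cite[Prop.~2.15]{CKP}, is bimodule internally $3$-Calabi--Yau with respect to the idempotent $e=\sum_{i\in F_0}\idemp{i}$. The general consequences of this property, established in \cite[\S5]{Pressland-iCY} and spelled out for this situation in \cite{Pressland-Postnikov}, are: $B_D=eA_De$ is Iwanaga--Gorenstein; $\GP(B_D)$ is a Frobenius exact category whose stable category is $2$-Calabi--Yau; $\Tsrc=eA_D$ is a cluster-tilting object in $\GP(B_D)$; and the natural map $A_D\to\Endalg{B_D}{\Tsrc}$ is an isomorphism. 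Since $Z$ has Krull dimension one, the discussion of Gorenstein projectives above identifies $\GP(B_D)$ with $\gproj\CM(B_D)$, so these statements already give the first assertion of the theorem and the first isomorphism.

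Next I would treat the stable endomorphism algebra. The projective--injective objects of $\gproj\CM(B_D)$ are precisely those of $\add(B_D)$, that is, the direct sums of the summands $eA_De_j$ of $\Tsrc$ with $j\in F_0$; so under $A_D\cong\Endalg{B_D}{\Tsrc}$ an endomorphism of $\Tsrc$ factors through a projective--injective object exactly when the corresponding element of $A_D$ lies in the two-sided ideal $A_DeA_D$ generated by $e$. Hence the induced surjection $A_D\to\stabEndalg{B_D}{\Tsrc}$ has kernel $A_DeA_D$, which is the second isomorphism.

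The remaining task is to identify the indecomposable summands. Fixing $j\in Q_0$, I regard $\Tsrc_j=eA_De_j$ as an object of $\CM(C_{k,n})$ via the restriction functor of Proposition~\ref{p:embed}. Because $D$ is connected it has exactly $n$ boundary regions, so $eA_De_j=\bigdsum_{i\in F_0}e_iA_De_j$ is free over $Z$ of rank $n$, and therefore has rank one over $C_{k,n}$ by Definition~\ref{d:CM} (cf.\ the rank computations in \cite{CKP}); by \cite[Prop.~5.2]{JKS} it is thus isomorphic to $M_I$ for a unique $k$-subset $I$, and it remains only to check $I=\srclab{j}$. For this I would compute directly with the $C_{k,n}$-action: the generator $x_i$ of $C_{k,n}$ maps under $C_{k,n}\to B_D$ to an explicit boundary path, and tracing its action on the rank-one module $eA_De_j$ shows that it acts by multiplication by $t$ precisely when the strand sourced at the marked point $i$ has the region $j$ on its left-hand side---that is, exactly when $i\in\srclab{j}$---which is the defining property of $M_{\srclab{j}}$ in Definition~\ref{d:rank1}. (This matching is carried out in \cite{CKP}; alternatively one could apply the cluster character $\Psi$, using $\Psi(M_I)=\Plueck{I}$, and compare with the initial source-labelled cluster of Theorem~\ref{t:GalLam}.) Summing over $j\in Q_0$ then yields $\Tsrc\iso\bigdsum_{j\in Q_0}M(\srclab{j})$.

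The hard part---and the only step that is not a formal consequence of the internally Calabi--Yau framework---is this final identification of $eA_De_j$ with $M(\srclab{j})$, where the module-theoretic profile of a space of paths in $Q_D$ must be matched against the strand-diagram recipe defining the source label; everything preceding it is bookkeeping with the machinery of \cite{Pressland-iCY,Pressland-FJAs,Pressland-Postnikov,CKP}.
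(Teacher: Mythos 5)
Your proposal is correct and takes the same route as the paper: the paper's proof consists of exactly the two citations you unpack, namely \cite[Thm.~4.5]{Pressland-Postnikov} (giving, via the internally $3$-Calabi--Yau structure of $A_D$ with respect to $e$, the Frobenius exactness, the stably $2$-Calabi--Yau property, the cluster-tilting object $\Tsrc=eA_D$, and both endomorphism-algebra isomorphisms) and \cite[Prop.~8.2]{CKP} (identifying each summand $eA_De_j$ with $M(\srclab{j})$ by tracing the $C_{k,n}$-action, as you do). The one small caveat is that your parenthetical alternative via $\Psi$ and Theorem~\ref{t:GalLam} is closer to circular than it appears, since Theorem~\ref{t:GalLam} is a purely cluster-algebraic statement and does not by itself compute $\Psi(eA_De_j)$; your primary argument is the one that actually matches \cite[Prop.~8.2]{CKP}.
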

\begin{proof}
The first statement is \cite[Thm.~4.5]{Pressland-Postnikov}, while the second is \cite[Prop.~8.2]{CKP}.
\end{proof}

Here \(\stabEndalg{B_D}{\Tsrc}\) denotes the stable endomorphism algebra of \(\Tsrc\), that is, its endomorphism algebra in the stable category \(\stabgproj{\CM(B_D)}=\gproj{\CM(B_D)}/\add(B_D)\).
This result motivates the shorthand \(\stab{A}_D\defeq A_D/A_DeA_D\). By applying Theorem~\ref{t:src-cat} to the opposite diagram \(D^\op\) and using Proposition~\ref{p:Zdual}, we obtain the corresponding result for \(\ginj\CM(B_D)\).
The statement identifying the relevant cluster-tilting object in \(\CM(C_{k,n})\) can also be found in \cite[Rem.~8.4]{CKP}.

\begin{thm}
\label{t:tgt-cat}
The category \(\ginj\CM(B_D)\) is a stably \(2\)-Calabi--Yau Frobenius exact category admitting a cluster-tilting object \(\Ttgt=\Zdual{(A_De)}\) such that the natural maps \(A_D\to\Endalg{B_D}{\Ttgt}\) and \(\stab{A}_D\to\stabEndalg{B_D}{\Ttgt}\) are isomorphisms. For each \(j\in Q_0\), the indecomposable summand \(\Zdual{(e_jA_De)}\) of \(\Ttgt\) is isomorphic to \(M(\tgtlab{j})\) as an object of \(\CM(C_{k,n})\), so in particular we have
\[\Ttgt\iso\bigdsum_{i\in Q_0}M(\tgtlab{j}).\]
\end{thm}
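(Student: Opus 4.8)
The plan is to obtain Theorem~\ref{t:tgt-cat} from Theorem~\ref{t:src-cat} by applying the latter to the opposite diagram $D^\op$ and then dualising via Proposition~\ref{p:Zdual}. Recall from the remark following Definition~\ref{d:dimer-alg} that $A_{D^\op}=A_D^\op$ and $B_{D^\op}=B_D^\op$, and from Proposition~\ref{p:op} that $D^\op$ has type $(n-k,n)$. Since $B_D$ is Iwanaga--Gorenstein (implicit in Theorem~\ref{t:src-cat}, via \cite{Pressland-Postnikov}), Proposition~\ref{p:Zdual} applies with $\Lambda=B_D$, yielding a duality
\[\Zdual{(\blank)}\colon\gproj\CM(B_{D^\op})\isoto(\ginj\CM(B_D))^\op.\]
Theorem~\ref{t:src-cat} for $D^\op$ says that $\gproj\CM(B_{D^\op})$ is a stably $2$-Calabi--Yau Frobenius exact category with cluster tilting object $eA_{D^\op}$, whose indecomposable summand at $j\in Q_0$ is $eA_{D^\op}e_j\iso M(\srclab{j}(D^\op))$ in $\CM(C_{n-k,n})$, and for which the natural maps $A_{D^\op}\to\Endalg{B_{D^\op}}{eA_{D^\op}}$ and $\stab{A}_{D^\op}\to\stabEndalg{B_{D^\op}}{eA_{D^\op}}$ are isomorphisms.

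First I would transport the structural statements across $\Zdual{(\blank)}$. A contravariant equivalence of exact categories interchanges projectives and injectives, hence sends a Frobenius exact category to a Frobenius exact category; it preserves the stably $2$-Calabi--Yau property, which is invariant under passing to the opposite category; and it carries cluster tilting objects to cluster tilting objects. Thus $\ginj\CM(B_D)$ is a stably $2$-Calabi--Yau Frobenius exact category with cluster tilting object
\[\Zdual{(eA_{D^\op})}=\Zdual{(eA_D^\op)}=\Zdual{(A_De)}=\Ttgt,\]
whose indecomposable summand at $j$ is $\Zdual{(eA_{D^\op}e_j)}=\Zdual{(e_jA_De)}$, once the opposite algebra is unwound. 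Applying $\Zdual{(\blank)}$ to the two algebra isomorphisms of Theorem~\ref{t:src-cat} for $D^\op$, and using that $\Zdual{(\blank)}$ induces anti-isomorphisms on endomorphism algebras (both before and after passing to the stable category) together with $(A_D^\op)^\op=A_D$ and $\stab{A}_{D^\op}=(\stab{A}_D)^\op$, produces isomorphisms $A_D\to\Endalg{B_D}{\Ttgt}$ and $\stab{A}_D\to\stabEndalg{B_D}{\Ttgt}$; I would then check these agree with the natural maps, which is routine since every construction involved is canonical.

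It remains to identify $\Zdual{(e_jA_De)}$ inside $\CM(C_{k,n})$. The cleanest route is to invoke \cite[Rem.~8.4]{CKP}, which records exactly this cluster tilting object and its summands; alternatively, one combines the identification $eA_{D^\op}e_j\iso M(\srclab{j}(D^\op))$ in $\CM(C_{n-k,n})$ with: (i) the fact that the restriction functor $\CM(B_{D^\op})\to\CM(C_{n-k,n})$ of Proposition~\ref{p:embed} is intertwined by $\Zdual{(\blank)}$ with the restriction functor $\CM(B_D)\to\CM(C_{k,n})$, under the standard identification $C_{k,n}^\op\iso C_{n-k,n}$ of circle algebras (since the canonical map $C_{n-k,n}\to B_{D^\op}$ is the opposite of $C_{k,n}\to B_D$); (ii) the computation $\Zdual{(M_I)}\iso M(\ZZ_n\setminus I)$ of the $Z$-dual of a rank-one module, which can be read off from Definition~\ref{d:rank1}; and (iii) the identity $\srclab{j}(D^\op)=\tgtlab{j}(D)^\compl$ from Proposition~\ref{p:op}\ref{p:op-labs}. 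Either way, $\Zdual{(e_jA_De)}\iso M(\tgtlab{j}(D))$ in $\CM(C_{k,n})$, whence the stated direct sum decomposition of $\Ttgt$.

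The hard part will not be any single step but the bookkeeping of the several passages to opposite algebras and opposite categories: one must keep careful track of which $\op$ is which, verify that the canonical map $C_{n-k,n}\to B_{D^\op}$ is genuinely dual to $C_{k,n}\to B_D$ (so the rank-one identification survives the duality), and confirm that the algebra isomorphisms obtained by dualising coincide with the natural maps named in the statement. None of this is conceptually deep, and much of the relevant compatibility is already recorded in \cite{CKP}; still, it is the part most in need of care.
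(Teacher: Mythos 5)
Your proposal is correct and follows exactly the route the paper takes: immediately after Theorem~\ref{t:src-cat} the paper states that Theorem~\ref{t:tgt-cat} is obtained ``by applying Theorem~\ref{t:src-cat} to the opposite diagram $D^\op$ and using Proposition~\ref{p:Zdual}'', and also cites \cite[Rem.~8.4]{CKP} for the identification of the cluster-tilting object in $\CM(C_{k,n})$. You have simply filled in the bookkeeping (tracking the various $\op$s, unwinding $\Zdual{(eA_{D^\op})}=\Zdual{(A_De)}$, and using Proposition~\ref{p:op}\ref{p:op-labs} together with $\Zdual{M_I}\iso M_{I^\compl}$) that the paper leaves to the reader.
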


We emphasise that in Theorem~\ref{t:tgt-cat} the notation \(\stabEndalg{B_D}{\Ttgt}\) refers to the endomorphism algebra of \(\Ttgt\) in the stable category \(\stabginj{\CM(B_D)}=\ginj{\CM(B_D)}/\add({\Zdual{B}_D})\), and not to the projectively stable endomorphism algebra of \(\Ttgt\) as a \(B_D\)-module.

\begin{rem}
Because \(A_D\) is Noetherian (being free and finitely generated over the Noetherian ring \(Z\)) and of finite global dimension \cite[Thm.~3.7]{Pressland-Postnikov}, it follows from Theorems~\ref{t:src-cat} and \ref{t:tgt-cat} together with a result of Kalck, Iyama, Wemyss and Yang \cite[Thm.~2.7]{KIWY} that the categories \(\gproj\CM(B_D)\) and \(\ginj\CM(B_D)\) are equivalent, via an equivalence that takes \(\Tsrc\) to \(\Ttgt\).
However, this is not the relationship between these two categories which will ultimately lead to quasi-equivalence of the two cluster structures.
\end{rem}

By Theorems~\ref{t:src-cat} and \ref{t:tgt-cat}, both \(\gproj\CM(B_D)\) and \(\ginj\CM(B_D)\) serve as categorifications of the abstract cluster algebra \(\clustalg{D}\).
Cluster-tilting objects, which both of these categories have, play the role of seeds or clusters, the stably \(2\)-Calabi--Yau property ensuring that these objects admit a mutation theory \cite{IyaYos}.
The fact that the cluster-tilting objects \(\Tsrc\) and \(\Ttgt\) have endomorphism algebra \(A_D\), which is defined from the same quiver \(Q_D\) as the cluster algebra \(\clustalg{D}\), means that the mutation classes of these cluster-tilting objects exhibit the same combinatorics as the mutation class of seeds in \(\clustalg{D}\).
More precisely, we have the following.

\begin{thm}[{\cite[Thm.~6.11]{Pressland-Postnikov}}]
\label{t:categorify}
Both \((\gproj\CM(B_D),\Tsrc)\) and \((\ginj\CM(B_D),\Ttgt)\) are Frobenius \(2\)-Calabi--Yau realisations of the cluster algebra \(\clustalg{D}\) \cite[Def.~5.1]{FuKel} (see also \cite[Def.~6.9]{Pressland-Postnikov}).

In particular, both categories carry a Fu--Keller cluster character \cite[\S3]{FuKel}, providing a bijection between the mutation class of the given cluster-tilting object and the set of seeds of \(\clustalg{D}\), inducing a bijection between the indecomposable summands of the objects in this mutation class and the cluster variables, compatible with the combinatorics of mutation on each set.
\end{thm}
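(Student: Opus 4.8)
The plan is to recognise that Theorem~\ref{t:categorify} is essentially an assembly of Theorems~\ref{t:src-cat} and~\ref{t:tgt-cat} with the general cluster-character machinery of Fu--Keller, so the task reduces to checking that $(\gproj\CM(B_D),\Tsrc)$ and $(\ginj\CM(B_D),\Ttgt)$ satisfy each clause of the definition of a Frobenius $2$-Calabi--Yau realisation (\cite[Def.~5.1]{FuKel}, in the completed $Z$-linear formulation of \cite[Def.~6.9]{Pressland-Postnikov}). That definition requires a Hom-finite-over-$Z$ Frobenius exact category which is stably $2$-Calabi--Yau, a cluster-tilting object whose endomorphism algebra is the complete frozen Jacobian algebra of an ice quiver with potential, and that the underlying ice quiver be the one defining the initial seed of the cluster algebra, with the projective-injective summands of the cluster-tilting object matching its frozen vertices.

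For the source-labelled case I would take $\mathcal{E}=\gproj\CM(B_D)$ and $T=\Tsrc$. Theorem~\ref{t:src-cat} already supplies that $\mathcal{E}$ is a stably $2$-Calabi--Yau Frobenius exact category, that $\Tsrc=eA_D$ is cluster-tilting, and that $\Endalg{B_D}{\Tsrc}\iso A_D$; Hom-finiteness over $Z$ is immediate since $B_D$, and hence $\Tsrc$, is finitely generated over $Z$. By Definition~\ref{d:dimer-alg}, $A_D$ is precisely the frozen Jacobian algebra of $(Q_D,F_D,W_D)$, whose underlying ice quiver $(Q_D,F_D)$ is by construction the initial ice quiver of $\clustalg{D}$, and the projective-injective summands of $\Tsrc$ are the $eA_De_j$ with $j\in F_0$. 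Any further technical hypotheses of \cite[Def.~6.9]{Pressland-Postnikov} — in particular finiteness of the global dimension of $A_D$ — are furnished by the other main results of \cite{Pressland-Postnikov}, notably \cite[Thm.~3.7]{Pressland-Postnikov}. This shows $(\gproj\CM(B_D),\Tsrc)$ is a Frobenius $2$-Calabi--Yau realisation of $\clustalg{D}$.

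For the target-labelled case one may argue identically, replacing Theorem~\ref{t:src-cat} by Theorem~\ref{t:tgt-cat}, which records exactly the same structural data for $\ginj\CM(B_D)$ and $\Ttgt$; alternatively, deduce it from the source-labelled case applied to $D^\op$. Indeed, that case makes $\gproj\CM(B_{D^\op})$ with its source cluster-tilting object a Frobenius $2$-Calabi--Yau realisation of $\clustalg{D^\op}$, whose initial quiver is $Q_D^\op$ by Proposition~\ref{p:op}\ref{p:op-quiv}; applying the duality of Proposition~\ref{p:Zdual}, an equivalence $\gproj\CM(B_{D^\op})\isoto(\ginj\CM(B_D))^\op$ carrying the relevant cluster-tilting object onto $\Ttgt$ (this is exactly the transfer used to derive Theorem~\ref{t:tgt-cat}), converts it into a realisation of the cluster algebra with the opposite of the opposite quiver, namely $\clustalg{D}$ itself.

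The ``in particular'' assertion is then the output of the Fu--Keller theory: for any Frobenius $2$-Calabi--Yau realisation, the associated cluster character \cite[\S3]{FuKel} restricts to a bijection between the mutation class of the chosen cluster-tilting object and the set of seeds of the cluster algebra, inducing a bijection of indecomposable summands with cluster variables and compatible with mutation. I expect the only genuine obstacle to lie in the stably $2$-Calabi--Yau property and the finiteness conditions in this non-artinian, complete setting; but these are precisely the points already settled by the cited results from \cite{Pressland-Postnikov} (which rest on the internal bimodule Calabi--Yau property of frozen Jacobian algebras from \cite{Pressland-FJAs} and \cite{Pressland-iCY}), so beyond invoking Theorems~\ref{t:src-cat} and~\ref{t:tgt-cat} the remaining work is bookkeeping.
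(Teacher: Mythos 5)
This theorem is quoted directly from \cite[Thm.~6.11]{Pressland-Postnikov} and not reproved in the present paper, so there is no in-paper argument to compare against. Your sketch correctly reassembles the ingredients a proof must use: Theorems~\ref{t:src-cat} and~\ref{t:tgt-cat} supply the stably $2$-Calabi--Yau Frobenius exact category, the cluster-tilting object, and the isomorphism of its endomorphism algebra with the frozen Jacobian algebra $A_D$ of Definition~\ref{d:dimer-alg}; the finiteness of $\gldim A_D$ needed for the remaining technical hypotheses is \cite[Thm.~3.7]{Pressland-Postnikov}; and the ``in particular'' clause is the standard output of the Fu--Keller formalism. Your alternative deduction of the target case from the source case applied to $D^\op$ via Proposition~\ref{p:Zdual} is precisely how the paper itself obtains Theorem~\ref{t:tgt-cat} from Theorem~\ref{t:src-cat}. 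One small omission worth noting: the mutation compatibility in the ``in particular'' statement rests not only on $\Endalg{B_D}{\Tsrc}\iso A_D$ but also on the stable isomorphism $\stab{A}_D\iso\stabEndalg{B_D}{\Tsrc}$ recorded in Theorem~\ref{t:src-cat}, since it is the Jacobian algebra $\stab{A}_D$ of the restricted quiver with potential that governs mutation of the cluster-tilting object; your proposal cites the unstable isomorphism but does not make the stable one explicit.
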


Cluster-tilting objects in \(\gproj\CM(B_D)\) in the mutation class of \(\Tsrc\), and objects of their additive closures, are called \emph{reachable}.
We use the same terminology for the mutation class of \(\Ttgt\) in \(\ginj\CM(B_D)\).

Recall that \(\CM(C_{k,n})\) carries a cluster character \(\Psi\) with values in \(\CC[\Grasscone{k}{n}]\), such that \(\Psi(M_I)=\Plueck{I}\) for each \(k\)-subset \(I\) \cite[\S9]{JKS}.
We may restrict the domain of \(\Psi\) to either \(\gproj\CM(B_D)\) and \(\ginj\CM(B_D)\), and restrict its values to functions on \(\openposvarcone_\posit\).
Since these subcategories are extension-closed by \cite[Prop~7.2]{Pressland-Postnikov} and its dual, each such modification is a cluster character with values in \(\CC[\openposvarcone_\posit]\), which we will again denote by \(\Psi\).
Additionally, we have the Fu--Keller cluster characters on \(\gproj\CM(B_D)\) and \(\ginj\CM(B_D)\) from Theorem~\ref{t:categorify}, which take values in \(\clust{A}_D^+=\clust{A}_D\) by \cite[Thm.~1.3]{Plamondon-Generic} and Proposition~\ref{p:uca}, and thus can be composed with either of Galashin--Lam's isomorphisms \(\GLsrc\) and \(\GLtgt\) from Theorem~\ref{t:GalLam}; we denote the resulting cluster characters with values in \(\CC[\openposvarcone_\posit]\) by \(\CCsrc\) and \(\CCtgt\) respectively.

An object \(X\) of an exact category \(\cat{E}\) is called \emph{rigid} if \(\Ext^1_\cat{E}(X,X)=0\).

\begin{thm}
\label{t:CC-values}
If \(X\in\gproj\CM(B_D)\) is reachable and rigid, then \(\CCsrc(X)=\Psi(X)\), whereas if \(X\in\ginj\CM(B_D)\) is reachable and rigid, then \(\CCtgt(X)=\Psi(X)\).
In particular, if \(I\) is a \(k\)-subset and \(M_I\in\gproj\CM(B_D)\), then \(\CCsrc(M_I)=\Plueck{I}\), whereas if \(M_I\in\ginj\CM(B_D)\), then \(\CCtgt(M_I)=\Plueck{I}\).
\end{thm}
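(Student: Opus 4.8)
The plan is to use the fact that, on each of the categories $\gproj\CM(B_D)$ and $\ginj\CM(B_D)$, the ring $\CC[\openposvarcone_\posit]$ now carries \emph{two} cluster characters: the restriction of the Jensen--King--Su character $\Psi$ (which is a cluster character valued in $\CC[\openposvarcone_\posit]$ precisely because these subcategories are extension-closed in $\CM(C_{k,n})$), and the composite $\CCsrc$ (respectively $\CCtgt$) of the Fu--Keller cluster character on $\gproj\CM(B_D)$ (respectively $\ginj\CM(B_D)$) with the isomorphism $\GLsrc$ (respectively $\GLtgt$) of Theorem~\ref{t:GalLam}, which is well-defined because the Fu--Keller character lands in $\clustalg{D}^+=\clustalg{D}$ by Proposition~\ref{p:uca}. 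Two cluster characters on a stably $2$-Calabi--Yau Frobenius category that agree on a single cluster-tilting object must agree on its whole reachable mutation class---their values being propagated through the canonical exchange sequences---so it is enough to check that $\Psi$ and $\CCsrc$ agree on the summands of $\Tsrc$. I will carry out the source-labelled case; the target-labelled case is identical after replacing $\Tsrc,\srclab{j},\GLsrc,\CCsrc$ by $\Ttgt,\tgtlab{j},\GLtgt,\CCtgt$ and Theorem~\ref{t:src-cat} by Theorem~\ref{t:tgt-cat} (alternatively one may deduce it from the source-labelled case applied to $D^\op$, using Propositions~\ref{p:Zdual} and \ref{p:op-GL}).

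For the comparison on the initial seed, Theorem~\ref{t:src-cat} identifies the summand $eA_De_j$ of $\Tsrc$ with $M(\srclab{j})$ inside $\CM(C_{k,n})$, so $\Psi(eA_De_j)=\Psi(M(\srclab{j}))=\varPlueck{\srclab{j}}$ by the defining property of $\Psi$ on rank-one modules; while Theorem~\ref{t:categorify} says the Fu--Keller character sends $eA_De_j$ to the initial cluster variable $x_j\in\clustalg{D}$, and $\GLsrc(x_j)=\varPlueck{\srclab{j}}$ by Theorem~\ref{t:GalLam}, so $\CCsrc(eA_De_j)=\varPlueck{\srclab{j}}$ as well. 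Hence $\Psi$ and $\CCsrc$ coincide on every summand of $\Tsrc$, and by multiplicativity on all of $\add(\Tsrc)$.

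To propagate this, I would induct on the length of a mutation sequence from $\Tsrc$. If $\Psi=\CCsrc$ on the summands of a reachable cluster-tilting object $T$ and $T'=\mu_j(T)$ is its mutation at a non-frozen vertex $j$, with exchange sequences $0\to T_j\to E\to T'_j\to 0$ and $0\to T'_j\to E'\to T_j\to 0$ (so $E,E'\in\add(T/T_j)$ and $\dim\Ext^1(T_j,T'_j)=1$), then the cluster-character multiplication formula gives $\Psi(T_j)\Psi(T'_j)=\Psi(E)+\Psi(E')$ and the same identity for $\CCsrc$. Since the two characters agree on $T_j,E,E'$ by induction and $\Psi(T_j)$ is a non-zero element of the domain $\CC[\openposvarcone_\posit]$ (being the $\GLsrc$-image of a cluster variable), dividing in $\Frac(\CC[\openposvarcone_\posit])$ forces $\Psi(T'_j)=\CCsrc(T'_j)$, so they agree on the summands of $T'$. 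By induction they agree on every indecomposable summand of a reachable cluster-tilting object, hence---by multiplicativity---on every reachable object, and in particular on every reachable rigid $X$; this is the first assertion.

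For the ``in particular'', if $M_I\in\gproj\CM(B_D)$ then $M_I$ is rigid---it is a summand of some cluster-tilting object of $\CM(C_{k,n})$ (for instance one attached to a maximal weakly separated collection through $I$, by \cite{Scott-Grass,JKS}), so $\Ext^1_{C_{k,n}}(M_I,M_I)=0$, and this $\Ext$ is computed in the extension-closed subcategory $\gproj\CM(B_D)$---and it is reachable. Granting this, the first assertion together with $\Psi(M_I)=\varPlueck{I}$ gives $\CCsrc(M_I)=\varPlueck{I}$, and the statement for $\CCtgt$ is symmetric. The main obstacle is exactly this reachability input: two cluster characters agreeing on a cluster-tilting object need \emph{not} agree on non-reachable rigid objects, so one genuinely needs the classification of which rank-one modules lie in $\gproj\CM(B_D)$ and $\ginj\CM(B_D)$, together with the fact that all of these are reachable (quoting \cite{CKP}, cf.\ \cite{Pressland-Postnikov}); everything else is a matter of matching the two normalisations on the initial seed and running the standard exchange-relation induction.
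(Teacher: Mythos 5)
Your proof is correct and matches the paper's approach, modulo the fact that the paper simply cites \cite[Prop.~7.5, Thm.~7.6]{Pressland-Postnikov} rather than writing out the argument. The standard ``uniqueness of cluster characters agreeing on a cluster-tilting object'' induction you spell out (normalise on the summands of $\Tsrc$ using Theorems~\ref{t:src-cat}, \ref{t:categorify} and \ref{t:GalLam}; propagate across exchange relations by cancelling $\Psi(T_j)=\CCsrc(T_j)\neq0$ in the integral domain $\CC[\openposvarcone_\posit]$) is exactly the content of the first citation, and you correctly flag that the remaining input is the reachability of the rank-one modules, which is the content of the second citation. One small note: for the reachability of $M_I\in\gproj\CM(B_D)$ from $\Tsrc$ the sharpest reference is \cite[Thm.~7.6]{Pressland-Postnikov} (ultimately relying on \cite{OPS-PGs}) rather than \cite{CKP}, though you do flag this alternative.
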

\begin{proof}
For \(\gproj\CM(B_D)\), this is \cite[Prop.~7.5]{Pressland-Postnikov} and \cite[Thm.~7.6]{Pressland-Postnikov}, the latter result showing that any \(M_I\in\gproj\CM(B_D)\) is reachable.
The statements for \(\ginj\CM(B_D)\) can be proved similarly, or deduced from those for \(\gproj\CM(B_{D^\op})\).
\end{proof}

\begin{rem}
We do not claim that the cluster characters \(\CCsrc\) and \(\Psi\) coincide on every object of \(\gproj\CM(B_D)\) (nor make the analogous claim for \(\ginj\CM(B_D)\)), and to the best of our knowledge this remains an open question in this generality.
The two functions are constructed rather differently: the cluster character \(\CCsrc\) is obtained from a formula of Fu and Keller \cite{FuKel}, building on earlier work of Palu \cite{Palu-CC} and Caldero--Chapoton \cite{CalCha}, whose expression depends on the choice of cluster-tilting object \(\Tsrc\), whereas \(\Psi\) is based on Lusztig's construction of the dual semicanonical basis in terms of modules for a preprojective algebra \cite{Lusztig-Quivers,Lusztig-Semican}, and is explicitly independent of any choice of cluster-tilting object.
However, in the absence of any explicit examples to demonstrate otherwise, it remains possible that the two functions are in fact equal.
Some evidence for this comes from the case that $D$ is uniform, for which $\gproj\CM(B_D)=\CM(C_{k,n})=\ginj\CM(B_D)$; here \(\CCtgt=\Psi\) by \cite[Thm.~9.11]{JKS3}.
\end{rem}

\begin{eg}
\label{eg:running2}
For the Postnikov diagram \(D\) from Example~\ref{eg:running1}, the Auslander--Reiten quiver of \(\gproj\CM(B_D)\) is shown in Figure~\ref{f:gproj}, and that of \(\ginj\CM(B_D)\) is shown in Figure~\ref{f:ginj}.
We use Proposition~\ref{p:embed} to identify these categories with their embeddings into the Grassmannian cluster category \(\CM(C_{3,7})\).
The notation \(M(\frac{257}{136})\) in Figure~\ref{f:gproj} refers to an indecomposable rank \(2\) module with a filtration whose composition factors are \(M_{136}\) and \(M_{257}\), as in \cite[\S6]{JKS}, and similarly for the other rank \(2\) modules.
One can also view the label of a module as notation for its profile (see \cite[\S6]{JKS} again), although our conventions differ from those of \cite{JKS} in such a way that the label records the upward, rather than downward, steps in this profile.

\begin{figure}
\begin{tikzpicture}[xscale=1.7,yscale=1.3]
\foreach \x/\y/\n/\l in {-3/-1/357l/357, -2/0/347/347, -1/1/137/137, -1/-1/346/346, 0/0/136/136, 1/1/126/126, 1/-1/135/135, 3/1/357r/357}
{\draw (\x,\y) node (\n) {\(M_{\l}\)};}

\foreach \x/\y/\n/\l in {-3/1/247136l/\frac{247}{136}, 2/0/257136/\frac{257}{136}, 3/-1/247136r/\frac{247}{136}}
{\draw (\x,\y) node (\n) {\(M(\l)\)};}

\foreach \x/\y/\n in {-2/2.25/167, -2/1.75/123, -2/-2/356, 0/2/127, 0/-2/345, 1/0.35/367, 2/-2/134}
{\draw (\x,\y) node (\n) {\(\boxed{M_{\n}}\)};}

\foreach \s/\t in {247136l/167.west, 247136l/123.west, 247136l/347, 167.east/137, 123.east/137, 347/137, 357l/347, 357l/356, 347/346, 356/346, 137/127, 137/136, 346/136, 346/345, 127/126, 136/126, 136/367, 136/135, 345/135, 126/257136, 367/257136, 135/257136, 135/134, 257136/357r, 257136/247136r, 134/247136r}
{\path [-angle 90] (\s) edge (\t);}

\draw (-3,0) node (l) {};
\draw (3,0) node (r) {};
\foreach \l/\r in {247136l/137, l/347, 357l/346, 347/136, 137/126, 346/135, 136/257136, 126/357r, 135/247136r, 257136/r}
{\path [dashed] (\l) edge (\r);}

\draw [dotted] (-3,-2.5) -- (357l) -- (247136l) -- (-3,2.5);
\draw [dotted] (3,-2.5) -- (247136r) -- (357r) -- (3,2.5);
\end{tikzpicture}
\caption{The Auslander--Reiten quiver of the category \(\gproj\CM(B_D)\), for \(B_D\) the boundary algebra of the plabic graph in Figure~\ref{f:eg}. Objects displayed in boxes are projective-injective, and the left-hand and right-hand ends are identified via a Möbius twist. The category is embedded in the Grassmannian cluster category \(\CM(C_{3,7})\), and objects named using their profiles as \(C_{3,7}\)-modules.}
\label{f:gproj}
\end{figure}
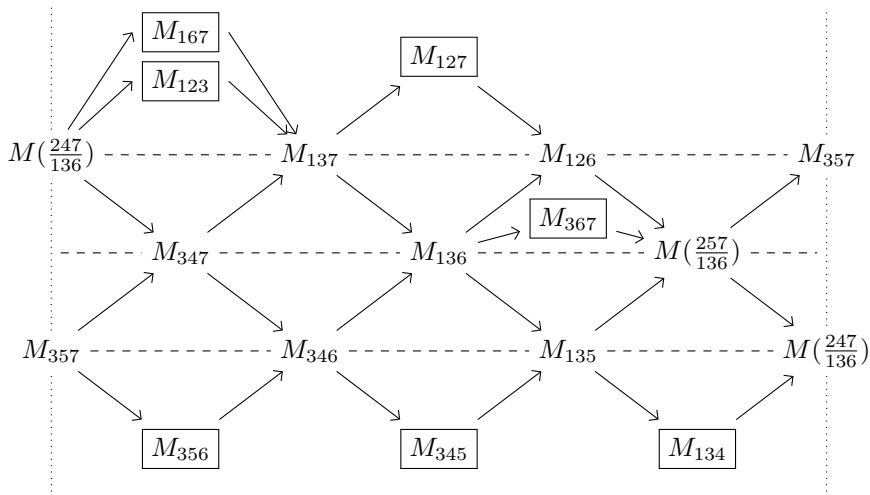

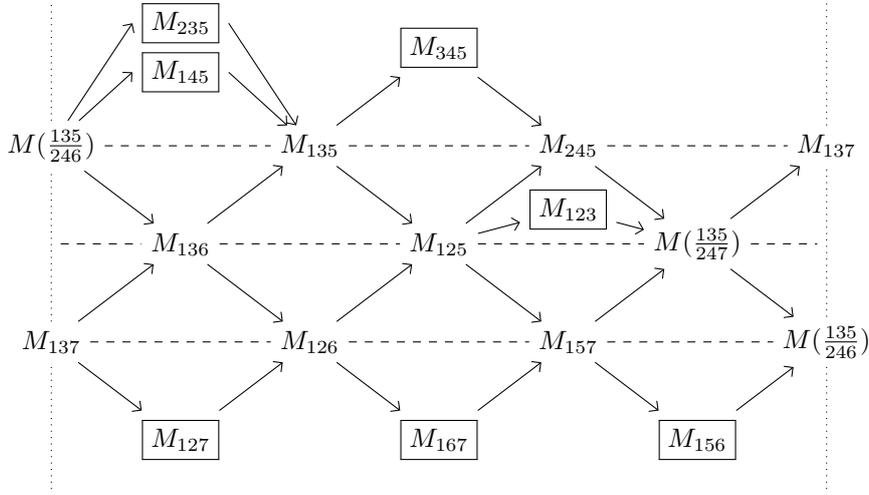
\begin{figure}
\begin{tikzpicture}[xscale=1.7,yscale=1.3]
\foreach \x/\y/\n/\l in {-3/-1/357l/137, -2/0/347/136, -1/1/137/135, -1/-1/346/126, 0/0/136/125, 1/1/126/245, 1/-1/135/157, 3/1/357r/137}
{\draw (\x,\y) node (\n) {\(M_{\l}\)};}

\foreach \x/\y/\n/\l in {-3/1/247136l/\frac{135}{246}, 2/0/257136/\frac{135}{247}, 3/-1/247136r/\frac{135}{246}}
{\draw (\x,\y) node (\n) {\(M(\l)\)};}

\foreach \x/\y/\n/\l in {-2/2.25/167/235, -2/1.75/123/145, -2/-2/356/127, 0/2/127/345, 0/-2/345/167, 1/0.35/367/123, 2/-2/134/156}
{\draw (\x,\y) node (\n) {\(\boxed{M_{\l}}\)};}

\foreach \s/\t in {247136l/167.west, 247136l/123.west, 247136l/347, 167.east/137, 123.east/137, 347/137, 357l/347, 357l/356, 347/346, 356/346, 137/127, 137/136, 346/136, 346/345, 127/126, 136/126, 136/367, 136/135, 345/135, 126/257136, 367/257136, 135/257136, 135/134, 257136/357r, 257136/247136r, 134/247136r}
{\path [-angle 90] (\s) edge (\t);}

\draw (-3,0) node (l) {};
\draw (3,0) node (r) {};
\foreach \l/\r in {247136l/137, l/347, 357l/346, 347/136, 137/126, 346/135, 136/257136, 126/357r, 135/247136r, 257136/r}
{\path [dashed] (\l) edge (\r);}

\draw [dotted] (-3,-2.5) -- (357l) -- (247136l) -- (-3,2.5);
\draw [dotted] (3,-2.5) -- (247136r) -- (357r) -- (3,2.5);
\end{tikzpicture}
\caption{The Auslander--Reiten quiver for \(\ginj\CM(B_D)\), with conventions as in Figure~\ref{f:gproj}.}
\label{f:ginj}
\end{figure}

Comparing to Tables~\ref{tab:src-eg} and \ref{tab:tgt-eg}, we see that the rank \(1\) modules in each category have the same labels as the Plücker coordinates which are cluster variables in the corresponding cluster structure, with the projective-injective objects sharing their labels with the frozen variables.
Each category also contains a pair of rank \(2\) modules, corresponding to the pair of degree \(2\) cluster variables.

As a sample computation, observe that there are short exact sequences
\[\begin{tikzcd}
M_{136}\arrow{r}&\begin{matrix}M_{126}\\\dsum\\M_{367}\\\dsum\\M_{135}\end{matrix}\arrow{r}&M(\frac{257}{136}),
\end{tikzcd}\quad
\begin{tikzcd}
M(\frac{257}{136})\arrow{r}&\begin{matrix}M_{167}\\\dsum\\M_{123}\\\dsum\\M_{356}\end{matrix}\arrow{r}&M_{136}
\end{tikzcd}\]
in \(\gproj\CM(B_D)\), in which all objects except \(M(\frac{257}{136})\) have summands from the cluster-tilting object \(M_{136}\dsum M_{126}\dsum M_{135}\dsum B_D\).
This is different from the initial cluster-tilting object \(\Tsrc=eA_D=M_{137}\oplus M_{135}\oplus M_{357}\oplus B_D\), but is more convenient for our computation.

It then follows that
\begin{align*}
\CCsrc\bigl(M(\tfrac{257}{136})\bigr)&=\frac{\CCsrc(M_{126}\dsum M_{367}\dsum M_{135})+\CCsrc(M_{167}\dsum M_{123}\dsum M_{356})}{\CCsrc(M_{136})}\\
&=\frac{\Plueck{126}\Plueck{367}\Plueck{135}+\Plueck{167}\Plueck{123}\Plueck{356}}{\Plueck{136}}
\end{align*}
by Theorem~\ref{t:CC-values} and the multiplication formula for cluster characters \cite[Thm.~3.3(d)]{FuKel}.
Now using the Plücker relations
\begin{align*}
\Plueck{126}\Plueck{135}+\Plueck{123}\Plueck{156}&=\Plueck{125}\Plueck{136},\\
\Plueck{167}\Plueck{356}-\Plueck{367}\Plueck{156}&=\Plueck{136}\Plueck{567}=0
\end{align*}
on \(\openposvar_{\posit}\), where \(\Plueck{567}=0\), we see that this expression simplifies to
\[\CCsrc\bigl(M(\tfrac{257}{136})\bigr)=\Plueck{125}\Plueck{367},\]
cf. Table~\ref{tab:src-eg}.
\end{eg}

\subsection{Categorification of perfect matchings}
\label{s:categorification-pms}

Recall the notion of a perfect matching of a plabic graph from Definition~\ref{d:pm}, defined as a subset of the set of edges and half-edges.
For the plabic graph \(\Gamma_D\) associated to a Postnikov diagram \(D\), these edges are in natural bijection with the arrows of the quiver \(Q=Q_D\), and so we may also view a perfect matching as a subset of this set of arrows.
In terms of \(Q_D\), a perfect matching \(\mu\) is defined by the property that each of the distinguished cycles \(c_v\) contains exactly one arrow from \(\mu\).

\begin{defn}[{\cite[Def.~4.3]{CKP}}]
Let \(\mu\) be a perfect matching of \(Q=Q_D\). The \emph{perfect matching module} \(N_\mu\) is the \(A_D\)-module given by the following representation of \(Q\).
At each vertex in \(Q_0\), we take the vector space \(Z=\powser{C}{t}\).
The linear map \(Z\to Z\) associated to an arrow \(\alpha\in Q_1\) is multiplication by \(t\) if \(\alpha\in\mu\), and \(\id_Z\) otherwise.
\end{defn}

Once again, we will sometimes write \(N(\mu)\) instead of \(N_\mu\).
In a perfect matching module \(N_\mu\), any representative of a cycle \(c_v\) acts as multiplication by \(t\) (on the relevant copy of \(Z\) at a vertex).
This applies in particular to the cycles \(ap_a^\black\) and \(ap_a^\white\) associated to an internal arrow \(a\), and so we see that either \(p_a^\black\) and \(p_a^\white\) both act on \(N_\mu\) as \(\id_Z\), if \(\alpha\in\mu\), or they both act as multiplication by \(t\) if \(\alpha\not\in\mu\).
As a result, \(N_\mu\) does indeed describe an \(A_D\)-module as claimed.

The perfect matching modules are the analogue for \(A_D\) of the rank \(1\) modules \(M_I\) for \(C_{k,n}\) from Definition~\ref{d:rank1}.
Indeed, in the construction of \(M_I\), the role of the distinguished cycles \(c_v\) is played by the terms of the product \(xy\), each of which is a \(2\)-cycle in which exactly one of the two arrows acts as multiplication by \(t\) in \(M_I\), and the other by \(\id_Z\).
Analogous to the \(M_I\), the modules \(N_\mu\) are Cohen--Macaulay \(A_D\)-modules with \(\rank(N_\mu)=1\), and any \(N\in\CM(A)\) with \(\rank(N)=1\) is isomorphic to \(N_\mu\) for a unique perfect matching \(\mu\) \cite[Cor.~4.6]{CKP}.
A further relationship between the \(N_\mu\) and the \(M_I\) is the following.

\begin{prop}[{\cite[Prop.~4.9]{CKP}}]
\label{p:bdry-value}
Let \(\mu\) be a perfect matching.
Then the \(B_D\)-module \(eN_\mu\) obtained by restricting \(N_\mu\) to the boundary coincides, when viewed in \(\CM(C_{k,n})\), with the rank \(1\) module \(M_{\bdry\mu}\).
\end{prop}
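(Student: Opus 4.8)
The plan is to identify $eN_\mu$ inside $\CM(C_{k,n})$ by first computing its rank and then reading off its $k$-subset label. First I would observe that $eN_\mu$ is a rank $1$ Cohen--Macaulay module: since $N_\mu\in\CM(A_D)$ has rank $1$ it is free over $Z$ of rank $\#Q_0$, and because $N_\mu$ is by construction the representation of $Q_D$ placing a copy of $Z$ at every vertex, its truncation $eN_\mu=\bigdsum_{j\in F_0}\idemp{j}N_\mu$ is free over $Z$ of rank $\#F_0=n$ (for connected $D$ there are exactly $n$ boundary regions), which is the $Z$-rank of a rank $1$ module over $B_D$, and of any $M_I$ over $C_{k,n}$. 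The canonical map $C_{k,n}\to B_D$ of Proposition~\ref{p:embed} is a $Z$-algebra homomorphism identifying the two systems of $n$ vertex idempotents, so restriction along it leaves the underlying $Z$-module unchanged; hence, viewed in $\CM(C_{k,n})$ through the fully faithful functor of Proposition~\ref{p:embed}, the module $eN_\mu$ still has rank $1$. By \cite[Prop.~5.2]{JKS} it is therefore isomorphic to $M_J$ for a unique $k$-subset $J$, and the problem reduces to proving $J=\bdry\mu$.

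To pin down $J$, I would use the characterisation of rank $1$ $C_{k,n}$-modules coming from Definition~\ref{d:rank1}: $M_J$ is the one on which, for each $i\in\ZZ_n$, the arrow $x_i$ of $Q_n$ acts invertibly exactly when $i\notin J$, and $y_i$ acts invertibly exactly when $i\in J$. So I would evaluate, for each $i$, the action on $eN_\mu$ of the images of $x_i$ and $y_i$ under $C_{k,n}\to B_D$. Let $v$ be the node of $\Gamma_D$ incident with the half-edge at the marked point $i$, let $a_i$ be the arrow of $Q_D$ crossing that half-edge, and write the distinguished cycle at $v$ as $c_v=a_ip_{a_i}$. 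The construction of the canonical map sends one of $x_i,y_i$ to a unit multiple of $a_i$ and the other to the complementary boundary path $p_{a_i}$, in such a way that $x_i$ corresponds to $a_i$ when $v$ is white and $y_i$ corresponds to $a_i$ when $v$ is black. On $N_\mu$ the arrow $a_i$ acts as multiplication by $t$ precisely when $a_i\in\mu$, and since $c_v$ acts as multiplication by $t$ and contains exactly one arrow of $\mu$, the path $p_{a_i}$ acts as multiplication by $t$ precisely when $a_i\notin\mu$. Combining these two dichotomies with Definition~\ref{d:rank1} and with Definition~\ref{d:pm}, one gets $i\in J\iff i\in\bdry\mu$ in the white case (where $i\in J$ iff $a_i\in\mu$) and in the black case (where $i\in J$ iff $a_i\notin\mu$), so $J=\bdry\mu$ and hence $eN_\mu\iso M_{\bdry\mu}$.

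The rank computation and the final bookkeeping are routine; the one step that genuinely needs care is the italicised assertion in the previous paragraph, namely the precise matching between $x_i$, $y_i$ and the half-edge arrow $a_i$ as a function of the colour of $v$, together with the accompanying unit scalars. This has to be extracted from the explicit description of the canonical map $C_{k,n}\to B_D$, as constructed by Baur--King--Marsh \cite{BKM} and recalled in \cite{CKP}, so the hard part is really just quoting that construction in a form that plugs cleanly into Definitions~\ref{d:rank1} and \ref{d:pm}. I would also double-check the orientation of $a_i$ against the identification of the boundary regions of $\Gamma_D$ with the vertices $\ZZ_n+\tfrac12$ of $Q_n$, but this only affects notation, not the invertibility test that detects $J$.
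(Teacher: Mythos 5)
Your proof is correct, and since the paper simply cites \cite[Prop.~4.9]{CKP} rather than reproving the statement, there is no internal proof to compare it against; what you give is the natural argument and almost certainly the substance of the cited one. A few remarks. The rank step is clean: $N_\mu$ places $Z$ at each vertex of $Q_D$, so $eN_\mu$ is $Z$-free of rank $\#F_0 = n$ (connectedness of $D$, which is the standing hypothesis in Section~\ref{s:categorification}, gives $\#F_0 = n$), and restriction along the $Z$-algebra map $C_{k,n}\to B_D$ preserves the underlying $Z$-module, so $eN_\mu$ is rank $1$ in $\CM(C_{k,n})$ and hence of the form $M_J$. For the label step, you in fact need slightly less than the claim that $y_i$ maps \emph{on the nose} to the local path $p_{a_i}$: all that is used is that one of $x_i,y_i$ maps to $a_i$ (up to a unit), together with the relation $x_iy_i = y_ix_i = t$ and the fact that every distinguished cycle acts as $t$ on $N_\mu$; these force the complementary generator to act invertibly exactly when $a_i\in\mu$, whatever path it literally equals in $B_D$. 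You correctly identify the one input that must be pulled from the construction of the canonical map $C_{k,n}\to B_D$ in \cite{BKM,CKP} — that $x_i$ corresponds to $a_i$ at a white boundary node and $y_i$ to $a_i$ at a black one — and the bookkeeping you do against Definitions~\ref{d:pm} and~\ref{d:rank1} confirms that this is the assignment consistent with the statement. That convention is the genuine content of the proposition, so flagging it as the point that must be checked against the explicit map, rather than treating it as obvious, is the right call.
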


Because the indecomposable projective \(A_D\)-modules are Cohen--Macaulay with rank \(1\) by \cite[Prop.~2.15]{CKP}, they are isomorphic to perfect matching modules.
The same is true for the indecomposable injective objects in \(\CM(A_D)\), and in both cases it turns out that we have seen the corresponding matchings already in Proposition~\ref{p:MSmats}.

\begin{thm}[{\cite[Cor.~7.6--7.7]{CKP}}]
\label{t:MSmats}
Let \(j\) be a vertex of \(Q_D\).
Then the indecomposable projective \(A\)-module \(Ae_j\) is isomorphic to the perfect matching module \(N(\MSsrc{j})\) associated to the downstream wedge matching \(\MSsrc{j}\).
Dually, the module \(\Zdual{(e_jA)}\), which is indecomposable injective in \(\CM(A)\), is isomorphic to \(N(\MStgt{j})\), for \(\MStgt{j}\) the upstream wedge matching.
\end{thm}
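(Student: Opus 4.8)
The plan is to reduce the statement about injectives to the one about projectives, and to reformulate the latter as a single combinatorial claim about the Postnikov diagram. For the reduction: viewing the right $A_D$-module $e_jA_D$ as a left module over $A_{D^{\op}}=A_D^{\op}$ (the identification noted just after Definition~\ref{d:dimer-alg}), it is the indecomposable projective $A_{D^{\op}}e_j$. Granting the projective statement for the diagram $D^{\op}$ gives $A_{D^{\op}}e_j\iso N(\MSsrc{j}(D^{\op}))$, and applying the duality $\Zdual{(\blank)}\colon\CM(A_{D^{\op}})\isoto\CM(A_D)^{\op}$ of Proposition~\ref{p:Zdual} yields $\Zdual{(e_jA_D)}\iso\Zdual{(N(\MSsrc{j}(D^{\op})))}$. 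Since $\Zdual{(\blank)}$ transposes the arrow maps of a representation, and the arrow maps of a perfect matching module are each the identity or multiplication by $t$ on a copy of $Z$ (both self-dual), $\Zdual{(\blank)}$ carries the perfect matching module of a matching $\mu$ over $D^{\op}$ to that of $\mu$ over $D$ (the two plabic graphs share an edge set). Combined with $\MSsrc{j}(D^{\op})=\MStgt{j}(D)$ from Proposition~\ref{p:op}\ref{p:op-MS}, this gives $\Zdual{(e_jA_D)}\iso N(\MStgt{j}(D))$, as required.

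For the projective statement, I would let $g$ generate the free rank-one $Z$-module $e_jN(\MSsrc{j})$ and consider the $A_D$-module homomorphism $\phi\colon A_De_j\to N(\MSsrc{j})$ determined by $\phi(e_j)=g$. Both modules are Cohen--Macaulay of rank one — for $A_De_j$ by \cite[Prop.~2.15]{CKP}, for $N(\MSsrc{j})$ by the rank count for perfect matching modules — hence free over $Z$ with one basis vector at each vertex of $Q_D$; so $\phi$ is an isomorphism as soon as it is surjective, and surjectivity can be tested one vertex at a time. At vertex $i$ the image of the component $e_iA_De_j\to e_iN(\MSsrc{j})$ is the $Z$-submodule spanned by the elements $p\cdot g$ for $p$ a path $j\to i$, and $p$ acts on $N(\MSsrc{j})$ as multiplication by $t$ raised to the number of arrows of $p$ lying in $\MSsrc{j}$. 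Thus $\phi$ is surjective at $i$ precisely when some path $j\to i$ in $Q_D$ uses no arrow of $\MSsrc{j}$, and the whole theorem comes down to the claim:

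$(\ast)$\quad for every vertex $i$ of $Q_D$ there is a directed path from $j$ to $i$ that avoids every arrow of $\MSsrc{j}$.

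To establish $(\ast)$ I would argue geometrically with the Postnikov diagram. By Proposition~\ref{p:MSmats}, $\MSsrc{j}$ is the set of edges $e$ of $\Gamma_D$ whose downstream wedge contains the region $j$, so an arrow across $e$ is allowed exactly when $j$ is not downstream of $e$. Given a target region $i$, choose an arc $\gamma$ from $j$ to $i$ in the disc, transverse to the strands and missing all crossings, arranged so that every strand it meets is crossed on a side that does not place $j$ in that strand's downstream wedge; the dual sequence of arrows is then a path $j\to i$ avoiding $\MSsrc{j}$. Producing such a $\gamma$ is where the axioms enter: property \ref{d:P4} guarantees that each downstream region is a genuine wedge, so the ``forbidden'' half-planes relative to $j$ assemble coherently, and Remark~\ref{r:bdry-wedges} supplies the analogous picture at the boundary half-edges. (Alternatively, $(\ast)$ might be extracted from known structural results on dimer models on discs.) As a sanity check, restricting $\phi$ to the boundary should identify $eA_De_j\iso M(\srclab{j})$ from Theorem~\ref{t:src-cat} with $eN(\MSsrc{j})\iso M_{\bdry\MSsrc{j}}=M_{\srclab{j}}$ from Proposition~\ref{p:bdry-value} and \eqref{eq:MSbdry}.

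I expect $(\ast)$ to be the main obstacle: turning the informal recipe ``escape from $j$ in every direction without ever crossing a strand on its downstream side relative to $j$'' into an actual directed path of $Q_D$, and proving it can always be extended to reach an arbitrary region, is a genuinely topological--combinatorial argument that leans on the planarity and alternation axioms of a Postnikov diagram. Everything else — the opposite-diagram reduction, the rank argument for $\phi$, and the translation to $(\ast)$ — is formal.
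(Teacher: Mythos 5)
The statement you are proving is cited rather than proved in the paper — it is attributed to \cite[Cor.~7.6, 7.7]{CKP} and the paper gives no independent argument — so the comparison is really with the CKP proof, which you have evidently reconstructed in spirit. Your reduction of the injective case to the projective case for $D^{\op}$ via $\Zdual{(\blank)}$ and Proposition~\ref{p:op}\ref{p:op-MS} is correct and closely parallels what the paper itself does in the proof of Lemma~\ref{l:upmat-class}, where it records that $\Zdual{(\blank)}$ carries $N_\mu(Q^{\op})$ to $N_\mu(Q)$. The translation of ``$\phi$ is an isomorphism'' into the combinatorial claim $(\ast)$ is also sound: both modules are free of $Z$-rank one at each vertex, a surjection of free $Z$-modules of the same finite rank over the local ring $Z$ is automatically an isomorphism, and a path $p\colon j\to i$ acts on $N(\MSsrc{j})$ by $t^{m(p)}$ with $m(p)$ the number of matched arrows, so surjectivity at $i$ is exactly the vanishing of $\min_p m(p)$.

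The genuine gap is $(\ast)$ itself, and you correctly flag it as such. As stated, the sketch does not yet produce a directed path in $Q_D$: an arbitrary arc $\gamma$ from region $j$ to region $i$ that crosses edges of $\Gamma_D$ avoiding the downstream wedges of $j$ is dual to a \emph{sequence} of arrows, but each arrow has a fixed orientation determined by the colours of the endpoints of the crossed edge, so there is no reason a priori that the sequence is consistently oriented from $j$ to $i$ — one has to arrange the arc so that every edge is crossed in the correct sense, and then show that the two constraints (correct orientation, avoiding $\MSsrc{j}$) can be met simultaneously all the way to $i$. This is precisely the nontrivial geometric content that CKP supplies (the paper itself advertises ``a direct geometric argument'' in \cite[\S7]{CKP} for the closely related Proposition~\ref{p:MSmats}), and without it the proof is incomplete. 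A secondary point worth making explicit: the fact that $e_iA_De_j$ has $Z$-rank exactly one at \emph{every} vertex $i$ (rather than, say, rank two at one vertex and zero at another, still summing to $\#Q_0$) needs the strong connectedness of $Q_D$ for consistent dimer models, which your argument uses implicitly and which $(\ast)$ itself would subsume.
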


\begin{cor}
\label{c:bdry-value}
For any vertex \(j\) of \(Q_D\), we have isomorphisms \(eAe_j\iso M(\srclab{j})\) and \(\Zdual{(e_jAe)}\iso M(\tgtlab{j})\).
\end{cor}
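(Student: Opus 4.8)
The plan is to combine the two preceding structural results --- Theorem~\ref{t:MSmats}, which identifies the indecomposable projective $Ae_j$ with the perfect matching module $N(\MSsrc{j})$ and the indecomposable injective $\Zdual{(e_jA)}$ with $N(\MStgt{j})$ --- with Proposition~\ref{p:bdry-value}, which computes the boundary restriction of a perfect matching module in terms of its boundary value. First I would apply the idempotent $e$ on the left to the isomorphism $Ae_j\iso N(\MSsrc{j})$ of $A_D$-modules, obtaining an isomorphism $eAe_j\iso eN(\MSsrc{j})$ of $B_D$-modules (left multiplication by $e$ is an exact functor $\fgmod A_D\to\fgmod B_D$, and in fact restricts to the restriction functor on Cohen--Macaulay modules). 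By Proposition~\ref{p:bdry-value}, $eN(\MSsrc{j})$ is, when viewed inside $\CM(C_{k,n})$ via Proposition~\ref{p:embed}, the rank~$1$ module $M_{\bdry\MSsrc{j}}$. Finally, the boundary value $\bdry\MSsrc{j}=\srclab{j}$ is exactly \eqref{eq:MSbdry}, so $eAe_j\iso M(\srclab{j})$, which is the first assertion.

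For the second assertion I would run the dual argument. Theorem~\ref{t:MSmats} gives $\Zdual{(e_jA)}\iso N(\MStgt{j})$ as objects of $\CM(A_D)$; applying the exact functor $(\blank)e$ (right multiplication by $e$) yields $\Zdual{(e_jA)}e\iso N(\MStgt{j})e$. One should note the harmless identification $\Zdual{(e_jA)}e\iso\Zdual{(e_jAe)}$, which holds because $\Zdual{(\blank)}=\Hom_Z(\blank,Z)$ is $Z$-linear and the idempotent decomposition of $A$ is compatible with this duality --- concretely, $\Zdual{(e_jA)}e$ and $\Zdual{(e_jAe)}$ are the same $Z$-submodule of $\Zdual{(e_jA)}$, namely the functions supported on the summand $e_jAe$. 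Then again by Proposition~\ref{p:bdry-value} (applied to the matching $\MStgt{j}$, using that right-multiplication by $e$ is the boundary restriction, identical to left-multiplication here since $e$ is central among the idempotents in question) we get $N(\MStgt{j})e\iso M_{\bdry\MStgt{j}}$ inside $\CM(C_{k,n})$, and $\bdry\MStgt{j}=\tgtlab{j}$ by the second half of \eqref{eq:MSbdry}. Hence $\Zdual{(e_jAe)}\iso M(\tgtlab{j})$.

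The only genuine subtlety --- and the step I would write out with a little care --- is the compatibility of the $Z$-dual $\Zdual{(\blank)}$ with restriction to the boundary, i.e.\ that $\Zdual{(e_jA)}e$ really does compute the boundary restriction of the injective $\Zdual{(e_jA)}$ and agrees with $\Zdual{(e_jAe)}$; everything else is a formal concatenation of isomorphisms already established in the excerpt and in \cite{CKP}. In fact this compatibility is implicit in the symmetry already used in Theorems~\ref{t:src-cat} and \ref{t:tgt-cat} (where $\Ttgt=\Zdual{(A_De)}$ and its summands $\Zdual{(e_jA_De)}$ appear), and in the ``$D$ versus $D^\op$'' duality of Proposition~\ref{p:op} together with Proposition~\ref{p:Zdual}; indeed an alternative route is to deduce the second isomorphism from the first by passing to the opposite diagram, using $A_{D^\op}=A_D^\op$, $\srclab{j}(D^\op)=\tgtlab{j}(D)^\compl$ (Proposition~\ref{p:op}\ref{p:op-labs}), $\MSsrc{j}(D^\op)=\MStgt{j}(D)$ (Proposition~\ref{p:op}\ref{p:op-MS}), and the fact that $\op^*$ sends $M_I$ to $M_{I^\compl}$. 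Either way the proof is short.
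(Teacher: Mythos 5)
Your proof is correct and follows the paper's one-line argument: combine Theorem~\ref{t:MSmats}, Proposition~\ref{p:bdry-value} and \eqref{eq:MSbdry}. One small slip in the second half: $\Zdual{(e_jA)}$ and $N(\MStgt{j})$ are \emph{left} $A$-modules, so boundary restriction is left multiplication by $e$ (yielding $e\Zdual{(e_jA)}\iso\Zdual{(e_jAe)}$), not the right multiplication $(\blank)e$ you write; but your description of which $Z$-submodule this produces is correct, so the rest of the argument goes through unchanged, and your alternative route via $D^\op$ is also a valid (and equivalent) way to dispatch the second isomorphism.
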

\begin{proof}
Combine Theorem~\ref{t:MSmats} with Proposition~\ref{p:bdry-value} and \eqref{eq:MSbdry}.
\end{proof}

Because the algebra \(A_D\) has finite global dimension \cite[Thm.~3.7]{Pressland-Postnikov}, we may associate to each \(N\in\fgmod{A_D}\) the class \([N]\in\Kgp_0(\projcat{A_D})\) of a projective resolution of \(N\).
When \(N=N_\mu\) is a perfect matching module, \cite[Thm.~6.7]{CKP} gives an explicit projective resolution of \(N_\mu\), leading to an expression for \([N_\mu]\) in terms of the classes of indecomposable projective \(A_D\)-modules \(P_j=A_D\idemp{j}\) for \(j\in Q_0\) \cite[Prop.~6.9]{CKP}.
In this paper, it will suffice to know the image of this class under the natural projection \(\Kgp_0(\projcat{A_D})\to\Kgp_0(\projcat{\stab{A}_D})\) with kernel spanned by the classes \([P_j]\) for \(j\in F_0\), recalling that \(\stab{A}_D=A_D/A_DeA_D\).

\begin{prop}[{\cite[Prop.~6.9]{CKP}}]
\label{p:mat-res}
Let \(\mu\) be a perfect matching of \(D\). Then in \(\Kgp_0(\projcat{\stab{A}_D})\), we have
\begin{equation}
\label{eq:white-class}
[N_{\mu}]=\sum_{j\in Q_0\setminus F_0}[Ae_j]-\wt[\white](\mu),
\end{equation}
where
\[\wt[\white](\mu)=\sum_{\gamma\in\mu}\wt[\white](\gamma),\quad\wt[\white](\gamma)=\sum_{j\in\pos_0(\gamma)}[Ae_j],\]
and \(\pos_0(\gamma)\) is obtained from the set of vertices appearing in the positively oriented face of \(Q\) with \(\gamma\) in its boundary by removing the two vertices \(s(\gamma)\) and \(t(\gamma)\) incident with \(\gamma\).
We also have
\begin{equation}
\label{eq:black-class}
[N_{\mu}]=\sum_{j\in Q_0\setminus F_0}[Ae_j]-\wt[\black](\mu)
\end{equation}
where
\[\wt[\black](\mu)=\sum_{\gamma\in\mu}\wt[\black](\gamma),\quad\wt[\black](\gamma)=\sum_{j\in\neg_0(\gamma)}[Ae_j]\]
and \(\neg_0(\gamma)\) is defined analogously to \(\pos_0(\gamma)\) but using the negatively oriented face containing \(\gamma\).
\end{prop}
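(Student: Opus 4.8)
The statement is \cite[Prop.~6.9]{CKP}; I outline the proof. The plan is to exploit the fact that the asserted equality lives in $\Kgp_0(\projcat{\stab{A}_D})$, so that it suffices to produce \emph{any} finite resolution of $N_\mu$ by finitely generated projective $A_D$-modules, form the alternating sum of the classes of its terms in $\Kgp_0(\projcat{A_D})$---which is free abelian on $\{[Ae_i]:i\in Q_0\}$---and then push the result forward along the canonical surjection $\Kgp_0(\projcat{A_D})\to\Kgp_0(\projcat{\stab{A}_D})$, whose kernel is spanned by the classes $[Ae_j]$ with $j\in F_0$. Such a resolution exists because $A_D$ has finite global dimension \cite[Thm.~3.7]{Pressland-Postnikov} and $N_\mu$ is finitely generated over $Z=\powser{\CC}{t}$ (via the central element $t=\sum_i c_i$), hence over $A_D$.

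The essential input is an \emph{explicit} such resolution, of length $3$, given by \cite[Thm.~6.7]{CKP}. It begins with the tautological surjection $\bigdsum_{i\in Q_0}Ae_i\twoheadrightarrow N_\mu$ sending $e_i$ to the generator of the copy of $Z$ at vertex $i$, and its terms in degrees $1$, $2$, $3$ are built, respectively, from the arrows $\gamma\in\mu$, from the faces of $Q_D$ of one fixed colour, and from the vertices of $Q_D$ again, with differentials induced by the relations $p^\black_a-p^\white_a$ defining $A_D$. Exactness is verified using the frozen Jacobian (dimer) structure---concretely, by checking the complex after reducing modulo $t$ and after inverting $t$, where it becomes tractable. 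Which colour one fixes in degree $2$ is a genuine choice, and it is this choice that makes the bookkeeping below produce $\wt[\white]$ rather than $\wt[\black]$.

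Taking Euler characteristics gives $[N_\mu]=\sum_{\ell=0}^{3}(-1)^\ell[P^\ell]$ in $\Kgp_0(\projcat{A_D})$. Applying the quotient map kills $[Ae_j]$ for every frozen $j$, so the degree-$0$ term collapses to $\sum_{j\in Q_0\setminus F_0}[Ae_j]$, and what remains is to reorganise $-[P^1]+[P^2]-[P^3]$, after cancelling frozen contributions, into $-\wt[\white](\mu)$. Using the combinatorial description of the terms of the resolution, one checks that the net surviving contribution of each arrow $\gamma\in\mu$ is precisely $-\sum_{j\in\pos_0(\gamma)}[Ae_j]$, i.e.\ minus the interior vertices---other than $s(\gamma)$ and $t(\gamma)$---of the positively oriented face of $Q_D$ incident with $\gamma$; this yields \eqref{eq:white-class}. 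Running the same computation with the opposite colour fixed in degree $2$ gives \eqref{eq:black-class} for the \emph{same} class $[N_\mu]$; alternatively \eqref{eq:black-class} follows from \eqref{eq:white-class} applied to the opposite diagram $D^\op$, using $A_{D^\op}=A_D^\op$ and Proposition~\ref{p:op} together with the fact that swapping the colours of the nodes of $\Gamma_D$ interchanges positively and negatively oriented faces. In particular the two expressions agree in $\Kgp_0(\projcat{\stab{A}_D})$.

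I expect the main obstacle to be the middle of the third paragraph: identifying the degree-$1$, $2$ and $3$ terms of the resolution precisely enough that, after projecting to $\Kgp_0(\projcat{\stab{A}_D})$ and cancelling the frozen classes, the surviving combination is $\wt[\white](\mu)$ \emph{on the nose}---in particular reproducing the slightly delicate prescription of $\pos_0(\gamma)$ (deleting exactly the two endpoints of $\gamma$, and the correct handling of half-edges and boundary faces). Verifying exactness of the explicit complex is routine once the dimer relations are in hand, and everything downstream is formal manipulation in a free abelian group.
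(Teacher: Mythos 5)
The paper does not prove this proposition; it is cited from \cite[Prop.~6.9]{CKP} without argument, and the paper relies on the accompanying \cite[Thm.~6.7]{CKP} explicit resolution exactly as you do. So on the primary route there is nothing to compare against: your outline (take the length-$3$ projective resolution, pass to Euler characteristics in $\Kgp_0(\projcat{A_D})$, project to $\Kgp_0(\projcat{\stab{A}_D})$ so that frozen classes die, and reorganise the surviving terms into $-\wt[\white](\mu)$) is a plausible and structurally correct reconstruction of the cited proof, and you are right to flag the bookkeeping for $\pos_0(\gamma)$ and half-edges as the delicate point.

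However, your stated \emph{alternative} derivation of \eqref{eq:black-class} from \eqref{eq:white-class} via the opposite diagram $D^\op$ is circular in this context. Applying \eqref{eq:white-class} to $D^\op$ computes $[N_\mu(Q^\op)]=[\Zdual{N_\mu(Q)}]$ in $\Kgp_0(\projcat{\stab{A}^\op_D})$, and the reinterpretation of white weights on $Q^\op$ as black weights on $Q$ is fine; but to convert this into a statement about $[N_\mu(Q)]$ in $\Kgp_0(\projcat{\stab{A}_D})$ you need that the naive lattice isomorphism $[Ae_j]\mapsto[e_jA]$ carries $[N_\mu(Q)]$ to $[\Zdual{N_\mu(Q)}]$. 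This is not automatic: $\Zdual{(\blank)}$ sends projective $A$-modules to injective objects of $\CM(A^\op)$, so the class $[\Zdual{N_\mu(Q)}]$ is computed from a projective resolution over $A^\op$ that bears no a priori relation to the $A$-resolution of $N_\mu(Q)$. In fact, the compatibility you need is precisely the content of Lemma~\ref{l:upmat-class}, whose proof in this paper uses \emph{both} \eqref{eq:white-class} and \eqref{eq:black-class} as input, so the implication cannot run in the direction you propose. The correct derivation of \eqref{eq:black-class} is the other one you mention---repeat the direct resolution computation with the opposite colour fixed in degree $2$---and only with both identities in hand does the opposite-diagram comparison become available.
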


\begin{rem}
While \cite[Prop.~6.9]{CKP} is stated for standardised diagrams, standardisation only changes \(Q_D\) by adding arrows between frozen vertices, and so has no effect on the formulae for \([N_\mu]\in\Kgp_0(\projcat{\stab{A}_D})\), in which contributions from frozen vertices are discounted.
A consequence of Proposition~\ref{p:mat-res} is that \(\wt[\black](\mu)=\wt[\white](\mu)\) in \(\Kgp_0(\projcat{\stab{A}_D})\) for any matching \(\mu\), although the corresponding identity does not hold in \(\Kgp_0(\projcat{A_D})\) when the contributions from frozen vertices are included.
\end{rem}

For \(j\in Q_0\), define \(\Plueck{}^{[P_j]}=\varPlueck{\srclab{j}}\).
Extending this to a group homomorphism, we obtain a Laurent monomial \(\Plueck{}^v\) in the initial source-labelled cluster of Plücker coordinates \(\varPlueck{\srclab{j}}\) for any \(v\in\Kgp_0(\projcat{A_D})\).
Abbreviating \(F=\Hom_{B_D}(\Tsrc,\blank)\colon\CM(B_D)\to\fgmod{A_D}\) and  \(G=\Ext^1_{B_D}(\Tsrc,\blank)\colon\CM(B_D)\to\fgmod{\stab{A}_D}\subseteq\fgmod{A_D}\), we may write Fu--Keller's cluster character formula for \(X\in\gproj\CM(B_D)\) as
\[\CCsrc(X)=\Plueck{}^{[FX]}\sum_{\dimvec{E}:E\leq GX}\chi(\qGrass{\dimvec{E}}{GX})\Plueck{}^{-[E]},\]
as in \cite[\S10]{CKP}.
This gives \(\CCsrc(X)\) a leading term indexed by \(0\leq GX\), with exponent \([FX]\), and a trailing term indexed by \(GX\leq GX\), with exponent \([FX]-[GX]\).
(Depending on a choice of cluster-algebraic convention, one or the other of these exponents is the \emph{$\mathbf{g}$-vector} of \(\CCsrc(X)\).)
The integer coefficients are computed as Euler characteristics of quiver Grassmannians; the details of this will not be important to us here.

Now observe that for any object \(X\in\CM(B_D)\), there is a short exact sequence
\begin{equation}
\label{eq:F'}
\begin{tikzcd}0\arrow{r}&F'X\arrow{r}&FX\arrow{r}&G\bOmega X\arrow{r}&0,\end{tikzcd}
\end{equation}
where \(\bOmega X\) is an arbitrary syzygy of \(X\), i.e.\ the kernel of an arbitrary (not necessarily minimal) projective cover \(PX\to X\).
Since \(G\) vanishes on projective \(B_D\)-modules, \(G\bOmega X\) is independent of this choice of syzygy, and the sequence \eqref{eq:F'} defines a third functor \(F'\colon\CM(B_D)\to\fgmod(A_D)\).
In particular, the assignment \(N\mapsto N/F'X\) is a bijection between submodules of \(FX\) which contain \(F'X\) and submodules of \(G\bOmega X\).
We may also describe \(F'X\) directly as the subspace of \(\Hom_{B_D}(\Tsrc,X)\) consisting of morphisms factoring over a projective \(B_D\)-module.
See \cite[Cor.~5.11]{CKP} for a proof that \(F'\) coincides with the functor of the same name used in \cite{CKP}. 

\begin{prop}
\label{p:g-vec}
Let \(X\in\CM(B_D)\) and let \(\bOmega X\in\CM(B_D)\) be an arbitrary syzygy of \(X\).
Then \(\bOmega X\in\gproj\CM(B_D)\), and in \(\Kgp_0(\projcat{\stab{A}_D})\) we have the identities \([F\Omega X]=-[F'X]\) and \([F\bOmega X]-[G\bOmega X]=-[FX]\).
\end{prop}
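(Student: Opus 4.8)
The plan is to work entirely in $\Kgp_0(\projcat{\stab{A}_D})$ using the short exact sequence~\eqref{eq:F'} together with the long exact sequence obtained by applying $\Hom_{B_D}(\Tsrc,\blank)$ to a projective cover of $X$. First I would establish that $\bOmega X \in \gproj\CM(B_D)$: since $\bOmega X$ is a submodule of a projective $B_D$-module $PX \in \add(B_D)$, and $B_D$ is Iwanaga--Gorenstein of injective dimension at least $1$ (as $Z$ is one-dimensional), $\bOmega X$ is a first syzygy of a Cohen--Macaulay module, hence satisfies $\Ext^i_{B_D}(\bOmega X, B_D) = \Ext^{i+1}_{B_D}(X,B_D)$ for $i \geq 1$; but the right-hand side vanishes because $X$, being Cohen--Macaulay over the regular ring $Z$, has $\bOmega X$ itself Cohen--Macaulay and one can iterate—more directly, the characterisation of Gorenstein projectives as syzygies (invoked in the remark before Proposition~\ref{p:Zdual}) gives that every syzygy module in $\CM(B_D)$ is Gorenstein projective. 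This is the routine part.

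Next, apply $F = \Hom_{B_D}(\Tsrc,\blank)$ and $G = \Ext^1_{B_D}(\Tsrc,\blank)$ to the short exact sequence $0 \to \bOmega X \to PX \to X \to 0$. Since $G$ vanishes on $\add(B_D) = \add(\Tsrc$'s projectives$)$ and $FP X$ is projective (being $\Hom_{B_D}(\Tsrc,PX)$ with $PX$ projective-injective, this is a projective $A_D$-module, so its class dies in $\Kgp_0(\projcat{\stab{A}_D})$), the six-term exact sequence
\[
0 \to F\bOmega X \to FPX \to FX \to G\bOmega X \to GPX = 0
\]
(using $\Ext^1_{B_D}(\Tsrc,PX)=0$ and that $G$ of the middle term vanishes one step further) collapses in $\Kgp_0(\projcat{\stab{A}_D})$ to the relation $[F\bOmega X] - 0 + [G\bOmega X] = [FX]$, wait—I must be careful with signs: the alternating sum gives $[F\bOmega X] - [FPX] + [FX] - [G\bOmega X] = 0$, and since $[FPX] = 0$ in $\Kgp_0(\projcat{\stab{A}_D})$ we get $[F\bOmega X] - [G\bOmega X] = -[FX]$, which is the second claimed identity. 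For the first identity, compare this with the defining sequence~\eqref{eq:F'} for $X$ itself, which reads $0 \to F'X \to FX \to G\bOmega X \to 0$ and hence gives $[FX] - [G\bOmega X] = [F'X]$ in the Grothendieck group; substituting into the previous relation yields $[F\bOmega X] = -[FX] + [G\bOmega X] = -[F'X]$.

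The main subtlety—and the step I would present most carefully—is the vanishing of $[FPX]$ in $\Kgp_0(\projcat{\stab{A}_D})$ and the exactness of the six-term sequence. Since $PX \in \add(B_D)$ is projective-injective in $\gproj\CM(B_D)$, and $\Tsrc = eA_D$ is the cluster-tilting object with $\End_{B_D}(\Tsrc)^{\op} \iso A_D$ (Theorem~\ref{t:src-cat}), the module $FPX = \Hom_{B_D}(\Tsrc, PX)$ is a projective $A_D$-module, so its class in $\Kgp_0(\projcat{A_D})$—and a fortiori in the quotient $\Kgp_0(\projcat{\stab{A}_D})$—is a sum of classes $[P_j]$; but we need it to vanish in $\Kgp_0(\projcat{\stab{A}_D})$, which requires that $FPX$ actually lies in $\add$ of the \emph{frozen} projectives. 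This holds because $\Hom_{B_D}(\Tsrc, B_D)$ decomposes according to the frozen idempotent $e$, matching the description of $\add(\Tsrc)$'s projective-injectives; alternatively one uses that $G$, and hence the relevant $\Kgp_0$-computation, takes values in $\fpmod{\stab{A}_D}$ by construction, so that passing to $\Kgp_0(\projcat{\stab{A}_D})$ automatically discards $FPX$. I would phrase the argument in this second way to keep it clean, citing \cite[\S10]{CKP} or \cite[Thm.~6.11]{Pressland-Postnikov} for the compatibility of the cluster character with this $\Kgp_0$ projection.
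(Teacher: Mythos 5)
Your argument that $\bOmega X\in\gproj\CM(B_D)$ does not work. The claim that $\Ext^{i+1}_{B_D}(X,B_D)$ vanishes for all $i\geq 1$ ``because $X$ is Cohen--Macaulay over the regular ring $Z$ and one can iterate'' would, if correct, put every $X\in\CM(B_D)$ in $\gproj\CM(B_D)$; but $\gproj\CM(B_D)$ is in general a proper subcategory of $\CM(B_D)$, which is precisely why the paper introduces it. The remark before Proposition~\ref{p:Zdual} is also being read backwards: it says that Gorenstein projective modules are $d$-th (and hence first) syzygy modules, not that a first syzygy of a Cohen--Macaulay module is Gorenstein projective. The latter implication needs a genuine argument, and the paper simply cites \cite[Lem.~10.4]{CKP} for it; you should do the same.

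The two Grothendieck-group identities, on the other hand, are computed correctly, by a mildly different route from the paper's. The paper imports the relation $[F\bOmega X]-[N/F'X]=[FP]-[N]$ (valid for any $F'X\leq N\leq FX$) from \cite[Prop.~10.2]{CKP} and specialises at $N=F'X$ and $N=FX$; you instead apply $\Hom_{B_D}(\Tsrc,\blank)$ to $0\to\bOmega X\to PX\to X\to 0$, note that $\Ext^1_{B_D}(\Tsrc,PX)=0$ since $PX$ is injective in the Frobenius category $\gproj\CM(B_D)$, take Euler classes of the resulting five-term exact sequence in $\Kgp_0(\projcat{\stab{A}_D})$, and combine with the class of the sequence~\eqref{eq:F'}. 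The two arguments are variants of the same computation. On the one genuinely delicate point---why $[FPX]=0$ after projecting---your discussion is circuitous; the clean reason is that $\Hom_{B_D}(\Tsrc,B_D)\cong A_De$ by Yoneda, so $FPX\in\add(A_De)$ is a sum of frozen indecomposable projectives $A_De_j$ with $j\in F_0$, whose classes span the kernel of $\Kgp_0(\projcat{A_D})\to\Kgp_0(\projcat{\stab{A}_D})$ by construction. Your alternative citation of \cite[Thm.~6.11]{Pressland-Postnikov} for this step is not apt, as that result concerns Frobenius $2$-CY realisations rather than the $\Kgp_0$ projection.
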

\begin{proof}
The first statement is \cite[Lem.~10.4]{CKP}. For the second, as in the proof of \cite[Prop.~10.2]{CKP}, we have for any submodule \(F'X\leq N\leq FX\) that
\[[F\bOmega X]-[N/F'X]=[FP]-[N]\]
in \(\Kgp_0(\projcat{A_D})\).
Projecting to \(\Kgp_0(\projcat{\stab{A}_D})\), where \([FP]=0\), and taking either \(N=F'X\) or \(N=FX\), we obtain \([F\bOmega X]=-[F'X]\) and \([F\bOmega X]-[G\bOmega X]=-[FX]\) as required.
\end{proof}

As a corollary, \(-[F'X]\) is the exponent of the leading term of \(\CCsrc(\bOmega X)\), i.e.\ that indexed by the zero module.
Similarly, the exponent of the trailing term, indexed by \(G\bOmega X\), is \(-[FX]\).

In \cite[\S5]{CKP}, we show that each \(A_D\)-module \(N\) with \(F'M_I\leq N\leq FM_I\) is isomorphic to \(N_\mu\) for a unique matching \(\mu\) with \(\bdry\mu=I\), and this assignment is a bijection between these intermediate submodules and perfect matching modules with this fixed boundary value.
This leads to an expression \cite[Thm.~10.3]{CKP} for \(\CCsrc(\bOmega M_I)\), for a carefully chosen syzygy \(\bOmega M_I\), as a dimer partition function counting perfect matchings with boundary value \(I\).
A second important consequence for our purposes here is that it will allow us to combine Propositions~\ref{p:mat-res} and \ref{p:g-vec} to compute the leading exponent of \(\CCsrc(\bOmega M_I)\) explicitly for any \(I\in\posit_D\); we will return to this in Section~\ref{s:proof}.

\subsection{Categorification of twists}

In this subsection, we recall and slightly extend a result from \cite{CKP} explaining how to categorify Muller--Speyer's left twist automorphism \(\ltwist\colon\CC[\openposvarcone_\posit]\to\CC[\openposvarcone_\posit]\).

\begin{thm}[{\cite[Thm.~12.2]{CKP}}]
\label{t:MS=CC}
Let \(M_I\in\CM(B)\) be a rank \(1\) module, and let
\[\begin{tikzcd}
0\arrow{r}&\bOmega M_I\arrow{r}&PM_I\arrow{r}&M_I\arrow{r}&0
\end{tikzcd}\]
be a short exact sequence in which \(PM_I\to M_I\) is a projective cover. Then
\[\ltwist\Plueck{I}=\frac{\CCsrc(\bOmega M_I)}{\CCsrc(PM_I)}.\]
\end{thm}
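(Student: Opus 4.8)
The plan is to prove Theorem~\ref{t:MS=CC} by combining the explicit cluster character formula for $\CCsrc$ recalled in Section~\ref{s:categorification-pms} with the combinatorial description, due to Muller and Speyer, of the left twist $\ltwist$ in terms of perfect matchings. First I would recall Muller--Speyer's formula \cite[Thm.~6.7]{MulSpe-Twist} expressing $\ltwist\Plueck{I}|_{\openposvarcone_\posit}$, for $I\in\posit_D$, as a ratio of two dimer partition functions: the numerator counts (weighted) perfect matchings of $\Gamma_D$ with boundary value $I$, while the denominator is a fixed Laurent monomial in the frozen Plücker coordinates, coming from the ``source'' or ``all-boundary'' matching. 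The key translation step is then \cite[Thm.~10.3]{CKP}, which identifies $\CCsrc(\bOmega M_I)$, for a carefully chosen syzygy $\bOmega M_I$, with exactly this dimer partition function: the intermediate submodules $F'M_I\leq N\leq FM_I$ are in bijection with perfect matching modules $N_\mu$ with $\bdry\mu=I$ (as recalled from \cite[\S5]{CKP}), and under this bijection the exponents $\Plueck{}^{-[E]}$ appearing in the Fu--Keller formula match the monomial weights assigned by Muller--Speyer.

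Next I would handle the denominator. By Theorem~\ref{t:src-cat}, $\Tsrc=eA_D=\bigdsum_{j\in Q_0}M(\srclab{j})$, so a projective cover $PM_I\to M_I$ in $\gproj\CM(B_D)$ is a sum of summands $eA_De_j\iso M(\srclab{j})$; since these are all reachable rigid modules with $\CCsrc(M(\srclab{j}))=\varPlueck{\srclab{j}}$ by Theorem~\ref{t:CC-values}, and the cluster character is multiplicative on direct sums \cite[Thm.~3.3]{FuKel}, $\CCsrc(PM_I)$ is the product $\prod_j \varPlueck{\srclab{j}}^{m_j}$ over the summands of $PM_I$ — i.e.\ a Laurent monomial (in fact an honest monomial) in frozen and non-frozen source-labelled cluster variables. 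I would then check that this monomial agrees with Muller--Speyer's denominator. The cleanest route is to compare $g$-vectors (equivalently leading exponents): by Proposition~\ref{p:g-vec}, the leading exponent of $\CCsrc(\bOmega M_I)$ is $-[F'M_I]$, and by Proposition~\ref{p:mat-res} this can be computed explicitly as $\wt[\white]$ or $\wt[\black]$ of the relevant matching, matching the combinatorial normalisation in \cite[\S6.1]{MulSpe-Twist}. Since $\ltwist$ is an algebra automorphism of $\CC[\openposvarcone_\posit]$, it suffices to check the identity on the generating Plücker coordinates $\Plueck{I}$ with $I\in\posit_D$, where both sides are now expressed as the same ratio of the same dimer partition function over the same monomial.

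I expect the main obstacle to be bookkeeping of normalisations and signs: Muller--Speyer work with a specific weighting of matchings and a specific ``base'' matching in their twist formula, whereas the Fu--Keller cluster character has its own normalisation coming from the choice of cluster-tilting object $\Tsrc$ and the functors $F$, $F'$, $G$. Reconciling these requires care in identifying which syzygy $\bOmega M_I$ is ``carefully chosen'' (the non-minimal projective cover used in \cite[Thm.~10.3]{CKP}), and in verifying that Proposition~\ref{p:bdry-value} and Corollary~\ref{c:bdry-value} correctly align the boundary values $\bdry\mu$ with the Plücker labels appearing in $\Psi$ and hence in $\CCsrc$. A secondary subtlety is that Theorem~\ref{t:MS=CC} as stated here is a ``slight extension'' of \cite[Thm.~12.2]{CKP}: I would need to confirm that the original statement was proved for $M_I$ rank $1$ and that the displayed short exact sequence is exactly the projective-cover sequence used there, so that no genuinely new argument is needed beyond citing \cite[Thm.~12.2]{CKP} and, if necessary, filling in the identification of $\CCsrc(PM_I)$ with the Muller--Speyer denominator via the multiplicativity of the cluster character and Theorem~\ref{t:CC-values}. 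In the write-up I would therefore keep the proof short: recall \cite[Thm.~10.3]{CKP} and \cite[Thm.~6.7]{MulSpe-Twist}, observe they produce the same numerator, compute the denominator $\CCsrc(PM_I)$ via Theorems~\ref{t:src-cat}, \ref{t:CC-values} and multiplicativity, and invoke \cite[Thm.~12.2]{CKP} for the remaining identification.
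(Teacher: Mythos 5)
The paper contains no proof of Theorem~\ref{t:MS=CC} at all: it is stated with the citation \textup{[CKP, Thm.~12.2]} in its heading, and the sentence immediately preceding it says that the subsection will ``recall and slightly extend a result from \cite{CKP}.'' The ``slight extension'' refers to Theorem~\ref{t:twist}, which comes next and upgrades the statement from rank~$1$ modules to arbitrary reachable rigid objects by an induction on mutation; Theorem~\ref{t:MS=CC} itself is cited verbatim and serves only as the base case. You eventually recognise this in your last paragraph (``no genuinely new argument is needed beyond citing \cite[Thm.~12.2]{CKP}''), so your conclusion is correct; the bulk of your proposal is an attempt to reconstruct the proof that lives in \cite{CKP}, not an argument the present paper either gives or needs.

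On the substance of that reconstruction, the outline is broadly right --- match the numerator to the dimer partition function via \cite[Thm.~10.3]{CKP} and match the $g$-vector/leading exponent via Propositions~\ref{p:g-vec} and \ref{p:mat-res} --- but there is one inaccuracy worth fixing. You say $\CCsrc(PM_I)$ is ``a Laurent monomial (in fact an honest monomial) in frozen \emph{and non-frozen} source-labelled cluster variables.'' That is not right: a projective cover in $\CM(B)$ (equivalently, a cover by projective-injectives in $\gproj\CM(B)$) is a sum of indecomposable projective $B$-modules $Be_j$, which are exactly the summands $M(\srclab{j})$ with $j\in F_0$, i.e.\ the $M_{I'}$ for $I'\in\srcneck_D$. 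So $\CCsrc(PM_I)$ is a monomial in \emph{frozen} variables only. This matters because the whole point of dividing by $\CCsrc(PM_I)$ is to produce the normalising frozen Laurent monomial that makes the right-hand side agree with $\ltwist\Plueck{I}$, and the paper uses exactly this fact (see \eqref{eq:proj-twist}, where $\ltwist\CCsrc(P)=\CCsrc(P)^{-1}$ for $P$ projective) when bootstrapping to Theorem~\ref{t:twist}.
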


If \(M_I\in\gproj\CM(B)\), then we may use Theorem~\ref{t:CC-values} to rewrite the identity as
\[\ltwist\CCsrc(M_I)=\frac{\CCsrc(\bOmega M_I)}{\CCsrc(PM_I)},\]
cf.~Theorem~\ref{t:twist} below.
As a further special case, if \(I\in\srcneck_D\) is an element of the source necklace, so that \(M_I\in\projcat(B)\), then this identity reduces to \(\ltwist\Plueck{I}=\Plueck{I}^{-1}\) (as used in the proof of Proposition~\ref{p:sourceneck} to see that \(\ltwist^2\Plueck{I}=\Plueck{I}\) in this case).
Since \(\ltwist\) is a ring homomorphism and the cluster character is multiplicative on direct sums \cite[Thm.~3.3(c)]{FuKel}, it follows that
\begin{equation}
\label{eq:proj-twist}
\ltwist\CCsrc(P)=\CCsrc(P)^{-1}
\end{equation}
for all \(P\in\projcat(B)\).

\begin{thm}
\label{t:twist}
Let \(X\) be a reachable rigid object of \(\gproj\CM(B)\), and let
\[\begin{tikzcd}
0\arrow{r}&\bOmega X\arrow{r}&PX\arrow{r}&X\arrow{r}&0
\end{tikzcd}\]
be a short exact sequence in which \(PX\to X\) is a projective cover. Then
\begin{equation}
\label{eq:twist}
\ltwist\CCsrc(X) = \frac{\CCsrc(\bOmega X)}{\CCsrc(PX)}.
\end{equation}
\end{thm}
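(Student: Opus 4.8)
The plan is to deduce Theorem~\ref{t:twist} from the special case already recorded in Theorem~\ref{t:MS=CC} (the rank $1$ case), by reducing an arbitrary reachable rigid object $X$ to a complex of rank $1$ modules $M_I$ with $I\in\posit_D$, each of which lies in $\gproj\CM(B_D)$ by Theorem~\ref{t:CC-values}, and then propagating the identity \eqref{eq:twist} along exact sequences. The key point is that both sides of \eqref{eq:twist} behave controllably under the short exact sequences that arise from cluster-tilting mutation and from taking syzygies.

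First I would set up the bookkeeping. Since $X$ is reachable and rigid, Theorem~\ref{t:categorify} gives that $X$ is (a summand of) a mutation of $\Tsrc$, and Theorem~\ref{t:CC-values} tells us $\CCsrc(X)=\Psi(X)$. The multiplication formula for the Fu--Keller cluster character \cite[Thm.~3.3(d)]{FuKel}, applied to an exchange sequence $0\to X\to E\to X^*\to 0$, $0\to X^*\to E'\to X\to 0$, expresses $\CCsrc(X)\CCsrc(X^*)=\CCsrc(E)+\CCsrc(E')$; additivity on direct sums \cite[Thm.~3.3(c)]{FuKel} handles summands. So if \eqref{eq:twist} is known for a class of objects closed under the operations appearing on the right-hand sides of these relations, it holds for all reachable rigid $X$. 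The crucial compatibility to check is that the operation $X\mapsto \CCsrc(\bOmega X)/\CCsrc(PX)$ is also multiplicative on direct sums (clear, since $\bOmega$ and $P$ are additive and $\CCsrc$ is multiplicative on sums) and respects the exchange relations; the latter uses that $\ltwist$ is a ring homomorphism, so $\ltwist$ of the multiplication formula is again a multiplication formula, together with \eqref{eq:proj-twist} to absorb projective summands that appear in $PX$ versus $PE$.

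Concretely, the induction runs on the number of mutations needed to reach $X$ from $\Tsrc$. The base case is $X=\Tsrc=eA_D=\bigoplus_j M(\srclab{j})$, where $\srclab{j}\in\posit_D$ by Corollary~\ref{c:bdry-card}, so each summand is a rank $1$ module $M_I$ with $I\in\posit_D$; Theorem~\ref{t:MS=CC} combined with Theorem~\ref{t:CC-values} gives $\ltwist\CCsrc(M_I)=\CCsrc(\bOmega M_I)/\CCsrc(PM_I)$ for each summand, and additivity finishes the base case. For the inductive step, suppose $X$ appears in a cluster-tilting object $T'$ obtained from $T$ by mutation at $X$, with exchange sequences as above in which $E,E'$ are summands of $T$ (hence reachable and rigid with fewer mutations). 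Applying the inductive hypothesis to $E$ and $E'$, then applying $\ltwist$ to the multiplication formula $\CCsrc(X)\CCsrc(X^*)=\CCsrc(E)+\CCsrc(E')$, and using that $\ltwist$ is a ring homomorphism, I get an expression for $\ltwist\CCsrc(X)$ in terms of $\ltwist\CCsrc(X^*)$ and the right-hand-side quantities for $E,E'$. The task is then to recognise the result as $\CCsrc(\bOmega X)/\CCsrc(PX)$: one applies $\bOmega$ to the exchange sequences, obtaining exact sequences among the syzygies $\bOmega X$, $\bOmega X^*$, $\bOmega E$, $\bOmega E'$ (all lying in $\gproj\CM(B_D)$ by Proposition~\ref{p:g-vec}), uses the multiplication formula again for $\CCsrc(\bOmega X)\CCsrc(\bOmega X^*)$, and divides by the corresponding relation among the $\CCsrc(PX)$, which is just an identity of Laurent monomials in projectives handled by \eqref{eq:proj-twist}.

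The main obstacle I anticipate is the syzygy bookkeeping in the inductive step: $\bOmega$ is only well-defined up to projective summands, and applying it to an exchange sequence does not automatically produce an exchange sequence for the mutation of $\bOmega T$---one must check that $\bOmega X$ and $\bOmega X^*$ are genuinely exchange partners in a cluster-tilting object of the form $\bOmega T'$ (up to adding projectives), which requires knowing that $\bOmega$ induces a well-behaved operation on the cluster structure. This is essentially the content of \cite[Lem.~10.4]{CKP} and the surrounding discussion of how $\bOmega$ interacts with $F$, $F'$, $G$; I would lean on those results, and on the fact (Proposition~\ref{p:g-vec}) that the $g$-vectors transform linearly, so that the monomial prefactors on both sides match. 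A cleaner alternative, which I would pursue if the direct induction gets tangled, is to observe that $\ltwist$, being a quasi-cluster automorphism (which one also wants to prove---see Section~\ref{s:twist}), sends cluster variables to cluster variables times frozen monomials, so \eqref{eq:twist} can be verified purely at the level of $g$-vectors and frozen exponents using Propositions~\ref{p:mat-res} and \ref{p:g-vec}, reducing everything to the combinatorics of dimer weights; but this risks circularity if Theorem~\ref{t:twist} is needed to establish the quasi-cluster property, so I would use the induction from Theorem~\ref{t:MS=CC} as the primary route.
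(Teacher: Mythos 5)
Your strategy (induction on mutations from $\Tsrc$, base case Theorem~\ref{t:MS=CC}, propagation via the Fu--Keller multiplication formula and the fact that $\ltwist$ is a ring homomorphism) is exactly the paper's. The step you correctly flag as the main obstacle --- why the multiplication formula applies to the syzygied exchange sequences --- is the one piece that needs more care, and neither of your two suggested resolutions is quite right: you do not need $\bOmega X$, $\bOmega X^*$ to be exchange partners in some cluster-tilting $\bOmega T'$, and a purely $g$-vector argument via Propositions~\ref{p:mat-res} and \ref{p:g-vec} could only match leading and trailing exponents, not the full Laurent polynomial. The paper's resolution is simpler: apply the horseshoe lemma to the two exchange sequences $0\to U\to L\to U^*\to 0$ and $0\to U^*\to R\to U\to 0$, with the common middle projective $PU\dsum PU^*$, to produce genuine short exact sequences $0\to\bOmega U\to\bOmega L\to\bOmega U^*\to 0$ and $0\to\bOmega U^*\to\bOmega R\to\bOmega U\to 0$ in $\gproj\CM(B)$. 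Since $\bOmega$ induces the inverse suspension on the stable category, $\Ext^1_B(\bOmega V,\bOmega W)\iso\Ext^1_B(V,W)$ for $V,W\in\gproj\CM(B)$, so $\dim\Ext^1_B(\bOmega U,\bOmega U^*)=1$ and both syzygy sequences are non-split --- which is exactly the hypothesis of the multiplication formula, applied directly without needing $\bOmega T$ to be cluster-tilting. The projective factors $\CCsrc(PU)$, $\CCsrc(PU^*)$ then cancel as you predict, via \eqref{eq:proj-twist}. So your outline is sound; the horseshoe-lemma construction and the $\Ext^1$-invariance check are the details that close it.
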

\begin{proof}
We show that the relevant identity is preserved under mutation of cluster-tilting objects.
Thus, the result will follow by induction, with Theorem~\ref{t:MS=CC} as a base case, using the fact (Theorem~\ref{t:categorify}) that \(\Tsrc=\bigoplus_{j\in Q_0}M(\srclab{j})\in\gproj\CM(B)\) is a cluster-tilting object all of whose indecomposable summands have rank \(1\).
Note that Theorem~\ref{t:MS=CC} extends directly to all \(X\in\add(\Tsrc)\) using the fact that the cluster character is multiplicative on direct sums.

To this end, let \(T\in\gproj\CM(B)\) be a cluster-tilting object, and assume that \eqref{eq:twist} holds for any \(X\in\add(T)\).
If \(U\in\add(T)\) is indecomposable and non-projective, we may consider the two exchange sequences
\[\begin{tikzcd}0\arrow{r}&U\arrow{r}&L\arrow{r}&U^*\arrow{r}&0,\end{tikzcd}\quad
\begin{tikzcd}0\arrow{r}&U^*\arrow{r}&R\arrow{r}&U\arrow{r}&0.\end{tikzcd}\]
which compute the mutation \(T^*=(T/U)\dsum U^*\) of \(T\) at \(X\).
By the multiplication formula for cluster characters, we have the exchange relation
\begin{equation}
\label{eq:mult-form1}
\CCsrc(U^*)=\frac{\CCsrc(L)+\CCsrc(R)}{\CCsrc(U)}.
\end{equation}

Now choose projective covers \(PU\to U\) and \(PU^*\to U^*\).
By the horseshoe lemma, we may construct diagrams
\begin{center}
$\begin{tikzcd}[column sep=1.1em,row sep=1.4em]
&0\arrow{d}&0\arrow{d}&0\arrow{d}\\
0\arrow{r}&\bOmega U\arrow{r}\arrow{d}&\bOmega L\arrow{r}\arrow{d}&\bOmega U^*\arrow{r}\arrow{d}&0\\
0\arrow{r}&PU\arrow{r}\arrow{d}&PX\dsum PU^*\arrow{r}\arrow{d}&PU^*\arrow{r}\arrow{d}&0\\
0\arrow{r}&U\arrow{r}\arrow{d}&L\arrow{r}\arrow{d}&U^*\arrow{r}\arrow{d}&0\\
&0&0&0
\end{tikzcd}$
\hfill
$\begin{tikzcd}[column sep=1.1em,row sep=1.4em]
&0\arrow{d}&0\arrow{d}&0\arrow{d}\\
0\arrow{r}&\bOmega U^*\arrow{r}\arrow{d}&\bOmega R\arrow{r}\arrow{d}&\bOmega U\arrow{r}\arrow{d}&0\\
0\arrow{r}&PU^*\arrow{r}\arrow{d}&PX^*\dsum PU\arrow{r}\arrow{d}&PX\arrow{r}\arrow{d}&0\\
0\arrow{r}&U^*\arrow{r}\arrow{d}&R\arrow{r}\arrow{d}&U\arrow{r}\arrow{d}&0\\
&0&0&0\end{tikzcd}$
\end{center}
with exact rows and columns.
Note that the syzygies \(\bOmega L\) and \(\bOmega R\) may have projective summands, even if \(PU\to U\) and \(PU^*\to U^*\) were chosen to be minimal.

Since the syzygy induces the inverse suspension functor on \(\stabgproj\CM(B)\), we find that
\[\Ext^1_B(V,W)=\Ext^1_B(\bOmega V,\bOmega W)\]
for any \(V,W\in\gproj\CM(B)\).
In particular, both \(\bOmega U\dsum \bOmega L\) and \(\bOmega U\dsum \bOmega R\) are, like \(U\dsum L\) and \(U\dsum R\), rigid, whereas
\[\dim\Ext^1_B(\bOmega U,\bOmega U^*)=\dim\Ext^1_B(U,U^*)=1.\]
This means in particular that the upper rows of the two diagrams are not split.
Thus, the multiplication formula also applies to these two rows, to give
\begin{equation}
\label{eq:mult-form2}
\CCsrc(\bOmega U^*)=\frac{\CCsrc(\bOmega L)+\CCsrc(\bOmega R)}{\CCsrc(\bOmega U)}.
\end{equation}
By assumption, we have
\begin{align*}
\ltwist\CCsrc(U)&=\frac{\CCsrc(\bOmega U)}{\CCsrc(PU)},\\
\ltwist\CCsrc(L)&=\frac{\CCsrc(\bOmega L)}{\CCsrc(PU)\CCsrc(PU^*)},\\
\ltwist\CCsrc(R)&=\frac{\CCsrc(\bOmega R)}{\CCsrc(PU)\CCsrc(PU^*)},
\end{align*}
since \(U\), \(L\) and \(R\) are all objects of \(\add(T)\).
Applying the twist to \eqref{eq:mult-form1} and using these identities, we find that
\[
\ltwist\CCsrc(U^*)=\frac{\ltwist\CCsrc(L)+\ltwist\CCsrc(R)}{\ltwist\CCsrc(U)}=\frac{\CCsrc(\bOmega L)+\CCsrc(\bOmega R)}{\CCsrc(\bOmega U)\CCsrc(PU^*)}=\frac{\CCsrc(\bOmega U^*)}{\CCsrc(PU^*)}
\]
by \eqref{eq:mult-form2}.
Since \(U^*\) is the unique indecomposable object of \(\add(T^*)\) not contained in \(\add(T)\), and the cluster character is multiplicative on direct sums, \eqref{eq:twist} holds for all \(X\in\add(T^*)\).
\end{proof}

\subsection{Categorification of quasi-cluster morphisms}
Finally, we recall a key result of Fraser and Keller \cite[Thm.~A.7]{KelWu}, which allows us to check that a particular map of algebras is a quasi-cluster equivalence by using categorifications of the relevant cluster algebras.

Let \(\cat{E}\) be an idempotent complete (also known as Karoubian) Frobenius exact category.
Despite the fact that \(\cat{E}\) may not be abelian, the idempotent completeness property means that it admits a bounded derived category \(\bdcat(\cat{E})\) obtained via the usual construction \cite{ThoTro}.
As a result, if \(T\in\cat{E}\) is an object, the inclusion \(\add(T)\to\cat{E}\) induces a canonical triangle functor \(\hcat[\bdd](\add{T})\to\bdcat(\cat{E})\) from the category of bounded complexes of objects in \(\add(T)\), with morphisms considered up to homotopy, to the bounded derived category of \(\cat{E}\).

Denote by \(\cat{P}\subseteq\cat{E}\) the full subcategory of projective-injective objects, and by \(\hcat[\bdd](\cat{P})\) its bounded homotopy category.
Then there is a Verdier localisation functor
\[\bdcat(\cat{E})\to\bdcat(\cat{E})/\hcat[\bdd](\cat{P})\eqdef\singcat(\cat{E})\simeq\stab{\cat{E}}.\]
Here the notation \(\singcat(\cat{E})\) refers to the \emph{singularity category} of \(\cat{E}\), defined by Orlov \cite{Orlov-SingCat} as this Verdier quotient, which is equivalent to the stable category \(\stab{\cat{E}}\defeq \cat{E}/\cat{P}\) by Buchweitz's famous result \cite[Thm.~4.4.1]{Buchweitz-MCM}.

Finally, assume that \(\cat{E}\) is stably \(2\)-Calabi--Yau (i.e.\ that \(\stab{\cat{E}}\) is a \(2\)-Calabi--Yau triangulated category), and that there is a cluster character \(\Psi\colon\cat{E}\to\clust{A}^+\), where \(\clust{A}^+\) is an upper cluster algebra with invertible frozen variables. For example, this happens if \((\cat{E},T)\) is a Frobenius \(2\)-CY realisation of some ordinary cluster algebra \(\clust{A}\), on which Fu--Keller's cluster character takes values in the upper cluster algebra \(\clust{A}^+\) by \cite[Thm.~1.3]{Plamondon-Generic}.

\begin{thm}[{\cite[Thm.~A.4]{KelWu}}]
\label{t:CC-lift}
Let \(\Psi\colon\cat{E}\to\clust{A}^+\) be a cluster character, where \(\clust{A}^+\) is an upper cluster algebra with invertible frozen variables, inducing a cluster character \(\underline{\Psi}\colon\stab{\cat{E}}\to\stab{\clust{A}}^+\) on the stable category. Then there is a unique function \(\Psi\colon\bdcat(\cat{E})\to\clust{A}^+\) such that
\begin{enumerate}
\item the diagram
\[\begin{tikzcd}
\cat{E}\arrow{r}{\Psi}\arrow{d}&\clust{A}^+\arrow[equal]{d}\\
\bdcat(\cat{E})\arrow{d}\arrow{r}{\Psi}&\clust{A}^+\arrow{d}\\
\stab{\cat{E}}\arrow{r}{\stab{\Psi}}&\stab{\clust{A}}^+,
\end{tikzcd}\]
commutes (justifying the abuse of notation), and
\item for any triangle
\[\begin{tikzcd}
P\arrow{r}&X\arrow{r}&Y\arrow{r}&P[1]
\end{tikzcd}\]
in \(\bdcat(\cat{E})\) with \(P\in\hcat[\bdd](\cat{P})\), we have \(\Psi(X)=\Psi(P)\Psi(Y)\).
\end{enumerate}
\end{thm}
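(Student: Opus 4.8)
The plan is to construct $\Psi$ on $\bdcat(\cat{E})$ by first defining a ``projective correction factor'' on the subcategory $\hcat[\bdd](\cat{P})$ of perfect complexes of projective--injectives, and then, given an arbitrary object of $\bdcat(\cat{E})$, resolving it by an object of $\cat{E}$ modulo $\hcat[\bdd](\cat{P})$. The key input is Buchweitz's identification $\bdcat(\cat{E})/\hcat[\bdd](\cat{P})=\singcat(\cat{E})\simeq\stab{\cat{E}}$ \cite[Thm.~4.4.1]{Buchweitz-MCM}, which in particular tells us that $\cat{E}\to\singcat(\cat{E})$ is essentially surjective and that morphisms in the Verdier quotient are computed by a calculus of fractions inverting $\hcat[\bdd](\cat{P})$. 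Since $\cat{E}$ is Krull--Schmidt and $\Psi$ is multiplicative on direct sums with $\Psi(P)\in\froz(\clust{A}^+)$ for all $P\in\cat{P}$, the restriction $\Psi|_{\cat{P}}$ is the unique homomorphism from the split Grothendieck group of $\cat{P}$ to $\froz(\clust{A}^+)$ with the prescribed values on indecomposables. Setting $\Psi(P^\bullet)=\prod_i\Psi(P^i)^{(-1)^i}$ for a bounded complex $P^\bullet$ over $\cat{P}$ then gives a homotopy-invariant function on $\hcat[\bdd](\cat{P})$ (contractible summands contribute $1$), which factors through $\Kgp_0(\hcat[\bdd](\cat{P}))$; in particular $\Psi(B)=\Psi(A)\Psi(C)$ for every triangle $A\to B\to C\to A[1]$ in $\hcat[\bdd](\cat{P})$, and $\Psi(P[1])=\Psi(P)^{-1}$.

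Next I would pin down the value $\Psi(X)$ that conditions~(1) and~(2) force, obtaining uniqueness along the way. Given $X\in\bdcat(\cat{E})$, choose $E\in\cat{E}$ and a roof $X\xleftarrow{s}Z\xrightarrow{t}E$ whose cones $P_s=\cone(s)$ and $P_t=\cone(t)$ lie in $\hcat[\bdd](\cat{P})$, which exists by the essential surjectivity and calculus of fractions just mentioned. Rotating the triangles $Z\xrightarrow{s}X\to P_s\to Z[1]$ and $Z\xrightarrow{t}E\to P_t\to Z[1]$ and applying property~(2) twice forces
\[
\Psi(X)=\Psi(P_s)\,\Psi(P_t)^{-1}\,\Psi(E),
\]
where $\Psi(E)$ is the given cluster character value and $\Psi(P_s)$, $\Psi(P_t)$ are as in the previous paragraph. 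Taking this as the definition of $\Psi(X)$, the conditions are then verified: the top square of~(1) holds via the trivial roof of an object of $\cat{E}$; the bottom square holds because $\Psi(P_s)\Psi(P_t)^{-1}\in\froz(\clust{A}^+)$ becomes trivial in $\stab{\clust{A}}^+$, leaving exactly $\stab{\Psi}$ of the image of $X$ in $\singcat(\cat{E})\simeq\stab{\cat{E}}$; and~(2) follows by pulling a roof for $Y$ back along $Z\to Y$ to a roof for $X$ and tracking cone classes in $\Kgp_0(\hcat[\bdd](\cat{P}))$ through an octahedron, the net effect being multiplication by $\Psi(P)$.

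The substantial step, and the one I expect to be the main obstacle, is showing that the displayed formula is independent of the choices of $E$ and of the roof. The value manifestly depends only on $\Psi(E)$ and on the difference $[P_s]-[P_t]\in\Kgp_0(\hcat[\bdd](\cat{P}))$, whose image in $\Kgp_0(\bdcat(\cat{E}))$ is the fixed class $[X]-[E]$; but since $\Kgp_0(\hcat[\bdd](\cat{P}))\to\Kgp_0(\bdcat(\cat{E}))$ need not be injective, and since two roofs for $X$ may realise different isomorphisms in $\stab{\cat{E}}$ and hence need not admit a common refinement, one has to argue more carefully. The scheme would be: handle roofs inducing the same stable isomorphism by a common-refinement argument, where an octahedron shows that passing to a refinement multiplies both $\Psi(P_s)$ and $\Psi(P_t)$ by the same factor; and reduce the case of two different choices $E$, $E'$ to this one using the standard fact that $E\iso E'$ in $\stab{\cat{E}}$ forces $E\dsum P\iso E'\dsum P'$ for some $P,P'\in\cat{P}$, the extra projective--injective summands being absorbed by additivity of $\Psi$ on $\cat{P}$. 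Once this bookkeeping is in place the remaining verifications are formal; it is exactly here, and in securing Buchweitz's equivalence and the roof description it yields, that the Frobenius (equivalently, Gorenstein) hypothesis on $\cat{E}$ is used.
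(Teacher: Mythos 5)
The paper states this result as Theorem~A.4 of the appendix to \cite{KelWu}, due to Fraser and Keller, and does not reprove it, so there is no internal proof to compare against; I evaluate your proposal on its own terms. Your architecture — the alternating product on $\hcat[\bdd](\cat{P})$ (which is forced by condition~(2), homotopy-invariant, and additive over triangles), the roof resolution $X\xleftarrow{s}Z\xrightarrow{t}E$ via Buchweitz's theorem, and the resulting formula $\Psi(X)=\Psi(P_s)\Psi(P_t)^{-1}\Psi(E)$ — is a sensible plan, and you correctly flag well-definedness as the crux, noting in particular that $\Kgp_0(\hcat[\bdd](\cat{P}))\to\Kgp_0(\bdcat(\cat{E}))$ need not be injective, so the class $[P_s]-[P_t]$ is not simply determined by $[X]-[E]$.

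The well-definedness argument as you have sketched it, however, has a gap. You treat case~(A), roofs inducing the \emph{same} morphism $X\to E$ in $\singcat(\cat{E})$, via common refinement and the octahedral axiom, and case~(C), roofs with \emph{different} targets $E,E'$, via $E\dsum P\iso E'\dsum P'$. You do not treat the middle case~(B): roofs with the \emph{same} target $E$ but inducing \emph{distinct} isomorphisms $\alpha,\alpha'\colon X\to E$ in $\singcat(\cat{E})$ — precisely the situation you acknowledge need not admit a common refinement. Writing $\beta=\alpha'\alpha^{-1}\in\operatorname{Aut}_{\singcat(\cat{E})}(E)$, the discrepancy between your two formulas is $\Psi$ applied to the difference of cone classes of a roof representing $\beta$; this lies in the (possibly non-trivial) kernel of $\Kgp_0(\hcat[\bdd](\cat{P}))\to\Kgp_0(\bdcat(\cat{E}))$, so nothing you have said forces it to vanish. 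The missing ingredient is a lifting lemma: if $E$ has no projective--injective summands, then $\operatorname{Aut}_{\cat{E}}(E)\to\operatorname{Aut}_{\stab{\cat{E}}}(E)$ is surjective, because in a Krull--Schmidt Frobenius category the ideal of maps factoring through $\cat{P}$ lies in the Jacobson radical of the semiperfect ring $\End_{\cat{E}}(E)$, so units of the quotient lift. Having normalised $E$ to be free of projective--injective summands (which your reduction in case~(C) pushes you to do anyway), $\beta$ lifts to a genuine automorphism $b$ of $E$ in $\cat{E}$, whose trivial roof $E\xleftarrow{\id}E\xrightarrow{b}E$ has both cones zero, killing the obstruction. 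Note finally that your reduction of~(C) to~(A) in fact passes through~(B): after absorbing projective--injective summands to arrange a common target, the two induced isomorphisms $X\to E$ will generally differ, so this lifting lemma is genuinely needed to close the argument.
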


\begin{thm}[{\cite[Thm.~A.7]{KelWu}}]
\label{t:FKW}
Let \((\cat{E},T)\) and \((\cat{F},U)\) be Frobenius \(2\)-CY realisations of cluster algebras \(\clust{A}\) and \(\clust{B}\) respectively, with cluster characters \(\Phi\colon\cat{E}\to\clust{A}^+\) and \(\Psi\colon\cat{F}\to\clust{B}^+\) taking values in the associated upper cluster algebras.
Let \(\eta\colon\clust{A}^+\isoto R\) and \(\nu\colon\clust{B}^+\isoto S\) be upper cluster structures, and let \(f\colon R\to S\) be a ring homomorphism.
Assume we have a commutative diagram
\begin{equation}
\label{eq:qcm-diagram}
\begin{tikzcd}
\hcat[\bdd](\add{T})\arrow{r}\arrow{d}{\widetilde{\varphi}}&\dcat[\bdd](\cat{E})\arrow{d}{\varphi}\arrow{r}&\stab{\cat{E}}\arrow{d}{\stab{\varphi}}\\
\hcat[\bdd](\add{U})\arrow{r}&\dcat[\bdd](\cat{F})\arrow{r}&\stab{\cat{F}}
\end{tikzcd}
\end{equation}
of triangle functors, in which the horizontal arrows are the canonical functors, and further
that the diagram
\begin{equation}
\label{eq:cc-diagram}
\begin{tikzcd}
\add(T)\arrow{r}{\eta\circ \Phi}\arrow{d}{\varphi}&R\arrow{d}{f}\\
\bdcat(\cat{F})\arrow{r}{\nu\circ \Psi}&S.
\end{tikzcd}
\end{equation}
commutes.
If \(\stab{\varphi}\) is a triangle equivalence and \(\stab{\varphi}T\) is mutation equivalent to \(U\) in \(\stab{\cat{F}}\), then \(f\) is a quasi-cluster morphism.
\end{thm}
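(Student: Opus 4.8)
The plan is to verify directly the three defining conditions of Definition~\ref{d:qcm}. Write $\Phi\colon\bdcat(\cat{E})\to\clust{A}^+$ and $\Psi\colon\bdcat(\cat{F})\to\clust{B}^+$ for the extended cluster characters supplied by Theorem~\ref{t:CC-lift}, and recall that the non-frozen cluster variables of $R$ are exactly the $\eta(\Phi(X))$ for $X$ a non-projective indecomposable summand of a reachable cluster-tilting object of $\cat{E}$, while $\froz(R)$ is the group of Laurent monomials in the $\eta(\Phi(P))$, with $P$ running over indecomposable projective-injective objects. The heart of the argument is the following \emph{comparison identity}: for every reachable rigid $X\in\cat{E}$,
\[
f\bigl(\eta(\Phi(X))\bigr)=\nu\bigl(\Psi(\varphi(X))\bigr),
\]
and moreover $\nu(\Psi(\varphi(X)))$ is a non-frozen cluster variable of $S$ times a Laurent monomial in frozen variables when $X$ is non-projective indecomposable, and a frozen Laurent monomial when $X$ is projective-injective. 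Granting this, condition \ref{d:qcm-froz} is immediate: the right-hand square of \eqref{eq:qcm-diagram} shows $\varphi$ descends to the singularity categories, hence carries $\hcat[\bdd](\cat{P}_{\cat{E}})$ into $\hcat[\bdd](\cat{P}_{\cat{F}})$, on which $\Psi$ takes values in $\froz(\clust{B}^+)$ by iterated use of Theorem~\ref{t:CC-lift}\,(2); combined with the comparison identity for non-projective $X$ this gives both halves of \ref{d:qcm-froz}.

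The base case of the comparison identity, $X\in\add(T)$, is exactly the commutativity of \eqref{eq:cc-diagram}; the supplementary claim follows because $\stab{\varphi}$, being a triangle equivalence with $\stab{\varphi}T$ mutation equivalent to $U$, carries $\stab{X}$ to a non-zero reachable rigid indecomposable of $\stab{\cat{F}}$, which lifts to a non-projective reachable rigid indecomposable $X'\in\cat{F}$, so that $\varphi(X)$ and $X'$ differ by a cone in $\hcat[\bdd](\cat{P}_{\cat{F}})$ and Theorem~\ref{t:CC-lift}\,(2) yields $\Psi(\varphi(X))=\Psi(X')\cdot(\text{frozen})$ with $\Psi(X')$ a cluster variable. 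For the inductive step over the exchange graph, mutate a reachable cluster-tilting object $T'$ at an indecomposable non-projective summand $U_0$ to $U_0^*$: apply the Fu--Keller multiplication formula in $\cat{E}$ to express $\eta(\Phi(U_0^*))$ via the middle terms $B^{\pm}$ of the two exchange triangles, apply $f$, and invoke the inductive hypothesis for $B^{\pm},U_0\in\add(T')$. On the other side, because $\stab{\varphi}$ takes exchange triangles to exchange triangles, $\stab{\varphi}(U_0^*)$ is the corresponding mutation in $\stab{\cat{F}}$; lifting that mutation to $\cat{F}$ and using the multiplicativity of $\Psi$ over cones in $\hcat[\bdd](\cat{P}_{\cat{F}})$ (Theorem~\ref{t:CC-lift}\,(2)) to pass between $\varphi$-images and honest $\cat{F}$-objects reduces the right-hand side to the Fu--Keller multiplication formula in $\cat{F}$. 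The frozen-variable corrections introduced by these passages through $\hcat[\bdd](\cat{P}_{\cat{F}})$ are recorded by classes in $K_0(\bdcat(\cat{F}))$, and their additivity over the exchange triangles forces them to match, giving the comparison identity, and the cluster-variable-times-frozen-monomial shape, for $U_0^*$. Keeping track of these corrections and of the $\varphi$-images of exchange triangles is the main obstacle.

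Condition \ref{d:qcm-stable} follows because $\stab{\varphi}$ is a triangle equivalence of stably $2$-Calabi--Yau categories taking the cluster-tilting object $\stab{T}$ to one mutation equivalent to $U$, hence induces a strong isomorphism $\stab{\clust{A}}\iso\stab{\clust{B}}$ of stable cluster algebras; projecting \eqref{eq:cc-diagram} to the stable categories via Theorem~\ref{t:CC-lift}\,(1) identifies this strong isomorphism with $\stab{\nu}^{-1}\circ\stab{f}\circ\stab{\eta}$ (Proposition~\ref{p:stab-map}), so $\stab{f}$ is a cluster isomorphism. Finally, for condition \ref{d:qcm-yhat} we use Remark~\ref{r:seed-by-seed} to reduce to the seed $s$ attached to $T$, where $\yhat_j(s)$ is the ratio $\Phi(B_j^{+})/\Phi(B_j^{-})$ of the cluster-character values of the middle terms $B_j^{\pm}\in\add(T)$ of the two exchange triangles at the $j$th indecomposable summand of $T$ (these objects of $\add(T)$ may well have projective-injective summands, which is precisely why $\yhat_j$ involves frozen variables). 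Applying $f$ and using the comparison identity on $\add(T)$ --- there just \eqref{eq:cc-diagram} --- gives $f(\eta(\yhat_j(s)))=\nu(\Psi(\varphi(B_j^{+})))/\nu(\Psi(\varphi(B_j^{-})))$; since $\stab{\varphi}$ carries the exchange triangles at $\stab{T}$ to those underlying the seed $f(s)$, and the additivity of $K_0$-classes over these triangles makes the frozen corrections on numerator and denominator cancel, the right-hand side is exactly $\yhat_{f(j)}(f(s))$. This establishes all three conditions, so $f$ is a quasi-cluster morphism.
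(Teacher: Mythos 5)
Note first that the paper does not prove this statement: it is cited as Theorem~A.7 of the appendix to \cite{KelWu}, due to Fraser and Keller, and used as a black box. There is therefore no internal proof to compare your attempt against, and I cannot certify that your argument matches theirs.

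With that caveat, your sketch is a sensible reconstruction of how one would expect the proof to go: establish the comparison identity $f(\eta(\Phi(X)))=\nu(\Psi(\varphi X))$ for reachable rigid $X$ by induction over the exchange graph, then deduce the three conditions of Definition~\ref{d:qcm}. Your treatment of conditions \ref{d:qcm-stable} and \ref{d:qcm-yhat}, given the comparison identity, is in order. The weak point is the inductive step for the comparison identity itself. You assert that the frozen corrections are ``recorded by classes in $K_0(\bdcat(\cat{F}))$, and their additivity over the exchange triangles forces them to match,'' but this needs to be unpacked. Concretely, you must (i) exhibit, for each reachable rigid $X\in\cat{E}$, a lift $X'\in\cat{F}$ of $\stab{\varphi}X$ and argue that the isomorphism $\stab{\varphi}X\cong\stab{X'}$ in the Verdier quotient $\singcat(\cat{F})$ yields a roof in $\bdcat(\cat{F})$ whose cones lie in $\hcat[\bdd](\cat{P}_{\cat{F}})$, so that iterated application of Theorem~\ref{t:CC-lift}(2) produces a well-defined frozen Laurent monomial $p_X$ with $\nu\Psi(\varphi X)=p_X\,\nu\Psi(X')$ (this uses that $\hcat[\bdd](\cat{P}_{\cat{F}})$ is thick, and that $\Psi$ on it factors through its Grothendieck group); and (ii) establish the identities $p_{U_0}p_{U_0^*}=p_{B^+}$ and $p_{U_0}p_{U_0^*}=p_{B^-}$ from the two exchange conflations at $U_0$, which requires comparing the $\varphi$-images of those conflations with the honest exchange conflations at the lift $U_0'$ in $\cat{F}$. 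Your $K_0$ argument does point at the right mechanism, but as written the inductive step is a gesture rather than a proof; this is exactly the technical heart of the theorem, so it is the part that most needs to be made precise before the argument can be considered complete.
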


Our goal in the remainder of the paper is to apply Theorem~\ref{t:FKW} to the Frobenius \(2\)-CY realisations \((\ginj{\CM(B)},\Ttgt)\) and \((\gproj{\CM(B)},\Tsrc)\) of \(\clust{A}_D\) and the (upper) cluster structures \(\GLtgt\colon\clust{A}_D\isoto\CC[\openposvarcone_\posit]\) and \(\GLsrc\colon\clust{A}_D\isoto\CC[\openposvarcone_\posit]\), taking \(f\) to be the identity map on this coordinate ring.
This will complete the proof of the quasi-coincidence conjecture.

\section{Proof of the quasi-coincidence conjecture}
\label{s:proof}

We fix a connected plabic graph \(D\) of type \((k,n)\) for the remainder of the paper and abbreviate \(A=A_D\), \(B=B_D\), and \(C=C_{k,n}\).
As in Section~\ref{s:categorification-posit}, we view the two associated categories \(\gproj\CM(B)\) and \(\ginj\CM(B)\) as full subcategories of \(\CM(C)\), via the fully faithful functor from Proposition~\ref{p:embed}.
Since the strategy of the proof is to apply Fraser--Keller's Theorem~\ref{t:FKW}, the bulk of the section is devoted to proving that the hypotheses of this theorem hold in our situation.

\begin{prop}
\label{p:der-eq}
The categories \(\bdcat(\gproj\CM(B))\), \(\bdcat(\ginj\CM(B))\) and \(\bdcat(\CM(B))\) are all tautologically equivalent, in the sense that the natural functor from each to \(\bdcat(\fgmod B)\) is an equivalence.
\end{prop}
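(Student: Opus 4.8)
The plan is to realise each of $\CM(B)$, $\gproj\CM(B)$ and $\ginj\CM(B)$ as a resolving subcategory of finite resolution dimension inside a suitable ambient category, so that all three equivalences follow from the standard resolution theorem for bounded derived categories: if $\cat{C}$ is a full subcategory of an exact category $\cat{E}$ which is closed under extensions and under kernels of admissible epimorphisms between its objects, contains the projective objects of $\cat{E}$, and is such that every object of $\cat{E}$ admits a finite resolution by objects of $\cat{C}$, then the canonical functor $\bdcat(\cat{C})\to\bdcat(\cat{E})$ is a triangle equivalence. (All the exact categories involved are idempotent complete, so their bounded derived categories exist in the sense of \cite{ThoTro}.) Since the composite of the relevant inclusions is in each case the inclusion into $\fpmod B$, it suffices to produce the equivalences one step at a time.

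First I would treat $\CM(B)\subseteq\fpmod B$. This subcategory contains the projective $B$-modules, is closed under extensions (an extension of free $Z$-modules is again free over $Z=\powser{\CC}{t}$), and is closed under kernels of epimorphisms between its objects, because $Z$ is a principal ideal domain and hence submodules of free $Z$-modules are free. Moreover, any $M\in\fpmod B$ fits in a short exact sequence $0\to\bOmega M\to P\to M\to 0$ with $P$ projective, and $\bOmega M\in\CM(B)$ for the same reason. So $\CM(B)$ is resolving in $\fpmod B$ of resolution dimension at most $1$, and the resolution theorem gives $\bdcat(\CM B)\longisoto\bdcat(\fpmod B)$. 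The identical argument applied to $B^\op=B_{D^\op}$, the boundary algebra of the (still connected) opposite diagram, gives $\bdcat(\CM B^\op)\longisoto\bdcat(\fpmod B^\op)$.

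Next I would treat $\gproj\CM(B)\subseteq\CM(B)$. It contains $\add(B)$, the projectives of $\CM(B)$; it is closed under extensions by \cite[Prop.~7.2]{Pressland-Postnikov} and under kernels of admissible epimorphisms by the long exact sequence for $\Ext^\bullet_B(\blank,B)$. For finiteness of resolution dimension, recall that $B$ is Iwanaga--Gorenstein, say of self-injective dimension $d$; then for any $M\in\CM(B)$ and any $i\geq 1$ one has $\Ext^i_B(\bOmega^dM,B)\iso\Ext^{i+d}_B(M,B)=0$, and since all syzygies of $M$ remain in $\CM(B)$ it follows that $\bOmega^dM$ lies in $\GP(B)=\gproj\CM(B)$ (using the identification recalled before Proposition~\ref{p:Zdual}). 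Hence $\gproj\CM(B)$ is resolving in $\CM(B)$ of resolution dimension at most $d$, so $\bdcat(\gproj\CM B)\longisoto\bdcat(\CM B)$, and likewise for $B^\op$. Finally, for $\ginj\CM(B)$ I would argue by duality rather than symmetrically, since $\ginj\CM(B)$ is \emph{not} resolving in $\fpmod B$ (it need not contain the projective $B$-modules). By Proposition~\ref{p:Zdual} the functor $\Zdual{(\blank)}\colon\CM(B^\op)\isoto\CM(B)^\op$ is an exact equivalence restricting to $\gproj\CM(B^\op)\isoto\ginj\CM(B)^\op$, compatibly with the inclusions of these subcategories. Passing to bounded derived categories gives a commutative square of triangle functors in which the vertical maps are the equivalences induced by $\Zdual{(\blank)}$, the top map is the canonical functor $\bdcat(\gproj\CM B^\op)\to\bdcat(\CM B^\op)$ (an equivalence by the previous step applied to $B^\op$), and the bottom map is the opposite of the canonical functor $\bdcat(\ginj\CM B)\to\bdcat(\CM B)$; hence the latter is a triangle equivalence, and composing with $\bdcat(\CM B)\longisoto\bdcat(\fpmod B)$ finishes the proof.

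I expect the main obstacle to be one of care rather than of ideas: checking that the hypotheses of the resolution theorem hold on the nose in the middle step, where the ambient category $\CM(B)$ is exact but not abelian, and in the last step verifying that $\Zdual{(\blank)}$ is genuinely compatible with all the structural functors, so that the equivalence it produces is the tautological one and not merely an abstract equivalence of categories. The input that $B$ is Iwanaga--Gorenstein is also used, but is already available from \cite{Pressland-Postnikov}.
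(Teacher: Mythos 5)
Your proposal is correct and takes essentially the same approach as the paper: both reduce all three tautological equivalences to the resolution theorem for bounded derived categories of exact categories, first showing $\CM(B)\subset\fpmod B$ is resolving of finite dimension (using that $Z$ is a PID to get syzygies in $\CM(B)$), and then showing $\gproj\CM(B)\subset\CM(B)$ is resolving of finite dimension (using the Iwanaga--Gorenstein property). The only cosmetic deviation is in the $\ginj$ case: the paper directly invokes the dual resolution theorem via finite \emph{coresolutions} by objects of $\ginj\CM(B)$, whereas you transpose through $\Zdual{(\blank)}$ to the $\gproj$ statement for $B^\op$; these two arguments are formally equivalent, and yours has the mild advantage of not needing to state the coresolving hypotheses separately.
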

\begin{proof}
We have a diagram of fully faithful exact functors
\[\begin{tikzcd}[row sep=0pt]
\ginj\CM(B)\arrow{dr}\\
&\CM(B)\arrow{r}&\fgmod{B}\\
\gproj\CM(B)\arrow{ur}
\end{tikzcd}\]
of exact categories, each of which embeds its source into its target as an extension-closed subcategory \cite[Prop.~7.2]{Pressland-Postnikov}.
When passing to derived categories, each of these functors induces a triangle equivalence: indeed, because \(B\) is Iwanaga--Gorenstein, every Cohen--Macaulay \(B\)-module has a finite resolution by objects of \(\gproj\CM(B)\) and a finite coresolution by objects of \(\ginj\CM(B)\).
Recall that objects of \(\CM(B)\) are those \(B\)-modules which are free and finitely generated over the central subalgebra \(Z\iso\powser{\CC}{t}\), which is Noetherian of global dimension \(1\).
Thus, \(\CM(B)\) is closed under subobjects and contains \(B\), so every finitely generated \(B\)-module has a finite resolution in \(\CM(B)\) (for example, by taking a projective cover and its kernel).
\end{proof}
\begin{cor}
\label{c:stab-equiv}
The stable categories \(\stabgproj\CM(B)\) and \(\stabginj\CM(B)\) are canonically triangle equivalent to the singularity category \(\singcat(B)=\bdcat(\fgmod{B})/\per{B}\).
\end{cor}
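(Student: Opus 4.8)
The plan is to combine Proposition~\ref{p:der-eq} with the standard fact, recalled in the discussion preceding Theorem~\ref{t:CC-lift}, that for an idempotent-complete Frobenius exact category $\cat{E}$ with subcategory $\cat{P}$ of projective-injective objects the canonical functors induce a triangle equivalence $\stab{\cat{E}}\simeq\bdcat(\cat{E})/\hcat[\bdd](\cat{P})$. For $\cat{E}=\gproj\CM(B)$, which is Frobenius by Theorem~\ref{t:src-cat} with $\cat{P}=\add(B)$, Proposition~\ref{p:der-eq} provides a canonical triangle equivalence $\bdcat(\gproj\CM(B))\isoto\bdcat(\fpmod{B})$ taking $\add(B)$ to $\projcat B$, hence $\hcat[\bdd](\add(B))$ to $\per{B}$; composing these yields the desired canonical triangle equivalence $\stabgproj\CM(B)\simeq\bdcat(\fpmod{B})/\per{B}=\singcat(B)$. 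This step is formal.

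For $\cat{E}=\ginj\CM(B)$ (Frobenius by Theorem~\ref{t:tgt-cat}) the relevant subcategory is instead $\cat{P}=\add(\Zdual{B})$, and I would handle this case by passing to the opposite Postnikov diagram. Applying the case just proved to $B^{\op}=B_{D^{\op}}$, the boundary algebra of the connected diagram $D^{\op}$ (recall $B_{D^{\op}}=B_D^{\op}$), gives $\stabgproj\CM(B^{\op})\simeq\singcat(B^{\op})$. By Proposition~\ref{p:Zdual} the functor $\Zdual{(\blank)}$ is an exact equivalence $\gproj\CM(B^{\op})\isoto(\ginj\CM(B))^{\op}$ preserving projective-injective objects, hence inducing a triangle equivalence $\stabgproj\CM(B^{\op})\simeq(\stabginj\CM(B))^{\op}$. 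Finally, as $B$ (and therefore $B^{\op}$) is Iwanaga--Gorenstein, the derived dual $\RHom_{B^{\op}}(\blank,B^{\op})$ is a triangle duality $\bdcat(\fpmod{B^{\op}})^{\op}\isoto\bdcat(\fpmod{B})$ carrying $\per{B^{\op}}$ onto $\per{B}$, so it descends to a triangle equivalence $\singcat(B^{\op})^{\op}\isoto\singcat(B)$. Concatenating these three equivalences produces $\stabginj\CM(B)\simeq\singcat(B)$. (Alternatively one can verify directly that $\hcat[\bdd](\add(\Zdual{B}))$ is carried to $\per{B}$ under Proposition~\ref{p:der-eq}, i.e.\ that $\thick(\Zdual{B})=\per{B}$ in $\bdcat(\fpmod{B})$, using that $\Zdual{B}=\Hom_Z(B,Z)$ has finite projective dimension over the Iwanaga--Gorenstein algebra $B$.)

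I expect the main obstacle to be precisely this identification in the $\ginj$ case: here the asymmetry between $\gproj$ and $\ginj$ bites, because $\Zdual{B}=\Hom_Z(B,Z)$ is the injective cogenerator of $\CM(B)$ but is \emph{not} injective in $\fpmod{B}$, so one genuinely needs the Iwanaga--Gorenstein hypothesis---and not merely Proposition~\ref{p:der-eq}---to recognise that the Verdier quotient one forms is $\singcat(B)$ itself rather than some other quotient of $\bdcat(\fpmod{B})$. Everything else should follow formally from Proposition~\ref{p:der-eq}, Buchweitz's theorem, and standard facts about Iwanaga--Gorenstein algebras.
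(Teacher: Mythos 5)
Your argument is correct and deserves comment precisely because the paper's stated proof is only one sentence: combine Proposition~\ref{p:der-eq} with Buchweitz's theorem. For the $\gproj$ case your reasoning is that one-liner unpacked. For the $\ginj$ case you have put your finger on a genuine subtlety the paper passes over in silence: applying the Frobenius-category form of Buchweitz to $\ginj\CM(B)$ yields $\stabginj\CM(B)\simeq\bdcat(\ginj\CM(B))/\hcat[\bdd](\add\Zdual{B})$, and after the identification of Proposition~\ref{p:der-eq} one still has to know that $\thick(\Zdual{B})=\per{B}$ inside $\bdcat(\fpmod{B})$, which is non-obvious exactly because $\Zdual{B}$ is injective in $\CM(B)$ but not in $\fpmod{B}$. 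Both resolutions you propose are sound. In the opposite-diagram argument, $\RHom_{B^{\op}}(\blank,B^{\op})$ is indeed a triangle duality $\bdcat(\fpmod{B^{\op}})^{\op}\isoto\bdcat(\fpmod{B})$ taking $\per{B^{\op}}$ onto $\per{B}$, with the Iwanaga--Gorenstein hypothesis ensuring the derived dual preserves boundedness and is involutive; combined with Proposition~\ref{p:Zdual} this gives the claim. For the parenthetical direct argument, finite projective dimension of $\Zdual{B}$ gives $\thick(\Zdual{B})\subseteq\per{B}$, but you should also record the reverse inclusion $\per{B}\subseteq\thick(\Zdual{B})$, which requires that $B$ admit a finite coresolution in $\CM(B)$ by objects of $\add\Zdual{B}$---this too follows from the Iwanaga--Gorenstein property, dually to the fact you cite. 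Either way you have supplied a step the paper leaves implicit.
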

\begin{proof}
This combines Proposition~\ref{p:der-eq} with Buchweitz's theorem \cite[Thm.~4.4.1]{Buchweitz-MCM}.
\end{proof}

The previous two results yield a triangle equivalence \(\stab{\varphi}\colon\stabginj\CM(B)\isoto\stabgproj\CM(B)\) which lifts to a functor \(\varphi\) (indeed, to the tautological equivalence) between the derived categories \(\bdcat(\ginj\CM(B))=\bdcat(\fgmod B)=\bdcat(\gproj\CM(B))\).
This is the data of the right-hand commuting square in \eqref{eq:qcm-diagram} from Theorem~\ref{t:FKW}. In what follows, it will be convenient to use \(\varphi\) to denote the equivalence \(\bdcat(\fgmod{B})\isoto\bdcat(\gproj\CM(B))\), as well as its restrictions (along the equivalences induced by inclusions of exact categories as in Proposition~\ref{p:der-eq}) to equivalences \(\bdcat(\CM(B))\isoto\bdcat(\gproj\CM(B))\) and \(\bdcat(\ginj\CM(B))\isoto\bdcat(\gproj\CM(B))\).

Next we turn our attention to the hypothesis that \(\stab{\varphi}(\Ttgt)\) is mutation equivalent to \(\Tsrc\) in \(\stabgproj\CM(B)\), or equivalently that \(\Tsrc\) and \(\Ttgt\) are mutation equivalent in \(\singcat(B)\).

\begin{prop}
\label{p:mut-shift}
The mutation classes of \(\Tsrc\) and \(\Ttgt\) in \(\singcat(B)\) are closed under the suspension functor \(\Sigma\).
\end{prop}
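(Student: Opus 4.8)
The plan is to reduce the proposition to a reachability statement for the syzygy of the initial cluster-tilting objects, and then to verify that statement with the categorification of Sections~\ref{s:categorification-posit}--\ref{s:categorification}. First, $\Sigma$ is a triangle autoequivalence of $\singcat(B)$, so it carries basic cluster-tilting objects to basic cluster-tilting objects, and---mutation of cluster-tilting objects in a $2$-Calabi--Yau triangulated category being defined purely via exchange triangles---it commutes with mutation. Hence $\Sigma$ induces an automorphism of the exchange graph carrying the component of $\Tsrc$ bijectively onto the component of $\Sigma\Tsrc$, and so the mutation class of $\Tsrc$ is closed under $\Sigma$ (and then automatically under $\Sigma^{-1}$) precisely when $\Sigma^{-1}\Tsrc$ is reachable from $\Tsrc$. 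The same reasoning applies to $\Ttgt$, and the $\Ttgt$-case for $D$ reduces to the $\Tsrc$-case for $D^{\op}$ by transporting along the anti-equivalence $\Zdual{(\blank)}\colon\gproj\CM(B_{D^{\op}})\isoto(\ginj\CM(B_D))^{\op}$ of Proposition~\ref{p:Zdual}, under which $\Tsrc_{D^{\op}}$ corresponds to $\Ttgt_D$ by Theorems~\ref{t:src-cat} and \ref{t:tgt-cat} and $\Sigma$ corresponds to $\Sigma^{-1}$. So it suffices to show that $\Sigma^{-1}\Tsrc$ is reachable from $\Tsrc$ in $\stabgproj\CM(B)\simeq\singcat(B)$.

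Since $\gproj\CM(B)$ is Frobenius, $\Sigma^{-1}$ is the syzygy functor $\bOmega$, and because $\Tsrc=eA\iso\bigoplus_{j\in Q_0}M(\srclab{j})$ has its summands at frozen $j$ in $\add(B)$, in $\stabgproj\CM(B)$ we have $\Sigma^{-1}\Tsrc\iso\bigoplus_{j\in Q_0}\bOmega M(\srclab{j})$. Each $M(\srclab{j})$ is rigid, being a summand of the cluster-tilting object $\Tsrc$, so $\Sigma^{-1}\Tsrc$ is a basic rigid object, and, as a syzygy of a cluster-tilting object, it is again cluster-tilting. To place it inside the mutation class of $\Tsrc$ I would compute its cluster character or, equivalently, its index. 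By Theorem~\ref{t:twist}, $\CCsrc(\bOmega M(\srclab{j}))=\CCsrc(PM(\srclab{j}))\cdot\ltwist\varPlueck{\srclab{j}}$ for a projective cover $PM(\srclab{j})\to M(\srclab{j})$, so $\CCsrc(\Sigma^{-1}\Tsrc)$ is a Laurent monomial in frozen variables times the product of the left twists of the initial source-labelled Plücker coordinates. The mutable part of the corresponding $g$-vector can instead be read off from Proposition~\ref{p:g-vec}, which reduces it modulo the frozen classes in $\Kgp_0(\projcat\stab{A})$ to $-\sum_j[F'M(\srclab{j})]$; here $FM(\srclab{j})$ is the indecomposable projective $A$-module at $j$---isomorphic to $N(\MSsrc{j})$, of boundary value $\srclab{j}$, by Theorem~\ref{t:MSmats} and \eqref{eq:MSbdry}---so by the classification in \cite[\S5]{CKP} the modules $N$ with $F'M(\srclab{j})\leq N\leq FM(\srclab{j})$ are exactly the perfect matching modules of boundary value $\srclab{j}$, and Proposition~\ref{p:mat-res} makes $[F'M(\srclab{j})]$ explicit in terms of the combinatorics of $D$. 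Once the resulting expression is recognised as (the value, respectively $g$-vector, of) a bona fide cluster monomial of $\clustalg{D}$, reachability of $\Sigma^{-1}\Tsrc$ follows: indices separate rigid objects by a theorem of Dehy and Keller, and every cluster monomial of $\clustalg{D}$ is the $\CCsrc$-image of a reachable rigid object by Theorem~\ref{t:categorify}.

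I expect the main obstacle to be exactly this last recognition. A priori one only knows that $\Sigma^{-1}\Tsrc$ is \emph{some} basic cluster-tilting object of $\stabgproj\CM(B)$, and the substance of the proposition is that it stays in the connected component of $\Tsrc$ rather than drifting off to another component of the exchange graph; equivalently, that the syzygy functor (equivalently Muller--Speyer's left twist) does not destroy reachability of the initial data. The most promising route seems to be a direct combinatorial comparison of the downstream wedges $\MSsrc{j}$ and the minimal matching modules $F'M(\srclab{j})$ with a distinguished, `Coxeter-type' sequence of mutations applied to $\Tsrc$; the dimer-model computations of \cite{CKP} together with the finite global dimension of $A$ \cite[Thm.~3.7]{Pressland-Postnikov} should keep this tractable, but it is where the genuine work of the proof lies.
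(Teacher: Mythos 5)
Your reduction is sound, and you correctly identify the crux: by triangle-functoriality of mutation, closure of the mutation class under $\Sigma$ is equivalent to showing that $\Sigma^{-1}\Tsrc$ (or $\Sigma\Tsrc$) is reachable from $\Tsrc$. Your handling of $\Ttgt$ by passing to $D^{\op}$ and the anti-equivalence $\Zdual{(\blank)}$ is a perfectly reasonable alternative to the paper's choice of running the same argument twice. But you stop precisely at the step that constitutes the actual content of the proposition—you compute the index of $\Sigma^{-1}\Tsrc$ via Proposition~\ref{p:g-vec} and Proposition~\ref{p:mat-res}, say you need to ``recognise the resulting expression as a bona fide cluster monomial,'' and then concede that you do not know how to do this. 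Nothing in your proposal rules out the possibility that $\Sigma^{-1}\Tsrc$ is a cluster-tilting object lying in a different connected component of the exchange graph; indices separate rigid objects by Dehy--Keller, but that gives uniqueness, not existence of a reachable object with the given index.

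The missing ingredient is a genuine theorem that you do not cite: Ford and Serhiyenko's result \cite[Thm.~1.2]{ForSer} that the quiver $\stab{Q}_D$ of a reduced plabic graph admits a green-to-red sequence (a reddening sequence). Combined with Keller's categorical interpretation of such sequences \cite[Prop.~5.17]{Keller-QDilog}, this produces explicitly a chain of mutations carrying $\Tsrc$ to $\Sigma^{-1}\Tsrc$ in $\singcat(B)$, which is exactly the reachability statement you need. Your closing speculation about a ``Coxeter-type sequence of mutations'' is pointing in roughly the right direction—a reddening sequence is precisely such a thing—but without knowing the Ford--Serhiyenko input, the index computation you outline via perfect-matching classes would leave you having to re-derive that result from scratch, which is a substantial combinatorial undertaking rather than a corollary of the dimer-model formulae. (The paper also notes an alternative route through \cite{BDP}, tropical duality, and the $g$-vector/index dictionary, but this too ultimately rests on the same reddening-sequence input.)
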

\begin{proof}
Recall from Theorems~\ref{t:src-cat} and \ref{t:tgt-cat} that we have isomorphisms \(\Endalg{B}{\Tsrc}\iso A_D\iso\Endalg{B}{\Ttgt}\), where \(A_D\) is the dimer algebra of the plabic graph \(D\).
In particular, the Gabriel quiver of each endomorphism algebra coincides with the quiver \(Q_D\) (up to removable \(2\)-cycles, if \(D\) has bivalent nodes).

By a result of Ford and Serhiyenko \cite[Thm.~1.2]{ForSer}, the quiver \(\stab{Q}_D\) admits a green-to-red sequence.
By \cite[Prop.~5.17]{Keller-QDilog}, this sequence may then be interpreted categorically as a sequence of mutations from \(\Tsrc\) to \(\Sigma^{-1}\Tsrc\) in \(\stabgproj\CM(B)=\singcat(B)\), or from \(\Ttgt\) to \(\Sigma^{-1}\Ttgt\) in \(\stabginj\CM(B)=\singcat(B)\).
This fact may also be deduced by combining \cite[Prop.~2.10(2)]{BDP} with Nakanishi and Zelevinsky's tropical duality \cite[Thm.~1.2]{NakZel}, the interpretation of g-vectors in terms of (co)indices \cite[Prop.~4.3 or 6.2]{FuKel}, and Proposition~\ref{p:shift} below.

Since all shifts of \(\Tsrc\) and \(\Ttgt\) in \(\singcat(B)\) have isomorphic endomorphism algebras, the same argument applies to show that \(\Sigma^n\Tsrc\) is mutation equivalent to \(\Sigma^{n-1}\Tsrc\), and \(\Sigma^n\Ttgt\) to \(\Sigma^{n-1}\Ttgt\), for any \(n\in\ZZ\), which gives the result.
\end{proof}

In combination with Proposition~\ref{p:mut-shift}, the next result will imply that \(\Ttgt\) and \(\Tsrc\) are in the same mutation class in \(\singcat(B)\), as required.
In fact, it will turn out to be the key statement in establishing all the remaining hypotheses of Theorem~\ref{t:FKW}.

\begin{thm}
\label{t:sigma2}
Let \(j\in Q_D\) be a vertex. Then in \(\singcat(B)\) we have \(\Sigma^2eAe_j\iso\Zdual{(e_jAe)}\), and hence \(\Sigma^2\Tsrc\iso\Ttgt\).
\end{thm}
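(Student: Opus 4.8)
The plan is to compute, for a fixed vertex $j \in Q_D$, the syzygy and cosyzygy behaviour of the rank-$1$ modules in $\CM(A)$ attached to $j$ and transport these through the boundary functors. Recall from Theorem~\ref{t:MSmats} that $Ae_j \iso N(\MSsrc{j})$ and $\Zdual{(e_jA)} \iso N(\MStgt{j})$ as $A$-modules, and from Corollary~\ref{c:bdry-value} that restricting to the boundary gives $eAe_j \iso M(\srclab{j})$ and $\Zdual{(e_jAe)} \iso M(\tgtlab{j})$ in $\CM(C)$. So the claim $\Sigma^2 eAe_j \iso \Zdual{(e_jAe)}$ in $\singcat(B)$ amounts to showing that two applications of the syzygy functor (which is $\Sigma^{-1}$ on the stable category, so $\Sigma^2$ is two applications of the cosyzygy) carry $M(\srclab{j})$ to $M(\tgtlab{j})$. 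The key input should be the explicit projective resolution of a perfect matching module $N_\mu$ from \cite[Thm.~6.7]{CKP} and its $K$-theory shadow in Proposition~\ref{p:mat-res}: the class $[N_\mu] \in \Kgp_0(\projcat{\stab{A}_D})$ is $\sum_{j \in Q_0 \setminus F_0}[Ae_j] - \wt[\white](\mu)$, and the dual statement using injective coresolutions and $\wt[\black]$. Applying this to $\mu = \MSsrc{j}$ and using that $Ae_j$ itself is the projective cover of $N(\MSsrc{j})$, one reads off that $\bOmega(Ae_j)$ has a class which, after one more syzygy, matches the class of $\Zdual{(e_jAe)}$; the rank-$1$ classification \cite[Prop.~5.2]{JKS} (or its $A$-module analogue \cite[Cor.~4.6]{CKP}) then upgrades the $K$-theoretic identity to an actual isomorphism, since a rank-$1$ Cohen--Macaulay module is determined by its boundary value.

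Concretely, I would first establish the statement one $\Sigma$ at a time. Step one: identify $\Sigma eAe_j$ (equivalently $\bOmega^{-1} eAe_j$, the cosyzygy in $\gproj\CM(B)$) by taking the injective envelope of $M(\srclab{j})$ in $\CM(B)$ — which lies in $\add(\Zdual{B})$ — and computing the cokernel. Step two: repeat to get $\Sigma^2 eAe_j$ and show it equals $M(\tgtlab{j}) = \Zdual{(e_jAe)}$. The cleanest route is probably to work entirely in $\CM(A)$, where $Ae_j$ is projective and $\Zdual{(e_jA)}$ is injective, compute the relevant (co)syzygies there using the matching-module machinery and Proposition~\ref{p:mat-res}, and then apply the restriction-to-boundary functor $e(-)$, using Proposition~\ref{p:bdry-value} and Corollary~\ref{c:bdry-value} to translate back. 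One must check that $e(-)$ interacts well enough with syzygies that the computation descends; this is where the fact that $\gproj\CM(B)$ and $\ginj\CM(B)$ are extension-closed subcategories of $\CM(B)$ (hence of $\CM(C)$) and that $B$ is Iwanaga--Gorenstein (so the relevant resolutions are finite, cf.\ Proposition~\ref{p:der-eq}) is needed. The final clause, $\Sigma^2 \Tsrc \iso \Ttgt$, then follows by summing over $j \in Q_0$, since $\Tsrc = \bigdsum_j M(\srclab{j})$ and $\Ttgt = \bigdsum_j M(\tgtlab{j})$ by Theorems~\ref{t:src-cat} and~\ref{t:tgt-cat}, and suspension commutes with finite direct sums.

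I expect the main obstacle to be controlling the \emph{second} syzygy precisely enough — one syzygy of a rank-$1$ module need not be rank $1$, so the $K$-theory computation of Proposition~\ref{p:mat-res} governs $[\bOmega M(\srclab{j})]$ but not its isomorphism type directly; it is only after the second (co)syzygy, when rank $1$ is recovered, that the classification theorem applies. So the argument must show that $\Sigma^2 eAe_j$ is again rank $1$ (plausibly because $Ae_j$ is projective and $\Zdual{(e_jA)}$ is injective of rank $1$, so the ``distance'' between them in the relevant periodicity is exactly two steps), and then pin down its boundary value using the two formulae in Proposition~\ref{p:mat-res} together with the relation $\tgtlab{j} = \pi_D(\srclab{j})$ and the combinatorial description of $\MSsrc{j}$, $\MStgt{j}$ via upstream/downstream wedges (Proposition~\ref{p:MSmats}). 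A secondary subtlety is bookkeeping: the passage between $\singcat(B)$, $\stabgproj\CM(B)$ and $\stabginj\CM(B)$ via Corollary~\ref{c:stab-equiv} must be used to interpret ``$\Sigma^2$'' consistently, since the two stable categories have different projective-injectives ($\add(B)$ versus $\add(\Zdual{B})$) even though they are identified with $\singcat(B)$.
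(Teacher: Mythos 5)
Your plan correctly locates the relevant ingredients — the identifications $Ae_j\iso N(\MSsrc{j})$ and $\Zdual{(e_jA)}\iso N(\MStgt{j})$, Corollary~\ref{c:bdry-value}, the $K$-theory formula of Proposition~\ref{p:mat-res}, and the use of $D^\op$ — but the final step, and the overall shape, differ from what the paper does in a way that leaves a real gap.

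The gap is exactly the one you flag at the end, and you don't close it. You propose to compute two (co)syzygies of $M(\srclab{j})$ inside $\CM(B)$ (or $\CM(A)$), land on a rank-$1$ object, and then invoke the rank-$1$ classification to pin it down. The problem is that the rank-$1$ classification only helps once you \emph{already know} the object is rank $1$, and there is nothing in your argument that forces the minimal Cohen--Macaulay representative of $\Sigma^2(eAe_j)$ to have rank $1$ in $\gproj\CM(B)$ or $\ginj\CM(B)$. (Rank is not an invariant of the isomorphism class in $\singcat(B)$, so you cannot read it off there.) Establishing that rank-$1$ness a priori is essentially equivalent to what you are trying to prove. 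Similarly, your suggestion to work entirely in $\CM(A)$ and then restrict via $e(-)$ runs into a basic obstruction: $Ae_j$ is projective and $\Zdual{(e_jA)}$ is injective in $\CM(A)$, so they are trivial objects in the $A$-level singularity category, and the syzygy computation you want to do there collapses to nothing; the transport must go in the other direction (from $B$-modules to $A$-modules via $F$ and $F'$), not the one you propose.

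The idea the paper uses and your proposal lacks is Dehy--Keller's theorem that rigid objects in a $2$-Calabi--Yau triangulated category are determined by their index (this is Proposition~\ref{p:shift}). This makes the argument a \emph{single}-syzygy, purely $K$-theoretic one: Lemma~\ref{l:A-inj} (an adjunction argument showing $\Zdual{(e_jA)}\iso F'\bigl(\Zdual{(e_jAe)}\bigr)$, which you do not identify) together with Lemma~\ref{l:upmat-class} (the $\wt^{\smash{+}}$/$\wt^{\smash{-}}$ comparison on $D$ versus $D^\op$) give $[F'\Zdual{(e_jAe)}]=[Ae_j]$ in $\Kgp_0(\projcat{\stab{A}})$. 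By Corollary~\ref{c:CM-ind}, this says $\ind\bigl(\bOmega\Zdual{(e_jAe)}\bigr)=-[Ae_j]=-[F(eAe_j)]$. Since $\bOmega\Zdual{(e_jAe)}$ is rigid and $eAe_j\in\add(\Tsrc)$, Proposition~\ref{p:shift} forces $\bOmega\Zdual{(e_jAe)}\iso\Sigma(eAe_j)$, i.e.\ $\Sigma^{-1}\Zdual{(e_jAe)}\iso\Sigma(eAe_j)$, and the theorem follows by applying $\Sigma$. No explicit second syzygy, and no rank-$1$ argument at the syzygy level, is needed. Without the Dehy--Keller input your route has no mechanism to upgrade a class in $\Kgp_0$ to an isomorphism in $\singcat(B)$.
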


Recalling that \(\Tsrc=eA\) and \(\Ttgt=\Zdual{(Ae)}\) for \(A=A_D\) and \(e\) the boundary idempotent, the second isomorphism in Theorem~\ref{t:sigma2} follows immediately from the first by taking the direct sum over vertices.

\begin{rem}
As in Theorems~\ref{t:src-cat} and \ref{t:tgt-cat}, we have \(eAe_j=M(\srclab{j})\) and \(\Zdual{(e_jAe)}=M(\tgtlab{j})\) in \(\CM(C)\).
Given the relationship between Muller--Speyer's twist map and the suspension functor on \(\singcat(B)\) explained in Theorem~\ref{t:MS=CC}, we may view Theorem~\ref{t:sigma2} as a categorical version of \cite[Prop.~7.14]{MulSpe-Twist}.
\end{rem}

We prove Theorem~\ref{t:sigma2} in several steps, beginning with some generalities.
Recall (e.g.\ from \cite[\S4]{KelRei-StabCY}) that in a stably \(2\)-Calabi--Yau Frobenius exact category \(\cat{E}\) with cluster-tilting object \(T\), any object \(X\in\cat{E}\) fits into short exact sequences
\begin{equation}
\label{eq:ind-coind}
\begin{tikzcd}[row sep=0pt]
0\arrow{r}&T^1\arrow{r}&T^0\arrow{r}&X\arrow{r}&0,\\
0\arrow{r}&X\arrow{r}&T_0\arrow{r}&T_1\arrow{r}&0
\end{tikzcd}
\end{equation}
with \(T^i,T_i\in\add(T)\).
We write \(F=\Hom_{\cat{E}}(T,\blank)\) for the Yoneda equivalence \(\add(T)\isoto\projcat{A}\), where \(A=\Endalg{\cat{E}}{T}\).
For \(\stab{A}=\stabEndalg{\cat{E}}{T}\) the endomorphism algebra of \(T\) in the stable category \(\stab{\cat{E}}\), we may consider the projection \(\Kgp_0(\projcat{A})\to\Kgp_0(\projcat{\stab{A}})\) with kernel generated by the classes \([FP]\) for a \(P\) a projective object of \(\cat{E}\), as in Section~\ref{s:categorification-pms}.

We define the \emph{index} of \(X\in\cat{E}\) by \(\ind(X)=[FT^0]-[FT^1]\in\Kgp_0(\projcat{\stab{A}})\) and the \emph{coindex} by \(\coind(X)=[FT_0]-[FT_1]\in\Kgp_0(\projcat{\stab{A}})\).
While the individual objects \(T^i\) and \(T_i\) can depend on the choice of sequences \eqref{eq:ind-coind}, the index and coindex do not.
Because we project the index and coindex to \(\Kgp_0(\projcat{\stab{A}})\), they can also be computed by choosing triangles
\[\begin{tikzcd}[row sep=0pt]
T^1\arrow{r}&T^0\arrow{r}&X\arrow{r}&\Sigma T^1,\\
X\arrow{r}&T_0\arrow{r}&T_1\arrow{r}&\Sigma X
\end{tikzcd}\]
in the stable category \(\stab{\cat{E}}\) with \(T^i,T_i\in\add(T)\), and taking \(\ind(X)=[FT^0]-[FT^1]\) and \(\coind(X)=[FT_0]-[FT_1]\).
Note here that \(\cat{E}\) and \(\stab{\cat{E}}\) have the same objects, so it makes sense to apply \(F\) to an object of \(\stab{\cat{E}}\) (but not to a morphism).
By this second description, we see that our definitions are compatible with those of Palu \cite[\S2.1]{Palu-CC} for the \(2\)-Calabi--Yau triangulated category \(\stab{\cat{E}}\) (although Palu does not apply the Yoneda isomorphism, preferring to work in \(\add(T)\)).

\begin{prop}
\label{p:shift}
Let \(U\in\add(T)\).
Then if \(X\in\cat{E}\) is rigid, we have \(\ind(X)=-[FU]\) if and only if \(X\iso\Sigma U\) in \(\stab{\cat{E}}\), and \(\coind(X)=-[FU]\) if and only if \(X\iso\Sigma^{-1}U\) in \(\stab{\cat{E}}\).
\end{prop}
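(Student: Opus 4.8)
The plan is to pass to the $2$-Calabi--Yau triangulated category $\stab{\cat{E}}$, where, as explained in the discussion preceding the proposition, $\ind$ and $\coind$ coincide with the invariants computed from triangles relative to the cluster-tilting object $\stab{T}$, and then to invoke the theorem of Dehy and Keller that a rigid object of a ($\Hom$-finite) $2$-Calabi--Yau triangulated category is determined up to isomorphism by its index. Two easy preliminary remarks make this possible. Since $\cat{E}$ is Frobenius, $\Ext^1_{\cat{E}}(X,X)\iso\stabHom_{\stab{\cat{E}}}(X,\Sigma X)$, so a rigid object of $\cat{E}$ is still rigid when regarded in $\stab{\cat{E}}$. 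And a shift of a rigid object of a triangulated category is again rigid, while every $U\in\add(T)$ is rigid (being a summand of $T$), so $\Sigma U$ and $\Sigma^{-1}U$ are rigid objects of $\stab{\cat{E}}$.

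First I would establish the two ``if'' implications by evaluating the invariants on degenerate triangles: if $X\iso\Sigma U$ in $\stab{\cat{E}}$ then $U\to 0\to\Sigma U\xrightarrow{\id}\Sigma U$ is an index triangle with $T^1=U$ and $T^0=0$, so $\ind(X)=-[FU]$; dually, if $X\iso\Sigma^{-1}U$ then $\Sigma^{-1}U\to 0\to U\xrightarrow{\id}U$ is a coindex triangle with $T_0=0$ and $T_1=U$, so $\coind(X)=-[FU]$. (Neither computation uses rigidity.) I would also record the identity $\coind(Y)=-\ind(\Sigma Y)$, valid for any object $Y$: rotating a coindex triangle $Y\to T_0\to T_1\to\Sigma Y$ one step forward gives the index triangle $T_0\to T_1\to\Sigma Y\to\Sigma T_0$ for $\Sigma Y$, whence $\ind(\Sigma Y)=[FT_1]-[FT_0]=-\coind(Y)$.

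The two ``only if'' implications are then formal. If $X\in\cat{E}$ is rigid with $\ind(X)=-[FU]$, then $X$ and $\Sigma U$ are rigid objects of $\stab{\cat{E}}$ with equal index, hence isomorphic there; that is, $X\iso\Sigma U$ in $\stab{\cat{E}}$. If instead $X$ is rigid with $\coind(X)=-[FU]$, then $\ind(\Sigma X)=-\coind(X)=[FU]=\ind(U)$ by the identity above, and since $\Sigma X$ and $U$ are both rigid, Dehy--Keller gives $\Sigma X\iso U$; applying $\Sigma^{-1}$ yields $X\iso\Sigma^{-1}U$ in $\stab{\cat{E}}$.

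Because Dehy--Keller's theorem carries the real content, the only point I would flag as needing care is the compatibility invoked at the outset: that the index and coindex defined above (via the short exact sequences \eqref{eq:ind-coind} in $\cat{E}$, then projected to $\Kgp_0(\projcat{\stab{A}})$) really do agree with those computed from triangles in $\stab{\cat{E}}$ relative to $\stab{T}$. This is what the discussion preceding the proposition establishes: the projection $\Kgp_0(\projcat{A})\to\Kgp_0(\projcat{\stab{A}})$ annihilates exactly the classes of the projective-injective summands of $T$ --- the summands that vanish in $\stab{\cat{E}}$ --- and the sequences \eqref{eq:ind-coind} become the corresponding triangles there. Beyond making this explicit, the only caveat is the $\Hom$-finiteness of $\stab{\cat{E}}$ over $\CC$ needed for Dehy--Keller's theorem, which holds in the cases of interest (in particular for $\cat{E}=\gproj\CM(B)$); I do not anticipate a deeper obstruction.
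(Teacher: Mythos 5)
Your proof is correct and follows essentially the same route as the paper's: compute the index of $\Sigma U$ from the degenerate triangle $U\to 0\to \Sigma U\to\Sigma U$, observe $\Sigma U$ is rigid, and invoke Dehy--Keller's theorem that rigid objects in a $2$-Calabi--Yau triangulated category are determined by their index. The one stylistic difference is that for the coindex you explicitly reduce to the index case via the rotation identity $\coind(Y)=-\ind(\Sigma Y)$, whereas the paper simply says the coindex statement is ``completely analogous''; these amount to the same thing.
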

\begin{proof}
Firstly, the existence of the triangle
\[\begin{tikzcd}
U\arrow{r}&0\arrow{r}&\Sigma U\arrow{r}&\Sigma U
\end{tikzcd}\]
in \(\stab{\cat{E}}\), in which \(0,U\in\add(T)\), shows that \(\ind(\Sigma U)=-[FU]\).
Note also that \(\Sigma U\) is rigid, because \(T\) and hence \(U\) is.
Now by \cite[Thm.~2.3]{DehKel}, rigid objects in \(\stab{\cat{E}}\) are determined by their indices, and so up to isomorphism \(\Sigma U\) is the only rigid object of \(\stab{\cat{E}}\) with index \(-[FU]\).
The statement for the coindex may be shown completely analogously.
\end{proof}

\begin{prop}
\label{p:ind-coind}
For any \(X\in\cat{E}\), we have \(\ind(X)=[FX]\) and \(\coind(X)=[FX]-[GX]\) in \(\Kgp_0(\projcat{\stab{A}})\).
\end{prop}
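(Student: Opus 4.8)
The plan is to apply the functor $F=\Hom_{\cat{E}}(T,\blank)$ directly to the two defining short exact sequences \eqref{eq:ind-coind} and to read off the required identities in $\Kgp_0(\projcat{\stab{A}})$ from the resulting (co)resolutions; rigidity of $T$ is exactly what makes these sequences behave well under $F$.

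For the index, fix a short exact sequence $0\to T^1\to T^0\to X\to 0$ with $T^i\in\add(T)$ as in \eqref{eq:ind-coind}. Applying the left exact functor $F$ gives an exact sequence $0\to FT^1\to FT^0\to FX$, and the long exact sequence for $\Ext^\bullet_{\cat{E}}(T,\blank)$ identifies the cokernel of $FT^0\to FX$ with a submodule of $\Ext^1_{\cat{E}}(T,T^1)$, which vanishes since $T$ is rigid and $T^1\in\add(T)$. Hence $0\to FT^1\to FT^0\to FX\to 0$ is exact, and since $F$ restricts to the Yoneda equivalence $\add(T)\isoto\projcat{A}$, this is a length-one projective resolution of the $A$-module $FX$. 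In particular $FX$ has projective dimension at most one, so $[FX]$ is a well-defined element of $\Kgp_0(\projcat{A})$, equal to $[FT^0]-[FT^1]=\ind(X)$; projecting to $\Kgp_0(\projcat{\stab{A}})$ gives the first claim.

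For the coindex, fix a short exact sequence $0\to X\to T_0\to T_1\to 0$ with $T_i\in\add(T)$, again as in \eqref{eq:ind-coind}. Applying $F$ and using the long exact sequence together with $\Ext^1_{\cat{E}}(T,T_0)=0$ (once more by rigidity of $T$), we obtain an exact sequence of $A$-modules
\[\begin{tikzcd}
0\arrow{r}&FX\arrow{r}&FT_0\arrow{r}&FT_1\arrow{r}&GX\arrow{r}&0,
\end{tikzcd}\]
where $G=\Ext^1_{\cat{E}}(T,\blank)$. Here $FT_0$ and $FT_1$ are projective, and $FX$ has finite projective dimension by the previous paragraph, so $GX$ does too; taking the alternating sum of classes along this exact sequence in $\Kgp_0(\projcat{A})$ yields $[FX]-[FT_0]+[FT_1]-[GX]=0$, that is, $[FX]-[GX]=[FT_0]-[FT_1]=\coind(X)$. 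Projecting to $\Kgp_0(\projcat{\stab{A}})$ completes the proof.

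There is no substantial obstacle here; the only points requiring care are that $\ind(X)$ and $\coind(X)$ are independent of the chosen sequences \eqref{eq:ind-coind} (standard, as recalled above, and compatible with Palu's conventions \cite[\S2.1]{Palu-CC}) and that the $\Kgp_0$-bookkeeping is legitimate. The latter rests on the observation from the index argument that every module of the form $FX$ has projective dimension at most one, so that all classes appearing are honestly defined as alternating sums of finite projective resolutions and are additive along exact sequences of finitely presented modules; in the intended application to $A=A_D$ this is in any case subsumed by the finite global dimension of $A_D$ \cite[Thm.~3.7]{Pressland-Postnikov}.
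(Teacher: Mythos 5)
Your proof is correct and follows essentially the same route as the paper's: apply $F$ to each of the two short exact sequences \eqref{eq:ind-coind}, use rigidity of the cluster-tilting object $T$ to control the resulting long exact sequences, and read off the $\Kgp_0$-classes. The extra bookkeeping you provide—checking that $FT^0\to FX$ is epi via $\Ext^1_{\cat{E}}(T,T^1)=0$ rather than invoking $GT=0$ wholesale, and noting that $FX$ (hence $GX$) has finite projective dimension so the alternating-sum calculus is legitimate—is sound and slightly more self-contained than the paper's version, but it does not change the argument.
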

\begin{proof}
Since \(T\) is cluster-tilting, and hence rigid, we have \(GT=0\).
Thus, the upper sequence in \eqref{eq:ind-coind} remains exact under \(F\), and \(\ind(X)=[FT^0]-[FT^1]=[FX]\).
On the other hand, applying \(F\) to the lower sequence in \eqref{eq:ind-coind} yields
\[\begin{tikzcd}
0\arrow{r}&FX\arrow{r}&FT_0\arrow{r}&FT_1\arrow{r}&GX\arrow{r}&0,
\end{tikzcd}\]
and so \(\coind(X)=[FT_0]-[FT_1]=[FX]-[GX]\).
\end{proof}

When \(\cat{E}\) satisfies the necessary assumptions to admit a Fu--Keller cluster character \(\Phi\) as in Section~\ref{s:categorification-pms} (for example, if \(\cat{E}=\gproj\CM(B)\)), Proposition~\ref{p:ind-coind} shows in particular that the exponents of the two extremal terms of \(\Phi(X)\), indexed by \(0\) and \(GX\) respectively, are \(\ind(X)\) and \(\coind(X)\).

We now apply these considerations to \(\cat{E}=\gproj\CM(B)\) and \(T=\Tsrc=eA\) for \(A\) the dimer algebra of \(D\) and \(e\) the boundary idempotent.
Recall from Theorem~\ref{t:src-cat} that there are natural isomorphisms \(A=\Endalg{B}{\Tsrc}\) and \(A/AeA=\stab{A}=\stabEndalg{B}{\Tsrc}\), and from Section~\ref{s:categorification-pms} (specifically \eqref{eq:F'}) that there is a short exact sequence
\[\begin{tikzcd}
0\arrow{r}&F'X\arrow{r}&FX\arrow{r}&G\bOmega X\arrow{r}&0
\end{tikzcd}\]
for any \(X\in\CM(B)\) and any syzygy \(\bOmega X\) of \(X\).

\begin{cor}
\label{c:CM-ind}
For \(X\in\CM(B)\) and \(\bOmega X\) an arbitrary syzygy, we have \(\ind(\bOmega X)=-[F'X]\) and \(\coind(\bOmega X)=-[FX]\) in \(\Kgp_0(\projcat{\stab{A}})\).
\end{cor}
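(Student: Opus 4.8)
The plan is that this corollary is a formal consequence of Propositions~\ref{p:ind-coind} and \ref{p:g-vec}, and requires essentially no new work. First I would invoke Proposition~\ref{p:g-vec} to record that $\bOmega X$ lies in $\gproj\CM(B)$, so that the index and coindex of $\bOmega X$---computed in the category $\cat{E}=\gproj\CM(B)$ with cluster-tilting object $\Tsrc$---are defined. I would also note that the functor $F=\Hom_B(\Tsrc,\blank)$ of Section~\ref{s:categorification-pms}, restricted to $\gproj\CM(B)$, is precisely the Yoneda functor $\add(\Tsrc)\isoto\projcat{A}$ appearing in the definitions of $\ind$ and $\coind$, so the two formalisms are literally compatible.

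Next I would apply Proposition~\ref{p:ind-coind} with the object taken to be $\bOmega X$ in place of $X$, obtaining
\[
\ind(\bOmega X)=[F\bOmega X],\qquad \coind(\bOmega X)=[F\bOmega X]-[G\bOmega X]
\]
in $\Kgp_0(\projcat{\stab{A}})$. Finally I would substitute the two identities of Proposition~\ref{p:g-vec}, namely $[F\bOmega X]=-[F'X]$ and $[F\bOmega X]-[G\bOmega X]=-[FX]$, to conclude $\ind(\bOmega X)=-[F'X]$ and $\coind(\bOmega X)=-[FX]$, as claimed.

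Since both input propositions are already established, there is no genuine obstacle: the entire content is the bookkeeping observation that the ``leading'' and ``trailing'' exponents appearing in the cluster character formula for $\CCsrc(\bOmega X)$ are exactly the index and coindex of $\bOmega X$, and Proposition~\ref{p:g-vec} has already identified those exponents in terms of $F'$ and $F$. The one point worth spelling out explicitly in the write-up is the compatibility of the functor $F$ from Section~\ref{s:categorification-pms} with the abstract index--coindex formalism of the preceding subsection, which is immediate as noted above.
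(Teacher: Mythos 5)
Your proposal is correct and matches the paper's argument exactly: the paper's proof is the single sentence ``This combines Propositions~\ref{p:g-vec} and~\ref{p:ind-coind},'' and your write-up is precisely the unfolding of that combination (apply Proposition~\ref{p:ind-coind} to $\bOmega X$, then substitute the two identities from Proposition~\ref{p:g-vec}). The compatibility remark about $F$ and the Yoneda equivalence is a reasonable thing to make explicit, even if the paper leaves it implicit.
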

\begin{proof}
This combines Propositions~\ref{p:g-vec} and \ref{p:ind-coind}.
\end{proof}

Recall Muller--Speyer's matchings \(\MSsrc{j}\) and \(\MStgt{j}\) from Proposition~\ref{p:MSmats}, defined in terms of downstream and upstream wedges.
As explained in Theorem~\ref{t:MSmats}, the perfect matching module \(N(\MSsrc{j})\) is isomorphic to the indecomposable projective \(A\)-module \(Ae_j\), while \(N(\MStgt{j})\) is isomorphic to \(\Zdual{(e_jA)}\), which is indecomposable injective in \(\CM(A)\).
By the double centraliser property \(A=\Endalg{B}{eA}\) from Theorem~\ref{t:src-cat}, together with Corollary~\ref{c:bdry-value}, we have \(Ae_j=F(eAe_j)\iso FM(\srclab{j})\), and the next lemma gives similar descriptions for the injective object \(\Zdual{(e_jA)}\).

\begin{lem}
\label{l:A-inj}
For each \(j\in Q_0\), we have \(N(\MStgt{j})\iso\Zdual{(e_jA)}\iso F'(\Zdual{(e_jAe)})\iso F'M(\tgtlab{j})\).
\end{lem}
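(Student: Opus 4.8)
The plan is to reduce the statement to a single isomorphism and then prove that one by passing to the opposite diagram. The first isomorphism $N(\MStgt{j})\iso\Zdual{(e_jA)}$ is exactly Theorem~\ref{t:MSmats}, and the last isomorphism $F'(\Zdual{(e_jAe)})\iso F'M(\tgtlab{j})$ follows on applying the functor $F'$ to the isomorphism $\Zdual{(e_jAe)}\iso M(\tgtlab{j})$ of Corollary~\ref{c:bdry-value}. So the real content is the middle isomorphism $\Zdual{(e_jA)}\iso F'M(\tgtlab{j})$, and I would prove it by transporting the identification $Ae_j\iso N(\MSsrc{j})$ of Theorem~\ref{t:MSmats} through the duality $\Zdual{(\blank)}$ and the opposite diagram $D^\op$.

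First I would recall from \cite[\S5]{CKP} that for each $I\in\posit_D$ the $A$-modules $N$ with $F'M_I\leq N\leq FM_I$ are precisely the perfect matching modules $N_\mu$ with $\bdry\mu=I$; in particular $F'M_I$ and $FM_I$ themselves are such modules, sitting at the two ends of this interval under inclusion. On the $D^\op$ side, Proposition~\ref{p:op} gives $A_{D^\op}=A^\op$, identifies the perfect matchings of $\Gamma_{D^\op}$ with those of $\Gamma_D$ with $\bdry\mu$ replaced by $\bdry\mu^\compl$, and yields $\srclab{j}(D^\op)=\tgtlab{j}(D)^\compl$ and $\MSsrc{j}(D^\op)=\MStgt{j}(D)$. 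A short direct check from Definitions~\ref{d:dimer-alg} and~\ref{d:rank1} (using that multiplication by $t$ and $\id_Z$ are each $Z$-self-dual) shows $\Zdual{N_D(\mu)}\iso N_{D^\op}(\mu)$ and $\Zdual{M_D(I)}\iso M_{D^\op}(I^\compl)$. Since $\Zdual{(\blank)}\colon\CM(A^\op)\isoto\CM(A)^\op$ is an exact duality (Proposition~\ref{p:Zdual}), applying it to the chain $F'M_{D^\op}(I^\compl)\leq N_{D^\op}(\mu)\leq FM_{D^\op}(I^\compl)$ produces the interval of $A$-modules $N_D(\mu)$ with $\bdry\mu=I$, namely $F'M_D(I)\leq N_D(\mu)\leq FM_D(I)$, with its order reversed; in particular the two extremes are interchanged, so $\Zdual{(F'M_D(I))}\iso FM_{D^\op}(I^\compl)$.

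I would then specialise to $I=\tgtlab{j}$, so $I^\compl=\srclab{j}(D^\op)$. Applying to $D^\op$ the identity $Ae_j\iso F(eAe_j)\iso FM(\srclab{j})$ recorded just before the statement gives $FM_{D^\op}(\srclab{j}(D^\op))\iso A_{D^\op}e_j=A^\op e_j$, and combining this with the previous isomorphism yields $\Zdual{(F'M(\tgtlab{j}))}\iso A^\op e_j$. As $A^\op e_j$ is the projective left $A^\op$-module at $j$, that is, $e_jA$ regarded as an object of $\CM(A^\op)$, dualising back gives $F'M(\tgtlab{j})\iso\Zdual{(e_jA)}$, which is the missing isomorphism. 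The main obstacle is the bookkeeping: keeping the opposite algebra, the duality $\Zdual{(\blank)}$ and the passage $D\mapsto D^\op$ mutually consistent, and in particular checking that the constructions $F$, $F'$ and $N_\mu$ of \cite{CKP} interact with them exactly as claimed. Conceptually, though, everything rests on the single input from Theorem~\ref{t:MSmats} that the \emph{projective} perfect matching modules occupy the top of the intervals $[F'M_I,FM_I]$, whose image under $\Zdual{(\blank)}$ is the dual statement that the \emph{injective} perfect matching modules occupy the bottom.
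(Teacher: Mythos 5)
Your reduction of the lemma to the middle isomorphism is the same as the paper's: the first and third isomorphisms follow directly from Theorem~\ref{t:MSmats} and Corollary~\ref{c:bdry-value}, and the content lies in $\Zdual{(e_jA)}\iso F'M(\tgtlab{j})$. You also share the paper's idea of attacking this via $D^\op$ and the duality $\Zdual{(\blank)}$. However, the crucial step in your argument---that $\Zdual{(\blank)}$ sends the interval of perfect matching modules $[F'M_{D^\op}(I^\compl),\, FM_{D^\op}(I^\compl)]$ to the interval $[F'M_D(I),\, FM_D(I)]$ \emph{with the order reversed and the two extremes interchanged}---is exactly the assertion you need to prove, and it is not justified by the observations you make.

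The difficulty is that the ordering on the interval is by set-theoretic inclusion of submodules of $FM_I$, and these inclusions are not admissible monomorphisms in the exact category $\CM$, so exactness of $\Zdual{(\blank)}$ says nothing about them. In fact, all the modules involved are $Z$-free of the same rank, so there are no non-isomorphic surjections among them at all; the $Z$-dual of a non-split inclusion $N_\mu\subseteq N_\nu$ with torsion cokernel is again an \emph{injection} $\Zdual{N_\nu}\hookrightarrow\Zdual{N_\mu}$, and even then these injections do not come with canonical compatible embeddings into $FM_{D^\op}(I^\compl)$. Showing that the top of one interval corresponds to the bottom of the other is precisely the content of the functorial identity one wants, namely $\Zdual{(F^{\op}X)}\cong F'(\Zdual{X})$. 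The paper obtains this cleanly from the fact that $F'$ and $F$ are the left and right adjoints of the restriction functor $e$, together with the commutation of $e$ with $\Zdual{(\blank)}$ (tensor--Hom adjunction) and the fact that contravariant equivalences swap left and right adjoints. Without such an input (or an independent combinatorial identification of the extremal matchings) your step ``in particular the two extremes are interchanged'' is a gap rather than a proof.
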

\begin{proof}
The first isomorphism is Theorem~\ref{t:MSmats} and the third is Corollary~\ref{c:bdry-value}, so it remains to prove the second.
Recall from Proposition~\ref{p:Zdual} that the duality functor \(\Zdual{(\blank)}\) provides equivalences \(\CM(B)^\op\isoto\CM(B^{\op})\) and \(\CM(A)^\op\isoto\CM(A^{\op})\).
Moreover, there is a commutative diagram
\[\begin{tikzcd}
\CM(A)^\op\arrow{r}{e}\arrow{d}{\Zdual{(\blank)}}&\CM(B)^\op\arrow{d}{\Zdual{(\blank)}}\\
\CM(A^\op)\arrow{r}{e}&\CM(B^\op)
\end{tikzcd}\]
where we abuse notation by denoting restriction to the boundary algebra by \(e\) in both cases.
Commutativity follows from tensor--Hom adjunction, since for \(N\in\CM(A)\) we have
\[e(\Zdual{N})=\Hom_{A^{\op}}(eA,\Hom_Z(N,Z))=\Hom_Z(eA\otimes_AN,Z)=\Zdual{(eN)}.\]

As observed in \cite[\S5]{CKP}, the functor \(F'\colon\CM(B)\to\CM(A)\) is left adjoint to \(e\colon\CM(A)\to\CM(B)\).
Write \(F^\op\) for the right adjoint of \(e\colon\CM(A^\op)\to\CM(B^\op)\), where the notation refers to the fact that \(F^\op=\Hom_{B^\op}(eA^\op,\blank)\) is the analogue of the functor \(F\) for the algebras \(A^\op\) and \(B^\op\).

Now, by uniqueness of adjoints, we have \(\Zdual{\smash{(F^\op X)}}=F'(\Zdual{X})\) for any \(X\in\CM(B^\op)\); the interchanging of left and right adjoints is a consequence of the contravariance of \(\Zdual{(\blank)}\).
Taking \(X=e_jAe\), and noting that \(F^\op(e_jAe)=e_jA\) by applying Theorem~\ref{t:src-cat} to \(D^\op\), the result follows.
\end{proof}

\begin{lem}
\label{l:upmat-class}
Let \(j\in Q_0\). Then \([N(\MStgt{j})]=[Ae_j]\) after projection to \(\Kgp_0(\projcat{\stab{A}})\).
\end{lem}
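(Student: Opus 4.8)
The statement to be proved is an equality of classes in $\Kgp_0(\projcat{\stab{A}})$, namely $[N(\MStgt{j})]=[Ae_j]$. The plan is to compute both sides using the formulae already available in the excerpt and observe that they agree. For the right-hand side, recall from Theorem~\ref{t:MSmats} that $Ae_j\iso N(\MSsrc{j})$, so $[Ae_j]=[N(\MSsrc{j})]$, and from Proposition~\ref{p:mat-res} (the formula \eqref{eq:white-class} together with \eqref{eq:black-class}) that for \emph{any} perfect matching $\mu$,
\[
[N_\mu]=\sum_{j\in Q_0\setminus F_0}[Ae_j]-\wt[\white](\mu)=\sum_{j\in Q_0\setminus F_0}[Ae_j]-\wt[\black](\mu)
\]
in $\Kgp_0(\projcat{\stab{A}_D})$. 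So it suffices to show $\wt[\white](\MStgt{j})=\wt[\black](\MSsrc{j})$ (equivalently, by the remark after Proposition~\ref{p:mat-res}, $\wt[\white](\MStgt{j})=\wt[\white](\MSsrc{j})$ would also do, but the cleaner match is white-to-black via the orientation reversal).

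The key step is to relate the two Muller--Speyer matchings via the opposite diagram. By Proposition~\ref{p:op}\ref{p:op-MS}, we have $\MSsrc{j}(D^\op)=\MStgt{j}(D)$. The weight $\wt[\white]$ is defined using the positively oriented face of $Q$ containing an arrow $\gamma$; passing to $D^\op$ reverses all arrows (Proposition~\ref{p:op}\ref{p:op-quiv}), so positively oriented faces of $Q_{D^\op}$ are negatively oriented faces of $Q_D$ and vice versa. Hence $\wt[\white]_{D^\op}(\mu)=\wt[\black]_D(\mu)$ for a matching $\mu$ regarded in both diagrams (the underlying edge set being the same, by Proposition~\ref{p:op}\ref{p:op-plabic}), once one identifies $\Kgp_0(\projcat{\stab{A}_{D^\op}})$ with $\Kgp_0(\projcat{\stab{A}_D})$ via $A_{D^\op}=A_D^\op$ (which sends $[A_{D^\op}e_j]$ to $[e_jA_D]$, i.e.\ the class of the corresponding indecomposable; one must check this identification is compatible with the statements, using that $[Ae_j]=[e_jA^\op]$ under the standard identification $\Kgp_0(\projcat{A})\iso\Kgp_0(\projcat{A^\op})$). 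Applying Proposition~\ref{p:mat-res} to $D^\op$ and the matching $\MSsrc{j}(D^\op)=\MStgt{j}(D)$ then gives
\[
[N(\MStgt{j})]=\sum_{i\in Q_0\setminus F_0}[Ae_i]-\wt[\black](\MStgt{j})
\]
after transporting back to $\Kgp_0(\projcat{\stab{A}_D})$, where I have written $\wt[\black]$ for the $D$-weight.

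It remains to see that $\wt[\black](\MStgt{j})=\wt[\black](\MSsrc{j})$ in $\Kgp_0(\projcat{\stab{A}})$, which will then force $[N(\MStgt{j})]=[N(\MSsrc{j})]=[Ae_j]$. This is where I expect the main obstacle to lie. One approach is to use Lemma~\ref{l:A-inj}, which gives $N(\MStgt{j})\iso F'M(\tgtlab{j})$, together with $Ae_j=FM(\srclab{j})$, and the short exact sequence \eqref{eq:F'} relating $F$, $F'$ and $G\bOmega$. Since $G$ vanishes on projectives and its image lands in $\fpmod{\stab{A}}$, the class $[G\bOmega M(\tgtlab{j})]$ might be shown to vanish in $\Kgp_0(\projcat{\stab{A}})$ — or more precisely, one would compare $[F'M(\tgtlab{j})]$ with $[FM(\srclab{j})]$ directly. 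Alternatively, and perhaps more robustly, one computes both $\wt$-classes combinatorially: each is a sum over arrows $\gamma$ in the matching of $[Ae_i]$ for $i$ in the interior of a face, and one checks face-by-face (using the local strand picture at each crossing, as in the proof of Proposition~\ref{p:grading}) that reorganising the downstream-wedge matching into the upstream-wedge matching preserves the total class. I anticipate that the cleanest route is in fact the homological one via Lemma~\ref{l:A-inj} and Corollary~\ref{c:CM-ind}: since $\ind(\bOmega X)=-[F'X]$ and $\coind(\bOmega X)=-[FX]$, the desired equality of classes should come from a relation between the index and coindex of the relevant syzygies, and this is exactly the kind of bookkeeping that Theorem~\ref{t:sigma2} is being set up to exploit. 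I would carry out the combinatorial check as a fallback if the homological identification of the two classes proves too indirect at this stage of the argument.
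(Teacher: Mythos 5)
Your setup is in the right spirit and uses the same ingredients as the paper — the opposite diagram, the identity $\MSsrc{j}(D^\op)=\MStgt{j}(D)$, the swap of positively/negatively oriented faces under $D\mapsto D^\op$, the lattice isomorphism $\Kgp_0(\projcat{\stab{A}})\iso\Kgp_0(\projcat{\stab{A}^\op})$, and the fact that $\wt[\white]_{D^\op}$ corresponds to $\wt[\black]_D$ under it. But there is a genuine gap at the point where you acknowledge the difficulty, and the displayed equation that precedes it is not what applying Proposition~\ref{p:mat-res} to $D^\op$ actually produces.

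Applying Proposition~\ref{p:mat-res} to $D^\op$ with the matching $\mu=\MSsrc{j}(D^\op)=\MStgt{j}(D)$ yields a formula for the class of the $A^\op$-module $N_\mu(Q^\op)$, namely $[N_\mu(Q^\op)]=\sum_i[A^\op e_i]-\wt[\white]_{D^\op}(\mu)$ in $\Kgp_0(\projcat{\stab{A}^\op})$. The step you omit is to observe two things about this module: first, $N_\mu(Q^\op)\iso\Zdual{N_\mu(Q)}$, so its class corresponds to $[N(\MStgt{j})]$ under the lattice isomorphism (this is the identification the paper makes explicit, and it follows from comparing \eqref{eq:white-class} for $D$ with \eqref{eq:black-class} for $D^\op$); and second, and crucially, Theorem~\ref{t:MSmats} applied to $D^\op$ identifies $N_\mu(Q^\op)=N(\MSsrc{j}(D^\op))$ with the projective $A^\op$-module $A^\op e_j$, whose class corresponds to $[Ae_j]$. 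Combining these two facts forces $[N(\MStgt{j})]=[Ae_j]$ immediately, with no further weight comparison needed. Your displayed equation, with $[N(\MStgt{j})]$ on the left, is instead just \eqref{eq:black-class} applied directly to $D$, which makes no use of the opposite diagram; and the residual step you formulate, $\wt[\black](\MStgt{j})=\wt[\black](\MSsrc{j})$, is not an input to the proof but an equivalent restatement of the lemma, so deferring it is not progress. Of your two fallbacks, the homological one via Lemma~\ref{l:A-inj} and Corollary~\ref{c:CM-ind} is circular, since that route feeds into the proof of Theorem~\ref{t:sigma2}, which itself uses the lemma you are trying to prove; and the face-by-face combinatorial check is not carried out and is not the paper's method. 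The single missing idea is to recognise $\Zdual{N(\MStgt{j})}$ as the projective $A^\op$-module $A^\op e_j$ and transport its class back through the lattice isomorphism.
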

\begin{proof}
We exploit the fact that a combinatorial calculation involving perfect matchings has two interpretations, one for \(D\) and one for the opposite diagram \(D^\op\).
Let \(\mu\) be a perfect matching of \(Q\) (and therefore also a perfect matching of \(Q^\op\)), and let \(N_\mu(Q)\) be the corresponding perfect matching \(A\)-module.
Then by \eqref{eq:white} from Proposition~\ref{p:mat-res}, we have
\begin{equation}
\label{eq:white}
[N_{\mu}(Q)]=\sum_{\substack{j\in Q_0\\\text{int}}}[Ae_j]-\wt[\white](\mu)\in\Kgp_0(\projcat{\stab{A}}),
\end{equation}
where the sum is over internal, i.e.\ mutable, vertices of \(Q\).

A direct comparison to the construction in \cite[Def.~4.3]{CKP} shows that \(\Zdual{N_\mu(Q)}\) is (canonically isomorphic to) the perfect matching \(A^{\op}\)-module \(N_\mu(Q^\op)\) associated to \(\mu\) when viewing it as a perfect matching of \(Q^{\op}\).
Applying Proposition~\ref{p:mat-res} again, but this time using \eqref{eq:black}, we therefore have
\begin{equation}
\label{eq:black}
[\Zdual{N_\mu(Q)}]=\sum_{\substack{j\in Q_0\\\text{int}}}[e_jA]-\wt[\black](\mu)\in\Kgp_0(\projcat{\stab{A}^\op}).
\end{equation}

There is a lattice isomorphism \(\Kgp_0(\projcat{A})\isoto\Kgp_0(\projcat{A^\op})\), projecting to a lattice isomorphism \(\Kgp_0(\projcat{\stab{A}})\isoto\Kgp_0(\projcat{\stab{A}^\op})\), given by identifying \([P_j]=[Ae_j]\mapsto[e_jA]\); this amounts to identifying each lattice with \(\ZZ^{Q_0}\) using the basis of projective modules, and its indexing by the common set of quiver vertices.
Because a face is positively oriented in \(Q\) if and only if it is negatively oriented in \(Q^\op\), for any perfect matching \(\mu\) this isomorphism identifies the value \(\wt[\white](\mu)\) computed on \(Q\) with the value \(\wt[\black](\mu)\) computed on \(Q^{\op}\).
It thus follows from \eqref{eq:white} and \eqref{eq:black} that it also identifies \([N_\mu(Q)]\) with \([\Zdual{N_\mu(Q)}]\).

Since \(\MStgt{j}(D^\op)=\MSsrc{j}(D)\) by Proposition~\ref{p:op}\ref{p:op-MS}, we have \(\Zdual{\smash{N(\MStgt{j})}}\iso e_jA\) by \cite[Cor.~7.6]{CKP}, and hence \([\Zdual{\smash{N(\MStgt{j})}}]=[e_jA]\) in \(\Kgp_0(\projcat{\stab{A}^\op})\).
The corresponding identity in \(\Kgp_0(\projcat{\stab{A}})\) is then \([N(\MStgt{j})]=[Ae_j]\), as required.
\end{proof}

\begin{rem}
We emphasise that \([N(\MStgt{j})]\ne[Ae_j]\) in \(\Kgp_0(\projcat{A})\), before projecting to the Grothendieck group for the stable endomorphism algebra; indeed, this would contradict \cite[Cor.~6.16]{CKP} by implying that \(\MStgt{j}=\MSsrc{j}\).
To obtain correct identities in \(\Kgp_0(\projcat{A})\) and \(\Kgp_0(\projcat{A}^\op)\), the formulae \eqref{eq:white} and \eqref{eq:black} must be modified by adding additional terms from the kernels of the projections as in \cite[Prop.~6.11]{CKP}, and these additional terms are not related by our naive isomorphism \(\Kgp_0(\projcat{A})\iso\Kgp_0(\projcat{A^\op})\).
\end{rem}

\begin{proof}[Proof of Theorem~\ref{t:sigma2}]
Combining Lemmas~\ref{l:A-inj} and \ref{l:upmat-class}, we find that
\[[F'(\Zdual{(e_jAe)})]=[Ae_j]\]
in \(\Kgp_0(\projcat{\stab{A}})\), and so \(\ind(\bOmega\Zdual{(e_jAe)})=-[Ae_j]\) by Corollary~\ref{c:CM-ind}.
But \(Ae_j=F(eAe_j)\) and \(eAe_j\in\add(\Tsrc)\), so \(\bOmega\Zdual{(e_jAe)}\iso\Sigma(eAe_j)\) in \(\singcat(B)\) by Proposition~\ref{p:shift}.
Since \(\bOmega\Zdual{(e_jAe)}=\Sigma^{-1}\Zdual{(e_jAe)}\) in \(\singcat(B)\), this completes the proof.
\end{proof}

Theorem~\ref{t:sigma2} is also the main step in lifting the tautological equivalence \(\bdcat(\ginj\CM{B})=\bdcat(\gproj\CM{B})\) to a functor between the appropriate homotopy categories, in order to obtain the left-hand commuting square in \eqref{eq:qcm-diagram} from Theorem~\ref{t:FKW}.

\begin{prop}
\label{p:homot-lift}
There is a functor \(\widetilde{\varphi}\colon\hcat[\bdd](\add{\Ttgt})\to\hcat[\bdd](\add{\Tsrc})\) such that the diagram
\[\begin{tikzcd}
\hcat[\bdd](\add{\Ttgt})\arrow{d}{\widetilde{\varphi}}\arrow{r}&\bdcat(\ginj\CM(B))\arrow{d}{\varphi}\\
\hcat[\bdd](\add{\Tsrc})\arrow{r}&\bdcat(\gproj\CM(B))
\end{tikzcd}\]
commutes, where the horizontal arrows are the canonical functors.
\end{prop}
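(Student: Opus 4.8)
The plan is to realise each indecomposable summand $M(\tgtlab j)=\Zdual{(e_jAe)}$ of $\Ttgt$ as an explicit bounded complex $J_j^\bullet$ of objects of $\add(\Tsrc)$, chosen functorially in the vertex $j$, and to let $\widetilde{\varphi}$ act on a complex with terms in $\add(\Ttgt)$ by replacing every occurrence of $M(\tgtlab j)$ by $J_j^\bullet$ and passing to the total complex. To organise this I would first use the Yoneda functors $\Hom_B(\Tsrc,\blank)$ and $\Hom_B(\Ttgt,\blank)$ together with the isomorphisms $A\cong\Endalg{B}{\Tsrc}$ and $A\cong\Endalg{B}{\Ttgt}$ of Theorems~\ref{t:src-cat} and~\ref{t:tgt-cat} to identify both $\hcat[\bdd](\add\Tsrc)$ and $\hcat[\bdd](\add\Ttgt)$ with $\per A$, in such a way that the summand $M(\srclab j)=eAe_j$ of $\Tsrc$ and the summand $M(\tgtlab j)$ of $\Ttgt$ both correspond to the indecomposable projective $A$-module at $j$. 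Under these identifications the two canonical functors in the diagram become the functors $\per A\to\bdcat(\fpmod B)$ determined by sending that projective to $M(\srclab j)$, respectively to $M(\tgtlab j)$, and $\varphi$ is, by Proposition~\ref{p:der-eq}, simply the identity of $\bdcat(\fpmod B)$. Hence it suffices to produce the complexes $J_j^\bullet$ together with isomorphisms in $\bdcat(\fpmod B)$ between the image of $J_j^\bullet$ and $M(\tgtlab j)$ that are natural for the morphisms among the $M(\tgtlab j)$ --- equivalently, to realise $\widetilde{\varphi}$ as deriving a tensor product with a bounded complex of $A$-$A$-bimodules.

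The complexes $J_j^\bullet$ are supplied by Theorem~\ref{t:sigma2}, which gives $M(\tgtlab j)\iso\Sigma^2 M(\srclab j)$ in $\singcat(B)\simeq\stabgproj\CM(B)$. Unwinding this isomorphism as in the proof of Theorem~\ref{t:sigma2}, via Corollary~\ref{c:CM-ind} and Proposition~\ref{p:shift}, and enlarging a minimal injective copresentation of $M(\srclab j)$ in the Frobenius category $\gproj\CM(B)$ by projective--injective summands so that its second cosyzygy $\bOmega^{-2}M(\srclab j)$ is exactly $M(\tgtlab j)$ (possible because the two agree in $\stabgproj\CM(B)$), one obtains an exact sequence $0\to M(\srclab j)\to J_j^0\to J_j^1\to M(\tgtlab j)\to 0$ in $\gproj\CM(B)$ with $J_j^0,J_j^1\in\add(B)$. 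Set $J_j^\bullet=(M(\srclab j)\to J_j^0\to J_j^1)$, concentrated in degrees $-2,-1,0$. All terms lie in $\add(\Tsrc)$, since $M(\srclab j)=eAe_j$ is an indecomposable summand of $\Tsrc=eA$ by Theorem~\ref{t:src-cat}, and the projective--injective objects $\add(B)$ of $\gproj\CM(B)$ are contained in $\add(\Tsrc)$; and the exact sequence shows the image of $J_j^\bullet$ in $\bdcat(\fpmod B)$ is $M(\tgtlab j)$, as required.

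The main obstacle is the functoriality demanded in the first paragraph: the copresentations, the complexes $J_j^\bullet$, and the correcting projective--injective summands must be chosen compatibly for all vertices $j$ at once, so that a morphism of $\add(\Ttgt)$ induces the correct chain map between the $J_j^\bullet$ and the identifications of their images with the $M(\tgtlab j)$ are natural. I expect this to be tractable precisely because Theorem~\ref{t:sigma2} is proved uniformly in $j$ --- through the single identity $[N(\MStgt j)]=[Ae_j]$ in $\Kgp_0(\projcat{\stab A})$ coming from Lemmas~\ref{l:A-inj} and~\ref{l:upmat-class} --- and because lifts of morphisms along injective copresentations are unique up to homotopy, so that $M(\tgtlab j)\mapsto J_j^\bullet$ does extend to a functor; concretely one packages the $J_j^\bullet$ into the complex of $A$-$A$-bimodules whose terms are the appropriate sums of copies of $eAe_i$ and of $B$, carrying the tautological right $A$-action, and lets $\widetilde{\varphi}$ be the derived tensor product with it. Granting this, commutativity of the square is immediate from the construction: it holds on the object $\Ttgt$ placed in degree $0$, hence on $\add(\Ttgt)$, and both composites around the square respect shifts and mapping cones, so it holds throughout $\hcat[\bdd](\add\Ttgt)$.
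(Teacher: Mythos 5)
Your starting point is the same as the paper's: use Theorem~\ref{t:sigma2} to produce, for each vertex $j$, an exact sequence $0\to M(\srclab j)\to J_j^0\to J_j^1\to M(\tgtlab j)\to0$ in $\gproj\CM(B)$ with middle terms in $\add(B)$, and take the left three terms as a complex $\xi_j$ in $\hcat[\bdd](\add\Tsrc)$ representing $M(\tgtlab j)$ in $\bdcat(\fpmod B)$. (Summing over $j$ gives exactly the complex $\xi\colon\Tsrc\to P_1\to P_0$ in the paper.) However, you correctly identify functoriality as ``the main obstacle'' and then do not close it: the remainder of your argument consists of the hope that the $J_j^\bullet$ can be chosen compatibly, with a suggested mechanism (derived tensor with a bimodule complex) that is not actually constructed, and indeed is mis-specified. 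The degree-$(-2)$ term you propose, ``sums of copies of $eAe_i$ carrying the tautological right $A$-action'', is not well posed: $eAe_i$ is a left $B$-module with a right $e_iAe_i$-action, not a right $A$-action; only the full $\Tsrc=eA$ is a $B$--$A$-bimodule. Even once that is corrected, you would still have to produce $P_1,P_0$ as $B$--$A$-bimodules that lie in $\add(B)$ as left $B$-modules, and nothing in your argument addresses whether such a bimodule resolution exists. Lifting morphisms along injective copresentations is unique only up to homotopy, so some coherence mechanism is genuinely needed, not just expected.

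The paper closes precisely this gap by a different device: it does not attempt a term-by-term bimodule construction, but instead shows that the complex $\xi$ is a tilting object of $\hcat[\bdd](\add\Tsrc)$ with endomorphism algebra isomorphic to $\Endalg{B}{\Ttgt}\cong A$, and then invokes Rickard's derived Morita theory to obtain $\widetilde\varphi$. The endomorphism computation is the nontrivial content: applying $F=\Hom_B(\Tsrc,\blank)$ to $\xi$, using rigidity of $\Tsrc$ to control the cohomology, and identifying the result with $F'\Ttgt\cong\Zdual A$ (via Lemma~\ref{l:A-inj}), which is tilting because $A$ has finite global dimension and $\Zdual A$ is an injective cogenerator of $\CM(A)$. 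Full faithfulness of $F$ and of the intermediate extension $F'$ then gives $\Endalg B{\xi}\cong A$. Rickard's theorem absorbs all the coherence that your proposal leaves open. If you want to salvage your approach, you should either run exactly this tilting-complex argument, or prove directly that $\xi$ admits a $B$--$A$-bimodule structure with $P_1,P_0$ projective on both sides; as written, the step from ``an object-level resolution for each $j$'' to ``a functor on $\hcat[\bdd](\add\Ttgt)$'' is missing.
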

\begin{proof}
By the construction of \(\varphi\), the right-hand square in the extended diagram
\begin{equation}
\label{eq:ext-diag}
\begin{tikzcd}
\hcat[\bdd](\add{\Ttgt})\arrow{d}{\widetilde{\varphi}}\arrow{r}&\bdcat(\ginj\CM(B))\arrow{d}{\varphi}\arrow{r}{\sim}&\bdcat(\fgmod{B})\arrow[equal]{d}\\
\hcat[\bdd](\add{\Tsrc})\arrow{r}&\bdcat(\gproj\CM(B))\arrow{r}{\sim}&\bdcat(\fgmod{B})
\end{tikzcd}
\end{equation}
commutes, and so, given that the horizontal maps in this right-hand square are equivalences, it is sufficient to provide a functor $\widetilde{\varphi}$ such that the outer square of \eqref{eq:ext-diag} commutes.

It follows from Theorem~\ref{t:sigma2} that there is an exact sequence
\[\begin{tikzcd}
0\arrow{r}&\Tsrc\arrow{r}&P_1\arrow{r}&P_0\arrow{r}&\Ttgt\arrow{r}&0
\end{tikzcd}\]
in \(\CM(B)\), where \(P_0,P_1\in\projcat{B}\).
In particular, the complex
\[\begin{tikzcd}
\xi\colon\Tsrc\arrow{r}&P_1\arrow{r}&P_0
\end{tikzcd}\]
in \(\bdcat(\gproj\CM(B))\), extended by zeros and with \(P_0\) in degree \(0\), is quasi-isomorphic to \(\Ttgt\) when viewed as an object of \(\bdcat(\fgmod{B})\), so it suffices to construct a functor \(\widetilde{\varphi}\colon\hcat[\bdd](\add{\Ttgt})\to\hcat[\bdd](\add{\Tsrc})\) such that \(\widetilde{\varphi}(\Ttgt)=\xi\).
We will deduce the existence of such a functor from \cite[Thm.~6.4]{Rickard-Morita} (see also \cite{Keller-ChainComp}) by showing that \(\xi\) is a tilting object in \(\hcat[\bdd](\add\Tsrc)\) with endomorphism algebra isomorphic to \(\Endalg{B}{\Ttgt}=A\).
(Strictly speaking, this proves more than is necessary, since it will also imply that the functor \(\widetilde{\varphi}\) thus constructed is a triangle equivalence.)

We first calculate the endomorphism algebra of \(\xi\), and begin by applying the equivalence \(F=\Hom_B(\Tsrc,\blank)\colon\hcat[\bdd](\add{\Tsrc})\to\hcat[\bdd](\projcat{A})=\bdcat(\fgmod{A})\), recalling for the final equality that \(\gldim{A}<\infty\) \cite[Thm.~3.7]{Pressland-Postnikov}.
Since \(F\) is left exact as a functor on \(\fgmod{B}\) and \(\Tsrc\) is rigid, the complex \(F\xi\) is exact at \(F\Tsrc\) and \(FP_1\).
Because the map \(P_0\to\Ttgt\) is a projective cover, the cohomology of \(F\xi\) at \(FP_0\) is the subspace of \(\Hom_B(\Tsrc,\Ttgt)\) consisting of maps factoring over a projective \(B\)-module; that is, it is \(F'\Ttgt\).
Thus, in \(\bdcat(\fgmod{A})\) we have \(F\xi\cong F'\Ttgt\).
Now \(F\) is fully faithful by Yoneda's lemma, and \(F'\) is fully faithful by \cite[Lem.~2.2]{CBS}, observing as in \cite[\S5]{CKP} that it is the intermediate extension functor \cite{Kuhn} associated to the idempotent \(e\).
We therefore have
\[\Endalg{B}{\xi}=\Endalg{A}{F\xi}\iso\Endalg{A}{F'\Ttgt}=\Endalg{B}{\Ttgt}=A,\]
and so it remains only to show that \(F'\Ttgt\) is tilting as an \(A\)-module.

By summing the isomorphisms from Lemma~\ref{l:A-inj} over \(j\in Q_0\), we see that \(F'\Ttgt=\Zdual{A}\). Since this object is an injective cogenerator of \(\CM(A)\), and \(A\) has finite global dimension, it is in particular tilting as required.
\end{proof}

Finally, we establish commutativity of the diagram \eqref{eq:cc-diagram} for the cluster-tilting object \(\Ttgt\).
In proving the next lemma, we denote the suspension functor in \(\bdcat(\gproj\CM(B))\) by \([1]\), to distinguish it from the suspension functor \(\Sigma\) of \(\singcat(B)\).

\begin{lem}
\label{l:CC-varphi}
Let \(X\in\CM(B)\), and consider the commutative diagram
\begin{equation}
\label{eq:gp-app}
\begin{tikzcd}
0\arrow{r}&\bOmega X\arrow{r}{\kappa}&PX\arrow{r}&X\arrow{r}&0\\
0\arrow{r}&\bOmega X\arrow[equal]{u}\arrow{r}&QX\arrow{r}\arrow{u}&\bSigma\bOmega X\arrow{r}\arrow{u}&0
\end{tikzcd}
\end{equation}
with exact rows, in which \(PX\to X\) is a projective cover and \(\bOmega X\to QX\) is a left \(\projcat(B)\)-approximation. Then the lower row of \eqref{eq:gp-app} is a short exact sequence in \(\gproj\CM(B)\), and
\begin{equation}
\label{eq:CC-varphi}\CCsrc(\varphi X)=\CCsrc(\bSigma\bOmega X)\frac{\CCsrc(PX)}{\CCsrc(QX)}.
\end{equation}
\end{lem}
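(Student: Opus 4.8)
The plan is to realise $\varphi X$ as the genuine object $\bSigma\bOmega X\in\gproj\CM(B)$ \emph{twisted by an object of} $\hcat[\bdd](\cat P)$, where $\cat P=\projcat B$ is the subcategory of projective--injective objects of $\gproj\CM(B)$, and then to read off \eqref{eq:CC-varphi} from the multiplicativity of the extended cluster character provided by Theorem~\ref{t:CC-lift}.

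First I would construct the diagram \eqref{eq:gp-app} and check that its lower row is a short exact sequence in $\gproj\CM(B)$, not merely in $\CM(B)$. The object $\bOmega X=\ker(PX\to X)$ lies in $\gproj\CM(B)$ by the first assertion of Proposition~\ref{p:g-vec}, and by Theorem~\ref{t:src-cat} the category $\gproj\CM(B)$ is a Frobenius exact category whose projective--injectives are $\add B$; hence any left $\projcat B$-approximation $\lambda\colon\bOmega X\to QX$ is an admissible monomorphism (being Gorenstein projective, $\bOmega X$ embeds in a projective, so every map to a projective, in particular $\lambda$, must be monic), its cokernel $\bSigma\bOmega X$ lies in $\gproj\CM(B)$, and $\bSigma\bOmega X\iso\Sigma\bOmega X$ in $\singcat(B)\simeq\stabgproj\CM(B)$. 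The vertical arrow $QX\to PX$ of \eqref{eq:gp-app} is obtained by factoring $\kappa\colon\bOmega X\to PX$ through the approximation $\lambda$, and it induces the map $\bSigma\bOmega X\to X$; so the diagram commutes with the two stated exact rows.

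Next I would pass to $\bdcat(\gproj\CM(B))$ via the tautological equivalence of Proposition~\ref{p:der-eq}. The top row of \eqref{eq:gp-app} is a short exact sequence in $\CM(B)$ whose outer terms are Gorenstein projective, so $\varphi X$ fits into a triangle $\bOmega X\xrightarrow{\kappa}PX\to\varphi X\to\bOmega X[1]$, i.e.\ $\varphi X\iso\cone(\kappa)$; likewise the lower row gives $\bSigma\bOmega X\iso\cone(\lambda)$. Applying the octahedral axiom to the composable morphisms $\bOmega X\xrightarrow{\lambda}QX\xrightarrow{u}PX$ (so that $u\lambda=\kappa$) then produces a distinguished triangle
\[\bSigma\bOmega X\longrightarrow\varphi X\longrightarrow\cone(u)\longrightarrow\bSigma\bOmega X[1],\]
and $\cone(u)=[QX\xrightarrow{u}PX]$ lies in $\hcat[\bdd](\cat P)$ because $QX,PX\in\projcat B$.

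Finally I would extract the formula. By Theorem~\ref{t:CC-lift} the cluster character $\CCsrc$ extends to $\bdcat(\gproj\CM(B))$, restricts on genuine objects to the ordinary one, and satisfies $\CCsrc(Y')=\CCsrc(P')\CCsrc(Z')$ for every triangle $P'\to Y'\to Z'\to P'[1]$ with $P'\in\hcat[\bdd](\cat P)$; in particular $\CCsrc(W[1])=\CCsrc(W)^{-1}$ for $W\in\hcat[\bdd](\cat P)$, and rotating $QX\xrightarrow{u}PX\to\cone(u)\to QX[1]$ gives $\CCsrc(\cone(u))=\CCsrc(PX)\CCsrc(QX)^{-1}$. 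Rotating the triangle above to $\cone(u)[-1]\to\bSigma\bOmega X\to\varphi X\to\cone(u)$ and applying multiplicativity with first term $\cone(u)[-1]\in\hcat[\bdd](\cat P)$ yields $\CCsrc(\bSigma\bOmega X)=\CCsrc(\cone(u))^{-1}\CCsrc(\varphi X)$, which rearranges to \eqref{eq:CC-varphi}. I expect the only genuine subtleties to be the verification that the lower row of \eqref{eq:gp-app} is an admissible short exact sequence in the Frobenius category $\gproj\CM(B)$ (this is exactly where Proposition~\ref{p:g-vec} enters) and the careful tracking of the shifts so that the ratio $\CCsrc(PX)/\CCsrc(QX)$ comes out the right way up rather than inverted; everything else is formal manipulation inside the triangulated category.
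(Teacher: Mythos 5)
Your proof is correct and follows essentially the same approach as the paper: both rely on realising the two rows of \eqref{eq:gp-app} as triangles in $\bdcat(\gproj\CM(B))$ and then applying the multiplicativity of the extended cluster character from Theorem~\ref{t:CC-lift}. The only cosmetic difference is that you package the data into a single triangle $\bSigma\bOmega X\to\varphi X\to\cone(u)\to\bSigma\bOmega X[1]$ via the octahedral axiom, whereas the paper applies Theorem~\ref{t:CC-lift} directly to the two rows and eliminates the intermediate quantity $\CCsrc(\bOmega X[1])$; both routes give the identical computation.
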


\begin{proof}
We first justify that the diagram \eqref{eq:gp-app} is well-defined, and that its lower row is in \(\gproj\CM(B)\).
The syzygy \(\bOmega X\) is in \(\gproj\CM(B)\) by \cite[Lem.~10.4]{CKP}, and so the left \(\projcat(B)\)-approximation \(\bOmega X\to QX\) is an injective envelope in \(\gproj\CM(B)\), hence in particular an admissible monomorphism.
Thus, its cokernel \(\bSigma\bOmega X\) lies in \(\gproj\CM(B)\) as well.
The functor \(\bSigma\) descends to the suspension functor on \(\singcat(B)=\stabgproj{\CM{B}}\), hence the notation

Since \(PX,\bOmega X\in\gproj\CM(B)\), it follows from the upper row of \eqref{eq:gp-app} that \(\varphi X \iso (\bOmega X\map{\kappa} PX)\) in \(\bdcat(\CM(B))\).
Since this \(2\)-term complex is also the cone of the morphism \(\kappa\), viewed as a map of stalk complexes in \(\bdcat(\gproj\CM(B))\), we see that
\[\CCsrc(\varphi X)=\CCsrc(\bOmega X[1])\CCsrc(PX),\]
using Theorem~\ref{t:CC-lift}.
Applying the same theorem to the triangle in \(\bdcat(\gproj\CM(B))\) induced from the lower row of \eqref{eq:gp-app}, we also have
\[\CCsrc(\bOmega X[1])=\frac{\CCsrc(\bSigma\bOmega X)}{\CCsrc(QX)},\]
and the result follows by combining these two equations.
\end{proof}

\begin{prop}
\label{p:CCsrc}
Let \(M_I\in\ginj\CM(B)\) be a rank \(1\) module.
Then  \(\CCsrc(\varphi M_I)=\Plueck{I}\in\CC[\openposvarcone_\posit]\).
\end{prop}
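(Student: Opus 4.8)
The plan is to start from Lemma~\ref{l:CC-varphi} applied with $X=M_I$, which gives
\[\CCsrc(\varphi M_I)=\CCsrc(\bSigma\bOmega M_I)\,\frac{\CCsrc(PM_I)}{\CCsrc(QM_I)},\]
where $PM_I\to M_I$ is a minimal projective cover and $\bOmega M_I\hookrightarrow QM_I$ an injective envelope in $\gproj\CM(B)$. Both $PM_I$ and $QM_I$ lie in $\add(B)$, and every indecomposable summand of $B$ is isomorphic to some $eAe_j\iso M(\srclab j)$ with $j\in F_0$ by Corollary~\ref{c:bdry-value}; hence, by Theorem~\ref{t:CC-values} and multiplicativity of the cluster character on direct sums, $\CCsrc(PM_I)$ and $\CCsrc(QM_I)$ are explicit Laurent monomials in the frozen source-labelled Plücker coordinates. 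So the proposition reduces to computing $\CCsrc(\bSigma\bOmega M_I)$.

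The key step is to feed this into the categorified twist. Because $M_I$ is a rank~$1$ module and lies in $\ginj\CM(B)$, where it is reachable (by the argument in the proof of Theorem~\ref{t:CC-values}), we have $\CCtgt(M_I)=\Plueck I$, so Theorem~\ref{t:MS=CC} reads $\ltwist\Plueck I=\CCsrc(\bOmega M_I)/\CCsrc(PM_I)$. On the other hand, the syzygy $\bOmega M_I$ lies in $\gproj\CM(B)$ by \cite[Lem.~10.4]{CKP}, and with the minimal choices above one checks that $QM_I\to\bSigma\bOmega M_I$ is itself a projective cover with kernel $\bOmega M_I$, since essentiality of the injective envelope $\bOmega M_I\hookrightarrow QM_I$ prevents $QM_I$ from splitting off a projective into the cokernel. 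Provided $\bSigma\bOmega M_I$ is reachable, Theorem~\ref{t:twist} applies to it and yields $\ltwist\CCsrc(\bSigma\bOmega M_I)=\CCsrc(\bOmega M_I)/\CCsrc(QM_I)$. Combining the two identities gives $\ltwist\CCsrc(\bSigma\bOmega M_I)=\ltwist(\Plueck I)\,\CCsrc(PM_I)/\CCsrc(QM_I)$, and applying $\ltwist^{-1}$ — which inverts $\CCsrc$ on projectives by \eqref{eq:proj-twist}, since $\ltwist$ applied twice fixes such elements — we obtain $\CCsrc(\bSigma\bOmega M_I)=\Plueck I\,\CCsrc(QM_I)/\CCsrc(PM_I)$. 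Substituting into the first display gives $\CCsrc(\varphi M_I)=\Plueck I$.

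The main obstacle is justifying that $\bSigma\bOmega M_I$ (and $\bOmega M_I$) is reachable in $\gproj\CM(B)$; reachability then also gives rigidity for free, since reachable objects are summands of cluster-tilting objects. This is where Theorem~\ref{t:sigma2} enters. As $M_I$ is reachable in $\ginj\CM(B)$, its image in $\singcat(B)$ lies in the mutation class of $\Ttgt$; by Theorem~\ref{t:sigma2} together with Proposition~\ref{p:mut-shift}, this mutation class coincides with that of $\Tsrc$. Since $\bSigma\bOmega M_I$ is an object of $\gproj\CM(B)$ with the same image in $\singcat(B)\simeq\stabgproj\CM(B)$, it differs from a summand of some reachable cluster-tilting object only by summands in $\add(B)$, and is therefore reachable; the same reasoning applies to $\bOmega M_I$. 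A secondary, purely bookkeeping point — that the projective summands appearing in the various (co)syzygy constructions cancel exactly — is handled by taking all projective covers and injective envelopes minimal, as in the statements of Theorem~\ref{t:MS=CC} and Lemma~\ref{l:CC-varphi}.
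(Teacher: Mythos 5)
Your argument is correct and follows essentially the same route as the paper's proof: both rest on Lemma~\ref{l:CC-varphi}, the categorified twist identities (Theorems~\ref{t:MS=CC} and~\ref{t:twist} together with \eqref{eq:proj-twist}), the reachability of $\bSigma\bOmega M_I$ deduced from reachability of $M_I$ in $\ginj\CM(B)$ via Theorem~\ref{t:sigma2} and Proposition~\ref{p:mut-shift}, and finally invertibility of $\ltwist$. The only difference is cosmetic bookkeeping --- you isolate $\CCsrc(\bSigma\bOmega M_I)$ by applying $\ltwist^{-1}$ midway, whereas the paper applies $\ltwist$ to the whole identity from Lemma~\ref{l:CC-varphi} and compares to $\ltwist\Plueck_I$ at the end; the paper also simply asserts that $QM_I\to\bSigma\bOmega M_I$ is a projective cover, while you offer a (plausible, if informal) justification via essentiality of the injective envelope.
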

\begin{proof}
Apply Lemma~\ref{l:CC-varphi} to \(M_I\), and then take the twist of the resulting identity \eqref{eq:CC-varphi} to obtain
\[\ltwist\CCsrc(\varphi M_I)=\ltwist\CCsrc(\bSigma\bOmega M_I)\frac{\ltwist\CCsrc(PM_I)}{\ltwist\CCsrc(QM_I)},\]
where the objects on the right-hand side are all defined via a diagram of the form \eqref{eq:gp-app}, and in particular all lie in \(\gproj\CM(B)\).

Now we use Theorem~\ref{t:twist} to compute these twisted cluster characters.
We have \(\ltwist\CCsrc(PM_I)=\CCsrc(PM_I)^{-1}\) and \(\ltwist\CCsrc(QM_I)=\CCsrc(QM_I)^{-1}\) since both \(PM_I\) and \(QM_I\) are projective \(B\)-modules.
In the singularity category \(\singcat(B)\), we have \(\bSigma\bOmega M_I\iso\Sigma\Sigma^{-1} M_I\iso M_I\).
By the main result of \cite{OPS-PGs} (cf.~the proof of \cite[Thm.~7.6]{Pressland-Postnikov}), \(M_I\) is reachable from \(\Ttgt\); here we use that \(M_I\in\ginj\CM(B)\).
Hence \(\bSigma\bOmega M_I\) is reachable from \(\Tsrc\) by Proposition~\ref{p:mut-shift} and Theorem~\ref{t:sigma2}.
Since \(QM_I\to\bSigma\bOmega M_I\) is a projective cover, we therefore have
\[\ltwist\CCsrc(\bSigma\bOmega M_I)=\frac{\CCsrc(\bOmega M_I)}{\CCsrc(QM_I)},\]
by Theorem~\ref{t:twist}.
It follows that
\[\ltwist\CCsrc(\varphi M_I)=\frac{\CCsrc(\bOmega M_I)}{\CCsrc(QM_I)}\frac{\CCsrc(QM_I)}{\CCsrc(PM_I)}=\frac{\CCsrc(\bOmega M_I)}{\CCsrc(PM_I)}=\ltwist\Plueck{I},\]
where the final equality is Theorem~\ref{t:MS=CC} (originally from \cite{CKP}).
Since \(\ltwist\) is an isomorphism \cite[Thm.~6.7]{MulSpe-Twist}, we have \(\CCsrc(\varphi M_I)=\Plueck{I}\) as required.
\end{proof}
 
\begin{cor}
\label{c:cc-diag}
The diagram
\[\begin{tikzcd}
\add(\Ttgt)\arrow{r}{\CCtgt}\arrow{d}{\varphi}&\CC[\openposvarcone_\posit]\arrow[equal]{d}\\
\bdcat(\gproj\CM(B))\arrow{r}{\CCsrc}&\CC[\openposvarcone_\posit]
\end{tikzcd}\]
commutes.
\end{cor}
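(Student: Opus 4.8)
The plan is to verify commutativity object by object on $\add(\Ttgt)$, which by additivity of both cluster characters reduces to the indecomposable summands $\Ttgt_j := \Zdual{(e_jAe)} \iso M(\tgtlab{j})$ of $\Ttgt$. For such an object we know on the one hand that $\CCtgt(\Ttgt_j) = \Plueck{\tgtlab{j}}$ by Theorem~\ref{t:CC-values}, since $M(\tgtlab{j}) \in \ginj\CM(B)$ is reachable and rigid (indeed $\Ttgt_j$ is a summand of the cluster-tilting object $\Ttgt$). On the other hand, the object $\varphi\Ttgt_j \in \bdcat(\gproj\CM(B))$ is exactly $\varphi M_{\tgtlab{j}}$ under the identification of $\ginj\CM(B)$-modules with their images in $\CM(C)$, so Proposition~\ref{p:CCsrc} gives $\CCsrc(\varphi\Ttgt_j) = \Plueck{\tgtlab{j}}$ as well. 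Both composites therefore send $\Ttgt_j$ to the same Plücker coordinate.

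First I would spell out that $\varphi$ here means the composite of the tautological equivalence $\bdcat(\ginj\CM(B)) \isoto \bdcat(\gproj\CM(B))$ from Proposition~\ref{p:der-eq} (restricted along the inclusion $\add(\Ttgt) \hookrightarrow \ginj\CM(B) \hookrightarrow \bdcat(\ginj\CM(B))$), so that for a module $X \in \ginj\CM(B)$ the object $\varphi X$ is literally $X$ viewed as a stalk complex, resolved against $\gproj\CM(B)$; in particular $\varphi M_I$ for a rank~$1$ module $M_I \in \ginj\CM(B)$ is exactly the input to Proposition~\ref{p:CCsrc}. Next I would invoke Corollary~\ref{c:bdry-value} (equivalently Theorem~\ref{t:tgt-cat}) to identify the indecomposable summand $\Zdual{(e_jAe)}$ of $\Ttgt$ with $M(\tgtlab{j})$ in $\CM(C)$, so that each summand of $\Ttgt$ is a rank~$1$ module of the form required by Proposition~\ref{p:CCsrc}. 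Then the two computations above, combined with the fact that both $\CCtgt$ (by Theorem~\ref{t:CC-values}) and $\CCsrc \circ \varphi$ (by Proposition~\ref{p:CCsrc}) are multiplicative on direct sums, yield commutativity on all of $\add(\Ttgt)$.

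Strictly speaking the diagram asserts equality of two functors $\add(\Ttgt) \to \CC[\openposvarcone_\posit]$, but a cluster character is not a functor in the usual sense; what is being claimed is the pointwise equality of the two maps on objects, together with the standard compatibility with direct sum decompositions, and this is what the argument above establishes. I do not anticipate a serious obstacle: all the hard work has already been done in Proposition~\ref{p:CCsrc} (which itself rests on Theorems~\ref{t:sigma2}, \ref{t:twist}, \ref{t:MS=CC}, and the reachability results) and in Theorem~\ref{t:CC-values}. The only point requiring mild care is bookkeeping around the identifications of $\ginj\CM(B)$ and $\gproj\CM(B)$ as full subcategories of $\CM(C)$ and the precise meaning of $\varphi$ on stalk complexes, so that Proposition~\ref{p:CCsrc} applies verbatim to each summand $\Zdual{(e_jAe)} = M(\tgtlab{j})$ of $\Ttgt$.
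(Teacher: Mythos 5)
Your proposal is correct and follows exactly the paper's own argument: reduce to the indecomposable summands $M(\tgtlab{j})$ of $\Ttgt$, apply Theorem~\ref{t:CC-values} for $\CCtgt$ and Proposition~\ref{p:CCsrc} for $\CCsrc\circ\varphi$, and conclude by multiplicativity on direct sums. The extra care you take over the meaning of $\varphi$ and the identifications via $\CM(C)$ is accurate but not different in substance from what the paper implicitly relies on.
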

\begin{proof}
Each summand of \(\Ttgt\) is a rank \(1\) module \(M_I\) in \(\ginj\CM(B)\), and \(\CCtgt(M_I)=\Plueck{I}\) by Theorem~\ref{t:CC-values}.
We also have \(\CCsrc(\varphi M_I)=\Plueck{I}\) by Proposition~\ref{p:CCsrc}.
\end{proof}

Note that \cite[Thm.~7.6]{Pressland-Postnikov} is the analogous statement to Proposition~\ref{p:CCsrc} for \(M_I\in\gproj\CM(B)\).
We expect that in fact \(\CCsrc(\varphi M_I)=\Plueck{I}\) for any \(M_I\in\CM(B)\) (and more generally that \(\CCsrc(\varphi X)=\CCtgt(X)\) for any \(X\in\bdcat(\fgmod{B})\)), but at present we depend on reachability arguments which restrict us to one of the stably \(2\)-Calabi--Yau Frobenius categories \(\gproj\CM(B)\) or \(\ginj\CM(B)\) in which these make sense.

We may now conclude by proving the main theorem, namely that Conjecture~\ref{conj:qc-conj} is true.

\begin{thm}
\label{t:main-thm}
For any positroid \(\posit\), the source-labelled and target-labelled cluster structures on \(\CC[\openposvarcone_\posit]\) quasi-coincide.
\end{thm}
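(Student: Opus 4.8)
The plan is to combine the two reductions established so far. By Corollary~\ref{c:conn}, it suffices to prove quasi-coincidence when $\posit$ is connected, so fix a connected plabic graph $D$ of type $(k,n)$ with positroid $\posit=\posit_D$, and set $A=A_D$, $B=B_D$, $C=C_{k,n}$ as in the running conventions of Section~\ref{s:proof}. The strategy is to verify the hypotheses of Fraser--Keller's Theorem~\ref{t:FKW}, applied to the two Frobenius $2$-Calabi--Yau realisations $(\cat{E},T)=(\ginj\CM(B),\Ttgt)$ and $(\cat{F},U)=(\gproj\CM(B),\Tsrc)$ of $\clust{A}_D$ (Theorems~\ref{t:src-cat}, \ref{t:tgt-cat}, \ref{t:categorify}), with cluster characters $\CCtgt$ and $\CCsrc$, upper cluster structures $\GLtgt,\GLsrc\colon\clust{A}_D\isoto\CC[\openposvarcone_\posit]$ (legitimate upper cluster structures by Proposition~\ref{p:uca}), and with $f=\id$ the identity map on $\CC[\openposvarcone_\posit]$.

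First I would assemble the right-hand commuting square of \eqref{eq:qcm-diagram}: Proposition~\ref{p:der-eq} and Corollary~\ref{c:stab-equiv} give the tautological derived equivalence $\varphi\colon\bdcat(\ginj\CM(B))\isoto\bdcat(\gproj\CM(B))$ restricting to a triangle equivalence $\stab{\varphi}\colon\stabginj\CM(B)\isoto\stabgproj\CM(B)$ on the singularity categories. Next, the left-hand square is supplied by Proposition~\ref{p:homot-lift}, which produces the functor $\widetilde{\varphi}\colon\hcat[\bdd](\add\Ttgt)\to\hcat[\bdd](\add\Tsrc)$ compatible with $\varphi$. The mutation-equivalence hypothesis---that $\stab{\varphi}(\Ttgt)$ is mutation equivalent to $\Tsrc$ in $\stabgproj\CM(B)$---follows from Theorem~\ref{t:sigma2}, which gives $\Sigma^2\Tsrc\iso\Ttgt$ in $\singcat(B)$ (so $\stab{\varphi}(\Ttgt)=\Sigma^2\Tsrc$ under the canonical identification), together with Proposition~\ref{p:mut-shift}, which shows the mutation class of $\Tsrc$ in $\singcat(B)$ is closed under the suspension functor $\Sigma$ (via the Ford--Serhiyenko green-to-red sequence). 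Finally, the commutativity of \eqref{eq:cc-diagram}---that $\CCsrc\circ\varphi$ agrees with $\CCtgt$ on $\add(\Ttgt)$---is exactly Corollary~\ref{c:cc-diag}, which rests on Proposition~\ref{p:CCsrc} computing $\CCsrc(\varphi M_I)=\Plueck{I}$ for rank $1$ modules $M_I\in\ginj\CM(B)$.

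With all four hypotheses of Theorem~\ref{t:FKW} in place, that theorem concludes that $f=\id$ is a quasi-cluster morphism from $\GLtgt$ to $\GLsrc$. Since $\id$ is an algebra isomorphism, by Remark~\ref{r:seed-by-seed} its inverse---again $\id$---is then a quasi-cluster morphism from $\GLsrc$ to $\GLtgt$ as well, so by Definition~\ref{d:quasi-coincide} the two cluster structures quasi-coincide. This proves Conjecture~\ref{conj:qc-conj} for connected positroids, and then Corollary~\ref{c:conn} upgrades it to arbitrary positroids, completing the proof.

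The work in the section preceding the theorem is where the real content lies, so the main obstacle is not in stitching these pieces together at the end but in Theorem~\ref{t:sigma2} and its consequences: identifying $\Sigma^2\Tsrc$ with $\Ttgt$ in the singularity category requires the index computation $\ind(\bOmega\Zdual{(e_jAe)})=-[Ae_j]$ in $\Kgp_0(\projcat{\stab{A}})$, which in turn depends on the combinatorial matching-theoretic identity $[N(\MStgt{j})]=[Ae_j]$ of Lemma~\ref{l:upmat-class} and the adjunction-theoretic identification $\Zdual{(e_jA)}\iso F'M(\tgtlab{j})$ of Lemma~\ref{l:A-inj}. Granting the results quoted from the earlier sections, however, the proof of the final theorem itself is essentially a matter of checking that the setup of Theorem~\ref{t:FKW} is met and invoking it.

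\begin{proof}
By Corollary~\ref{c:conn}, it suffices to prove the claim when $\posit$ is connected, so fix a connected plabic graph $D$ of type $(k,n)$ with $\posit=\posit_D$, and write $A=A_D$, $B=B_D$, $C=C_{k,n}$.
We apply Theorem~\ref{t:FKW} to the Frobenius $2$-Calabi--Yau realisations $(\cat{E},T)=(\ginj\CM(B),\Ttgt)$ and $(\cat{F},U)=(\gproj\CM(B),\Tsrc)$ of $\clust{A}_D$ from Theorems~\ref{t:src-cat}, \ref{t:tgt-cat} and \ref{t:categorify}, with cluster characters $\CCtgt$ and $\CCsrc$, to the upper cluster structures $\GLtgt,\GLsrc\colon\clust{A}_D\isoto\CC[\openposvarcone_\posit]$ (which are upper cluster structures by Proposition~\ref{p:uca}), and with $f=\id$ the identity on $\CC[\openposvarcone_\posit]$.

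The right-hand commuting square in \eqref{eq:qcm-diagram} is provided by the tautological equivalence $\varphi$ of Proposition~\ref{p:der-eq}, restricting on singularity categories to the triangle equivalence $\stab{\varphi}\colon\stabginj\CM(B)\isoto\stabgproj\CM(B)$ of Corollary~\ref{c:stab-equiv}.
The left-hand square is provided by the functor $\widetilde{\varphi}$ of Proposition~\ref{p:homot-lift}.
By Theorem~\ref{t:sigma2} we have $\Sigma^2\Tsrc\iso\Ttgt$ in $\singcat(B)$, so $\stab{\varphi}(\Ttgt)\iso\Sigma^2\Tsrc$; by Proposition~\ref{p:mut-shift} the mutation class of $\Tsrc$ in $\singcat(B)$ is closed under $\Sigma$, so $\Sigma^2\Tsrc$ is mutation equivalent to $\Tsrc$, and hence $\stab{\varphi}(\Ttgt)$ is mutation equivalent to $\Tsrc$ in $\stabgproj\CM(B)$.
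Finally, the diagram \eqref{eq:cc-diagram} commutes by Corollary~\ref{c:cc-diag}.
Theorem~\ref{t:FKW} now shows that $\id$ is a quasi-cluster morphism from $\GLtgt$ to $\GLsrc$.
Since $\id$ is an algebra isomorphism, by Remark~\ref{r:seed-by-seed} it is also a quasi-cluster morphism from $\GLsrc$ to $\GLtgt$, and so by Definition~\ref{d:quasi-coincide} the two cluster structures quasi-coincide.
Applying Corollary~\ref{c:conn} completes the proof for arbitrary $\posit$.
\end{proof}
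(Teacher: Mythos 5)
Your proof is correct and follows the same approach as the paper's: reduce to the connected case via Corollary~\ref{c:conn}, then verify each hypothesis of Theorem~\ref{t:FKW} using Propositions~\ref{p:der-eq}, \ref{p:homot-lift}, \ref{p:mut-shift}, Theorem~\ref{t:sigma2}, and Corollary~\ref{c:cc-diag}. In fact your labelling of the two Frobenius realisations --- $(\ginj\CM(B),\Ttgt)$ as the source and $(\gproj\CM(B),\Tsrc)$ as the target --- is more careful than the wording in the published proof, which has them transposed.
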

\begin{proof}
By Corollary~\ref{c:conn}, we may assume that \(\posit\) is connected.
Thus, \(\posit=\posit_D\) for a connected Postnikov diagram \(D\), and we let \(A=A_D\), \(B=eAe\), \(\Tsrc=eA\) and \(\Ttgt=\Zdual{(Ae)}\) be the associated algebras and cluster-tilting objects.

The result now follows by applying Fraser--Keller's Theorem~\ref{t:FKW} to the categorifications \((\ginj\CM(B),\Tsrc)\) and \((\gproj\CM(B),\Ttgt)\) and the identity map \(\id\colon\CC[\openposvar_\posit]\to\CC[\openposvar_\posit]\).
The existence of the commutative diagram \eqref{eq:qcm-diagram}, and the fact that \(\underline{\varphi}\) is an equivalence, combines Proposition~\ref{p:der-eq}, Corollary~\ref{c:stab-equiv} and Proposition~\ref{p:homot-lift}.
Commutativity of the diagram \eqref{eq:cc-diagram} is Corollary~\ref{c:cc-diag}.
Finally, \(\stab{\varphi}(\Tsrc)\) is mutation equivalent to \(\Ttgt\) by the combination of Proposition~\ref{p:mut-shift} and Theorem~\ref{t:sigma2}.
\end{proof}

\begin{eg}
We continue Examples~\ref{eg:running1} and \ref{eg:running2}.
Note from Tables~\ref{tab:src-eg} and \ref{tab:tgt-eg} that while \(\Delta_{157}\) is an initial cluster variable for the target-labelled cluster structure on this open positroid variety, it is not a cluster variable for the source-labelled cluster structure, and indeed the module \(M_{157}\) does not appear in \(\gproj\CM(B)\).
If we compute a second syzygy of this module as follows
\begin{equation}
\label{eq:syz2}
\begin{tikzcd}M_{135}\arrow{r}&\begin{matrix}M_{134}\\\dsum\\M_{356}\end{matrix}\arrow{r}&\begin{matrix}M_{167}\\\dsum\\M_{345}\end{matrix}\arrow{r}&M_{157}
\end{tikzcd}
\end{equation}
we find that the resulting module \(M_{135}\) is the initial source-labelled cluster variable attached to the same quiver vertex.
This sequence tells us that \(M_{157}=\Sigma^2(M_{135})\) in the singularity category \(\singcat(B)\), as predicted by Theorem~\ref{t:sigma2}.

We may compare this calculation to that of the weight of the upstream wedge matching \(\MStgt{b}\) at the vertex \(b\) with target label \(157\), as shown in Figure~\ref{f:match}.
\begin{figure}
\begin{tikzpicture}[scale=3,baseline=(bb.base),yscale=-1]
\path (0,0) node (bb) {};

\foreach \n/\m/\a in {1/4/0, 2/3/0, 3/2/5, 4/1/10, 5/7/0, 6/6/-3, 7/5/0}
{ \coordinate (b\n) at (\bstart-\seventh*\n+\a:1.0);
  \draw (\bstart-\seventh*\n+\a:1.1) node {\(\m\)}; }

\foreach \n/\m in {8/1, 9/2, 10/3, 11/4, 14/5, 15/6, 16/7}
  {\coordinate (b\n) at ($0.65*(b\m)$);}

\coordinate (b13) at ($(b15) - (b16) + (b8)$);
\coordinate (b12) at ($(b14) - (b15) + (b13)$);

\foreach \n/\x/\y in {13/-0.03/-0.03, 12/-0.22/0.0, 14/-0.07/-0.03, 11/0.05/0.02, 16/-0.02/0.02}
  {\coordinate (b\n)  at ($(b\n) + (\x,\y)$); } 

\foreach \h/\t in {1/8, 2/9, 3/10, 4/11, 5/14, 6/15, 7/16, 
 8/9, 9/10, 10/11,11/12, 12/13, 13/8, 14/15, 15/16, 12/14, 13/15, 8/16}
{ \draw [bipedge] (b\h)--(b\t); }

\foreach \n in {8,10,12,15} 
  {\draw [\graphcolor] (b\n) circle(\dotrad) [fill=white];} \foreach \n in {9,11,13, 14,16}  
  {\draw [\graphcolor] (b\n) circle(\dotrad) [fill=\graphcolor];} 

\foreach \e/\f/\t in {2/9/0.5, 4/11/0.5, 5/14/0.5, 7/16/0.5, 
 8/9/0.5, 9/10/0.5, 10/11/0.5,11/12/0.5, 12/13/0.45, 8/13/0.6, 
 14/15/0.5, 15/16/0.6, 12/14/0.45, 13/15/0.4, 8/16/0.6}
{\coordinate (a\e-\f) at ($(b\e) ! \t ! (b\f)$); }

\foreach \n/\m/\l/\a in {1/124/4/0, 2/234/3/-1, 3/345/2/1, 4/456/1/10, 5/256/7/5, 6/267/6/0, 7/127/5/0}
{ \draw [\frozcolor] (\bstart+\seventh/2-\seventh*\n+\a:1) node (q\m) {\scriptsize \(\l\)}; }

\foreach \m/\l/\a/\r in {247/b/\bstart/0.42, 245/c/10/0.25, 257/a/210/0.36}
{ \draw [\quivcolor] (\a:\r) node (q\m) {\scriptsize \(\l\)}; }

\foreach \t/\h/\a in {124/247/13, 245/234/18, 257/247/14, 245/257/15,
 267/257/6, 256/245/-9, 345/245/0}
{ \draw [quivarrow]  (q\t) edge [bend left=\a] (q\h); }

\foreach \t/\h/\a in {234/124/-20, 456/345/-16, 456/256/22, 256/267/22, 127/267/-20,
 127/124/19}
{ \draw [frozarrow]  (q\t) edge [bend left=\a] (q\h); }

\foreach \t/\h/\a in {234/345/19, 247/127/11, 247/245/34, 257/256/0, 245/456/-16}
{ \draw [matcharrow]  (q\t) edge [bend left=\a] (q\h); }
\end{tikzpicture}
\caption{The upstream wedge matching \(\MStgt{b}\) at vertex \(b\). Matched arrows are shown in magenta.}
\label{f:match}
\end{figure}
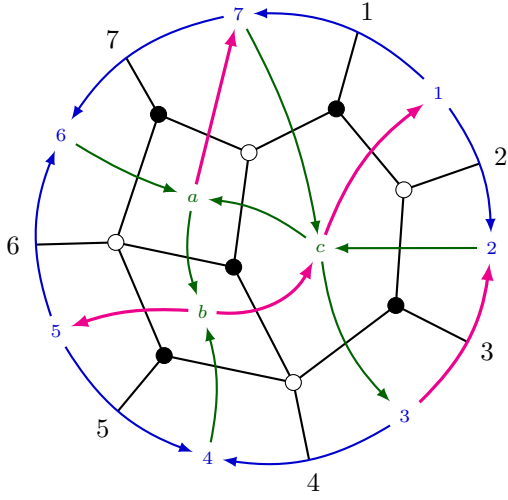
Identifying \(\Kgp_0(\projcat{A})\) with \(\ZZ^{Q_0}=\Span{v_a,v_b,v_c,v_1,\dotsc,v_7}_{\ZZ}\) via the basis of projectives, we may compute using \cite[Prop.~6.11]{CKP} that in this Grothendieck group
\begin{align*}
[N(\MStgt{b})]&=v_a+v_b+v_c\\
&\qquad +v_1+v_5+v_7\\
&\qquad-v_6-v_7-v_a-v_4-v_c\\
&=v_b+v_1-v_4+v_5-v_6,
\end{align*}
where care must be taken when computing the contribution from boundary arrows (in the second line) since we have not standardised the plabic graph in Figure~\ref{f:match}.
This class projects to \(v_b\) in \(\Kgp_0(\projcat{\stab{A}})\), as predicted by Lemma~\ref{l:upmat-class}.
We also observe that the source labels of vertices \(1\) and \(5\) are \(167\) and \(345\), while those of \(4\) and \(6\) are \(134\) and \(356\), so the identity corresponds to the exact sequence \eqref{eq:syz2}.

Now we compute a representative of \(\Sigma(M_{135})\in\singcat(B)\) from \(\gproj\CM(B)\) using left approximations by projectives; this yields the exact sequence
\begin{equation}
\label{eq:syz-2}
\begin{tikzcd}M_{135}\arrow{r}&\begin{matrix}M_{134}\\\dsum\\M_{356}\end{matrix}\arrow{r}&\begin{matrix}M_{367}\\\dsum\\M_{345}\end{matrix}\arrow{r}&M_{357}.
\end{tikzcd}
\end{equation}

Since the two cluster structures quasi-coincide, the target-labelled cluster variable \(\Plueck{157}\) must be expressible as the product of a cluster variable and a Laurent monomial in frozen variables from the source-labelled structure.
We have \(\Plueck{157}=\CCtgt(M_{157})\) by Theorem~\ref{t:CC-values}, and \(\CCtgt(M_{157})=\CCsrc(\varphi M_{157})\) by Lemma~\ref{l:CC-varphi}.
Using \eqref{eq:syz2} and \eqref{eq:syz-2} together with Theorem~\ref{t:CC-lift}, we may compute that
\begin{equation}
\label{eq:qcoin}
\Plueck{157}=\CCsrc(\varphi M_I)=\Plueck{357}\frac{\Plueck{167}\Plueck{345}}{\Plueck{367}\Plueck{345}}=\Plueck{357}\frac{\Plueck{167}}{\Plueck{367}},
\end{equation}
giving the required expression for \(\Plueck{157}\) in terms of source-labelled cluster variables.
This identity can be verified using the short Plücker relation
\[\Plueck{357}\Plueck{167}-\Plueck{157}\Plueck{367}=\Plueck{137}\Plueck{567}=0\]
on \(\openposvar_{\posit}\), where \(\Plueck{567}=0\).

The leftmost maps in \eqref{eq:syz2} and \eqref{eq:syz-2} are the same because any syzygy \(\bOmega(M_{157})\) is already Gorenstein projective \cite[Lem.~10.4]{CKP}.
This is why it suffices to compute a single syzygy and a single left \(\projcat(B)\)-approximation, as in Lemma~\ref{l:CC-varphi}, to obtain a quasi-coincidence identity of the form \eqref{eq:qcoin}. \end{eg}

\section{The twist}
\label{s:twist}

Via much the same approach as used for Theorem~\ref{t:main-thm}, we may show that the left twist automorphism \(\ltwist\colon\CC[\openposvarcone_\posit]\to\CC[\openposvarcone_\posit]\) from \cite{MulSpe-Twist} is a quasi-cluster morphism from the target-labelled cluster structure \(\GLsrc\) to the source-labelled cluster structure \(\GLtgt\).
Consider the diagram \eqref{eq:qcm-diagram} constructed in Section~\ref{s:proof} for the case that \(\varphi\colon\bdcat(\ginj\CM(B))\isoto\bdcat(\gproj\CM(B))\) is the tautological equivalence induced from identifying each category with \(\bdcat(\fgmod{B})\).
Since every arrow in this diagram is a triangle functor, we may postcompose the vertical arrows with the inverse suspension functor on their codomains to obtain a new commutative diagram
\begin{equation}
\label{eq:twist-funct}
\begin{tikzcd}
\hcat[\bdd](\add{\Ttgt})\arrow{r}\arrow{d}{[-1]\circ\widetilde{\varphi}}&\bdcat(\ginj\CM(B))\arrow{r}\arrow{d}{[-1]\circ\varphi}&\stabginj\CM(B)\arrow{d}{\Sigma^{-1}\circ\stab{\varphi}}\\
\hcat[\bdd](\add{\Tsrc})\arrow{r}&\bdcat(\gproj\CM(B))\arrow{r}&\stabgproj\CM(B).
\end{tikzcd}
\end{equation}
We observe that \([-1]\circ\varphi\colon\bdcat(\ginj\CM(B))\to\bdcat(\gproj\CM(B))\) is induced from the inverse suspension functor on \(\bdcat(\fgmod{B})\), just as \(\varphi\) was induced from the identity functor.
Every vertical arrow is a triangle equivalence, since this is true for \(\varphi\), \(\stab{\varphi}\) and \(\widetilde{\varphi}\) as shown in Section~\ref{s:proof}.

In the next two statements, we will use the following fact.
Let \(X\in\CM(B)\), and let \(PX\to X\) be a projective cover with kernel \(\bOmega X\).
Then the resulting triangle
\[\begin{tikzcd}
PX[-1]\arrow{r}&X[-1]\arrow{r}&\bOmega X\arrow{r}&PX
\end{tikzcd}\]
in \(\bdcat(\CM(B))\) remains exact under the equivalence \(\varphi\), and thus we see that
\begin{equation}
\label{eq:cc-shift}
\CCsrc(\varphi X[-1])=\frac{\CCsrc(\bOmega X)}{\CCsrc(PX)},
\end{equation}
using as in Section~\ref{s:proof} that both \(PX\) and \(\bOmega X\) are Gorenstein projective.
\begin{prop}
\label{p:twist-base}
The diagram
\begin{equation}
\label{eq:twist-cc}
\begin{tikzcd}\add(\Ttgt)\arrow{r}{\CCtgt}\arrow{d}{[-1]\circ\varphi}&\CC[\openposvarcone_\posit]\arrow{d}{\ltwist}\\
\bdcat(\gproj\CM(B))\arrow{r}{\CCsrc}&\CC[\openposvarcone_{\posit}]
\end{tikzcd}
\end{equation}
commutes.
\end{prop}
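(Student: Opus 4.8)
The plan is to check that the diagram \eqref{eq:twist-cc} commutes on each indecomposable summand of $\Ttgt$, exactly as in the proof of Corollary~\ref{c:cc-diag}. By Theorem~\ref{t:tgt-cat} every such summand is a rank~$1$ module $M_I$ with $I\in\posit_D$, viewed inside $\ginj\CM(B)$, and by Theorem~\ref{t:CC-values} the top-then-right route of the square sends it to $\ltwist\CCtgt(M_I)=\ltwist\Plueck{I}$.

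For the left-then-bottom route, I would choose a projective cover $PM_I\to M_I$ in $\CM(B)$ with kernel $\bOmega M_I$. Both $PM_I$ and $\bOmega M_I$ are Gorenstein projective (the syzygy by \cite[Lem.~10.4]{CKP}), so the identity \eqref{eq:cc-shift} established just before the statement applies and gives
\[\CCsrc(([-1]\circ\varphi)M_I)=\CCsrc(\varphi M_I[-1])=\frac{\CCsrc(\bOmega M_I)}{\CCsrc(PM_I)}.\]
Comparing with Muller--Speyer's twist formula in the form of Theorem~\ref{t:MS=CC}, which says precisely $\ltwist\Plueck{I}=\CCsrc(\bOmega M_I)/\CCsrc(PM_I)$, the two composites agree, so the diagram commutes.

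I do not expect a genuine obstacle here: the whole content of the proposition is that the shift $[-1]$ in $\bdcat(\gproj\CM(B))$ corresponds, under the cluster characters, to the left twist, which is exactly what \eqref{eq:cc-shift} and Theorem~\ref{t:MS=CC} jointly record on rank~$1$ modules. The one point needing care is the identification $([-1]\circ\varphi)M_I\cong(\bOmega M_I\to PM_I)$ in $\bdcat(\gproj\CM(B))$ underlying \eqref{eq:cc-shift}: this holds because $\varphi$ is a triangle functor and the short exact sequence $0\to\bOmega M_I\to PM_I\to M_I\to 0$ lies in the extension-closed subcategory $\gproj\CM(B)$, so rotating the associated triangle produces the desired two-term complex.
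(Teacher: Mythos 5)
Your proof is correct and is essentially identical to the paper's: both verify the square on the indecomposable rank-one summands $M_I$ of $\Ttgt$ and combine Theorem~\ref{t:CC-values}, the identity \eqref{eq:cc-shift}, and Theorem~\ref{t:MS=CC} in the same chain of equalities.
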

\begin{proof}
For any \(M_I\in\ginj\CM(B)\), we have
\[\ltwist\CCtgt(M_I)=\ltwist(\Plueck{I})=\frac{\CCsrc(\bOmega M_I)}{\CCsrc(PM_I)}=\CCsrc(\varphi M_I[-1])\]
for any projective cover \(PM_I\to M_I\) with kernel \(\bOmega M_I\); the first equality here is Theorem~\ref{t:CC-values} (originally from \cite{Pressland-Postnikov}), the second is Theorem~\ref{t:MS=CC} (originally from \cite{CKP}), and the third is obtained by applying \eqref{eq:cc-shift} to \(M_I\).
\end{proof}

\begin{thm}
\label{t:twist-qcm}
The left twist automorphism \(\ltwist\colon\CC[\openposvarcone_\posit]\to\CC[\openposvarcone_\posit]\) is a quasi-cluster morphism from \(\GLtgt\) to \(\GLsrc\).
\end{thm}
\begin{proof}
The statement reduces to the connected case by induction on Theorem~\ref{t:twist-conn}.
For \(\Tsrc\) and \(\Ttgt\) the cluster-tilting objects associated to a connected Postnikov diagram, we have \(\Sigma^{-1}\stab{\varphi}(\Ttgt)=\Sigma^{-1}\Ttgt=\Sigma\Tsrc\in\singcat(B)\) by Theorem~\ref{t:sigma2}, and thus \(\Sigma^{-1}\stab{\varphi}(\Ttgt)\) is mutation equivalent to \(\Tsrc\) by Proposition~\ref{p:mut-shift}.
The result then follows from Theorem~\ref{t:FKW}, using the commutative diagrams \eqref{eq:twist-funct} and \eqref{eq:twist-cc}.
\end{proof}

As remarked in the introduction, it has been proved independently by Casals, Le, Sherman-Bennett and Weng \cite{CLSBW} that the right twist \(\rtwist=\ltwist^{-1}\) is a quasi-cluster morphism.
We close by commenting briefly on the logic of the two arguments, and showing furthermore that Theorem~\ref{t:twist-qcm} implies Theorem~\ref{t:main-thm}.

\begin{prop}
\label{p:qcms}
One can show, without using that \(\GLsrc\) and \(\GLtgt\) quasi-coincide, that the following statements are equivalent:
\begin{enumerate}
\item\label{src-to-tgt} the left twist \(\ltwist\) is a quasi-cluster morphism from \(\GLtgt\) to \(\GLsrc\);
\item\label{tgt-to-tgt} the left twist \(\ltwist\) is a quasi-cluster morphism from \(\GLtgt\) to \(\GLtgt\);
\item\label{src-to-src} the left twist \(\ltwist\) is a quasi-cluster morphism from \(\GLsrc\) to \(\GLsrc\).
\end{enumerate}
Moreover, any of these statements implies that \(\GLsrc\) and \(\GLtgt\) quasi-coincide.
\end{prop}
\begin{proof}
By \cite[Prop.~7.13]{MulSpe-Twist} and a calculation in the proof of \cite[Thm.~5.17]{FSB}, showing that Definition~\ref{d:qcm}\ref{d:qcm-yhat} is satisfied, \(\rtwist^2\) is a quasi-cluster morphism from \(\GLsrc\) to \(\GLtgt\).
Now, since \(\ltwist=\rtwist^{-1}\) \cite[Thm.~6.7]{MulSpe-Twist}, we have identities
\[\rtwist^2\circ\ltwist=\ltwist^{-1}=\ltwist\circ\rtwist^2,\]
from which we deduce the equivalence of \ref{src-to-tgt}--\ref{src-to-src}.
Similarly, the identity \(\id=\rtwist^2\circ(\ltwist)^2\), in which we view \(\rtwist^2\) as a quasi-cluster morphism from \(\GLtgt\) to \(\GLsrc\) and \(\ltwist\) as a quasi-cluster automorphism of \(\GLtgt\), shows that \ref{tgt-to-tgt}, and hence any of \ref{src-to-tgt}--\ref{src-to-src}, implies the quasi-coincidence of \(\GLsrc\) and \(\GLtgt\).
\end{proof}

\begin{rem}
The arguments in \cite{CLSBW} most directly prove Proposition~\ref{p:qcms}\ref{tgt-to-tgt} (or rather, the equivalent statement for \(\rtwist=\ltwist^{-1}\)), and deduce the quasi-coincidence from this.
The method is to show that \(\rtwist\) coincides with the Donaldson--Thomas transformation, known to be a quasi-cluster morphism for any cluster algebra admitting a green-to-red sequence, as \(\clustalg{D}\) does by \cite{ForSer}.
We implicitly do this in our proof of Theorem~\ref{t:twist-qcm}, since the Donaldson--Thomas transformation is the quasi-cluster morphism induced by the shift automorphism \([1]\) (see \cite[Ex.~A.10]{KelWu}).

We have opted instead to prove Proposition~\ref{p:qcms}\ref{src-to-tgt}, but this choice was largely arbitrary: categorically, the equivalence of the three statements in Proposition~\ref{p:qcms} corresponds to the fact that the derived categories of \(\gproj\CM(B)\) and \(\ginj\CM(B)\) are tautologically equivalent.
Our choice was primarily motivated by the fact that \(\Sigma^{-1}\) takes Gorenstein injective objects to Gorenstein projective objects in the singularity category, by \cite[Lem.~10.4]{CKP}, making it natural to view the induced quasi-cluster morphism as going from \(\GLtgt\) to \(\GLsrc\).
As we pointed out in Remark~\ref{r:choices}, the argument in Theorem~\ref{t:twist-conn} to reduce to connected positroids is also insensitive to the choice of which pair of cluster structures we are aiming to relate.
\end{rem}

\section*{Acknowledgements}
Many thanks are due to Bernhard Keller for sharing in advance his results with Chris Fraser \cite[Appendix]{KelWu}, on which this work crucially depends, and for several useful discussions both about these results and the problem in general, most notably at ICRA 2022 in Montevideo and Buenos Aires, the conference \emph{Representation Theory and Triangulated Categories} in Paderborn in September 2022, and at ARTA IX in Kingston, ON, in 2023.
I also thank Darius Dramburg, Franco Rota and David Speyer for useful conversations about some of the geometric arguments needed in Section~\ref{s:connected}, Melissa Sherman-Bennett for a helpful discussion concerning the results of \cite{CLSBW}, and the anonymous referee for suggesting corrections and improvements.
This work has benefited greatly from two different long-term collaborations: one with İlke Çanakçı and Alastair King, to which the connection will be obvious, and a second with Jan E.\ Grabowski, from which much of my understanding of indices and coindices derives.
The author was supported by the EPSRC postdoctoral fellowship EP/T001771/2 while this work was conducted.

\defbibheading{bibliography}[\refname]{\section*{#1}}
\sloppy\printbibliography
\end{document}